\setlist[enumerate]{nosep} 
\crefname{equation}{equation}{equations}
\crefname{figure}{Figure}{Figures}
\newcommand{\xx}{x}
\newcommand{\dd}{\mathrm{d}}
\newcommand{\ff}{f}
\DeclareMathOperator*{\esssup}{ess\,sup}
\newcommand{\defeq}{\mathrel{\mathop:}=}
\newcommand{\eqdef}{\mathrel{\mathop=}:}
\newcommand{\f}{f}
\newcommand{\nabalaxtheta}{\nabla_\xi}
\newcommand{\LinOpEllip}{\mathcal{L}^{E}}
\newcommand{\LinOpPara}{\mathcal{L}^{P}}
\newcommand{\LinOpThet}{\mathsf{L}_\sigma^{E}}
\newcommand{\ABLinVec}{\mathbf{A}}
\newcommand{\VOpLinVec}{\mathsf{V}_{\llambda}}
\newcommand{\AmatLinVec}{\mathsf{B}_{\ttau}}
\newcommand{\Id}{\text{Id}}
\newcommand{\LinOpThetPara}{\mathsf{L}_{\ssigmax}^{\mathsf{P}}}
\newcommand{\ABLinVecPara}{\mathbf{A}}
\newcommand{\VOpLinVecPara}{\mathsf{V}^\mathsf{P}_{\llambda}}
\newcommand{\AmatLinVecPara}{\mathsf{B}_{\ttau, \cchi, \vvarphi}^\mathsf{P}}
\newcommand{\SmatLinVecPara}{\mathsf{S}^\mathsf{P}_{\ssigmax}}
\newcommand{\Real}{\mathrm{Re}}
\newcommand{\Imag}{\mathrm{Im}}
\newcommand{\llambda}{\Breve{\lambda}}
\newcommand{\cchi}{\Breve{\chi}}
\newcommand{\vvarphi}{\Breve{\nu}}
\newcommand{\ssigmax}{\Breve{\sigma}_x}
\newcommand{\ttau}{\Breve{\tau}}
\newcommand{\muinvi}{\mu_{0}}
\newcommand{\thth}{\theta\theta}
\newcommand{\dR}{\mathbb{R}}
\newcommand{\dC}{\mathbb{C}}
\newcommand{\Torus}{\mathbb{T}}
\newcommand{\xonepik}{z}
\newtheorem{prop}{Proposition}
\newtheorem{thm}{Theorem}
\newtheorem{definition}{Definition}
\newtheorem{lem}{Lemma}
\newtheorem{cor}{Corollary}
\numberwithin{prop}{section}
\numberwithin{lem}{section}
\numberwithin{thm}{section}
\numberwithin{definition}{section}
\numberwithin{cor}{section}
\theoremstyle{remark}
\newtheorem*{remark}{Remark}
\title{Existence and dimensional lower bound for the global attractor of a PDE model for ant trail formation}
\author{Matthias Rakotomalala\thanks{CMAP, CNRS, École polytechnique, Institut Polytechnique de Paris, 91120 Palaiseau, France}, Oscar de Wit\thanks{Department of Applied Mathematics and Theoretical Physics, University of Cambridge, Cambridge CB3 0WA, UK}}
\begin{document}

\maketitle
\begin{abstract}
We study the asymptotic behavior of a nonlinear PDE model for ant trail formation, which was introduced in~\cite{bertucci2024curvature}. We establish the existence of a compact global attractor and prove the nonlinear instability of the homogeneous steady state under an inviscid instability condition. We also provide a dimensional lower bound on the attractor. Alternatively, we prove that if the interaction parameter is sufficiently small, the homogeneous steady state is globally asymptotically stable.
\end{abstract}

\tableofcontents

\section{Introduction}
Collectives of ants display highly complex forms of behavior and sustain observable macroscopic non-trivial patterns such as ant trails. See~\cite{holldobler1990ants} for a general overview of ant biology. 



In this paper, we study the qualitative properties of the following nonlinear PDE model for ant trail formation that was introduced in \cite{bertucci2024curvature} as an extension of the physics-based model from~\cite{pohl2014dynamic},
\begin{equation}\label{sys:F}
\tag{$\mathcal{F}$}
    \begin{cases}
        \partial_t f & =\nabla_\xx\cdot(\sigma_x\nabla_\xx f-\lambda v f)+\partial_\theta(\sigma_\theta\partial_\theta f-\chi B_\tau[c] f),\\
        \mathsf{t}\partial_t c & = \sigma_c\Delta_\xx c-\gamma c+\int f \dd\theta.
    \end{cases}
\end{equation}
Here, the quantity $f(t,\xx,\theta),(t,x,\theta)\in[0,+\infty)\times\mathbb{T}^2_1\times\mathbb{T}_{2\pi}$, describes the phase-space density of an ensemble of interacting particles, given a position $\xx$ on the 2-dimensional torus and orientation $\theta$ at time $t$. For the velocity $v$, we use the notation $v(\theta)=
(\cos\theta,\sin\theta)^{\mathsf{T}}$. The equation for $f$ is coupled to a parabolic or elliptic equation for the chemical field $c(t,x)$, via the interaction mechanism described by $B_\tau[c]$. The term $B_\tau$ that we study is a curvature look-ahead mechanism and writes as
\begin{equation}\label{eq:BtauIntro}
\tag{$B_\tau$}
B_\tau[c]=v(\theta)^\perp\cdot\nabla_\xx c+\tau v(\theta)^\perp\cdot \nabla^2_\xx cv(\theta),
\end{equation} where $v^\perp(\theta)=(-\sin\theta,\cos\theta)^{\mathsf{T}}$. Furthermore, $\sigma_\xx,\lambda,\sigma_\theta,\chi,\sigma_c$ and $\gamma$ are positive constants and $\mathsf{t}\in\{0,1\}$, corresponding to either the elliptic coupling or the parabolic coupling. 

The PDE model consists of the following components: translational diffusion modulated by the constant $\sigma_\xx$, self-propulsion modulated by the speed $\lambda$, rotational diffusion modulated by the constant $\sigma_\theta$, the interaction term $B_\tau$ modulated by the constant $\chi$ and the equation for $c$ with diffusion modulated by the constant $\sigma_c$, decay modulated by the constant $\gamma$ and a source term coming from the spatial density of $f$, $\rho(t,x)\defeq\int f(t,x,\theta)\dd\theta$.

The model can be derived as a formal mean-field limit for a stochastic interacting particle model as explained in Section 2.1 of \cite{bertucci2024curvature}. 
The term $B_\tau$ can also be derived as the first-order Taylor expansion of the look-ahead term 
\begin{equation*}
    v(\theta)^\perp\cdot\nabla_\xx c(x+\tau v(\theta))=B_\tau[c]+O(\tau^2),
\end{equation*}
that was studied in \cite{bruna2024lane}, in the elliptic case $\mathsf{t}=0$. We refer to \cite{bertucci2024curvature,bruna2024lane} for global-in-time well-posedness theory and numerical results that illustrate qualitative behaviors of the solutions of the model. A discussion on how the modeling relates to other chemotaxis models, such as the Keller--Segel model can also be found in these papers. We refer to \cite{pohl2014dynamic,liebchen2017phoretic} for a truncated linear stability analysis and more numerical simulations for closely related models.



In this paper, we show the existence of a global attractor for model~\eqref{sys:F} and we show the nonlinear instability of the steady state $(f_\ast,c_\ast)\equiv(\frac{1}{2\pi},\frac{1}{\gamma})$, under the inviscid linear instability condition,
\begin{equation}
\label{ineq:linInstabIntro}
\tag{$\mathcal{U}_k$}
    \chi(2\pi k \tau + 1) > \lambda (\gamma + 4\pi \sigma_c k^2),
\end{equation}
for some integer wave number $k\geq 1$, and for sufficiently small $\sigma_\xx,\sigma_\theta$. We also show that under this condition, the dimension of the attractor is bounded from below by $4k$. Alternatively, we show that there exists $\chi^*>0$, such that for $0 \leq \chi< \chi^*$, the steady state $(f_*,c_*)$ is globally asymptotically stable.

This instability condition \eqref{ineq:linInstabIntro}, for the case $\tau=0$, was already derived in a physics paper \cite{pohl2014dynamic}. The condition \eqref{ineq:linInstabIntro} indicates that the steady state $(f_*,c_*)$ of the model \eqref{sys:F} is unstable if the interaction strength is sufficiently large compared to the speed.





By proving regularity results for the model, we prove the existence of the global attractor, as was done for some PDE models within the framework of dynamical systems theory, such as, for example, the nonlinear heat equation, the Navier--Stokes equation \cite{robinson2001infinite}, and the Kuramoto model \cite{giacomin2012global}.

The lower bound on the dimension of the global attractor follows from a spectral analysis of the linearized operator around the steady state $(f_*,c_*)$. This analysis shows the existence of linearly unstable eigenfunctions for the linearized operator. By an adaptation of the nonlinear perturbation theorem of \cite{henry2006geometric}, the linear instability implies nonlinear instability and hence the lower bound on the dimension of the global attractor. This approach was also followed for the three-dimensional Navier--Stokes system in \cite{ghidaglia1991lower}, and the Ginzburg-Landau model in \cite{ghidaglia1987dimension}.

Our result can be seen as a mathematical foundation for the emergence of ant trails for a well-motivated model of collective ant movement. That is, our result shows when we are guaranteed to expect to have a non-trivial attractor for this model. Furthermore, given \eqref{ineq:linInstabIntro} holds for some integer $k\geq 1$, the non-trivial orbits we construct each correspond to $ 1 \leq j \leq k $ parallel and equidistant trails on the two-dimensional torus up to translation and rotation, see Section~\ref{sec:LinearAnalysisHomogeneous}. The bifurcation of inhomogeneous stationary solutions and their stability is left for future work, as this requires a finer analysis of the spectrum of the linearized equation around the homogeneous solution.


In Section~\ref{sec:mainresults}, we introduce the definition of the weak solutions of \eqref{sys:F} we consider, and state the main result Theorem~\ref{thm:MainTheorem}. In Section~\ref{sec:SemigroupTheory}, we prove the properties for the existence of the semigroup associated to \eqref{sys:F}. Then, in Section~\ref{sec:ExistenceGlobalAttractor}, we prove the existence of the global attractor for \eqref{sys:F}. In Section~\ref{sec:LinearAnalysisHomogeneous}, we show, using spectral analysis, the existence of unstable eigenfunctions of the linearized equations around the steady state $(f_*,c_*)$, provided the instability condition \eqref{ineq:linInstabIntro} holds and $\sigma_\xx$ and $\sigma_\theta$ are sufficiently small. Finally, in Section~\ref{sec:StabilityInstability} we prove a dimensional lower bound on the global attractors by constructing solutions to the nonlinear problem for each unstable eigenfunction as obtained in Section~\ref{sec:LinearAnalysisHomogeneous}, and we also prove the global asymptotic stability of the steady state $(f_*,c_*)$ under a smallness condition on $\chi$.

\section{Main result}
\label{sec:mainresults}
In this section, we introduce notation and recall definitions and results from dynamical systems theory, and then state the main result of this paper.

\subsection{Notations}
In the following, we study the function spaces defined on the domain $\mathbb{T}^2_1\times \mathbb{T}_{2\pi}$ and $\mathbb{T}^2_1$, where $\mathbb{T}_{L} = \dR/L\mathbb{Z}$ is the $L$-periodic torus. For notational conciseness,  we denote by $L^p_x(L^r_\theta)$  for $1\leq p,r\leq \infty$ the space of real-valued integrable functions $L^p(\mathbb{T}^2_1, L^r(\mathbb{T}_{2\pi}))$, equipped with the norm,
\begin{equation*}
    \|f\|_{L^p_x(L^r_\theta)} = \left(\int_{\Torus^2_1} \left(\int_{\mathbb{T}_{2\pi}} |f(x,\theta)|^p \dd\theta\right)^\frac{p}{r}\dd x\right)^\frac{1}{p},
\end{equation*}
changing for $\esssup$ if $r = \infty$,
\begin{equation*}
    \|f\|_{L^p_x(L^\infty_\theta)} = \left(\int_{\Torus^2_1} \left(\esssup_{\theta \in \mathbb{T}_{2\pi}} |f(x,\theta)|\right)^p\dd x\right)^\frac{1}{p}.
\end{equation*}

Similarly, we denote $L^q_{t,loc}(L^p_x(L^r_\theta))$ for the Bochner spaces $L^q_{loc}([0,+\infty),L^p_x(L^r_\theta))$ of locally-in-time integrable functions taking values in the Banach space $L^p_x(L^r_\theta)$. For integrals we often drop the symbol $dx$ and the domain, when it is obvious what the domain is. The space $W^{k,p}$ denotes the Sobolev space of functions with $k$-weak derivatives in $L^p$, and we use the subscripts $\cdot_x$, $\cdot_\theta$ and $\cdot_{x,\theta}$ to specify the domains, similarly as in the $L^p$ case, e.g. $W^{k,p}_x = W^{k,p}(\mathbb{T}^2_1)$. We denote by $H^k$ the Hilbert space $W^{k,2}$, $p=2$.
If $\mathcal{Y}$ is a Banach space, $C_t(\mathcal{Y})$ is the space of continuous functions, $C([0,+\infty),\mathcal{Y})$. Depending on the context, when working with a finite time horizon $T>0$, $C_t(\mathcal{Y})$ denotes the Banach space of continuous functions $C([0,T],\mathcal{Y})$ equipped with the $\sup$-norm.  If $\mathcal{Y}$ is an $L^p$ space, $(\mathcal{Y})_{+}$ denotes the cone of non-negative functions in $\mathcal{Y}$. We use the $*$-symbol to represent the periodic convolution operation.

Given a function $g\in W^{2,p}_x$, we denote by $B_\tau[g]$ the function in $L^p_x(L^\infty_\theta)$ defined as,
\begin{equation*}
    B_\tau[g](x,\theta) = v^\perp(\theta)\cdot\nabla_\xx g(\xx)+\tau v^\perp(\theta) \cdot\nabla_\xx^2 g(\xx) v(\theta).
\end{equation*}

Recalling the definition of $v$, one obtains that for any integer number $k \geq 0$, $\partial^k_\theta B_\tau[g] \in L^p_x(L^\infty_\theta)$.
We use the notation for the full gradient $\nabla_\xi = \begin{pmatrix} \partial_{x_1},\partial_{x_2},\partial_{\theta}\end{pmatrix}^\mathsf{T}$.

Finally, we define for a fixed $\frac{12}{5} \leq r < 6$, required for Rellich–Kondrachov's embedding theorem as we explain later, the following convex functional spaces,
\begin{align}
    \label{def:StateSpaceSemigroupParabolic}
    \tag{$Y^P$}
    Y^P &\defeq \left\{(f,c) \in (L^r_{x,\theta})_+ \times W^{2,6}_x \Big| \int f \dd \theta \in H^1_x , \int f \dd \theta\dd x  = 1\right\},\\
    \label{def:StateSpaceSemigroupElliptic}
    \tag{$Y^E$}
    Y^E &\defeq  \left\{f\in (L^r_{x,\theta})_+\cap L^6_x(L^1_\theta)_+ \Big| \int f \dd\theta \dd x  = 1\right\}.
\end{align}
These spaces will be used for defining the semigroup for \eqref{sys:F}, for the parabolic and elliptic case, respectively.

\subsection{Preliminaries}

We recall the following notions from dynamical systems theory. Proofs of the following propositions can be found in \cite[Chapter 10]{robinson2001infinite}.

\begin{definition}
    \label{def:Semigroup}
    Let $X$ be a Banach space, and $Y$ be a closed subset of $X$. We say that $(Y,\{S(t)\}_{t\geq 0})$ is a semi-dynamical system, if for all $0\leq t$, there is a map $S(t): Y \to Y$ and the following properties hold,
    \begin{enumerate}
        \item $S(0) = \mathrm{Id}_Y$,
        \item $S(t)\circ S(s) = S(s)\circ S(t) = S(t+s)$.
        \item For any $u \in Y$, $S(t)u$ is continuous in $u$ and $t$.
    \end{enumerate}
    If the above are satisfied, $S$ is said to be a $C^0$-semigroup on $Y$.
\end{definition}

\begin{definition}
    The global attractor $\mathcal{A}\subset \subset Y$ of a semi-dynamical system $(Y,\{S(t)\}_{t\geq0})$, if it exists, is the maximal compact invariant set,
    \begin{equation*}
        S(t)\mathcal{A} = \mathcal{A} \ \ \text{ for all } t \geq 0,
    \end{equation*}
    and the minimal set that attracts all bounded sets,
    \begin{equation*}
        \text{dist}(S(t)V,\mathcal{A})\xrightarrow[]{} 0 \text{ as } t\xrightarrow[]{} \infty,
    \end{equation*}
    for any bounded set $V\subset Y$.
\end{definition}

\begin{definition}\label{def:dissip}
    A semi-dynamical system is said to be dissipative if it possesses a compact absorbing set $K \subset \subset Y$. That is, for any bounded set $B$ there exists $t_0(B)\geq 0$ such that,
    \begin{equation*}
        S(t)B\subset K \text{ for all } t\geq t_0(B).
    \end{equation*}
\end{definition}

\begin{prop}
    \label{prop:ExistenceofAttractionforDissipation}
    If $(Y,\{S(t)\}_{t\geq0})$ is dissipative and $K\subset\subset Y$ is a compact absorbing set, then there exists a global attractor $\mathcal{A}$, and
    \begin{equation*}
        \mathcal{A} = \bigcap_{t\geq 0} S(t)K.
    \end{equation*}
    If $Y$ is connected then so is $\mathcal{A}$.
\end{prop}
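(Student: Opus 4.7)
The plan is to carry out the classical construction of the global attractor from a compact absorbing set, as in Robinson's monograph. First, I would replace $K$ by a forward-invariant compact absorbing set. Applying the absorbing property to $B = K$ itself yields some $t_0 \geq 0$ with $S(t) K \subset K$ for all $t \geq t_0$, and then $K' \defeq \overline{\bigcup_{t \geq t_0} S(t) K}$ is a compact subset of $K$ that is forward invariant under the semigroup and still absorbs every bounded set. The candidate attractor is then
\begin{equation*}
\mathcal{A} \defeq \bigcap_{t \geq 0} S(t) K',
\end{equation*}
which coincides with the formula in the statement, since $S(t) K' = S(t) K$ for $t$ large enough.

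Second, I would verify in turn the defining properties of a global attractor. Nonemptiness and compactness of $\mathcal{A}$ follow immediately, as it is a nested intersection of nonempty compact subsets of $K'$ (the nesting coming from forward invariance: $S(t+s) K' \subset S(s) K'$). The inclusion $S(t) \mathcal{A} \subset \mathcal{A}$ is a direct consequence of the semigroup property. For the reverse inclusion $\mathcal{A} \subset S(t) \mathcal{A}$, I would take $y \in \mathcal{A}$ and, for each $n$, pick $x_n \in K'$ with $S(t+n) x_n = y$; setting $z_n \defeq S(n) x_n \in K'$, a subsequence converges by compactness of $K'$ to some $z$, continuity of $S(t)$ gives $S(t) z = y$, and a diagonal extraction shows $z \in \bigcap_s S(s) K' = \mathcal{A}$. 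Attraction is then proved by contradiction: if some bounded $V$ and sequences $v_n \in V$, $t_n \to \infty$ satisfy $\text{dist}(S(t_n) v_n, \mathcal{A}) \geq \varepsilon$, then eventually $S(t_n) v_n \in K'$, so any cluster point of this sequence lies in $\bigcap_s S(s) K' = \mathcal{A}$, a contradiction. Maximality follows because any compact invariant set $\mathcal{B}$ is absorbed into $K'$ and satisfies $\mathcal{B} = S(t) \mathcal{B} \subset S(t) K'$ for all $t$, hence $\mathcal{B} \subset \mathcal{A}$.

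Finally, for the connectedness of $\mathcal{A}$ when $Y$ is connected, I would rely on the classical topological fact that a decreasing nested intersection of compact connected sets in a metric space is connected. Since $K'$ need not itself be connected, some care is needed: I would fix an increasing exhausting sequence of bounded connected sets $V_n \subset Y$ (which exist because $Y$ is a connected metric space), observe that each $S(t) V_n$ is connected as the continuous image of a connected set, and realize $\mathcal{A}$ as a nested intersection of closed connected supersets of the form $\overline{\bigcup_{s \geq t} S(s) V_n}$ by exploiting the attracting property as $n, t \to \infty$. I expect the main subtlety of the whole proof to be the reverse invariance $\mathcal{A} \subset S(t) \mathcal{A}$ and its associated diagonal extraction; the remainder is essentially topological bookkeeping based on the absorbing property.
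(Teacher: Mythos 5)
The paper does not prove this proposition; it simply cites \cite[Chapter 10]{robinson2001infinite} for it. Your proposal is a reconstruction of the standard argument from Robinson's book, and the bulk of it is correct: the reduction to a forward-invariant compact absorbing set $K'$, the identification $\mathcal{A}=\bigcap_{t\ge 0}S(t)K'=\bigcap_{t\ge 0}S(t)K$, the two inclusions for invariance (the reverse one via the compactness/closedness argument you describe), the attraction by contradiction, and the maximality over compact invariant sets all go through. One small imprecision: it is not true that $S(t)K'=S(t)K$ for $t$ large; what is true, and what you actually need, is that the two intersections coincide, which follows since $S(t_0)K\subset K'\subset K$ gives $\bigcap_{t}S(t)K\subset\bigcap_t S(t)S(t_0)K\subset\bigcap_t S(t)K'\subset\bigcap_t S(t)K$.

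The genuine gap is in the connectedness step. You assert that a connected metric space $Y$ admits an increasing exhaustion by bounded connected subsets; this is not a general fact about connected metric spaces and cannot be invoked as such. What makes the argument work here is that $Y$ is a closed \emph{convex} subset of a Banach space $X$ (as the paper's $Y^P$ and $Y^E$ indeed are), so that for any $R>0$ the set $V_R=Y\cap \overline{B_X(0,R)}$ is convex, hence connected, and bounded. Moreover, you do not need a full exhaustion: it suffices to pick one bounded connected $V\supset K'$ (hence $V\supset\mathcal{A}$). Then, using the \emph{joint} continuity of $(t,u)\mapsto S(t)u$, the set $\bigcup_{s\ge t}S(s)V$ is the continuous image of the connected set $[t,\infty)\times V$ and is therefore connected; its closure is compact and connected for $t$ large, the family is nested in $t$, and $\bigcap_{t\ge 0}\overline{\bigcup_{s\ge t}S(s)V}=\omega(V)=\mathcal{A}$ because $\mathcal{A}\subset V$ and $\mathcal{A}$ attracts $V$. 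With the exhaustion claim replaced by this convexity-based choice of $V$ and the joint-continuity observation made explicit, the connectedness argument is sound.
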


\begin{definition}
    The unstable manifold $\mathcal{M}^u(z_*)$ at a point $z_*\in Y$ is defined as the set of all $u_0 \in Y$ such that there exists a global solution $u\in C((-\infty,0],Y)$ satisfying $u(0) = u_0$, and for any $t\geq s \geq 0$,
    $S(s)u(-t) = u(s-t)$, and $\lim_{t\to -\infty} u(t) = z_*$,
    \begin{equation*}
        \mathcal{M}^u(z_*) = \{u_0 \in Y| S(-t)u_0\to z_* \text{ as } t\to \infty\}.
    \end{equation*}
\end{definition}

\begin{prop}
    If $\mathcal{K}$ is a compact invariant set, then
    \begin{equation*}
        \mathcal{M}^u(\mathcal{K}) \subset \mathcal{A}.
    \end{equation*}
\end{prop}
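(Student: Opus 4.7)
The plan is to show that any $u_0 \in \mathcal{M}^u(\mathcal{K})$ lies in $\mathcal{A}$ by combining the backward-in-time approach to the compact set $\mathcal{K}$ with the forward-in-time attraction property of $\mathcal{A}$. The crucial identity is that for every $T \geq 0$, the semigroup relation $S(T) u(-T) = u(0) = u_0$, which follows from the invariance property in the definition of $\mathcal{M}^u$, allows us to view $u_0$ itself as the image, under arbitrarily long forward evolution, of points that come arbitrarily close to $\mathcal{K}$.

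First, I would establish that the backward orbit $\mathcal{O}^- \defeq \{u(-T) : T \geq 0\}$ is a bounded subset of $Y$. This follows from the compactness of $\mathcal{K}$, which supplies a bounded neighbourhood eventually containing $u(-T)$ for all large $T$, combined with the continuity of $t \mapsto u(t)$ from the definition of $\mathcal{M}^u(\mathcal{K})$, which makes the piece $\{u(-T) : 0 \leq T \leq T_0\}$ compact and hence bounded as well.

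With $\mathcal{O}^-$ bounded, the defining attraction property of $\mathcal{A}$ yields $\text{dist}(S(T)\mathcal{O}^-, \mathcal{A}) \to 0$ as $T \to \infty$. Since $u_0 = S(T) u(-T) \in S(T)\mathcal{O}^-$ for every $T \geq 0$, the time-independent quantity $\text{dist}(u_0, \mathcal{A})$ is dominated by something tending to zero, hence vanishes. As $\mathcal{A}$ is compact and therefore closed, this forces $u_0 \in \mathcal{A}$.

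The only delicate point I anticipate is the interpretation of $\mathcal{M}^u(\mathcal{K})$ when the target is a set rather than a single point, since the definition above is stated only for $z_* \in Y$: the natural extension is to require $\text{dist}(u(-t), \mathcal{K}) \to 0$ as $t \to \infty$, and one must check that this still ensures boundedness of the backward orbit, which is precisely where compactness of $\mathcal{K}$ enters. Beyond this, the argument is purely formal and uses nothing specific about model~\eqref{sys:F}.
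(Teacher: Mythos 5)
Your proof is correct and is essentially the standard argument from Robinson's book, to which the paper defers for this proposition. All the key steps are present: the backward orbit is bounded because it eventually enters a bounded neighbourhood of the compact set $\mathcal{K}$ and its initial segment is a continuous image of a compact interval; the semigroup identity $S(T)u(-T)=u_0$ puts $u_0$ in $S(T)\mathcal{O}^-$ for every $T\geq 0$; the attraction of bounded sets then forces $\mathrm{dist}(u_0,\mathcal{A})=0$, and closedness of the compact set $\mathcal{A}$ gives $u_0\in\mathcal{A}$.
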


We now recall the notion of solutions introduced in~\cite{bertucci2024curvature}, for the parabolic case. This allows to obtain existence and uniqueness results for the parabolic system, and we introduce the notion of the solution for the elliptic system. We later require the initial conditions to lie in the spaces~\eqref{def:StateSpaceSemigroupParabolic} and \eqref{def:StateSpaceSemigroupElliptic} to define the associated $C^0$-semigroups, and provide further regularity on such solutions.

\begin{definition}
    \label{def:FormicidaeParabolicSolution}
    Let $4<p< \infty$, and suppose that $\gamma \geq 0, \sigma_x > 0, \sigma_\theta > 0, \sigma_c > 0, \lambda,\tau,\chi \geq 0$.
    For $f_0 \in L^p_x(L^1_\theta)_+, c_0 \in W^{2,p}_x$, a couple $(f, c)\in C(\dR_+, L^p_x(L^1_\theta)_+)\times L^p_{t, loc}(\dR_+,W^{2,p}_x)$, is said to be a solution of the Cauchy problem~\eqref{sys:FormicidaeParabolic},
    \begin{equation}
        \label{sys:FormicidaeParabolic}
        \tag{$\mathcal{F}^P$}
        \begin{cases}
            \partial_t \ff=\nabla_\xx\cdot(\sigma_\xx\nabla_\xx\ff -\lambda v\ff)+\partial_\theta(\sigma_\theta\partial_\theta \ff -\chi B_\tau[c]\ff),\\
            \partial_t c = - \gamma c + \sigma_c \Delta_x c + \int f \dd\theta
        \end{cases}
    \end{equation}
    if $f$ stays positive, its mass is preserved for all times, the Fokker-Planck equation holds in the distributional sense,
    \begin{equation*}
        \int \varphi(t) f(t)  - \int \varphi(0) f_0 = \int_0^t \int (\partial_t \varphi + \sigma_\theta \partial_{\thth} \varphi + \sigma_x\Delta_{x}\varphi + \chi B_\tau[c]\partial_\theta \varphi + \lambda v \cdot \nabla_x \varphi) f \dd s,
    \end{equation*}
    $\forall \varphi \in C^{\infty}_b, \forall t \geq 0$, and the chemical field equation holds in $L^p_x$ pointwise in time.
\end{definition}

\begin{definition}
    \label{def:FormicidaeEllipticSolution}
    Let $4<p< \infty$, and suppose that $\gamma \geq 0, \sigma_x > 0, \sigma_\theta > 0, \sigma_c > 0, \lambda,\tau,\chi \geq 0$.
    For $f_0 \in L^p_x(L^1_\theta)_+$, a function $f \in C(\dR_+, L^p_x(L^1_\theta)_+)$, is said to be a solution of the Cauchy problem~\eqref{sys:FormicidaeElliptic},
    \begin{equation}
        \label{sys:FormicidaeElliptic}
        \tag{$\mathcal{F}^E$}
        \begin{cases}
            \partial_t \ff=\nabla_\xx\cdot(\sigma_\xx\nabla_\xx\ff -\lambda v\ff)+\partial_\theta(\sigma_\theta\partial_\theta \ff -\chi B_\tau[c]\ff),\\
            \gamma c(t) - \sigma_c \Delta_x c(t) = \int f(t) \dd\theta
        \end{cases}
    \end{equation}
    if its mass is preserved for all times, the Fokker-Planck equation holds in the distributional sense as above and the chemical field equation holds in $L^p_x$ pointwise in time.
\end{definition}

\subsection{Statement of the main result}

We now state the main result of this paper. This statement summarizes Theorem~\ref{thm:FormicidaeParabolicSemiGroup} which is about the well-posedness of the parabolic-parabolic semigroup, Theorem~\ref{thm:InstabilityParabolic} on the parabolic-parabolic nonlinear instability result and Theorem~\ref{thm:GlobalAsymptoticStability} on the parabolic-parabolic small-$\chi$ global stability. The same statement holds for the elliptic case~\eqref{sys:FormicidaeElliptic}, summarizing Theorem~\ref{thm:FormicidaeEllipticSemiGroup}, Theorem~\ref{thm:InstabilityElliptic} and Theorem~\ref{thm:GlobalAsymptoticStabilityElliptic} with the same order of contents for the parabolic-elliptic model. We only present the parabolic-parabolic case in full here now.

\begin{thm}
    \label{thm:MainTheorem}
    For any $\sigma_x, \sigma_\theta, \sigma_c, \gamma > 0$, the system~\eqref{sys:FormicidaeParabolic} defines a semidynamical system,
    \begin{equation*}
        (Y^P,\{S^P(t)\}_{t\geq0 }),
    \end{equation*}
    where $S^P$ is the solution operator of~\eqref{sys:FormicidaeParabolic} in~\eqref{def:StateSpaceSemigroupParabolic}. The $C^0$-semigroup $S^P$ is dissipative and possesses a compact global attractor,
    \begin{equation*}
        \mathcal{A}^P \subset \subset Y^P.
    \end{equation*}
    Furthermore, two distinct cases arise depending on the value of the parameters:
    \begin{enumerate}
        \item For any $\lambda,\tau \geq 0$, there exists $\chi^*>0$, such that for any $0\leq \chi < \chi^*$, the associated attractor is given by,
        \begin{equation*}
            \mathcal{A}^P = \{(f_*,c_*)\},
        \end{equation*}
        recalling that $(f_*,c_*)$ is the homogeneous steady state $(f_*, c_*)=(1/2\pi,1/\gamma)$.
    
        \item Alternatively, if the instability condition
        \begin{equation}
            \tag{$\mathcal{U}_k$}
            \chi(2\pi k \tau + 1) > \lambda (\gamma + 4\pi \sigma_c k^2),
        \end{equation}
        holds for some integer wavenumber $k\geq 1$, then there exists $\sigma_\theta^*>0$ such that for $\sigma_\theta \in (0,\sigma_\theta^*)$, there exists $\sigma_x^*(\sigma_\theta)>0$ such that for $\sigma_x \in (0,\sigma_x^*)$, we obtain
        \begin{equation*}
            4k \leq \dim \mathcal{A}^P.
        \end{equation*}                             
    \end{enumerate}
\end{thm}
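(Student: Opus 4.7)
The proof assembles three ingredients, one per clause. To establish the $C^0$-semigroup $(Y^P,\{S^P(t)\}_{t\geq 0})$, I would verify the three items of Definition~\ref{def:Semigroup}: well-posedness of the Cauchy problem for initial data in $Y^P$ (in the sense of Definition~\ref{def:FormicidaeParabolicSolution}) is obtained by adapting the fixed-point argument of~\cite{bertucci2024curvature}; invariance of $Y^P$ follows from mass conservation and positivity of $f$ (immediate from the distributional formulation together with the Fokker--Planck structure) and from propagation of the $W^{2,6}_x$-regularity of $c$ by parabolic regularity applied to the $c$-equation with source $\int f\,d\theta\in H^1_x$; continuity in $(t,u)$ is the usual consequence of the stability estimates. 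The existence of a compact global attractor $\mathcal{A}^P\subset\subset Y^P$ then follows from Proposition~\ref{prop:ExistenceofAttractionforDissipation} once one exhibits a compact absorbing set $K\subset\subset Y^P$. The plan is to derive uniform-in-time bounds $\|f(t)\|_{L^q_{x,\theta}}+\|c(t)\|_{W^{2,q'}_x}\leq C$ for some $q>r$ and $q'>6$ via Moser iteration for $f$ (using the dissipation in $(x,\theta)$) and via parabolic smoothing for $c$; the constraint $\tfrac{12}{5}\leq r<6$ in the definition of $Y^P$ is precisely what makes Rellich--Kondrachov yield compactness in $Y^P$.

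For the small-$\chi$ case I would write the perturbation $\tilde f\defeq f-f_*$, $\tilde c\defeq c-c_*$ with $\int\tilde f=0$, and track a combined energy $E(t)=\|\tilde f\|_{L^2_{x,\theta}}^2+\kappa\|\tilde c\|_{H^1_x}^2$ for a small constant $\kappa>0$ to be tuned. The linear part is coercive, $\frac{d}{dt}E\leq -\alpha E$, via Poincar\'e inequalities for mean-zero functions on $\mathbb{T}^2_1\times\mathbb{T}_{2\pi}$ and via $\gamma>0$; the nonlinear contribution carries a prefactor $\chi$ and is controlled using the uniform regularity bounds already established. Choosing $\chi^*$ small enough that the nonlinear remainder is absorbed into $\alpha/2$ yields $E(t)\to 0$ exponentially, hence $\mathcal{A}^P=\{(f_*,c_*)\}$.

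For the instability case I would linearize around $(f_*,c_*)$ and Fourier-decompose in both $x\in\mathbb{T}^2_1$ and $\theta\in\mathbb{T}_{2\pi}$: since $c_*$ is constant, the linearized operator has constant coefficients in $(x,\theta)$ and each spatial Fourier mode reduces to an infinite matrix in the $\theta$-index. A perturbative argument as $\sigma_\theta\downarrow 0$ first and then $\sigma_x\downarrow 0$ (explaining the nested structure of the thresholds $\sigma_\theta^*$ and $\sigma_x^*(\sigma_\theta)$ in the statement) shows that, for each $j\in\{1,\dots,k\}$ for which $(\mathcal{U}_k)$ holds, an eigenvalue of the block crosses into the right half-plane in the inviscid limit and persists for small viscosities. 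Translation invariance in $x$ and the decoupling of the two coordinate directions of $\mathbb{T}^2_1$ yield $4$ linearly independent real unstable eigenfunctions per admissible $j$, for a total of $4k$. Finally, adapting the nonlinear perturbation theorem of~\cite{henry2006geometric} (as in~\cite{ghidaglia1991lower,ghidaglia1987dimension}) to this non-self-adjoint setting produces a local unstable manifold $\mathcal{M}^u(f_*,c_*)$ of dimension at least $4k$, and the inclusion $\mathcal{M}^u\subset\mathcal{A}^P$ closes the proof.

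The main obstacle is the spectral analysis in Case~2: one must control the spectrum of the non-self-adjoint block operators uniformly as $\sigma_x,\sigma_\theta\to 0^+$, showing that exactly those eigenvalues predicted by the inviscid condition $(\mathcal{U}_k)$ cross into the right half-plane and no spurious modes are lost. A secondary difficulty is building the compact absorbing set: the interaction $\chi B_\tau[c]f$ couples the regularity of $f$ to $\nabla_x^2 c$, forcing a simultaneous bootstrap on both equations.
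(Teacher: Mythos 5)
Your overall architecture matches the paper: semigroup well-posedness from the fixed-point argument of \cite{bertucci2024curvature}, dissipativity plus Proposition~\ref{prop:ExistenceofAttractionforDissipation} for the global attractor, a Lyapunov-type energy estimate for the small-$\chi$ case, and a linear spectral analysis followed by Henry's nonlinear instability theorem for the lower bound. However, there are three concrete gaps.

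\textbf{Compactness of the absorbing set.} Your plan is to obtain uniform bounds $\|f(t)\|_{L^q_{x,\theta}}+\|c(t)\|_{W^{2,q'}_x}\leq C$ for some $q>r$ and $q'>6$ and then invoke Rellich--Kondrachov. This does not work: on a bounded domain $L^q$ embeds boundedly but \emph{not} compactly into $L^r$ for $q>r$, and likewise $W^{2,q'}_x$ does not embed compactly into $W^{2,6}_x$. Rellich--Kondrachov always trades a derivative for compactness, not merely higher integrability. The constraint $\frac{12}{5}\leq r<6$ in the definition of $Y^P$ is chosen so that $H^1_{x,\theta}\subset\subset L^r_{x,\theta}$, $H^2_x\subset\subset H^1_x$, and $H^4_x\subset\subset W^{2,6}_x$: the paper establishes absorption of $f$ in $H^1_{x,\theta}$, of $\rho$ in $H^2_x$, and of $c$ in $H^4_x$ (Theorem~\ref{thm:AbsorbH3H1Parabolic} and Corollary~\ref{cor:H4CParabolic}) via a bootstrap of joint energy estimates on $f$ and $c$ using Gagliardo--Nirenberg interpolation and the $L^6_x$-absorption of $\rho$ from Proposition~\ref{prop:rhoestimLp}. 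You need derivative absorption, not a Moser iteration yielding higher $L^p$ bounds alone.

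\textbf{Small-$\chi$ energy.} The energy $E(t)=\|\tilde f\|^2_{L^2_{x,\theta}}+\kappa\|\tilde c\|^2_{H^1_x}$ you propose does not see $\nabla_x^2 c$, yet the interaction term $B_\tau[c]$ involves the Hessian of $c$. The nonlinear contribution $\chi\int B[c]f\partial_\theta f$ cannot be closed with $\|\tilde c\|_{H^1_x}$ alone. The paper's functional $\Psi$ contains both $\|\nabla_x c\|^2_{L^2}$ and $\|\nabla_x^2 c\|^2_{L^2}$, and its estimate of the nonlinear term relies on the uniform $L^\infty_x$ bound on $\nabla_x c$ and $\nabla_x^2 c$ obtained from the $H^4_x$-absorption of Corollary~\ref{cor:H4CParabolic} via Morrey embedding. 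This $H^4_x$-absorption is therefore not optional; it is an essential input to Theorem~\ref{thm:GlobalAsymptoticStability}. Your phrase ``controlled using the uniform regularity bounds already established'' is in the right spirit, but the needed bound is $L^\infty$-control of $\nabla_x^2 c$, which requires the $H^4_x$-absorption you did not build.

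\textbf{Positivity of the unstable orbits.} Applying Henry's theorem produces solutions $y(t)=(h(t),c(t))$ of the perturbation equation, but the inclusion $\mathcal{M}^u(u_*)\subset\mathcal{A}^P$ only applies if $(1/2\pi+h(t),1/\gamma+c(t))$ is an actual semigroup orbit in $Y^P$, i.e.\ the phase-space density stays nonnegative. The paper checks this (in the proof of Theorem~\ref{thm:InstabilityParabolic}) by upgrading the exponential decay of $h(t)$ in $\mathcal{H}^1$-norm to uniform decay via a Duhamel bootstrap and Morrey embedding, concluding $f\geq 0$ for $t$ sufficiently negative, and then propagating positivity forward in time. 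Without this step the dimensional lower bound on $\mathcal{M}^u$ does not transfer to $\mathcal{A}^P$. Finally, on the spectral reduction: rather than fully Fourier-decomposing in $\theta$, the paper keeps $\theta$ as a continuous variable, explicitly inverts $\mu-\mathsf{V}_{\llambda}$, and reduces the $\theta$-eigenproblem to a scalar fixed-point condition $\cchi\,\mathcal{I}(\ttau,\llambda,\mu)=1$. This explicit resolvent formula is what makes the monotonicity-in-$\mu$ argument and the Riesz-projector perturbation tractable; a full Fourier decomposition in $\theta$ would give an infinite matrix system that is harder to analyze cleanly.
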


\section{Semigroup theory and regularity results}
\label{sec:SemigroupTheory}
In this section, we prove that the systems~\eqref{sys:FormicidaeParabolic} and \eqref{sys:FormicidaeElliptic} each generate a semigroup, by building on the existence and uniqueness theory established in \cite{bertucci2024curvature}, in the spaces~\eqref{def:StateSpaceSemigroupParabolic} and \eqref{def:StateSpaceSemigroupElliptic}. Furthermore, we prove that solutions in this functional framework enjoy additional regularity, specifically \( f \in L^2_{t,\text{loc}}(H^1_{x,\theta}) \). Finally, we prove the existence of the global attractors associated with these semigroups, given the results of Section~\ref{sec:ExistenceGlobalAttractor}.

We here recall the existence and uniqueness theory of the parabolic-parabolic system~\eqref{sys:FormicidaeParabolic}, obtained in \cite[Theorem 3.5]{bertucci2024curvature}.
\begin{prop}
    \label{prop:ExistenceUniquenessFormicidaeParabolicSolution}
    Let $4<p< \infty$, suppose that $\gamma \geq 0, \sigma > 0, \lambda > 0$ and  $\chi \geq 0$.
    
    Then, 
    for any initial condition $f_0 \in L^p_x(L^1_\theta)_+$, $c_0 \in W^{2,p}_x$,
    there exists a unique global solution $(f,c) \in C(\dR_+, L^p_x(L^1_\theta)_+)\times L^p_{t,loc}(\dR_+,W^{2,p}_x)$ of system~\eqref{sys:FormicidaeParabolic} in the sense of Definition~\ref{def:FormicidaeParabolicSolution}.
\end{prop}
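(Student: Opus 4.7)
The plan is to obtain local well-posedness via a Banach fixed-point argument exploiting the semilinear structure of \eqref{sys:FormicidaeParabolic}, and then to extend the solution globally by means of mass conservation and an $L^p$-propagation estimate. Precisely, for $T>0$ to be chosen, I would define on a closed ball of radius $R$ in the Banach space $\mathcal{X}_T \defeq L^p([0,T]; W^{2,p}_x) \cap C([0,T]; W^{2-2/p,p}_x)$ the map $\Phi: c \mapsto \tilde c$, where: (i) given $c$, one first solves the linear Fokker--Planck equation for $f$ with drift $(\lambda v(\theta), \chi B_\tau[c](x,\theta))$ and initial datum $f_0$; (ii) given $f$, one then solves the linear inhomogeneous heat equation
\begin{equation*}
\partial_t \tilde c - \sigma_c \Delta_x \tilde c + \gamma \tilde c = \rho, \qquad \rho(t,x) \defeq \int f(t,x,\theta)\,\dd\theta, \qquad \tilde c(0) = c_0.
\end{equation*}
A fixed point of $\Phi$ yields a solution in the sense of Definition~\ref{def:FormicidaeParabolicSolution}.

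For sub-step (i), since $p>4$, the Sobolev embedding $W^{2,p}_x \hookrightarrow C^{1,\alpha}_x$ controls $\nabla_x c$ uniformly, while $\nabla_x^2 c \in L^p_t L^p_x$. The drift thus lies in $L^p_{t,x}$ (with smooth $\theta$-dependence inherited from $v$), which is exactly the regularity needed to invoke the standard $L^p$-theory for the Fokker--Planck equation on $\mathbb{T}^2_1 \times \mathbb{T}_{2\pi}$ to produce $f \in C_t L^p_x(L^1_\theta)_+$ with conserved mass. Non-negativity and the $L^1$-bound follow by testing against the sign and by integration. For sub-step (ii), maximal $L^p$-regularity for the heat semigroup on the torus gives $\tilde c \in \mathcal{X}_T$ with a bound of the form
\begin{equation*}
\|\tilde c\|_{\mathcal{X}_T} \leq C\bigl(\|c_0\|_{W^{2-2/p,p}_x} + \|\rho\|_{L^p_t L^p_x}\bigr),
\end{equation*}
and $\|\rho\|_{L^p_t L^p_x} \leq T^{1/p}\|f\|_{C_t L^p_x(L^1_\theta)}$. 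Combining these estimates, for small enough $T$ depending on $(R, \|c_0\|, \|f_0\|)$, the map $\Phi$ sends the ball into itself.

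For the contraction property, I would take two candidates $c_1, c_2 \in \mathcal{X}_T$, denote by $f_i$ the associated Fokker--Planck solutions and by $\tilde c_i = \Phi(c_i)$, and write the difference $f_1 - f_2$ as the solution of a linear Fokker--Planck equation with zero initial datum and source $\partial_\theta\bigl(\chi (B_\tau[c_1]-B_\tau[c_2]) f_2\bigr)$. A duality estimate against test functions in $L^{p'}_x(L^\infty_\theta)$, using $p>4$ to absorb $\nabla_x^2(c_1-c_2)$, yields $\|f_1-f_2\|_{C_t L^p_x(L^1_\theta)} \leq C(R)\,T^{\alpha}\|c_1-c_2\|_{\mathcal{X}_T}$ for some $\alpha>0$; composing with maximal regularity gives contraction for $T$ small. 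Global extension follows by iterating on successive time intervals, using mass conservation to bound $\rho$ in $L^\infty_t L^1_x$, and an $L^p$-energy estimate on $f$ that absorbs the $\chi B_\tau[c]$ term after integration by parts in $\theta$ and Sobolev control of $\nabla_x^2 c$.

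The main obstacle is the low regularity of $B_\tau[c]$: since $\nabla_x^2 c$ sits only in $L^p_x$ (never in $L^\infty_x$ for any finite $p$ in two spatial dimensions), one cannot treat the drift as bounded and must rely throughout on $L^p$-theory with careful choice of integrability exponents. This is precisely why the threshold $p>4$ is imposed in Proposition~\ref{prop:ExistenceUniquenessFormicidaeParabolicSolution}: it provides the headroom needed for the duality and contraction estimates to close without loss of derivative on $c$.
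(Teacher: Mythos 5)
The paper does not prove this proposition: it is a recall of \cite[Theorem~3.5]{bertucci2024curvature} and is quoted without argument. What the paper does disclose about the cited proof — via the reconstruction in Proposition~\ref{prop:ActiveMatterEquation} — is that it works from a Duhamel (mild) formulation using the anisotropic heat kernel on $\Torus^2_1\times\Torus_{2\pi}$, Young's convolution inequality and kernel integrability estimates valid for $p>4$, closed by a Banach--Picard fixed point and a Gr\"onwall growth bound. Your scheme is the same in spirit (a fixed point for $c$, closed against an $L^p$ Fokker--Planck theory), but you replace the explicit heat-kernel estimates on the $c$-equation by abstract maximal $L^p$-regularity for the heat semigroup, which is a legitimate and somewhat more modular route; the Fokker--Planck input you invoke is exactly what Proposition~\ref{prop:ActiveMatterEquation} (i.e.\ \cite[Theorem~4.1]{bertucci2024curvature}) provides.

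Two points should be tightened. First, the reason $p>4$ is needed for a pointwise-in-time bound on $\nabla_x c$ is the trace-space embedding $W^{2-2/p,p}_x\hookrightarrow C^1_x$ applied to the $C_t W^{2-2/p,p}_x$ component of your $\mathcal{X}_T$ (this fails exactly at $p=4$); the embedding you quote, $W^{2,p}_x\hookrightarrow C^{1,\alpha}_x$, already holds for all $p>2$ and cannot account for the threshold, and it only gives $L^p_t$-integrability of $\nabla_x c$, not a uniform-in-$t$ bound. Second, your global-continuation step is more entangled than it needs to be: after integrating the Fokker--Planck equation in $\theta$, the term $\partial_\theta(\chi B_\tau[c]f)$ disappears entirely by periodicity, so $\rho=\int f\,\dd\theta$ solves a linear drift-diffusion equation that never sees $B_\tau[c]$, and $\|\rho\|_{L^p_x}$ admits a Gr\"onwall bound depending only on $\sigma_x,\lambda$ and $\|\rho_0\|_{L^p_x}$ (this is Proposition~\ref{prop:rhoestimLp}). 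Invoking ``Sobolev control of $\nabla^2_x c$'' to absorb a $\chi$-term reads as if you were attempting an $L^p_{x,\theta}$ energy estimate on $f$ itself, which would require more integrability of $f_0$ than $L^p_x(L^1_\theta)$ provides and would re-introduce the coupling with $c$ that the $B$-independent $\rho$-bound is designed to break.
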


The fixed point argument of \cite[Theorem 3.5]{bertucci2024curvature} can be easily adapted to obtain the existence and uniqueness of the elliptic system, by controlling the norm of $c$ in $C_t(W^{2,6}_x)$ with the norm of $f$ in $C_t(L^6_x(L^1_\theta)_+)$, using elliptic regularity theory \cite[Chapter 1]{krylov2008lectures}.
\begin{prop}
    \label{prop:ExistenceUniquenessFormicidaeEllipticSolution}
    Let $4<p< \infty$, suppose that $\gamma \geq 0, \sigma > 0, \lambda > 0$ and  $\chi \geq 0$.
    
    Then, 
    for any initial condition $f_0 \in L^p_x(L^1_\theta)_+$,
    there exists a unique global solution $f \in C(\dR_+, L^p_x(L^1_\theta)_+)$ of system~\eqref{sys:FormicidaeElliptic} in the sense of Definition~\ref{def:FormicidaeEllipticSolution}.
\end{prop}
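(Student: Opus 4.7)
The plan is to adapt the contraction-mapping argument used for the parabolic case in~\cite[Theorem 3.5]{bertucci2024curvature}, replacing its parabolic regularity estimate for $c$ with the corresponding elliptic one. Since the elliptic equation $\gamma c - \sigma_c \Delta_\xx c = \rho$ recovers $c$ instantaneously from the current value of $\rho$, the reasoning is in fact a simplification of the parabolic case; in particular, no extra initial datum $c_0$ has to be carried through the argument.

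First I would set up the fixed-point map $\Phi: f \mapsto \tilde f$ on a closed ball of $C([0,T], L^p_x(L^1_\theta)_+)$ with $T>0$ to be chosen small. Given a candidate $f$, one defines $\rho(t,\xx) \defeq \int f(t,\xx,\theta)\dd\theta$, solves the elliptic equation
\[
\gamma c(t) - \sigma_c \Delta_\xx c(t) = \rho(t)
\]
pointwise in $t$, and finally solves the linear Fokker--Planck equation
\[
\partial_t \tilde f = \nabla_\xx\cdot(\sigma_\xx \nabla_\xx \tilde f - \lambda v \tilde f) + \partial_\theta(\sigma_\theta \partial_\theta \tilde f - \chi B_\tau[c] \tilde f)
\]
with initial datum $f_0$ to produce $\tilde f$.

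The key replacement step is the elliptic regularity bound. Since $-\sigma_c \Delta_\xx + \gamma \Id$ is uniformly elliptic on $\Torus^2_1$ and is invertible as soon as $\gamma>0$, Calderón--Zygmund theory (see~\cite[Chapter 1]{krylov2008lectures}) yields, pointwise in $t$,
\[
\|c(t)\|_{W^{2,p}_x} \leq C\|\rho(t)\|_{L^p_x} \leq C\|f(t)\|_{L^p_x(L^1_\theta)},
\]
where the second inequality uses Minkowski's inequality on the compact factor $\Torus_{2\pi}$. Because $B_\tau[c](\xx,\theta)$ only involves $\nabla_\xx c$ and $\nabla_\xx^2 c$ contracted with the smooth trigonometric functions $v(\theta)$ and $v^\perp(\theta)$, it follows that $B_\tau[c] \in C([0,T]; L^p_x(L^\infty_\theta))$ with the same bound. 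With this drift estimate replacing the parabolic-regularity estimate of~\cite{bertucci2024curvature}, I would then reuse verbatim the linear Fokker--Planck bounds from that proof to show that $\Phi$ stabilizes a suitable ball and is a contraction on $[0,T]$ for $T$ small enough; uniqueness follows by applying the same contraction to the difference of two candidates.

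Non-negativity of $f$ and mass conservation are obtained as in the parabolic case, respectively by a standard maximum-principle argument on the linear Fokker--Planck equation with Lipschitz drift and by testing the equation against the constant function. Since the contraction time $T$ depends only on the norm of $f$ in $L^p_x(L^1_\theta)$, which is controlled by mass conservation, the local solution extends to $\dR_+$ by the usual continuation argument. I do not anticipate a genuine obstacle: the substance of the proposition is precisely that the elliptic substitution does not introduce any new difficulty, and the only items needing care are that the elliptic estimate is uniform in $t$ and that it plugs cleanly into the exact regularity spaces chosen in~\cite{bertucci2024curvature}.
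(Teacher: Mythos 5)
Your proposal matches the paper's approach exactly: the paper's own proof is a one-sentence remark that the fixed-point argument of \cite[Theorem 3.5]{bertucci2024curvature} adapts directly, with the elliptic Calder\'on--Zygmund bound $\|c(t)\|_{W^{2,p}_x}\lesssim\|\rho(t)\|_{L^p_x}$ from \cite[Chapter 1]{krylov2008lectures} replacing the parabolic regularity estimate for $c$, and you spell out precisely this substitution. One small correction to your continuation step: mass conservation controls only $\|f(t)\|_{L^1_{x,\theta}}$, not $\|f(t)\|_{L^p_x(L^1_\theta)}$, so global extension should instead rely on the Gr\"onwall-type growth estimate for $\|f(t)\|_{L^p_x(L^1_\theta)}$ (as in \cite[Proposition 3.3]{bertucci2024curvature}, which the paper invokes for the analogous Proposition~\ref{prop:ActiveMatterEquation}), ruling out finite-time blow-up of the $L^p_x(L^1_\theta)$ norm rather than deducing uniform boundedness from mass alone.
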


We now establish the following additional regularity of distributional solutions of the Fokker-Planck equation~\eqref{eq:fBgeneral}, assuming further integrability on the initial phase space density $f_0$, and for a given $\theta$-drift function $B(t,\xx,\theta)$.

\begin{prop}
    \label{prop:ActiveMatterEquation}
    Let $4<p< \infty$, $1\leq s < +\infty$ and $q\geq \frac{p}{p-1}$. For any $B \in L^p_{t,loc}(L^p_x(L^\infty_\theta))$,  and $f_0 \in L^q_x(L^s_\theta)_+$, there exists a unique $f \in C_t(L^q_x(L^s_\theta)_+)$ distributional solution of,
    \begin{equation}\label{eq:fBgeneral} \tag{$\mathcal{F}^B$}
        \partial_t f=\nabla_\xx\cdot(\sigma_\xx\nabla_\xx f-\lambda v f)+\partial_\theta(\sigma_\theta \partial_\theta f-\chi Bf).
    \end{equation}
    Furthermore, if $q\geq \frac{2p}{p-1}$ and $s \geq 2$, and $\partial_\theta B \in L^p_{t,loc}(L^p_x(L^\infty_\theta))$ then $f$ is in the space,
    \begin{equation*}
        f \in L^{2}_{t,loc}(H^1_{x,\theta})\cap C_t(L^q_x(L^s_\theta)_+).
    \end{equation*}
\end{prop}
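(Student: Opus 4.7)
The plan splits into two parts, both relying on the same approximation scheme: mollify $B$ to smooth $B_n$ and $f_0$ to smooth $f_0^n$, perform the analysis for the resulting classical solutions, and pass to the limit (positivity being preserved by the weak maximum principle applied at the regularised level).

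For existence and uniqueness in $C_t(L^q_x(L^s_\theta)_+)$, I would adapt the fixed-point/Duhamel argument of Proposition~\ref{prop:ExistenceUniquenessFormicidaeParabolicSolution}. Writing \eqref{eq:fBgeneral} in mild form via the heat semigroup $e^{tL_0}$ on $\Torus^2_1\times\Torus_{2\pi}$ with $L_0=\sigma_x\Delta_x+\sigma_\theta\partial_{\theta\theta}$, the drift perturbation $-\chi\partial_\theta(Bf)$ is controlled by H\"older: $\|Bf\|_{L^{q'}_xL^s_\theta}\leq\|B\|_{L^p_xL^\infty_\theta}\|f\|_{L^q_xL^s_\theta}$ with $1/q'=1/p+1/q$. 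The hypothesis $q\geq p/(p-1)$ ensures $q'\geq 1$, which is the minimal integrability required for the distributional formulation to make sense. Heat-kernel smoothing absorbs the $\partial_\theta$ (factor $(t-s)^{-1/2}$) and the exponent gap $q'\to q$, yielding a contraction on short times that is extended globally by linearity and a Gronwall bound.

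For the improved regularity, assume $s\geq 2$, $q\geq 2p/(p-1)$ and $\partial_\theta B\in L^p_{t,loc}(L^p_xL^\infty_\theta)$. Multiplying \eqref{eq:fBgeneral} by $f$ and integrating over $\Torus^2_1\times\Torus_{2\pi}$: the transport term vanishes since $v=v(\theta)$ is divergence-free in $x$, and two integrations by parts in $\theta$ give $-\chi\int\partial_\theta(Bf)\,f=\tfrac{\chi}{2}\int\partial_\theta B\cdot f^2$, so
\begin{equation*}
\tfrac{1}{2}\tfrac{d}{dt}\|f\|_{L^2}^2 + \sigma_x\|\nabla_x f\|_{L^2}^2 + \sigma_\theta\|\partial_\theta f\|_{L^2}^2 \,=\, -\tfrac{\chi}{2}\int\partial_\theta B\cdot f^2.
\end{equation*}
By H\"older the right-hand side is bounded by $C\,\|\partial_\theta B\|_{L^p_xL^\infty_\theta}(t)\,\|f\|_{L^{2p/(p-1)}_xL^2_\theta}^2$. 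Minkowski (valid since $2p/(p-1)\geq 2$) followed by the 2D Gagliardo--Nirenberg inequality yields $\|f(\cdot,\theta)\|_{L^{2p/(p-1)}_x}^2\leq C\|f(\cdot,\theta)\|_{L^2_x}^{2(p-1)/p}\|f(\cdot,\theta)\|_{H^1_x}^{2/p}$, and Young's inequality with a $t$-dependent weight $\delta(t)\sim\|\partial_\theta B\|(t)^{-1}$ absorbs the $H^1_x$ term into the diffusion on the left-hand side. This leaves a Gronwall-type inequality with coefficient $\|\partial_\theta B\|^{p/(p-1)}$, which lies in $L^1_{t,loc}$ since $p/(p-1)<p$; integrating in time then yields $f\in L^2_{t,loc}(H^1_{x,\theta})$.

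The main obstacle is this interpolation/Young step. A naive Sobolev embedding $\|f\|_{L^{q_0}_xL^2_\theta}^2\leq C\|f\|_{H^1_{x,\theta}}^2$ would require $L^\infty_t$ smallness of $\|\partial_\theta B\|$ to absorb the diffusion, which is not available. The Gagliardo--Nirenberg with $p$-dependent exponents produces instead the Young weight $\|\partial_\theta B\|^{p/(p-1)}$, which is integrable precisely because $\|\partial_\theta B\|\in L^p_{t,loc}$; the two-dimensional $x$-geometry (so that $H^1_x\hookrightarrow L^r_x$ for every $r<\infty$) and the quantitative assumption $p>4$ are both essential here.
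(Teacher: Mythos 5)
Your proposal is essentially correct and the first half (mild formulation via the anisotropic heat semigroup, H\"older on the drift, Young convolution plus heat-kernel smoothing, Banach fixed point) is the same route the paper takes. The interesting divergence is in the energy estimate. Both you and the paper arrive, after the same integration by parts and the same H\"older step, at the quantity $\|f\|^2_{L^{2p/(p-1)}_x(L^2_\theta)}\|\partial_\theta B\|_{L^p_x(L^\infty_\theta)}$; the paper then simply invokes the embedding $L^q_x(L^s_\theta)\subset L^{2p/(p-1)}_x(L^2_\theta)$ (guaranteed by $q\geq 2p/(p-1)$, $s\geq 2$) and feeds in the growth bound on $\|f(t)\|_{L^q_x(L^s_\theta)}$ that the first part already established, so that the right-hand side is $C(t)\|\partial_\theta B\|(t)\in L^1_{t,\mathrm{loc}}$ and one just integrates. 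You instead run a self-contained Gr\"onwall: Minkowski (valid since $2\leq 2p/(p-1)$), 2D Gagliardo--Nirenberg per fixed $\theta$, H\"older in $\theta$, and Young to absorb the $\|\nabla_x f\|^2_{L^2_{x,\theta}}$ piece into the diffusion, leaving a coefficient $\|\partial_\theta B\|^{p/(p-1)}\in L^{p-1}_{t,\mathrm{loc}}\subset L^1_{t,\mathrm{loc}}$. This is a legitimate alternative and has the modest advantage of not bootstrapping the $C_t(L^q_x(L^s_\theta))$ control (it only needs $f_0\in L^2_{x,\theta}$, which the hypotheses provide). Two small inaccuracies in your framing: the ``naive'' bound $\|f\|_{L^q_xL^2_\theta}\lesssim\|f\|_{H^1_{x,\theta}}$ is not what the paper would do -- the paper uses the part-one $C_t(L^q_x(L^s_\theta))$ bound, not an $H^1$ interpolation, so no $L^\infty_t$ smallness of $\|\partial_\theta B\|$ is ever required there -- and $p>4$ is not what makes your Gr\"onwall close (any $p>1$ gives $\|\partial_\theta B\|^{p/(p-1)}\in L^1_{t,\mathrm{loc}}$); the condition $p>4$ is needed for the heat-kernel integrability estimates in the mild formulation of the first part. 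Also the $t$-dependent Young weight $\delta(t)\sim\|\partial_\theta B\|(t)^{-1}$ is unnecessary and problematic where $\|\partial_\theta B\|(t)=0$; a fixed small $\delta$ tuned to $\sigma_x$ does the same job.
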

\begin{proof}
    The proof of \cite[Theorem 4.1]{bertucci2024curvature} can be extended from $s=1$ to $1\leq s<+\infty$. We only give it as an \textit{a priori} estimate, and the existence, uniqueness and stability results are obtained as in~\cite[Section 4]{bertucci2024curvature}.

    For this, let $g : (0,+\infty)\times \Torus_1^2 \times \Torus_{2\pi} \to \dR_+$, be the fundamental solution of the anisotropic heat equation,
    \begin{equation}
        \partial_t g = \sigma_x \Delta_x g + \sigma_\theta \partial_{\theta\theta} g,
    \end{equation}
    satisfying the integrability estimates,
    \begin{align}
        \label{est:HeatEqFurtherRegFokkerplanck1}
        \nabla_x g  &\in L^1_{t,loc}\left(L^1_{x,\theta}\right), \\
        \label{est:HeatEqFurtherRegFokkerplanck2}
        \partial_\theta g  &\in L^{\frac{p}{p-1}}_{t,loc}\left(L^{\frac{p}{p-1}}_x(L^1_\theta)\right),
    \end{align}
    given $p>4$, see e.g~\cite[Proposition 3.2]{bertucci2024curvature}.
    
    We also note by hypothesis that,
    \begin{equation}
        \label{est:furtherRegFokkerplanck}
        Bf\in L^p_{t,loc}\left(L^{\frac{pq}{p+q}}_x(L^s_\theta)\right).
    \end{equation}
    
    Let $f$ be an \textit{a priori} solution of the integral equation,
    \begin{equation}
        f_t = f_0 * g_t - \int _0^t (\partial_\theta g_{t-s} * (B_s f_s))\mathrm{d}s - \int^t_0 (\nabla_x g_{t-s} * (\lambda vf_s))\mathrm{d}s.
    \end{equation}
    Then applying Bochner integral inequality and Young convolution inequality,
    \begin{align*}
         \|f_t\|_{L^q_x(L^s_\theta)} & \leq  \|f_0 * g_t\|_{L^q_x(L^s_\theta)}  + \int _0^t \|\partial_\theta g_{t-s} * (B_s f_s)\|_{L^q_x(L^s_\theta)} \mathrm{d}s \\
         &\hspace{3em} + \int^t_0 \|\nabla_x g_{t-s} * (\lambda vf_s)\|_{L^q_x(L^s_\theta)}  \mathrm{d}s,\\
         & \leq  \|f_0\|_{L^q_x(L^s_\theta)}  \|g_t\|_{L^1_{x,\theta}} + \int _0^t \|\partial_\theta g_{t-s}\|_{L^{p/(p-1)}_x(L^1_\theta)} \|B_s f_s\|_{L^{pq/(p+q)}_x(L^s_\theta)} \mathrm{d}s \\
         &\hspace{3em} + \lambda \int^t_0 \|\nabla_x g_{t-s}\|_{L^1_{x,\theta}}\|f_s\|_{L^q_x(L^s_\theta)}\mathrm{d}s.
    \end{align*}
    From the integrability estimates \eqref{est:furtherRegFokkerplanck}, \eqref{est:HeatEqFurtherRegFokkerplanck1} and \eqref{est:HeatEqFurtherRegFokkerplanck2}, the right hand side is bounded and this implies the preservation of integrability \textit{a priori}. The same computations lead to existence and uniqueness by applying Banach-Picard fixed point theorem, the stability estimate and the growth estimate follow from the Gr\"onwall type inequality~\cite[Proposition 3.3]{bertucci2024curvature}. We refer to~\cite[Section 4.]{bertucci2024curvature} for more details. We conclude the existence of a unique positive global in time solution in the space,
    \begin{equation*}
        f \in C_t(L^q_x(L^s_\theta)_+).
    \end{equation*}
    
    For the existence of a weak derivative, we proceed as follows. We show that we have a sequence of mollified solutions, uniformly bounded in  $L^2([0,T],H^1_{x,\theta})$, that converges to the solution $f$, using the stability of the Fokker--Planck equation. The Banach-Alaoglu theorem then gives that $f$ is in $L^{2}_{t,loc}(H^1_{x,\theta})$. 
    
    The following computation is thus made rigorous by mollifying the initial data and the scalar-field $B$, and performing the estimate on smooth solutions.
    We use the following operator notation,
    \begin{equation*}
        \widetilde{\nabla}_{\xi} = \begin{pmatrix}
            \sqrt{\sigma_x}\partial_{x_1} \\
            \sqrt{\sigma_x}\partial_{x_2} \\
            \sqrt{\sigma_\theta}\partial_{\theta}
        \end{pmatrix}.
    \end{equation*}
    
    Multiplying the equation by $f$ and integrating in $x,\theta$ we obtain, 
    \begin{align*}
        \frac{d}{dt} \int \frac{f^2}{2} &= - \int |\widetilde{\nabla}_{\xi} f|^2 + \chi \int \partial_\theta f f B + \lambda \int \nabla_x f \cdot v f,\\
        &= - \int |\widetilde{\nabla}_{\xi}  f|^2 + \chi \int \partial_\theta \left(\frac{f^2}{2}\right) B + \lambda \int \nabla_x \cdot \left(v \frac{f^2}{2}\right),\\
        & = - \int |\widetilde{\nabla}_{\xi} f|^2 - \chi \int \frac{f^2}{2}\partial_\theta B.
    \end{align*}
    Integrating over time, and using the integrability of $f$ and $B$ in H\"older inequality, we obtain that,

    \begin{align*}
        \int^t_0 \int |\widetilde{\nabla}\f|^2 \mathrm{d}s &\leq \int \frac{\f_0^2}{2} + \frac{\chi}{2} \int^t_0 \left( \int f^2 |\partial_\theta B| d\theta d x\right) \mathrm{d}s, \\
        &\leq \int \frac{\f_0^2}{2} + \frac{\chi}{2}\int^t_0 \|f^2\|_{L^{\frac{p}{p-1}}_x(L^1_\theta)}\|\partial_\theta B\|_{L^p_x(L^\infty_\theta)}\mathrm{d}s,\\
        &\leq \int \frac{\f_0^2}{2} + \frac{\chi}{2}\int^t_0 \|f\|^2_{L^{\frac{2p}{p-1}}_x(L^2_\theta)}\|\partial_\theta B\|_{L^p_x(L^\infty_\theta)}\mathrm{d}s,\\
        &\leq \int \frac{\f_0^2}{2} + \frac{\chi C}{2}\int^t_0 \|f\|^2_{L^{q}_x(L^s_\theta)}\|\partial_\theta B\|_{L^p_x(L^\infty_\theta)}\mathrm{d}s,
    \end{align*}
    where $C>0$ is the constant from the embedding $L^{q}_x(L^s_\theta) \subset L^{\frac{2p}{p-1}}_x(L^2_\theta)$, by hypothesis on the exponents. The growth estimate on the integrability of $f$ concludes the proof.
\end{proof}

\subsection{Parabolic-parabolic case}

We now state the main theorems of this section. That is, we prove the existence of the semigroups, the additional regularity of the semigroup solutions, and the existence of the global attractors.

\begin{thm}
    \label{thm:FormicidaeParabolicSemiGroup}
    Suppose $\sigma_x>0,\sigma_\theta>0,\sigma_c>0$, and $6 > r\geq \frac{12}{5}$.
    The system~\eqref{sys:FormicidaeParabolic} defines a semidynamical system $(Y^P,\{S^P(t)\}_{t\geq0 })$, where $S^P$ is the solution operator of system~\eqref{sys:FormicidaeParabolic}.
    
    Furthermore the semigroup solutions satisfy the additional regularity,
    \begin{equation*}
        f\in L^{2}_{t,loc}(H^1_{x,\theta}).
    \end{equation*}
    
    Finally, for $\gamma >0$, there exists a global attractor $\mathcal{A}^P\subset\subset Y^P$ of the semigroup $S^P$.
\end{thm}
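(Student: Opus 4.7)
The plan is to establish the three claims in sequence, making substantial use of Proposition~\ref{prop:ExistenceUniquenessFormicidaeParabolicSolution} (existence, uniqueness, continuous dependence in $C_t(L^p_x(L^1_\theta))_+ \times L^p_{t,loc}(W^{2,p}_x)$ for $p > 4$), Proposition~\ref{prop:ActiveMatterEquation} (propagation of integrability and $H^1$-regularity for the Fokker--Planck equation), and Proposition~\ref{prop:ExistenceofAttractionforDissipation} (a compact absorbing set yields a global attractor).

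For the semigroup structure, the only remaining work beyond Proposition~\ref{prop:ExistenceUniquenessFormicidaeParabolicSolution} is to verify that $Y^P$ is invariant. Mass conservation and positivity are built into Definition~\ref{def:FormicidaeParabolicSolution}. Propagation of $f \in L^r_{x,\theta}$ is a direct application of Proposition~\ref{prop:ActiveMatterEquation} with $q = s = r$ and drift $B = B_\tau[c]$, whose hypothesis $B \in L^p_{t,loc}(L^p_x(L^\infty_\theta))$ holds because $\nabla_x c, \nabla^2_x c \in L^p_{t,loc}(L^p_x)$ and $v, v^\perp$ are smooth in $\theta$. Propagation of $c \in W^{2,6}_x$ uses Duhamel for the damped heat semigroup $e^{(\sigma_c \Delta_x - \gamma)t}$ and maximal parabolic regularity fed by the source $\rho = \int f \dd\theta$. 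Preservation of $\rho \in H^1_x$ follows from $H^1$ energy estimates on the closed parabolic equation $\partial_t \rho = \sigma_x \Delta_x \rho - \lambda \nabla_x \cdot \int v f \dd\theta$ obtained by integrating \eqref{sys:FormicidaeParabolic} in $\theta$. The semigroup identity and joint continuity in $(t,u)$ are then immediate from uniqueness and the stability estimate of \cite{bertucci2024curvature}.

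The second claim is a direct application of the second half of Proposition~\ref{prop:ActiveMatterEquation}: taking $q = r \geq 12/5$, $s = 2$, and $p$ close enough to $4$, both $q \geq 2p/(p-1)$ and $s \geq 2$ hold, and $\partial_\theta B_\tau[c]$ inherits membership in $L^p_{t,loc}(L^p_x(L^\infty_\theta))$ from $B_\tau[c]$ because $\theta$-differentiation only touches $v$ and $v^\perp$. For the third claim, I would exhibit a compact absorbing set in $Y^P$. The damping $-\gamma c$ in the chemical equation, combined with conservation of $\int \rho \dd x = 1$, produces via Duhamel and maximal parabolic regularity a uniform-in-time bound on $\|c\|_{W^{2,6}_x}$ once $\|\rho\|_{L^6_x}$ is controlled, and further parabolic smoothing upgrades this to a uniform $W^{3,6}_x$ bound after a transient, giving precompactness in $W^{2,6}_x$. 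For $f$, I would combine the $L^2$ energy estimate from the proof of Proposition~\ref{prop:ActiveMatterEquation} with a Poincaré inequality on the zero-mean fluctuation $f - (2\pi)^{-1}$ (meaningful thanks to mass conservation) to produce exponential decay toward an $L^2$ ball whose radius depends only on $\chi \|\partial_\theta B_\tau[c]\|$. Bootstrapping this uniform $L^2$ bound through Proposition~\ref{prop:ActiveMatterEquation} upgrades it to a uniform $H^1_{x,\theta}$ bound, and by Rellich--Kondrachov on the three-dimensional torus $\Torus^2_1 \times \Torus_{2\pi}$ (the assumption $r < 6$ is exactly what is needed here) this yields precompactness in $L^r_{x,\theta}$.

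The hard part will be closing the absorbing bootstrap for $f$ in the presence of the cross-coupling: the interaction $\chi B_\tau[c] f$ is bilinear in $(f, c)$ and contains second derivatives of $c$ multiplied by $f$, so a naive estimate risks a circular dependence between the controls on $f$ and on $c$. The way through is to exploit $\gamma, \sigma_c > 0$ in the $c$-equation, which eventually enslaves $c$ to the conserved $f$-mass through a bound independent of the initial data, and then to absorb the cross-term into the Fokker--Planck dissipation via Young's inequality, paying only a constant depending on $\chi, \tau, \lambda$.
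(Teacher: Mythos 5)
Your overall architecture matches the paper's: use Proposition~\ref{prop:ExistenceUniquenessFormicidaeParabolicSolution} for well-posedness, Proposition~\ref{prop:ActiveMatterEquation} for propagation of $L^r_{x,\theta}$ integrability and $L^2_t(H^1_{x,\theta})$ regularity, treat the $\rho$-equation as a closed heat equation with source and the $c$-equation via Duhamel with the damped heat kernel, then invoke dissipativity (Section~\ref{sec:ExistenceGlobalAttractor}) plus Rellich--Kondrachov for the attractor. The dissipativity sketch also mirrors the paper's strategy: mass conservation $\Rightarrow$ $L^p_x$ absorption of $\rho$ $\Rightarrow$ enslaving of $c$ $\Rightarrow$ joint $L^2_{x,\theta}\times H^2_x$ absorption via Young's inequality, then bootstrap. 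You correctly flag the cross-coupling circularity and identify the right mechanism for breaking it.

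However, there is a concrete error in the exponent choice for Proposition~\ref{prop:ActiveMatterEquation}. You propose applying it with ``$p$ close enough to $4$'' so that $q=r\geq 2p/(p-1)$; this goes the wrong way. The function $p\mapsto 2p/(p-1)$ is \emph{decreasing}, with $2p/(p-1)\to 8/3$ as $p\to 4^+$. Since $r$ can be as small as $12/5 < 8/3$, taking $p$ near $4$ makes the hypothesis $q\geq 2p/(p-1)$ \emph{fail}. The constraint $12/5 \geq 2p/(p-1)$ forces $p\geq 6$, and indeed the paper applies the proposition with $p=6$, which is natural given $c_0\in W^{2,6}_x$ and hence $c\in L^6_{t,loc}(W^{2,6}_x)$, $B_\tau[c]\in L^6_{t,loc}(L^6_x(L^\infty_\theta))$. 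Relatedly, you take $s=2$ for the regularity claim; this is unnecessary and loses information --- $s=r\geq 12/5 > 2$ already satisfies the hypothesis and yields $f\in C_t(L^r_{x,\theta})$ in one pass. With $p=6$ and $q=s=r$ both conclusions of Proposition~\ref{prop:ActiveMatterEquation} follow simultaneously, which is the paper's route.

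One further minor point: your sketch of the dissipativity argument is a plan rather than a proof, but this is defensible since the paper's own proof of Theorem~\ref{thm:FormicidaeParabolicSemiGroup} also defers to Theorem~\ref{thm:AbsorbH3H1Parabolic} and Corollary~\ref{cor:H4CParabolic}; those carry the substantive work. You should be aware that closing the loop in full rigor occupies most of Section~\ref{sec:ExistenceGlobalAttractor}, and in particular the joint Gr\"onwall estimates of Propositions~\ref{lem:parabolicLpC}--\ref{prop:parabolicH2L2Absorb} are needed precisely to avoid the circularity you identify; a pure ``enslaving'' argument alone does not suffice because the $c$-bound degrades until the $\rho$-bound has kicked in.
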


\begin{proof}
    We first note that for $(f_0, c_0)$ in $Y^P$, we have that $f_0\in L^6_x(L^1_\theta)_+$ since $\rho_0 = \int f_0 d\theta \in H^1_x$ by the Sobolev embedding. We can apply the existence and uniqueness result of Proposition~\ref{prop:ExistenceUniquenessFormicidaeParabolicSolution} providing a solution $(f,c)$ in the space,
    \begin{equation*}
        c\in L^6_{t,loc}(W^{2,6}_x), f\in C_t(L^6_x(L^1_\theta)_+).
    \end{equation*}
    Since the associated $B[c]$ satisfies the hypothesis of Proposition~\ref{prop:ActiveMatterEquation} with $p = 6$, $q = r \geq 12/5$, then
    \begin{equation*}
        f\in C_t((L^r_{x,\theta})_+) \cap L^{2}_{t,loc}(H^1_{x,\theta}).
    \end{equation*}
    
    We now check the continuity of $c$ and $\rho$ in $W^{2,6}_x$ and $H^1_x$, respectively. We proceed as follows. Using the additional regularity of $f$, we have that $\rho$ is a solution of the heat equation with a source term,
    \begin{equation*}
        \partial_t \rho = \sigma_x \Delta_x \rho - \lambda  \int v\cdot \nabla_x f \dd\theta,
    \end{equation*}
    with initial condition in $\rho_0\in H^1_x$, and the source term $\lambda  \int v\nabla_x f \dd\theta \in L^2_{t,loc}(L^2_x)$. By parabolic regularity theory \cite[Chapter 7, Theorem 5]{evans2010partial} we have,
    \begin{equation*}
        \rho \in L^\infty_{t,loc}(H^1_x) \cap L^2_{t,loc}(H^2_x) \cap H^1_{t,loc} (L^2_x).
    \end{equation*}
    In particular, we have $\nabla^2_x \rho \in L^2_{t,loc}(L^2_x)$. For the continuity of $c$ in $W^{2,6}_x$, we make use of the integrability properties of the fundamental solution of the heat equation on $\Torus_1^2$.

    For this, let $g : (0,+\infty)\times \Torus_1^2 \to \dR_+$, be the fundamental solution to the heat equation on the two dimensional torus,
    \begin{equation*}
        \partial_t g = \sigma_c \Delta_x g,
    \end{equation*}
    satisfying the integrability estimate,
    \begin{equation*}
        g \in L^2_{t,loc}(L^{3/2}_x(\Torus^2_1)),
    \end{equation*}
    see e.g~\cite[Appendix]{bertucci2024curvature}. Then, by the Duhamel formula,
     \begin{equation*}
         \nabla^2_x c(t) = e^{-\gamma t}g_t  * \nabla^2_x c_0 + \int_0^t e^{-\gamma (t-s)}g_{t-s} * \nabla^2_x \rho_s \mathrm{d}s,
    \end{equation*}
    and applying the Young convolution inequality, we obtain
    \begin{align*}
         \|\nabla^2_xc(t)\|_{L^6_x} &\leq e^{-\gamma t}\|g_t*\nabla^2_xc_0\|_{L^6_x} + \int_0^t e^{-\gamma (t-s)}\|g_{t-s}*\nabla^2_x\rho_s\|_{L^6_x} \mathrm{d}s,\\
         &\leq e^{-\gamma t}\|g_t\|_{L^1_x} \|\nabla^2_xc_0\|_{L^6_x} + e^{-\gamma (t-s)}\int_0^t \|g_{t-s}\|_{L^{3/2}_x}\|\nabla^2_x\rho_s\|_{L^2_x} \mathrm{d}s.
    \end{align*}
    The integrability of $g$ and $\nabla_x^2 \rho$ ensures that the right hand side is bounded. The same can be applied to $\nabla_x c(t)$ and $c(t)$, which together leads to 
    \begin{equation*}
        c \in C_t(W^{2,6}_x).
    \end{equation*}
    The continuity of $\rho$ in $H^1_x$, follows by applying Lemma~\ref{lem:weakL2convergenceLinfty} with $V = H^1_x$ and $H = L^2_x$. The continuity of the semi-group $(f_0,c_0,t) \mapsto (f(t),c(t))$ in $Y^P$, follows from the previous stability estimates, together with a joint estimate as in \cite[Section 5.1]{bertucci2024curvature}.

    The existence of the global attractor is an application of Proposition~\ref{prop:ExistenceofAttractionforDissipation}, using the estimates 
    of Theorem~\ref{thm:AbsorbH3H1Parabolic} and Corollary~\ref{cor:H4CParabolic}, together with the Rellich–Kondrachov compact embedding theorem in dimension three and two,
    \begin{equation*}
        H^1_{x,\theta} \subset\subset L^r_{x,\theta},\hspace{1em} H^2_x \subset\subset H^1_x,\hspace{1em} H^4_x \subset\subset W^{2,6}_x,
    \end{equation*}
    for any $\frac{12}{5}\leq r < 6$, as was fixed earlier.
\end{proof}

\subsection{Parabolic-elliptic case}
Similarly, we prove the equivalent statement of Theorem~\ref{thm:FormicidaeParabolicSemiGroup} for the parabolic-elliptic case.

\begin{thm}
    \label{thm:FormicidaeEllipticSemiGroup}
    Suppose $\sigma_x>0,\sigma_\theta>0,\sigma_c>0$, and $6 > r\geq \frac{12}{5}$.
    The system~\eqref{sys:FormicidaeElliptic} defines a semidynamical system $(Y^E,\{S^E(t)\}_{t\geq0 })$, where $S^E$ is the solution operator of system~\eqref{sys:FormicidaeElliptic}.
    
    Furthermore the semigroup solutions satisfy the additional regularity,
    \begin{equation*}
        f\in L^{2}_{t,loc}(H^1_{x,\theta}).
    \end{equation*}
    
    Finally, for $\gamma >0$, there exists a global attractor $\mathcal{A}^E\subset\subset Y^E$ of the semigroup $S^E$.
\end{thm}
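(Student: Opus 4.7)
The proof mirrors that of Theorem~\ref{thm:FormicidaeParabolicSemiGroup}, but is simpler because the elliptic coupling $\gamma c - \sigma_c \Delta_x c = \rho$ transfers regularity from $\rho$ to $c$ directly, sparing us the Duhamel-based bootstrapping and the auxiliary continuity lemma needed in the parabolic case.

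Given $f_0 \in Y^E$, in particular $f_0 \in L^6_x(L^1_\theta)_+$, Proposition~\ref{prop:ExistenceUniquenessFormicidaeEllipticSolution} applied with $p = 6$ produces a unique global solution $f \in C_t(L^6_x(L^1_\theta)_+)$. Standard elliptic regularity on $\Torus_1^2$ for the Helmholtz-type operator $\gamma - \sigma_c \Delta_x$ (cf.~\cite[Chapter 1]{krylov2008lectures}) then yields $\|c(t)\|_{W^{2,6}_x} \leq C\|\rho(t)\|_{L^6_x} \leq C\|f(t)\|_{L^6_x(L^1_\theta)}$, so that $c \in C_t(W^{2,6}_x)$ follows at once from the continuity of $f$. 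Consequently both $B_\tau[c]$ and $\partial_\theta B_\tau[c]$ lie in $L^\infty_{t,loc}(L^6_x(L^\infty_\theta))$, so the hypotheses of Proposition~\ref{prop:ActiveMatterEquation} are satisfied with $p = 6$. Applying that proposition with $(q,s) = (r,r)$, which is admissible since $r \geq 12/5 = 2p/(p-1)$ and $r \geq 2$, delivers the additional integrability $f \in C_t((L^r_{x,\theta})_+) \cap L^2_{t,loc}(H^1_{x,\theta})$.

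The semigroup property and the continuity of $(f_0,t) \mapsto f(t)$ in $Y^E$ are then obtained from uniqueness together with the stability estimates of Proposition~\ref{prop:ActiveMatterEquation} and the elliptic bound $\|c\|_{W^{2,6}_x} \lesssim \|\rho\|_{L^6_x}$, via a joint argument in the spirit of~\cite[Section 5.1]{bertucci2024curvature}. For the compact global attractor $\mathcal{A}^E \subset\subset Y^E$, I would invoke Proposition~\ref{prop:ExistenceofAttractionforDissipation} once a compact absorbing set in $Y^E$ is furnished in Section~\ref{sec:ExistenceGlobalAttractor} by the elliptic counterparts of Theorem~\ref{thm:AbsorbH3H1Parabolic} and Corollary~\ref{cor:H4CParabolic}, combined with the Rellich–Kondrachov embedding $H^1_{x,\theta} \subset\subset L^r_{x,\theta}$ for $r < 6$ in three dimensions to handle the $L^r_{x,\theta}$ factor of $Y^E$.

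The only genuinely new technical point relative to the parabolic case is compactness in the $L^6_x(L^1_\theta)$ factor of $Y^E$: the continuous chain $H^1_{x,\theta} \hookrightarrow L^6_{x,\theta} \hookrightarrow L^6_x(L^1_\theta)$, obtained from the sharp 3D Sobolev embedding followed by Jensen on the bounded $\theta$-interval, fails to be compact at its first inclusion, so the absorbing-set estimate must sit in a strictly stronger mixed $(x,\theta)$-regularity class (analogous to the $H^3_x$–$H^1_\theta$-type control used in the parabolic counterpart) for which the embedding into $L^6_x(L^1_\theta)$ is compact. I expect this to be the main (and mild) obstacle; the remainder of the argument is essentially a line-by-line transcription of the parabolic proof.
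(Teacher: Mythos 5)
Your argument up to the attractor step matches the paper's proof: Proposition~\ref{prop:ExistenceUniquenessFormicidaeEllipticSolution} gives existence, elliptic regularity hands the $W^{2,6}_x$ bound on $c$ for free, and Proposition~\ref{prop:ActiveMatterEquation} with $p=6$, $(q,s)=(r,r)$ produces the $H^1$-in-time regularity. Two remarks on the attractor discussion, the second of which is a genuine error.

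First, a simplification you missed: since $Y^E$ contains no $c$-component, there is no need for an elliptic counterpart of Corollary~\ref{cor:H4CParabolic}. The paper's Theorem~\ref{thm:AbsorbH3H1Elliptic} ($H^1_{x,\theta}$ absorption of $f$ alone) is all that is invoked; the $H^4_x$ control of $c$ was only needed in the parabolic case because $c$ lives in $Y^P$.

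Second, and more importantly, the ``genuinely new technical point'' you flag is a false alarm, and the conclusion you draw from it (that one needs strictly stronger, anisotropic absorption) is wrong. You are correct that the factorization $H^1_{x,\theta}\hookrightarrow L^6_{x,\theta}\hookrightarrow L^6_x(L^1_\theta)$ is not compact at its first step, but that does not show the composite embedding fails to be compact — it only shows that that particular route does not witness compactness. The composite $H^1_{x,\theta}\subset\subset L^6_x(L^1_\theta)$ \emph{is} compact, via the $x$-marginal route: for $f\in H^1_{x,\theta}$ one has $|f|\in H^1_{x,\theta}$ with the same norm, hence $g\defeq\int_{\Torus_{2\pi}}|f|\dd\theta\in H^1_x$ with $\|g\|_{H^1_x}\lesssim\|f\|_{H^1_{x,\theta}}$, and $H^1(\Torus_1^2)\subset\subset L^6(\Torus_1^2)$ by Rellich--Kondrachov in dimension two. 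Concretely, if $f_n\rightharpoonup f$ in $H^1_{x,\theta}$ with $\|f_n\|_{H^1_{x,\theta}}$ bounded, then $g_n\defeq\int|f_n-f|\dd\theta$ is bounded in $H^1_x\hookrightarrow L^7_x$, while $\|g_n\|_{L^1_x}=\|f_n-f\|_{L^1_{x,\theta}}\to0$ from the compact embedding $H^1_{x,\theta}\subset\subset L^1_{x,\theta}$; interpolating $L^6_x$ between $L^1_x$ and $L^7_x$ forces $\|g_n\|_{L^6_x}\to0$. So the $H^1_{x,\theta}$ absorbing ball is already compact in $L^r_{x,\theta}\cap L^6_x(L^1_\theta)$ for $r<6$, and no additional anisotropic estimate is required.
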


\begin{proof}
    We first apply the existence and uniqueness result of Proposition~\ref{prop:ExistenceUniquenessFormicidaeEllipticSolution} providing a solution $f$ in the space,
    \begin{equation*}
        f\in C_t(L^6_x(L^1_\theta)_+)\cap C_t((L^r_{x,\theta})_+).
    \end{equation*}

    Since the associated $\partial_\theta B[c] \in L^\infty_t(L^6_x(L^\infty_\theta)_+)$ satisfies the hypothesis of Proposition~\ref{prop:ActiveMatterEquation} with $p = 6$, $q = r \geq 12/5$, then
    \begin{equation*}
        f\in L^{2}_{t,loc}(H^1_{x,\theta}).
    \end{equation*}

    This concludes the existence of the semigroup in $Y^E$ and the additional regularity.
    Finally, the existence of the global attractor is an application of Proposition~\ref{prop:ExistenceofAttractionforDissipation}, using estimates 
    of Theorem~\ref{thm:AbsorbH3H1Elliptic}, together with the Rellich--Kondrachov compact embedding theorem in dimension three,
    \begin{equation*}
        H^1_{x,\theta} \subset\subset L^r_{x,\theta}\cap L^6_x(L^1_\theta).
    \end{equation*}
    for any $\frac{12}{5}\leq r < 6$, as was fixed earlier.
    
\end{proof}

\section{Existence of the Global Attractor}
\label{sec:ExistenceGlobalAttractor}

In this section, we prove that the parabolic-parabolic semigroup as defined via Theorem~\ref{thm:FormicidaeParabolicSemiGroup} and the parabolic-elliptic semigroup of Theorem \ref{thm:FormicidaeEllipticSemiGroup} are dissipative, as in Definition~\ref{def:dissip}. We first show that the spatial density $\rho$ of the semigroup solution is absorbed in $L^p_x$ for $1\leq p \leq 6$, both in the parabolic and elliptic case. In the elliptic case, we prove that the full phase space density $f$ is absorbed in $L^2_{x,\theta}$, and is, consequently, absorbed in $H^1_{x,\theta}$. In the parabolic case, we use joint estimates for $f$ and $c$, to prove first the absorption in $L^2_{x,\theta}\times H^2_{x}$ and then the absorption in  $H^1_{x,\theta}\times H^3_{x}$, and finally in $H^4_x$ for $\rho$. Using the compactness embedding argument at the end of the proofs in Section~\ref{sec:SemigroupTheory}, this shows the dissipativity of the semigroups.

We now recall some results from functional analysis. We intensively use the following Gr\"onwall inequality \cite[Lemma 2.8]{robinson2001infinite} for absorption estimates.

\begin{lem}[Gr\"onwall Inequality]
\label{lem:gronwall}
    Let $x\in C(\mathbb{R}_+, \mathbb{R})$ satisfy the differential inequality
    \begin{equation}
        \frac{\dd}{\dd t}_+x\leq a x+b,
    \end{equation}
    such that $a,b\in\mathbb{R}$ are constants, and $\frac{\dd}{\dd t_+} x(t) \defeq \lim_{h \searrow 0
} (x(t+h) -x(t))/h $.
    
    Then,
    \begin{equation}
        x(t)\leq \left(x(0)+\frac{b}{a}\right)e^{at}-\frac{b}{a}.
    \end{equation}
\end{lem}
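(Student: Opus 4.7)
My plan is to reduce the inhomogeneous inequality to a homogeneous one and then exploit the standard fact that a continuous function with non-positive upper right Dini derivative must be non-increasing. Throughout I assume $a \neq 0$ (the $a=0$ case follows either by a direct integration argument or by a limiting procedure; it is implicit in the statement, since $b/a$ appears in the conclusion).

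First, I would introduce the shifted quantity $y(t) \defeq x(t) + b/a$, so that the hypothesis becomes $\frac{d}{dt_+} y \leq a y$. Next, I would set $z(t) \defeq y(t) e^{-at}$ and compute its upper right Dini derivative. Since $t \mapsto e^{-at}$ is smooth and positive, the product rule for Dini derivatives (which is legitimate when one of the factors is classically differentiable) gives
\begin{equation*}
    \frac{d}{dt_+} z(t) = e^{-at}\,\frac{d}{dt_+} y(t) - a e^{-at} y(t) \leq e^{-at}\bigl(a y(t) - a y(t)\bigr) = 0.
\end{equation*}

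The next step is to conclude that $z$ is non-increasing. For this I would invoke the standard monotonicity lemma: if $z : [0,\infty) \to \mathbb{R}$ is continuous and its upper right Dini derivative is non-positive on $[0,\infty)$, then $z$ is non-increasing. This yields $z(t) \leq z(0)$, i.e.\ $y(t) \leq y(0) e^{at}$, and translating back in terms of $x$ produces exactly the desired bound
\begin{equation*}
    x(t) \leq \left(x(0) + \frac{b}{a}\right) e^{at} - \frac{b}{a}.
\end{equation*}

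The main delicate point is the passage from a pointwise Dini inequality to a monotonicity statement, since $x$ is only assumed continuous and not differentiable. The cleanest route is to first perturb by $\varepsilon > 0$, writing $z_\varepsilon(t) \defeq z(t) - \varepsilon t$, so that $\frac{d}{dt_+} z_\varepsilon \leq -\varepsilon < 0$ everywhere; a contradiction argument (inspecting the supremum of $\{t \geq 0 : z_\varepsilon(t) \leq z_\varepsilon(0)\}$) then shows $z_\varepsilon$ is strictly non-increasing, and letting $\varepsilon \to 0^+$ by continuity yields the result for $z$ itself. This is a standard argument, and since the lemma is quoted from \cite{robinson2001infinite}, I would simply cite it rather than reproduce the proof in full.
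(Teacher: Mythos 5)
The paper does not supply its own proof of this lemma; it is cited directly from Robinson's book \cite[Lemma 2.8]{robinson2001infinite}. Your proof is correct and is the standard integrating-factor argument for one-sided Gr\"onwall inequalities: shift to eliminate the inhomogeneity, multiply by $e^{-at}$, and invoke the monotonicity lemma for continuous functions with non-positive upper right Dini derivative (together with the $\varepsilon$-perturbation argument that makes this last step rigorous). Two small remarks: the paper defines $\tfrac{\dd}{\dd t_+}$ as a genuine one-sided limit rather than a $\limsup$, which only strengthens your hypotheses and leaves the argument untouched; and you correctly flag that $a\neq 0$ is implicit in the statement (the $a=0$ case being a trivial direct integration), which is worth keeping.
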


A proof of the following lemma can be found in \cite[Lemma 11.2]{robinson2001infinite}.

\begin{lem}
    \label{lem:weakL2convergenceLinfty}
    Let $H$ and $V$ be Banach spaces such that $V$ is compactly embedded in $H$, $V\subset \subset H $. Suppose that the sequence $(u_n)_n$ is uniformly bounded in $L^\infty([0,T], V)$,
    \begin{equation*}
        \esssup_{t\in [0,T]} \|u_n(t)\|_{V} \leq C,
    \end{equation*}
    and let $u_n \rightharpoonup u$ in $L^2([0,T],V)$, then,
    \begin{equation*}
        \esssup_{t\in [0,T]} \|u(t)\|_{V} \leq C.
    \end{equation*}
    Furthermore, if $u\in C^0([0,T], H)$, then in fact,
    \begin{equation*}
        \sup_{t\in [0,T]} \|u(t)\|_V \leq C.
    \end{equation*}
\end{lem}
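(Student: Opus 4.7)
The plan is to prove the two claims separately, with the essential supremum bound following from a duality argument combined with Lebesgue differentiation in $V$, and the genuine supremum bound following from a weak compactness / compact embedding argument using the continuity of $u$ into $H$.

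For the first part, I would exploit the weak convergence $u_n \rightharpoonup u$ in $L^2([0,T],V)$ through test functions of the form $\mathbf{1}_E \phi$ where $E \subset [0,T]$ is measurable and $\phi \in V^*$ (such functions lie in the predual of $L^2([0,T],V)$ in the reflexive-Hilbert setting that applies to our intended use $V=H^1_x$, $H=L^2_x$). Weak convergence then gives
\begin{equation*}
\int_E \langle \phi, u(t)\rangle \, dt = \lim_{n\to\infty} \int_E \langle \phi, u_n(t)\rangle \, dt,
\end{equation*}
while the uniform $L^\infty([0,T],V)$ bound yields $|\int_E \langle \phi, u_n(t)\rangle \, dt| \leq C |E| \, \|\phi\|_{V^*}$. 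Taking the supremum over $\|\phi\|_{V^*}\leq 1$ and using the duality characterization of the Bochner integral norm shows $\|\int_E u(t)\, dt\|_V \leq C|E|$ for every measurable $E$. The Lebesgue differentiation theorem for Bochner-integrable $V$-valued maps then yields $\|u(t)\|_V \leq C$ for almost every $t\in[0,T]$, i.e.\ the desired essential supremum bound.

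For the second part, fix an arbitrary $t_0 \in [0,T]$ and choose a sequence $t_k \to t_0$ with $t_k$ lying in the full-measure set where $\|u(t_k)\|_V \leq C$ (which is possible by part one). Continuity of $u$ into $H$ gives $u(t_k) \to u(t_0)$ strongly in $H$. Since $(u(t_k))$ is bounded in $V$, in the reflexive setting we can extract a subsequence converging weakly in $V$ to some limit $w$; the compact embedding $V \subset\subset H$ forces $u(t_{k_j}) \to w$ strongly in $H$, and uniqueness of the $H$-limit gives $w = u(t_0)$. Weak lower semicontinuity of the $V$-norm then yields $\|u(t_0)\|_V \leq \liminf_{j\to\infty} \|u(t_{k_j})\|_V \leq C$, and since $t_0$ was arbitrary this upgrades the esssup bound to a genuine supremum.

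The main technical obstacle is the first step: one must carefully justify that indicator-type test functions $\mathbf{1}_E \phi$ genuinely probe the weak limit in $L^2([0,T],V)$ and then connect the resulting integral bound to a pointwise-in-time $V$-norm bound via Bochner-style Lebesgue differentiation. Once these two measure-theoretic ingredients are in place, both conclusions follow quickly and the compact-embedding half is essentially a reflexivity-plus-lower-semicontinuity argument. Since in our applications $V$ is a Hilbert space, reflexivity and separability are automatic, and no additional structural assumption is needed beyond what is stated.
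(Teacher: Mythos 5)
Your proof is correct. The paper itself gives no argument for this lemma and simply cites \cite[Lemma 11.2]{robinson2001infinite}, so there is no in-text proof to compare against; your two-step argument is a standard and self-contained one. A few minor observations. For part 1, a shorter route is to note that the set $K = \{ w \in L^2([0,T],V) : \|w(t)\|_V \le C \ \text{a.e.} \}$ is convex and strongly closed in $L^2([0,T],V)$ (strong $L^2$ convergence gives an a.e.\ convergent subsequence in $V$), hence weakly closed by Mazur's theorem, and $u\in K$ follows directly; this avoids invoking Bochner--Lebesgue differentiation. Your reflexivity caveat is in fact only needed in part 2, where you extract a weakly convergent subsequence from the $V$-bounded family $\{u(t_k)\}$: part 1 works for arbitrary Banach spaces, since the functionals $w\mapsto\int_E\langle\phi,w\rangle\,dt$ are automatically continuous on $L^2([0,T],V)$ without needing the full dual identification, and the Lebesgue differentiation theorem for Bochner integrals requires no additional structure on $V$. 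Finally, in part 2 the compactness of the embedding is not essential: a continuous embedding $V\hookrightarrow H$ already upgrades $u(t_{k_j})\rightharpoonup w$ in $V$ to $u(t_{k_j})\rightharpoonup w$ in $H$, and uniqueness of weak limits in $H$ then identifies $w=u(t_0)$. All of this is harmless in the paper's applications since $V$ is always a Hilbert space.
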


Hereafter, we will use multiple times the Gagliardo-Nirenberg inequality with different exponents, we thus recall here its most general form \cite[Theorem in Lecture II]{nirenberg1959elliptic}.

\begin{prop}[Gagliardo--Nirenberg Inequality]
    \label{thm:GagliardoNirenbeg}
    Let $1 \leq q,r \leq \infty$, $j,m$ two integers such that $m>j\geq 0$, and let $p\geq 1$ and $\alpha \in \left[\frac{j}{m},1\right)$, such that the following relation holds,
    \begin{equation*}
        \frac{1}{p} = \frac{j}{d} + \alpha \left( \frac{1}{r} - \frac{m}{d}\right) + \frac{1-\alpha}{q}.
    \end{equation*}
   Then, for any $s\geq 1$, there exists a constant $C>0$, such that,

    \begin{equation*}
        \|D^j u\|_p \leq C \|D^m u\|^\alpha_r \|u\|^{1-\alpha}_q + C \|u\|_s,
    \end{equation*}

    for any function $u \in L^p(\Torus^d)$ such that $D^mu \in L^r(\Torus^d)$.

    Furthermore, if $m-j-d/r > 0$, one can take $\alpha = 1$.
\end{prop}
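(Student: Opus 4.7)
The proof is the classical argument of Nirenberg, adapted to the torus. The plan is to establish the inequality via a Littlewood--Paley decomposition on $\Torus^d$ combined with Bernstein-type inequalities, and to handle the zero Fourier mode separately; this last step is what generates the additive $C\|u\|_s$ term on the right-hand side.

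First, I would decompose $u = u^\flat + u^\sharp$ using a Fourier cutoff at scale $N \geq 1$, where $u^\flat$ collects modes with $|k| \leq N$ and $u^\sharp$ the complement. On the torus, Bernstein's inequality yields
\begin{equation*}
    \|D^j u^\flat\|_p \leq C\, N^{j + d\max(1/q-1/p,\,0)}\, \|u\|_q, \qquad
    \|D^j u^\sharp\|_p \leq C\, N^{-(m-j) + d\max(1/r-1/p,\,0)}\, \|D^m u\|_r.
\end{equation*}
The scaling identity $\frac{1}{p} = \frac{j}{d} + \alpha(\frac{1}{r} - \frac{m}{d}) + \frac{1-\alpha}{q}$ postulated in the statement is exactly what makes these two bounds scale identically in $N$; optimizing over $N$ gives the multiplicative estimate $\|D^j u\|_p \leq C \|D^m u\|_r^\alpha \|u\|_q^{1-\alpha}$ for mean-zero $u$.

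Second, the additive $C\|u\|_s$ term is needed on $\Torus^d$ because the zero mode $\bar u \defeq |\Torus^d|^{-1} \int u \, \dd x$ is annihilated by $D^m$, so no purely multiplicative bound involving only $\|D^m u\|_r$ and $\|u\|_q$ can hold for arbitrary $u$. Decomposing $u = \bar u + (u - \bar u)$, applying the Poincar\'e inequality to $u - \bar u$ to handle the interpolation, and controlling $|\bar u|$ by $|\Torus^d|^{-1/s}\|u\|_s$ (valid for any $s \geq 1$) closes the estimate. The endpoint case $\alpha = 1$ under the hypothesis $m - j - d/r > 0$ is a direct application of the Sobolev embedding $W^{m-j,r}(\Torus^d) \hookrightarrow L^p(\Torus^d)$, bypassing interpolation entirely.

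The main obstacle is the exponent bookkeeping: one must verify that the prescribed scaling condition truly balances the Bernstein bounds across the admissible range $\alpha \in [j/m, 1)$, and the degenerate endpoints $r = \infty$ or $q = \infty$ require modified Bernstein estimates. As this is a standard textbook result, in practice it suffices to cite \cite{nirenberg1959elliptic} directly, and I would not reproduce the argument in the paper itself.
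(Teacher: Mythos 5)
The paper itself does not prove this statement: it is recalled as a known result, with the preceding sentence pointing to \cite[Theorem in Lecture II]{nirenberg1959elliptic}, and you correctly recognize at the end that this is the right move for the manuscript. Your sketch is nonetheless a sound and genuinely different route from Nirenberg's original argument. Nirenberg proves the inequality by an elementary real-variable argument (one-dimensional intermediate-derivative estimates obtained by integration by parts, combined with Sobolev embedding, H\"older, and a scaling argument), whereas you propose a frequency-space proof via a Littlewood--Paley cutoff at scale $N$ and Bernstein inequalities, with the scaling relation on $\alpha$ emerging from optimizing the cutoff. The bookkeeping you flag does check out: writing the low-frequency exponent as $a = j + d(1/q - 1/p)_+$ and the high-frequency one as $b = -(m-j) + d(1/r-1/p)_+$, the optimizer gives $\alpha = a/(a-b)$, which agrees with the relation $\frac{1}{p} = \frac{j}{d} + \alpha(\frac{1}{r}-\frac{m}{d}) + \frac{1-\alpha}{q}$. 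Your diagnosis of the additive $C\|u\|_s$ term is also right and worth spelling out: taking $j = 0$, $\alpha \in (0,1)$ and $u$ constant shows the purely multiplicative form fails on $\Torus^d$ (the left side is nonzero while $\|D^m u\|_r = 0$), so one must split off the mean and control it by $\|u\|_s$, and the mean-zero remainder then satisfies the multiplicative estimate via Poincar\'e. The Bernstein route is arguably cleaner for the periodic setting but requires frequency-analysis machinery; Nirenberg's original argument is more elementary and self-contained. Either would serve, though, as you say, citing the reference is what the paper does and is the sensible choice.
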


We now prove the absorption estimate on the spatial density $\rho$, that holds both in the parabolic and elliptic case from the regularity result of Proposition~\ref{prop:ActiveMatterEquation}.

\begin{prop}
    \label{prop:rhoestimLp}
    Let $B\in L^q_{loc,t}(L^{r}_x(L^\infty_\theta))$ be a given scalar field with $6\leq q,r\leq +\infty$, $\sigma_\theta >0, \sigma_x >0$, and let $f \in L^2_{loc,t}(H^1_{x,\theta})$ be the unique solution to the Fokker-Planck equation:
    \begin{equation}
        \label{eq:ActiveMaterEquation}
        \tag{$\mathcal{F}^B$}
        \begin{cases}
            \partial_t \ff = \nabla_x \cdot (\sigma_x \nabla_x \ff - \lambda v \ff) + \partial_\theta(\sigma_\theta\partial_\theta \ff -\chi B\ff),\\
            f(0) = f_0,
        \end{cases}
    \end{equation}
    given that $f_0 \in L^p_x(L^1_\theta)_+$, for some $1\leq p < + \infty$.
    
    Then, $\rho$ is absorbed in $L^p_x$ independently of $B$. That is, there exists a constant $C_p>0$ and $\alpha_p >0$, depending only on $p$, $\sigma_x$ and $\lambda$, such that, for any $\rho$ satisfying the above, the following estimate holds,
    \begin{equation}
        \label{est:rhoestimLp}
        \|\rho(t)\|_{L^p_x} \leq e^{-\alpha_p t}\|\rho_0\|_{L^p_x} + C_p, \text{ for all } t\geq 0.
    \end{equation}
\end{prop}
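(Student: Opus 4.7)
The case $p=1$ is immediate from mass conservation. For $1<p<\infty$, I start by integrating~\eqref{eq:fBgeneral} in $\theta \in \mathbb{T}_{2\pi}$; periodicity kills both $\theta$-derivative terms (including the $\chi B f$ term), leaving
\begin{equation*}
    \partial_t\rho = \sigma_x\Delta_x\rho - \lambda\nabla_x\cdot J,\qquad J(t,x):=\int_{\mathbb{T}_{2\pi}} v(\theta)f(t,x,\theta)\,d\theta,
\end{equation*}
with the pointwise bound $|J|\leq \rho$ coming from $|v|\equiv 1$ and $f\geq 0$. This is the only place $B$ could have entered, and it enters only through a discarded term, which is precisely why the resulting estimate will be automatically $B$-independent.

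I would then use $p\rho^{p-1}$ as a test function---rigorous via mollification/truncation using the regularity $\rho\in L^2_{t,loc}(H^1_x)$ granted by Proposition~\ref{prop:ActiveMatterEquation}---together with integration by parts, $|J|\leq\rho$, Young's inequality, and the algebraic identity $\rho^{p-2}|\nabla_x\rho|^2 = (4/p^2)|\nabla_x\rho^{p/2}|^2$ to obtain an energy inequality of the form
\begin{equation*}
    \frac{d}{dt}\|\rho\|_{L^p_x}^p + \frac{2\sigma_x(p-1)}{p}\|\nabla_x\rho^{p/2}\|_{L^2_x}^2 \leq \frac{p(p-1)\lambda^2}{2\sigma_x}\|\rho\|_{L^p_x}^p.
\end{equation*}
Denote $Y := \|\rho\|_{L^p_x}^p$ and $Z := \|\nabla_x\rho^{p/2}\|_{L^2_x}^2$, so this reads $Y' + C_1 Z \leq C_2 Y$ with $C_1,C_2$ depending only on $p,\sigma_x,\lambda$.

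The key step is to combine this with Gagliardo--Nirenberg on $\mathbb{T}_1^2$ (Proposition~\ref{thm:GagliardoNirenbeg}) applied to $u = \rho^{p/2}$ with $q=1$, i.e.\ $\|u\|_{L^2}^2 \leq C\|\nabla u\|_{L^2}\|u\|_{L^1} + C\|u\|_{L^1}^2$. The factor $\|u\|_{L^1}=\int\rho^{p/2}$ is controlled using mass conservation $\|\rho\|_{L^1_x}=1$: for $1<p\leq 2$ the pointwise inequality $\rho^{p/2}\leq 1+\rho$ gives $\|u\|_{L^1}\leq 2$, while for $p\geq 2$ the H\"older interpolation $\|\rho\|_{L^{p/2}}\leq\|\rho\|_{L^1}^{1/(p-1)}\|\rho\|_{L^p}^{(p-2)/(p-1)}$ gives $\|u\|_{L^1}\leq Y^{(p-2)/(2(p-1))}$. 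In either regime, elementary algebra inverts the GN bound to a super-linear \emph{reverse} inequality
\begin{equation*}
    Z \geq c_p\,(Y - \widetilde{C}_p)_+^{r_p},\qquad r_p := \max(2,\,p/(p-1)) > 1.
\end{equation*}

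Inserting this lower bound into the energy inequality produces a super-linear ODI $Y' \leq -a_p(Y-\widetilde{C}_p)_+^{r_p} + b_p Y$. Since $r_p>1$, there is an explicit threshold $M^\ast$ above which the super-linear term dominates the linear one, giving $Y' \leq -b_p Y$ in that regime. A standard continuity argument---any attempted crossing of $M^\ast$ from below is blocked because $Y'<0$ at the crossing---then yields $Y(t)\leq \max(Y(0)e^{-b_p t},M^\ast)\leq Y(0)e^{-b_p t}+M^\ast$, and taking $p$-th roots via subadditivity of $x\mapsto x^{1/p}$ produces the estimate~\eqref{est:rhoestimLp} with $\alpha_p = b_p/p$ and $C_p = (M^\ast)^{1/p}$. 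The principal technical obstacle is making the energy identity rigorous at the available regularity $\rho\in L^2_{t,loc}(H^1_x)$; this is handled, as in the proof of Proposition~\ref{prop:ActiveMatterEquation}, by mollifying $f_0$ and $B$, running the computation on the resulting smooth solutions, and passing to the limit using weak lower semicontinuity of the norms involved.
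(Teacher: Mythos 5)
Your proof is correct, and the overall structure (integrate out $\theta$, test with $\rho^{p-1}$, use $f\geq 0$ to drop the $B$-dependence, convert $\rho^{p-2}|\nabla_x\rho|^2$ to $|\nabla_x\rho^{p/2}|^2$, invoke Gagliardo--Nirenberg with $q=1$, close with mass conservation) matches the paper. Where you diverge is in how you close the ODE: the paper applies Young's inequality to the GN bound so as to split $-\|\nabla_x\rho^{p/2}\|_{L^2}^2\leq -\frac{1-\mu}{\mu}\int\rho^p+\frac{C^2_{GN}}{\mu^2}\bigl(\int\rho^{p/2}\bigr)^2$, then for $p=2$ uses $\int\rho^{p/2}=1$ directly, and for $p>2$ interpolates $L^{p/2}$ between $L^1$ and $L^p$ and absorbs the resulting $Y^{(p-2)/(p-1)}$ term by Young's, landing on a purely linear differential inequality and concluding by Gr\"onwall (Lemma~\ref{lem:gronwall}). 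You instead keep a super-linear lower bound $Z\gtrsim(Y-\widetilde{C}_p)_+^{r_p}$ and run a threshold/blocking argument on the ODI. Both are sound; the paper's route is slightly more mechanical (a single clean application of Lemma~\ref{lem:gronwall}), whereas yours requires the continuity argument to rule out crossings of the threshold $M^\ast$. One genuine advantage of yours: the pointwise bound $\rho^{p/2}\leq 1+\rho$ lets you handle $1<p<2$, which the paper's proof as written does not address (though the paper only ever uses $p\geq 2$, so this is not a flaw in the paper). One small slip: your formula $r_p=\max(2,p/(p-1))$ is inconsistent with your own derivation --- you get $r_p=2$ for $1<p\leq 2$ via $L\leq 2$, and $r_p=p/(p-1)\leq 2$ for $p\geq 2$ via interpolation --- so the correct formula is the piecewise one (or $\min$, not $\max$); fortunately all that matters is $r_p>1$, which holds in both regimes, so the argument survives.
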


\begin{proof}
    We prove the absorption in $L^p_x$ by performing the following computations on a sequence of regularized solutions, obtained by mollifying the scalar field $B$ and the initial condition $\rho_0$, so that all the following integrals are well defined, and then applying the stability argument as in the proof of Proposition~\ref{prop:ActiveMatterEquation}, using \cite[Lemma 4.2]{bertucci2024curvature}

    Integrating over $\theta$ the equation on $f$ and multiplying by $\rho^{p-1}$, we obtain,
    \begin{equation*}
        \frac{\dd}{\dd t} \int \frac{\rho^p}{p} = \sigma_x \int \rho^{p-1}\Delta_x \rho - \lambda \int \rho^{p-1}\int v\cdot \nabla_x f \dd\theta.
    \end{equation*}
    Applying the divergence theorem on the right hand side,
    \begin{align*}
        \frac{\dd}{\dd t} \int \frac{\rho^p}{p(p-1)} & = - \sigma_x \int |\nabla_x \rho|^2 \rho^{p-2} - \lambda \int \rho^{p-2} \nabla_x \rho \cdot \int v f \dd\theta,\\
        & \leq- \sigma_x \int |\nabla_x \rho|^2 \rho^{p-2}  + \lambda \int \rho^{p-2} |\nabla_x \rho| \int |f| \dd\theta,\\
        & \leq - \sigma_x \int |\nabla_x \rho|^2 \rho^{p-2}  + \lambda \int \rho^{\frac{p-2}{2}} |\nabla_x \rho|\rho^{\frac{p}{2}},\\
        & \leq - \sigma_x \int |\nabla_x \rho|^2 \rho^{p-2}  +\mu \int |\nabla_x \rho|^2 \rho^{p-2}  + \frac{\lambda^2}{4\mu} \int \rho^p,
    \end{align*}
    for any $\mu > 0$, where we used that $ f \geq 0$.
    
    Taking $\mu = \frac{\sigma_x}{2}$,
    \begin{equation}
        \frac{\dd}{\dd t} \int \frac{\rho^p}{p(p-1)} \leq - \frac{\sigma_x}{2} \int |\nabla_x \rho|^2 \rho^{p-2}  + \frac{\lambda^2}{2\sigma_x} \int \rho^p,
    \end{equation}
    and using the identity,
    \begin{equation*}
        \frac{4}{p^2}\left|\nabla_x(\rho^\frac{p}{2})\right|^2 = \left|\nabla_x \rho\right|^2\rho^{p-2},  
    \end{equation*}
    this leads to,
    \begin{equation}
        \label{eq:rhoLpestim}
        \frac{\dd}{\dd t} \int \frac{\rho^p}{p(p-1)} \leq - \frac{2\sigma_x}{p^2} \int \left|\nabla_x(\rho^\frac{p}{2})\right|^2  + \frac{\lambda^2}{2\sigma_x} \int \rho^p\\.
    \end{equation}
Recalling the following Gagliardo-Nirenberg inequality in dimension two \cite[Chapter 8, p.~233, 1.(iii)]{brezis2011functional},

\begin{equation*}
    \|u\|^2_{L^2} \leq C_{GN}\|u\|_{1}\|u\|_{H^1}, \text{ for any } u \in H^1_x.
\end{equation*}
Applying the above to $u = \rho^{\frac{p}{2}}$, using the Young product inequality and rearranging, we have the following estimate on the first right term of \eqref{eq:rhoLpestim},

\begin{equation*}
     - \int \left|\nabla_x (\rho^{\frac{p}{2}})\right|^2 \leq - \frac{1-\mu}{\mu}\int \rho^p + \frac{C^2_{GN}}{\mu^2} \left( \int \rho^{\frac{p}{2}}\right)^2,
\end{equation*}
for any $\mu>0$. Plugging this in \eqref{eq:rhoLpestim}, we get,

\begin{equation*}
    \frac{\dd}{\dd t} \int \frac{\rho^p}{p(p-1)} \leq \left(\frac{\lambda^2}{2\sigma_x}
    - \frac{2\sigma_x}{p^2} \frac{1-\mu}{\mu} \right)\int \rho^p + \frac{2\sigma_x}{p^2}\frac{C^2_{GN}}{\mu^2} \left( \int \rho^{\frac{p}{2}}\right)^2.
\end{equation*}
Choosing $\mu = \frac{2\sigma_x^2}{2(\sigma_x^2+\lambda^2p^2)}$, we obtain, that there exist $\alpha_p>0$ and $\beta_p>0$, such that,
\begin{equation}
    \label{est:RhoLpd/dtIntermediate}
    \frac{\dd}{\dd t} \int \rho^p \leq -\alpha_p \int \rho^p + \beta_p \left( \int \rho^{\frac{p}{2}}\right)^2.
\end{equation}

For $p = 2$, by mass preservation and the Gr\"onwall inequality from Lemma~\ref{lem:gronwall}, we obtain

\begin{equation}
    \label{eq:fLpestim2}
    \int (\rho(t))^2 \leq e^{-\alpha_2 t }\int \rho^2_0+ \frac{\beta_2}{\alpha_2}\left(1-e^{-\alpha_2 t}\right),
\end{equation}
leading to the required result.

For $p>2$, we note the following inequality from interpolating in $L^p_x\cap L^1_x$,

\begin{equation*}
    \left( \int \rho^{\frac{p}{2}}\right)^2 = \|\rho\|_{L^{\frac{p}{2}}_x}^p \leq \left(\|\rho\|_{L^p_x}^{(p-2)/(p-1)}\|\rho\|_{L^1_x}^{1/(p-1)} \right)^p \leq \|\rho\|_{L^p_x}^{p(p-2)/(p-1)}.
\end{equation*}
The last inequality holds from the conservation of the mass. Note that since $0 < \alpha \defeq \frac{p-2}{p-1} <1$, we can apply the Young product inequality with exponents, 

\begin{equation*}
    \frac{1}{\alpha} = \frac{p-1}{p-2}, \frac{1}{1-\alpha} = p-1,
\end{equation*}
so that, for any $\mu >0$,
\begin{equation*}
     \beta_p \left( \int \rho^{\frac{p}{2}}\right)^2 \leq \beta_p\|\rho\|_{L^p_x}^{p(p-2)/(p-1)} \leq \mu \int \rho^p + \frac{1}{p-1}\left(\frac{p-2}{\mu(p-1)}\right)^{p-2}\beta_p ^{p-1}.
\end{equation*}
Using the previous inequality in \eqref{est:RhoLpd/dtIntermediate}, with $\mu = \frac{\alpha_p}{2}$, we obtain, 
\begin{equation}
    \label{est:RhoLpd/dt}
    \frac{\mathrm{d}}{\mathrm{d}t} \int \rho^p \leq -\frac{\alpha_p}{2}\int \rho^p +  \frac{1}{p-1}\left(\frac{2(p-2)}{\alpha_p(p-1)}\right)^{p-2}\beta_p ^{p-1}.
\end{equation}
We conclude the case $p>2$ using the Gr\"onwall inequality from Lemma~\ref{lem:gronwall}.
\end{proof}

\subsection{Parabolic-parabolic case}
In this section, we prove that the parabolic-parabolic semi-group~\eqref{thm:FormicidaeParabolicSemiGroup} is dissipative. We begin by providing the following joint estimate on the chemotactic field $c$ and the spatial density $\rho$.

\begin{prop}
    \label{lem:parabolicLpC}
    Let $\sigma_c, \gamma, \sigma_x > 0, p>4$, $c_0 \in W^{2,p}_x$ and $f_0 \in L^p_{x,\theta}$, and let,
    \begin{equation*}
        c \in L^{p}_{t,loc}(W^{2,p}_x)\cap W^{1,p}_{t,loc}(L^p_x), f\in C_t(L^p_{x,\theta})\cap L^2_{t,loc}(H^1_{x,\theta})
    \end{equation*}
    be the unique solution of \eqref{sys:FormicidaeParabolic} with initial data $(f_0,c_0)$.
    
    Then there exists $C_p>0$ and $\alpha_p>0$ depending only on $\sigma_c, \gamma, \sigma_x $ and $p$, such that $\forall t \geq 0$,
    \begin{align}
        \label{est:parabolicCLp}
        \|c(t)\|_p + \|\rho(t)\|_p  &\leq C_p\left(e^{-\alpha_p t}(\|c_0\|_p+\|\rho_0\|_p) + 1\right),\\
        \label{est:parabolicGradCL2}
        \|\nabla_x c(t)\|_2 + \|\rho(t)\|_2  &\leq C_p\left(e^{-\alpha_p t}(\|\nabla_x c_0\|_2+\|\rho_0\|_2) + 1\right).
    \end{align}
\end{prop}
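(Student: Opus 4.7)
The plan is to leverage Proposition~\ref{prop:rhoestimLp}, which already provides the absorption of $\|\rho(t)\|_{L^p_x}$ for every $p\in[1,\infty)$, uniformly in the drift $B=B_\tau[c]$. Both inequalities then reduce to producing absorption bounds for $c$ (respectively $\nabla_x c$) using the parabolic equation $\partial_t c = \sigma_c\Delta_x c - \gamma c + \rho$ with the already-controlled source $\rho$.

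For \eqref{est:parabolicCLp}, I would multiply the equation for $c$ by $c|c|^{p-2}$ and integrate. The diffusion term gives a nonpositive contribution, the damping term yields $-\gamma\|c\|_p^p$, and the source is estimated by Young's inequality,
\begin{equation*}
\Bigl|\int \rho\, c|c|^{p-2}\dd x \Bigr| \leq \varepsilon \|c\|_p^p + C(\varepsilon)\|\rho\|_p^p,
\end{equation*}
choosing $\varepsilon<\gamma$ so that genuine damping survives. This produces $\frac{\dd}{\dd t}\|c\|_p^p \leq -\alpha\|c\|_p^p + C\|\rho(t)\|_p^p$. Inserting the exponential absorption of $\|\rho(t)\|_p^p$ from Proposition~\ref{prop:rhoestimLp} and applying Lemma~\ref{lem:gronwall} closes the bound for $\|c(t)\|_p$, which combined with Proposition~\ref{prop:rhoestimLp} itself yields~\eqref{est:parabolicCLp}.

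For \eqref{est:parabolicGradCL2} the main difficulty is that the natural $H^1$ estimate on $c$, obtained by testing by $-\Delta_x c$, produces a cross term $\int \nabla_x\rho\cdot\nabla_x c$ not controlled by the source alone. I would work with the joint weighted energy $E(t) = A\|\nabla_x c\|_2^2 + \|\rho\|_2^2$. Combining the identity
\begin{equation*}
\tfrac{1}{2}\tfrac{\dd}{\dd t}\|\nabla_x c\|_2^2 = -\sigma_c\|\Delta_x c\|_2^2 - \gamma\|\nabla_x c\|_2^2 + \int \nabla_x\rho\cdot\nabla_x c
\end{equation*}
with the $L^2$ estimate for $\rho$ obtained by testing $\partial_t \rho = \sigma_x \Delta_x \rho - \lambda \int v\cdot\nabla_x f \dd\theta$ by $\rho$ (and using $|\int v f\dd\theta| \leq \rho$, which gives $\tfrac{1}{2}\tfrac{\dd}{\dd t}\|\rho\|_2^2 \leq -\tfrac{\sigma_x}{2}\|\nabla_x\rho\|_2^2 + \tfrac{\lambda^2}{2\sigma_x}\|\rho\|_2^2$), I split the cross term by Young's inequality so that the $\|\nabla_x\rho\|_2^2$ part is absorbed into half of the Fokker--Planck dissipation, while $A\sim\gamma\sigma_x$ keeps the $\|\nabla_x c\|_2^2$ coefficient strictly negative. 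The remaining positive $\tfrac{\lambda^2}{\sigma_x}\|\rho\|_2^2$ is then traded for $\mu\|\nabla_x\rho\|_2^2 + C_\mu$ via the two-dimensional Gagliardo--Nirenberg inequality $\|\rho\|_2^2 \leq C_{GN}\|\rho\|_1\|\rho\|_{H^1}$ combined with mass conservation $\|\rho\|_1=1$, exactly as at the end of the proof of Proposition~\ref{prop:rhoestimLp}. Taking $\mu$ small enough yields $\tfrac{\dd}{\dd t}E \leq -\beta E + C$, and Lemma~\ref{lem:gronwall} delivers~\eqref{est:parabolicGradCL2}.

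The main obstacle is this second step, specifically the balancing of the weight $A$ and the Young splitting parameters: one must arrange that after absorbing the cross term there is still enough of $-\sigma_x\|\nabla_x\rho\|_2^2$ left to dominate $\tfrac{\lambda^2}{\sigma_x}\|\rho\|_2^2$ via Gagliardo--Nirenberg, while $-A\gamma\|\nabla_x c\|_2^2$ retains a strictly negative coefficient. The $L^p$ bound for $c$ is by comparison a routine Gronwall argument driven by the already-absorbed source~$\rho$.
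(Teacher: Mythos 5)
Your overall strategy (use Proposition~\ref{prop:rhoestimLp} for $\rho$, then control $c$ via the parabolic equation with damping, diffusion, and Young's inequality on the source) agrees with the paper. For \eqref{est:parabolicCLp} the computation is the same as the paper's, but the closing step differs: you feed the integrated absorption bound $\|\rho(t)\|_p\leq e^{-\alpha_p t}\|\rho_0\|_p+C_p$ into $\frac{\dd}{\dd t}\|c\|_p^p\leq -\alpha\|c\|_p^p+C\|\rho(t)\|_p^p$ and invoke Lemma~\ref{lem:gronwall}, whereas the paper instead adds a suitable multiple $m_p$ of the \emph{differential} inequality \eqref{est:RhoLpd/dt} so that the $\int\rho^p$ term cancels and the joint quantity $\int c^p+m_p\int\rho^p$ satisfies Gronwall with constant coefficients. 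Your version needs a Gronwall with an exponentially decaying time-dependent source (a harmless extension, but not literally Lemma~\ref{lem:gronwall} as stated); if instead you bound $\|\rho(t)\|_p^p$ by a constant uniformly in $t$, you lose the $e^{-\alpha_p t}\|\rho_0\|_p$ decay required by \eqref{est:parabolicCLp}, so the time-dependent refinement is actually needed.

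For \eqref{est:parabolicGradCL2} your route is correct but noticeably more complicated than necessary, and the premise that motivates it is wrong. You claim the cross term is ``not controlled by the source alone''; but before integrating by parts it reads $-2\int\rho\,\Delta_x c$, which is absorbed immediately by Young against the $c$-diffusion, $-2\int\rho\,\Delta_x c\leq \sigma_c\int(\Delta_x c)^2+\tfrac{1}{\sigma_c}\int\rho^2$. After discarding the leftover $-\sigma_c\int(\Delta_x c)^2$, one is left with $\frac{\dd}{\dd t}\|\nabla_x c\|_2^2\leq -2\gamma\|\nabla_x c\|_2^2+\tfrac{1}{\sigma_c}\|\rho\|_2^2$, and the rest is the same addition-of-a-multiple trick against the $p=2$ estimate \eqref{est:RhoLpd/dtIntermediate}, in perfect analogy with the $L^p$ case. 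Your alternative --- integrating by parts to $\int\nabla_x\rho\cdot\nabla_x c$, bringing in the raw $\rho$-dissipation $-\tfrac{\sigma_x}{2}\|\nabla_x\rho\|_2^2+\tfrac{\lambda^2}{2\sigma_x}\|\rho\|_2^2$, balancing the weight $A$, and finally closing with Gagliardo--Nirenberg twice (once to trade $\|\rho\|_2^2$ for $\|\nabla_x\rho\|_2^2$, and implicitly again to recover a $-\|\rho\|_2^2$ term from $-\|\nabla_x\rho\|_2^2$ so that the right-hand side becomes $-\beta E+C$) --- does close, but it re-derives part of Proposition~\ref{prop:rhoestimLp} inside this proof and introduces a delicate multi-parameter balancing that the diffusive absorption renders unnecessary.
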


\begin{remark}
    The condition $p>4$ is only required for the well-posedness of the solution and is not crucial for this estimate. But since the domain is of finite measure, estimates \eqref{est:parabolicCLp} and \eqref{est:parabolicGradCL2} controls any $q$ such that $p \geq q \geq 1$, we leave the statement as such.
\end{remark}

\begin{proof}
    We first note that the chemotactic field $c$ is in the space $C_t(W^{1,p}_x)$, by the continuity of the semi-group. Multiplying the equation by $pc^{p-1}$, integrating by parts similarly as in Proposition~\ref{est:rhoestimLp}, and applying the following Young product inequality,

    \begin{equation*}
        ab\leq \frac{1}{\varepsilon^{p-1}p}a^p + \frac{(p-1)\varepsilon}{p}b^{p/(p-1)}, \forall \varepsilon > 0, \ \ \forall  a,b\geq 0, \ \ \forall p > 1,
    \end{equation*}
    with $\varepsilon = \gamma /2(p-1)$, we obtain, via Lions--Magenes,

    \begin{align}
        \frac{\mathrm{d}}{\mathrm{d}t}\int c^{p} & = -\frac{\sigma_c (p-1)}{4p}\int |\nabla_x (c^{p/2})|^2 -\gamma p \int c^p + p\int \rho c^{p-1},\nonumber\\
        &\leq  -\frac{\gamma p}{2} \int c^p + \frac{(2(p-1))^{p-1}}{p^{2p-1}\gamma^{p-1}}\int \rho^p, \label{est:parabolicCLpIntermediate}
    \end{align}
    
    Adding the adequate multiple $m_p>0$ of the intermediate estimate \eqref{est:RhoLpd/dt} on $\frac{d}{dt}\int \rho^p$ from Proposition~\ref{prop:rhoestimLp}, so that the last term drops, we obtain, that there exists a constant $C_p>0, \alpha_p>0$, such that,
    \begin{equation*}
        \frac{\mathrm{d}}{\mathrm{d}t}\int c^{p} + m_p\frac{\mathrm{d}}{\mathrm{d}t}\int \rho^{p} \leq -\alpha_p\left( \int c^{p} + m_p\int \rho^{p}\right) + C_p.
    \end{equation*}
    The conclusion follows from the Gr\"onwall inequality from Lemma~\eqref{lem:gronwall}.

    In order to obtain \eqref{est:parabolicGradCL2}, we note the following property,
    \begin{equation*}
        \frac{\mathrm{d}}{\mathrm{d}t} \int |\nabla_xc|^2 = - 2 \int \partial_t c \Delta_x c,
    \end{equation*}
    thus multiplying the equation by $-\frac{1}{2}(\Delta_xc)$ and integrate by part, yields the following equation,
    \begin{align*}
        \frac{\mathrm{d}}{\mathrm{d}t} \int |\nabla_x c|^2 & = -2 \sigma_c \int (\Delta_x c)^2 + 2\gamma \int c(\Delta_x c) - 2\int \rho (\Delta_x c),\\
                                        & \leq - \sigma_c \int (\Delta_x c)^2 - 2\gamma \int |\nabla_x c|^2  + \frac{1}{\sigma_c}\int \rho^2,\\
                                        & \leq - 2\gamma \int |\nabla_x c|^2  + \frac{1}{\sigma_c}\int \rho^2.
    \end{align*}
    The result follows similarly as the previous $L^p$ estimate, adding the adequate multiple of the estimate on $\frac{d}{dt} \int \rho^2$ from Proposition~\ref{prop:rhoestimLp} to drop the last positive term involving $\int \rho^2$, and conclude with the Gr\"onwall inequality from Lemma~\eqref{lem:gronwall}.
\end{proof}

We now prove the $L^2_{x,\theta} \times H^2_x$ absorption.

\begin{prop}
    \label{prop:parabolicH2L2Absorb}
    Let $\sigma_x, \sigma_\theta, \sigma_c , \gamma >0$, then there exists $C_1>0$ and $C_2 >0$, such that for any solution $(f, c)$ to the system \eqref{sys:FormicidaeParabolic} in the space
    \begin{equation*}
        (L^2_t(H^1_{x,\theta})\cap C_t (L^6_{x,\theta})) \times (L^{6}_{t,loc}(W^{2,6}_x) \cap W^{1,6}_{t,loc}(L^6_x)),    
    \end{equation*} 
    there exists $t_1>0$ depending on $\|c_0\|_{L^6_x}$, $\|\nabla^2_x c_0\|_{L^2_x}$, $\|\rho_0\|_{L^6_x}$ and $\|f_0\|_{L^2_x}$, such that,
    \begin{equation*}
        \forall t \geq t_1(\|c_0\|_{L^6_x}, \|\nabla^2_x c_0\|_{L^2_x},\|f\|_{L^2_{x,\theta}}, \|\rho_0\|_{L^6_x}) >0,
    \end{equation*}
    the following estimates hold,
    \begin{equation}
        \label{est:parabolicL2fL2nabla2c}
        \|\f(t)\|_{L^2_{x,\theta}}  + \|\nabla^2_x c(t)\|_{L^2_x} \leq C_1,
    \end{equation}

    \begin{equation}
        \label{est:parabolicL2nabalfL2nabla3c}
        \int_t^{t+1}\int |\nabalaxtheta f|^2 \mathrm{d}\theta \mathrm{d}x\mathrm{d}s +\int_t^{t+1} \int |\nabla^3_x c|^2 \mathrm{d}x\mathrm{d}s\leq C_2.
    \end{equation}

    The constants $C_1$ and $C_2$ can be expressed as, $C_1 = \chi^{p_1}\Tilde{C}_1, C_2 = \chi^{p_1}\Tilde{C}_2$, for some $p_1 > 1$ independent of the parameters and $\Tilde{C}_1, \Tilde{C}_2$ depending on all the parameters but $\chi$.
\end{prop}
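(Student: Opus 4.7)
The plan is to establish a coupled differential inequality for the energy
\[
E(t) := \|f(t)\|_{L^2_{x,\theta}}^2 + \mu\,\|\nabla_x^2 c(t)\|_{L^2_x}^2,
\]
where $\mu > 0$ is to be chosen small, and apply the Gr\"onwall inequality of Lemma~\ref{lem:gronwall}. As in the proof of Proposition~\ref{prop:ActiveMatterEquation}, all computations are performed \emph{a priori} on a sequence of mollified solutions and upgraded via the stability theory of~\cite[Section~4]{bertucci2024curvature}. Throughout, I treat the quantities $\|c\|_{L^p_x}$, $\|\rho\|_{L^p_x}$ ($1\leq p\leq 6$), $\|\rho\|_{L^2_x}$, and $\|\nabla_x c\|_{L^2_x}$, already absorbed by Proposition~\ref{lem:parabolicLpC}, as uniformly bounded for $t$ large enough.

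First, multiplying the Fokker--Planck equation by $f$ and integrating (the transport term $\int\nabla_x\cdot(vf^2/2)$ vanishes because $\nabla_x\cdot v=0$), and using integration by parts in $\theta$ on the interaction term as in Proposition~\ref{prop:ActiveMatterEquation}, one obtains
\[
\frac{\dd}{\dd t}\!\int\!\frac{f^2}{2} = -\!\int\!|\widetilde{\nabla}_\xi f|^2 - \frac{\chi}{2}\!\int\! f^2\,\partial_\theta B_\tau[c].
\]
Second, applying $\nabla_x^2$ to the $c$-equation, testing against $\nabla_x^2 c$, and integrating by parts once on the source ($\int\nabla_x^2\rho:\nabla_x^2 c = -\int\nabla_x\rho\cdot\nabla_x\Delta_x c$), Young's inequality together with the Jensen bound $\|\nabla_x\rho\|_{L^2_x}^2\leq 2\pi\|\nabla_x f\|_{L^2_{x,\theta}}^2$ yields
\[
\frac{\dd}{\dd t}\!\int\!\frac{|\nabla_x^2 c|^2}{2} \leq -\frac{\sigma_c}{2}\|\nabla_x^3 c\|_{L^2}^2 - \gamma\|\nabla_x^2 c\|_{L^2}^2 + C\|\nabla_x f\|_{L^2_{x,\theta}}^2.
\]
Taking $\mu\leq \sigma_x/(2C)$ in $E$ absorbs the $\|\nabla_x f\|^2$ remainder into half of the $f$-dissipation, producing
\[
\frac{\dd E}{\dd t} + \sigma_x\|\nabla_x f\|^2 + 2\sigma_\theta\|\partial_\theta f\|^2 + \mu\sigma_c\|\nabla_x^3 c\|^2 + 2\mu\gamma\|\nabla_x^2 c\|^2 \leq -\chi\!\int\! f^2\,\partial_\theta B_\tau[c].
\]

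The control of the cross term exploits the pointwise identity $\partial_\theta B_\tau[c] = -v\cdot\nabla_x c + \tau(\Delta_x c - 2v\cdot\nabla_x^2 c\, v)$, giving $|\partial_\theta B_\tau[c]|\leq C(|\nabla_x c| + |\nabla_x^2 c|)$. After H\"older's inequality in the marginal $F(x):=\|f(x,\cdot)\|_{L^2_\theta}$, the 2D Gagliardo--Nirenberg inequality of Proposition~\ref{thm:GagliardoNirenbeg} applied to $F$ (with $\|F\|_{H^1_x}\leq C\|f\|_{H^1_{x,\theta}}$) produces factors of the form $\|f\|_{L^2_{x,\theta}}^{5/3}\|f\|_{H^1_{x,\theta}}^{1/3}$; the 2D Sobolev embedding $H^1_x\hookrightarrow L^6_x$ and the 2D interpolation $\|\nabla_x^2 c\|_{L^2}^2\leq C\|\nabla_x c\|_{L^2}\|\nabla_x^3 c\|_{L^2}$ (valid because $\nabla_x^2 c$ has zero mean on $\mathbb{T}^2_1$), together with the already absorbed $\|\nabla_x c\|_{L^2_x}$ bound, let us control $\|\nabla_x c\|_{L^6_x}$ and $\|\nabla_x^2 c\|_{L^6_x}$ in terms of $\|\nabla_x^2 c\|_{L^2}$ and $\|\nabla_x^3 c\|_{L^2}$. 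Successively applying Young's inequality with tuned exponents then absorbs a fraction of each of $\|\nabla_x f\|^2$, $\|\nabla_x^3 c\|^2$, and $\|\nabla_x^2 c\|^2$ into the dissipation, leaving a residual of the form $C\chi^{p_1}(1+\|f\|_{L^2_{x,\theta}}^2)$ for some $p_1>1$.

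Collecting everything, $E$ satisfies $\frac{\dd E}{\dd t}\leq -\alpha E + \beta$ with $\alpha>0$ and $\beta=C\chi^{p_1}$, and Lemma~\ref{lem:gronwall} gives $E(t)\leq (E(0)+\beta/\alpha)e^{-\alpha t}+\beta/\alpha$, proving~\eqref{est:parabolicL2fL2nabla2c} for $t\geq t_1(E(0))$ large enough. For~\eqref{est:parabolicL2nabalfL2nabla3c}, integrating the same differential inequality over $[t,t+1]$ and using the now-absorbed bound on $E(t)$ yields $\int_t^{t+1}(\|\nabla_\xi f\|^2+\|\nabla_x^3 c\|^2)\,\dd s \leq C_2$ of the claimed form. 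The main obstacle is the control of the cross term: since $\int f^2\,\partial_\theta B_\tau[c]$ is cubic in the unknowns, closing the Young inequalities at the strictly linear level in $E$ on the right-hand side is delicate and requires combining \emph{all} of the dissipation terms together with the 2D interpolation relying on the absorbed $\|\nabla_x c\|_{L^2_x}$; this interplay is what forces the scaling $C_1 \propto \chi^{p_1}$ with $p_1>1$.
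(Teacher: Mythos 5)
Your overall strategy mirrors the paper's: form a joint energy $E=\|f\|_{L^2_{x,\theta}}^2+\mu\|\nabla_x^2 c\|_{L^2_x}^2$, derive the two basic dissipation identities (the $L^2$ balance for $f$ and the $H^2$ balance for $c$ with the Jensen bound $\|\nabla_x\rho\|_{L^2}\leq 2\pi\|\nabla_x f\|_{L^2_{x,\theta}}$), couple them, absorb the nonlinear term by H\"older, interpolation and Young, and finish with Gr\"onwall plus a time-integral for the dissipation. That part is faithful to the paper's argument.

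The genuine gap is in the interpolation step used to close the nonlinear term. You bound $\|f^2\|_{L^3_x(L^1_\theta)}=\|F\|_{L^6_x}^2$ by applying the 2D Gagliardo--Nirenberg inequality to the marginal $F(x)=\|f(x,\cdot)\|_{L^2_\theta}$, producing a factor of the form $\|f\|_{L^2_{x,\theta}}^{a}\|\nabla_\xi f\|_{L^2_{x,\theta}}^{b}$ with $a+b=2$. The quantity $\|f\|_{L^2_{x,\theta}}$ is \emph{not} a priori absorbed at this stage; it is precisely (a component of) the energy $E$ one is trying to control. Consequently, once you multiply by the $B$-factor $\lesssim\|\nabla_x^3 c\|_{L^2}^{3/7}+1$, the total Young exponents (in squared units) are $b/2+a/2+3/14 = 1 + 3/14>1$, so the three-term Young inequality does not apply with a bounded remainder. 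If you instead treat $\|f\|_{L^2}$ as a parameter and absorb only the $\|\nabla_\xi f\|^2$ and $\|\nabla_x^3 c\|^2$ pieces into the dissipation, the residual is of the form $C\chi^{p_1}\|f\|_{L^2_{x,\theta}}^{\gamma}$ with $\gamma>0$, i.e., it grows with $E$ itself. You then cannot reach $\frac{\dd E}{\dd t}\leq -\alpha E+\beta$ without imposing a smallness condition on $\chi$, which is not available here (the proposition must hold for all $\chi>0$, with $C_1,C_2$ merely polynomial in $\chi$).

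The paper avoids this by interpolating $\|f\|_{L^6_x(L^2_\theta)}^2$ in the mixed space $L^6_x(L^1_\theta)\cap L^6_{x,\theta}$ rather than in $L^2_x\cap H^1_x$: the $L^6_x(L^1_\theta)$ factor equals $\|\rho\|_{L^6_x}$, which \emph{is} already absorbed by Proposition~\ref{prop:rhoestimLp} and hence a genuine constant; the $L^6_{x,\theta}$ factor is handled by the three-dimensional Sobolev embedding $H^1_{x,\theta}\hookrightarrow L^6_{x,\theta}$, yielding a power $3/5<1$ of $\|\nabla_\xi f\|_2^2$. This leaves enough Young budget ($3/5+3/14<1$) to absorb everything into the dissipation with a constant remainder, without any smallness on $\chi$. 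To fix your argument you must replace the $L^2$-interpolation of $F$ by the mixed-norm interpolation $\|f\|_{L^6_x(L^2_\theta)}\leq\|\rho\|_{L^6_x}^{2/5}\|f\|_{L^6_{x,\theta}}^{3/5}$ and then use the 3D Sobolev embedding, so that the absorbed $\|\rho\|_{L^6_x}$ bound, rather than $\|f\|_{L^2_{x,\theta}}$, appears in the residual.
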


\begin{proof}
    We start with the following energy estimate on the phase space density $f$, where we apply the Lions--Magenes lemma.
    \begin{align}
        \frac{\mathrm{d}}{\mathrm{d}t}\int \frac{f^2}{2} & = -\sigma_x \int |\nabla_x f|^2 - \sigma_\theta \int (\partial_\theta f)^2 - \chi \int f \partial_\theta (B[c]f) - \lambda \int f \nabla_x \cdot ( vf),\nonumber\\
        & = -\sigma_x \int |\nabla_x f|^2 - \sigma_\theta \int (\partial_\theta f)^2 + \chi \int B[c] \partial_\theta (f^2) + \lambda \int \frac{v}{2} \nabla_x f^2,\nonumber\\
        \label{est:parabolicfL2Intermediate}
        & = -\sigma_x \int |\nabla_x f|^2 - \sigma_\theta \int (\partial_\theta f)^2 - \frac{\chi}{2} \int f^2 \partial_\theta B[c].
    \end{align}
    Using that $\rho \in L^2_t(H^1_x)$, we obtain the following equations on the spatial derivatives of $c$,
    \begin{equation}
        \label{est:parabolicnabla2Cintermidiatexi}
        \partial_t \partial_{x_i} c = \sigma_c \Delta_x \partial_{x_i} c - \gamma \partial_{x_i} c + \partial_{x_i} \rho, \text{ for } i = 1,2.
    \end{equation}
    This regularity implies that $c\in L^2_t(H^3_x), \partial_t c \in L^2_{t,x}$, and from the Aubin--Lions lemma, we have that $c\in C_t(H^2_x)$. 
    
    Multiplying both equations~\eqref{est:parabolicnabla2Cintermidiatexi} by the corresponding $-2\Delta_x \partial_{x_i} c$, and integrating, we obtain,
    \begin{align}
        \frac{\mathrm{d}}{\mathrm{d}t}\int |\nabla_x \partial_{x_i} c |^2 & = -2\sigma_c \int (\Delta_x \partial_{x_i} c)^2 + \gamma2 \int \partial_{x_i} c \Delta_x \partial_{x_i} c - 2\int \partial_{x_i} \rho (\Delta_x \partial_{x_i} c),\nonumber \\
        \label{est:parabolicnabla2Cintermidiate}
        & \leq -\sigma_c \int (\Delta_x \partial_{x_i} c)^2 - 2\gamma \int | \nabla_x \partial_{x_i} c |^2 + \frac{1}{\sigma_c}\int |\partial_{x_i} \rho|^2.
    \end{align}
    We then note by integration by parts that,
    \begin{align*}
        \int (\Delta_x \partial_{x_i} c)^2 & = \sum_{1\leq j,k \leq 2} \int (\partial_{x_j}^2 \partial_{x_i} c )(\partial_{x_k}^2 \partial_{x_i} c) = \sum_{1\leq j,k \leq 2} \int (\partial_{x_k}\partial_{x_j} \partial_{x_i} c )(\partial_{x_k}\partial_{x_j}\partial_{x_i} c )\\
        &= \int |\nabla^2_x \partial_{x_i} c|^2.
    \end{align*}
    Using this in \eqref{est:parabolicnabla2Cintermidiate} and suming for both $i = 1,2$, we obtain,
    \begin{equation}
        \label{est:parabolicnabla2C}
         \frac{\mathrm{d}}{\mathrm{d}t} \int |\nabla^2_x c |^2 \leq -\sigma_c \int |\nabla^3_x c|^2 - 2\gamma \int |\nabla^2_x c|^2 + \frac{1}{\sigma_c} \int |\nabla_x \rho|^2.
    \end{equation}
    We then treat separately the third term in \eqref{est:parabolicfL2Intermediate}, with H\"older inequality,
    \begin{equation}
        \label{est:parabolicBf2Bound1}
        \int |\partial_\theta B[c] f^2| \leq \|f^2\|_{L^3_x(L^1_\theta)}\|\partial_\theta B[c]\|_{L^{3/2}_x(L^\infty_\theta)}.
    \end{equation}
    Noting the following identity, 
    \begin{equation*}
        \|f^2\|_{L^3_x(L^1_\theta)} = \|f\|_{L^6_x(L^2_\theta)}^2,
    \end{equation*}
    interpolating $f$ in $L^6_x(L^1_\theta) \cap L^6_{x,\theta}$, and the Sobolev embedding from \cite[Corollary 1.2]{benyi2013sobolev},this implies that there exists $C_1>0$, such that
    \begin{equation}
        \label{est:parabolicL6L1f}
        \|f^2\|_{L^3_x(L^1_\theta)} \leq \left(\|f\|^{2/5}_{L^6_x(L^1_\theta)}\|f\|^{3/5}_{L^6_{x,\theta}}\right)^2 \leq C_1 \|\rho\|_{L^6_x}^{4/5}\left(\|\nabalaxtheta f \|^2_2 + 1\right)^{3/5}.
    \end{equation}
    We then treat the scalar field as follows, from its definition the following inequality holds,
    \begin{equation*}
        \|\partial_\theta B[c]\|_{L^{3/2}_x(L^\infty_\theta)} \leq C_\tau \left(\|\nabla_x c\|_{L^{3/2}_x} + \|\nabla^2_x c\|_{L^{3/2}_x}\right),
    \end{equation*}

    for some $C_\tau>0$ depending on $\tau$.

    
    We then apply the Gagliardo-Nirenberg exponents in dimension 2, with the following exponents, 
    \begin{align*}
        j = 2, m = 3, p = 3/2, r = 2, q = 6, s = 1, \alpha = 3/7,\\
        j = 1, m = 3, p = 3/2, r = 2,  q = 1, s = 1, \alpha = 1/6,
    \end{align*}
    and obtain, for some constant $C_{GN}>0$,
     \begin{align*}
        \|\nabla^2_x c\|_{L^{3/2}_x} \leq C_{GN}\left( \|\nabla^3_x c\|^{3/7}_{L^2_x}\| c \|^{4/7}_{L^6_x} + \|c\|_{L^1_x}\right),\\
        \|\nabla_x c\|_{L^{3/2}_x} \leq C_{GN}\left(\|\nabla^3_x c\|^{1/6}_{L^2_x}\| c \|^{5/6}_{L^1_x} + \|c\|_{L^1_x}\right).
     \end{align*}

     This allows to conclude, using \eqref{est:parabolicL6L1f} and the absorption result in $L^6_x\times L^6_x$ on $(c,\rho)$ from Proposition~\ref{lem:parabolicLpC}, that there exists $C_2>0$, such that for initial conditions $(c_0, f_0)\in (L^6_x,L^2_{x,\theta}\cap L^6_x(L^1_\theta)_+)$, there exists $t_1(\|c_0\|_6, \|\rho_0\|_6)>0$ depending on $\|c_0\|_6$ and $ \|\rho_0\|_6$, such for any $t \geq t_1 >0$,
     \begin{align}
        \label{est:parabolicL3/2LinftyfBc}
         \|\partial_\theta B[c(t)]\|_{L^{3/2}_x(L^\infty_\theta)} & \leq C_2\left(\|\nabla^3_x c(t)\|^{3/7}_{L^2_x} + 1\right),\\
         \label{est:parabolicL3/2f2Bound}
         \|f^2(t)\|_{L^3_x(L^1_\theta)} & \leq C_2 \left(\|\nabalaxtheta f(t) \|^2_2 + 1\right)^{3/5}.
     \end{align}

     Plugging \eqref{est:parabolicL3/2LinftyfBc} and \eqref{est:parabolicL3/2f2Bound}  in \eqref{est:parabolicBf2Bound1} leads to the following estimate for any $t \geq t_1>0$,
     \begin{equation}
         \label{est:parabolicBf2Bound2}
         \int |\partial_\theta B[c(t)] f^2(t)| \leq C\left(\|\nabla^3_x c(t)\|^2_{L^2_x} + 1\right)^{3/14}\left(\|\nabalaxtheta f(t) \|^2_2 + 1\right)^{3/5}.
     \end{equation}

    Recalling the following Young product inequality for three terms.

    Suppose that $0<\alpha, \beta < 1$, such that $\alpha + \beta < 1$, then,
     \begin{equation}
        \label{thm:YoungProductThreeTerms}
         a^\alpha b^\beta c \leq \varepsilon a + \eta b + \left( \frac{c}{(\varepsilon/\alpha)^\alpha(\eta/\beta)^\beta} \right)^{\frac{1}{1-\alpha -\beta}} \ \ \ \forall a,b,c\geq 0\text{ and } \varepsilon,\eta >0.
     \end{equation}

     Adding estimates \eqref{est:parabolicfL2Intermediate} and $(\sigma_x \sigma_c/ 2)$-times estimates  \eqref{est:parabolicnabla2C}, and using that,
     \begin{equation*}
         \int |\nabla_x \rho|^2\mathrm{d}x \leq \int |\nabla_x f|^2\mathrm{d}\theta \mathrm{d}x,
     \end{equation*}
     we obtain,
     \begin{align*}
          \frac{\mathrm{d}}{\mathrm{d}t} \int f^2 + \frac{\sigma_x \sigma_c}{2} \frac{\mathrm{d}}{\mathrm{d}t}  \int |\nabla^2_x c|^2 &\leq  -\frac{\sigma_x\wedge\sigma_c}{2} \int |\nabalaxtheta f|^2 -\frac{\sigma_x \sigma_c^2}{2} \int |\nabla^3_x c|^2 \\
          &\hspace{2em} - 2\gamma \frac{\sigma_x \sigma_c}{2} \int |\nabla^2_x c|^2 + \frac{\chi}{2} \int |f^2 \partial_\theta B[c]|,
     \end{align*}

     Bounding the last term by \eqref{est:parabolicBf2Bound2} and applying Young product \eqref{thm:YoungProductThreeTerms}, with 
     \begin{equation*}
         \alpha = 3/14, \beta = 3/5, \varepsilon = \frac{\sigma_x \sigma_c^2}{4}\text{ and } \eta = \frac{\sigma_x\wedge\sigma_c}{4},
     \end{equation*}
     
     this implies that there exists $C>0$ depending on $\chi, \tau, \sigma_x, \sigma_c, \sigma_\theta$, such that for any $t\geq t_1>0$,
     \begin{align}
          \frac{\mathrm{d}}{\mathrm{d}t} \int f^2 + \frac{\sigma_x \sigma_\theta}{2} \frac{\mathrm{d}}{\mathrm{d}t}  \int |\nabla^2_x c|^2 &\leq  -\frac{\sigma_x\wedge\sigma_c}{4} \int |\nabalaxtheta f|^2 -\frac{\sigma_x \sigma_c^2}{4} \int |\nabla^3_x c|^2\nonumber\\
          \label{est:parabolicfL2Nabla2cL2JointIntermediate}
          &\hspace{3em}- \gamma \sigma_x \sigma_c \int |\nabla^2_x c|^2 + C.
     \end{align}
     Then applying Poincaré inequality and the conservation of the mass on $f$,
     \begin{equation*}
         \|f\|^2_{L^2_{x,\theta}} \leq 2\left(C^2_P \|\nabalaxtheta f\|^2_{L^2_{x,\theta}} + \frac{1}{2\pi}\right),
     \end{equation*}
     where $C_{\mathcal{P}}>0$ is the Poincaré constant on the three-dimensional torus $\Torus_1^2\times \Torus_{2\pi}$.
     Using this in estimate~\eqref{est:parabolicfL2Nabla2cL2JointIntermediate}, yields that there exists a new constant $C>0$ such that, the following holds, on $t\geq t_1 >0$,
     \begin{equation}
        \label{est:parabolicfL2Nabla2cL2JointIntermediate2}
          \frac{\mathrm{d}}{\mathrm{d}t} \int f^2 + \frac{\sigma_x \sigma_c}{2} \frac{\mathrm{d}}{\mathrm{d}t}  \int |\nabla^2_x c|^2 \leq -\frac{\sigma_x\wedge\sigma_c}{8C_{\mathcal{P}}^2} \int |f|^2 - \gamma \sigma_x \sigma_c \int |\nabla^2_x c|^2 + C,
     \end{equation}
     Applying the Gr\"onwall inequality from Lemma~\eqref{lem:gronwall}, we obtain that there exists a constant $C>0$, depending only on the parameters, $\chi, \tau,\sigma_x,\sigma_\theta,\lambda, \sigma_c$, such that for any initial condition $(f_0, c_0)$, there exists $t_0(\|c_0\|_{L^6_x},\|\rho_0\|_{L^6_x})>0$, such that $\forall t \geq t_0> 0$,
     \begin{equation*}
         \|\f(t)\|^2_{L^2_{x,\theta}}  + \|\nabla^2_x c(t)\|^2_{L^2_x} \leq C\left( e^{-\alpha (t-t_0)/2}\left(\|\f(t_0)\|^2_{L^2_{x,\theta}}  + \|\nabla^2_x c(t_0)\|^2_{L^2_x}\right) + 1\right),
     \end{equation*}
     where $\alpha > 0$ is given by,
     \begin{equation*}
         \alpha = \frac{\sigma_x\wedge\sigma_c}{8C_{\mathcal{P}}^2} \wedge 2\gamma.
     \end{equation*}

     The above computations, also imply growth estimates on the finite time interval $[0,t_0]$, for $\nabla^2_x c$ and $f$, so that the norm $\|\f(t_0)\|^2_{L^2_{x,\theta}}  + \|\nabla^2_x c(t_0)\|^2_{L^2_x}$ only depends on the norms of the initial condition. We conclude the desired result by choosing $t_1\geq t_0$, sufficiently large, depending on $\|\rho_0\|_{L^2_x}, \|\nabla^2_x c_0\|_{L^2_x},\|f_0\|_{L^2_{x,\theta}}$, to obtain estimate~\eqref{est:parabolicL2fL2nabla2c}.
     
     Finally, integrating \eqref{est:parabolicfL2Nabla2cL2JointIntermediate2} from $t$ to $t+1$, for $t > t_1$,   using the bound \eqref{est:parabolicL2fL2nabla2c}, we obtain \eqref{est:parabolicL2nabalfL2nabla3c}.
\end{proof}

We now prove the $H^1_{x,\theta} \times H^3_x$ absorption.
\begin{thm}
    \label{thm:AbsorbH3H1Parabolic}
    Let $\sigma_x, \sigma_\theta, \sigma_c , \gamma >0$, then there exists $C_3>0$ depending on the parameters, such that for any solution $(f, c)$ to the system \eqref{sys:FormicidaeParabolic} in the space $(L^2_{t,loc}(H^1_{x,\theta})\cap C_t (L^6_{x,\theta})) \times (L^{6}_{t,loc}(W^{2,6}_x) \cap W^{1,6}_{t,loc}(L^6_x)$, there exists $t_2>0$ depending on $\|c_0\|_{L^6_x}$, $\|\nabla^2_x c_0\|_{L^2_x}$, $\|\rho_0\|_{L^6_x}$ and $\|f_0\|_{L^2_x}$, such that,
    \begin{equation*}
        c\in C([t_2, \infty), H^3_x), f\in C([t_2, \infty), H^1_{x,\theta}),
    \end{equation*}
    and for any $t\geq t_2(\|c_0\|_{L^6_x}, \|\nabla^2_x c_0\|_{L^2_x}, \|\rho_0\|_{L^6_x},\|f_0\|_{L^2_{x,\theta}})>0$, the following estimate holds,
    \begin{equation*}
        \|c(t)\|_{H^3_x}+\|f(t)\|_{H^1_{x,\theta}} \leq C_3.
    \end{equation*}
    Furthermore, the following estimate holds,
    \begin{equation}
        \label{est:L2H4L2H2}
        \int_t^{t+1}\int |\nabla^4_x c|^2 \mathrm{d}x \mathrm{d}s + \int^{t+1}_t \int |\nabalaxtheta^2 f|^2 \mathrm{d}\theta \mathrm{d}x \mathrm{d}s \leq C_4, \text{ for all } t \geq t_2.
    \end{equation}
    Finally, the constants $C_3$, and $C_4$ can be expressed as, $C_3 = (1+\chi^{p_2})\Tilde{C}_3, C_4 = (1+\chi^{p_2})\Tilde{C}_4$, where $\Tilde{C}_3$ and $\Tilde{C}_4$ depend on all the parameters but $\chi$, and $p_2>1$ is independent of the parameters.
\end{thm}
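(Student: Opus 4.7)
The plan is to iterate the strategy of Proposition~\ref{prop:parabolicH2L2Absorb} one derivative higher: do weighted energy estimates in $H^1_{x,\theta}\times H^3_x$, control the resulting nonlinear couplings via Gagliardo--Nirenberg interpolation using the $L^2_{x,\theta}\times H^2_x$ absorption already in hand, and close the loop by Gr\"onwall.

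First, from Proposition~\ref{prop:parabolicH2L2Absorb} we already have $\|f(t)\|_{L^2_{x,\theta}}+\|\nabla^2_x c(t)\|_{L^2_x}\leq C_1$ for $t\geq t_1$, together with the uniform integral bound \eqref{est:parabolicL2nabalfL2nabla3c} on $\nabla_\xi f$ and $\nabla^3_x c$. This integral bound provides a starting time $t_1'\geq t_1$ for which $\|\nabla_\xi f(t_1')\|_{L^2_{x,\theta}}+\|\nabla^3_x c(t_1')\|_{L^2_x}$ is finite. I will then differentiate the Fokker--Planck equation in each coordinate $\xi_i\in\{x_1,x_2,\theta\}$, multiply by $\partial_{\xi_i} f$ and integrate in $x,\theta$. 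As in \eqref{est:parabolicfL2Intermediate}, the drift $\lambda v$ and the linear diffusion part produce the clean dissipation $-\sum_i\|\widetilde{\nabla}_\xi\partial_{\xi_i} f\|_{L^2_{x,\theta}}^2$, while the coupling $\partial_\theta(\chi B_\tau[c] f)$ generates a commutator of the form $\chi\int \partial_{\xi_i}(B_\tau[c]f)\,\partial_\theta\partial_{\xi_i} f$ and, after integration by parts in $\theta$, contributes terms bilinear in $\nabla_\xi f$ and derivatives of $B_\tau[c]$ up to order three in $x$.

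Next, for $c$ I apply $\partial^3_{x_i x_j x_k}$ to the chemical equation, multiply by the same third derivative, and integrate. Mirroring \eqref{est:parabolicnabla2C}, this yields
\begin{equation*}
    \frac{\mathrm d}{\mathrm dt}\int|\nabla^3_x c|^2\leq -\sigma_c\int|\nabla^4_x c|^2-2\gamma\int|\nabla^3_x c|^2+\frac{1}{\sigma_c}\int|\nabla^2_x \rho|^2,
\end{equation*}
and the last term is absorbed in the $f$-dissipation since $\|\nabla^2_x \rho\|_{L^2_x}^2\leq\|\nabla_\xi^2 f\|_{L^2_{x,\theta}}^2$ after integration in $\theta$. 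The main work is to estimate the nonlinear contribution to $\tfrac{d}{dt}\|\nabla_\xi f\|_2^2$. Typical worst terms are $\int\partial_\theta B_\tau[c]\,|\nabla_\xi f|^2$ and $\int(\nabla_\xi\partial_\theta B_\tau[c])\,f\,\nabla_\xi f$, both of which I will bound by H\"older as $\|\partial_\theta B_\tau[c]\|_{L^3_x(L^\infty_\theta)}\|\nabla_\xi f\|_{L^{12/5}_x(L^2_\theta)}^2$ or similar, and then interpolate via Gagliardo--Nirenberg (Proposition~\ref{thm:GagliardoNirenbeg}) between the higher-order norms $\|\nabla_\xi^2 f\|_2$, $\|\nabla^4_x c\|_2$ and the lower-order norms $\|f\|_2$, $\|\nabla^2_x c\|_2$ controlled by Proposition~\ref{prop:parabolicH2L2Absorb}.

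Once these estimates are in hand, I form a weighted sum $\|\nabla_\xi f\|_2^2+\kappa\|\nabla^3_x c\|_2^2$ with $\kappa$ chosen so that the $\|\nabla^2_x \rho\|_2^2$ defect in the $c$-estimate is dominated by the $\|\nabla_\xi^2 f\|_2^2$ dissipation from the $f$-estimate, apply the three-term Young inequality \eqref{thm:YoungProductThreeTerms} with exponents dictated by the Gagliardo--Nirenberg scaling to absorb the highest-order nonlinear pieces into $\|\nabla_\xi^2 f\|_2^2+\|\nabla^4_x c\|_2^2$, and use Poincar\'e (valid because $\int f = 1$ is prescribed and $c$ can be split into mean plus zero-mean part) to produce a coercive $-\alpha(\|\nabla_\xi f\|_2^2+\kappa\|\nabla^3_x c\|_2^2)$ on the right-hand side. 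Gr\"onwall's inequality (Lemma~\ref{lem:gronwall}), starting from time $t_1'$ where the initial $H^1\times H^3$ norm is finite, then delivers the uniform bound $\|f(t)\|_{H^1_{x,\theta}}+\|c(t)\|_{H^3_x}\leq C_3$ for $t\geq t_2$, and integrating the differential inequality from $t$ to $t+1$ yields \eqref{est:L2H4L2H2}. Continuity $c\in C_t(H^3_x)$, $f\in C_t(H^1_{x,\theta})$ follows from the Aubin--Lions/Lions--Magenes lemma using $\partial_t c\in L^2_{t,\text{loc}}(H^1_x)$ and $\partial_t f\in L^2_{t,\text{loc}}(H^{-1}_{x,\theta})$ together with \eqref{est:L2H4L2H2}. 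The hard step is the careful Gagliardo--Nirenberg bookkeeping on the worst nonlinear term $\int\partial_\theta B_\tau[c]|\nabla_\xi f|^2$, since $B_\tau[c]$ involves two spatial derivatives of $c$ and one must balance three indices $(\text{regularity of }c,\text{regularity of }f,\text{Lebesgue exponents})$ so that Young's inequality leaves genuine room below the top-order dissipation; tracking the $\chi$-dependence through this bookkeeping gives the stated form $C_3=(1+\chi^{p_2})\widetilde C_3$.
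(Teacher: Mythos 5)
Your overall plan — differential inequality at the $H^1_{x,\theta}\times H^3_x$ level, Gagliardo--Nirenberg interpolation against the $L^2\times H^2$ absorption from Proposition~\ref{prop:parabolicH2L2Absorb}, weighted sum and Gr\"onwall — is indeed the strategy of the paper, and your starting time $t_1'$ argument (picking a time where $\|\nabla_\xi f\|_{L^2}+\|\nabla^3_x c\|_{L^2}$ is finite via the integral bound~\eqref{est:parabolicL2nabalfL2nabla3c}) is the right way to launch Gr\"onwall. The genuine difference lies in how you form the energy estimate. The paper tests the $f$-equation against $-\mathcal L f=-(\sigma_x\Delta_x+\sigma_\theta\partial_{\theta\theta})f$ and immediately applies Young's inequality, producing $\chi^2\int(\partial_\theta(B_\tau[c]f))^2$; crucially this expression involves only $\theta$-derivatives of $B_\tau[c]$, hence at most $\nabla^2_x c$. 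The explicit cancellation in~\eqref{est:parabolicH2ProductFirst} then kills the cross term and reduces the nonlinearity to $\int\Phi[c]f^2+\mu\int(\partial_{\theta\theta}f)^2$ with $\Phi$ involving $B^4$ and $(\partial_\theta B)^2$ only. By contrast, your differentiate-in-$\xi_i$-then-test route produces commutator terms such as $\int(\partial_{x_j}B_\tau[c])\,f\,\partial_\theta\partial_{x_j}f$ which, unlike anything in the paper's computation, involve $\nabla^3_x c$ directly.

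This is not a fatal gap — the exponents do close. After Young, your worst commutator becomes essentially $\int|\nabla^3_x c|^2 f^2\leq\|\nabla^3_x c\|^2_{L^3_x}\,\|f^2\|_{L^3_x(L^1_\theta)}$, and Gagliardo--Nirenberg in dimension $2$ ($j=1,m=2,p=3,r=q=2$) gives $\|\nabla^3_x c\|_{L^3_x}\lesssim\|\nabla^4_x c\|^{2/3}_{L^2_x}\|\nabla^2_x c\|^{1/3}_{L^2_x}$, while $\|f^2\|_{L^3_x(L^1_\theta)}\lesssim(\|\nabla^2_\xi f\|^2_{L^2}+1)^{3/10}$ as in~\eqref{est:f2HolderEstim}--\eqref{est:fL6GNS}. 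Since $\frac{2}{3}+\frac{3}{10}=\frac{29}{30}<1$, the three-term Young inequality~\eqref{thm:YoungProductThreeTerms} still absorbs this into the $\|\nabla^4_x c\|^2+\|\nabla^2_\xi f\|^2$ dissipation. However, you single out $\int\partial_\theta B_\tau[c]|\nabla_\xi f|^2$ as "the worst nonlinear term \dots{} since $B_\tau[c]$ involves two spatial derivatives"; in your route, that term is actually milder than the $\nabla^3_x c$ commutator, and you should make it explicit that the latter is the one that forces the $\nabla^4_x c$ interpolation with the tight exponent budget. You should also say a word about the rigorization: the $H^1\times H^3$ energy computation is formal at the stated solution regularity, and the paper handles it by regularizing the initial data (propagation of regularity of~\cite{bertucci2024curvature}), obtaining the bound uniformly, and passing to the limit via Lemma~\ref{lem:weakL2convergenceLinfty}; your Aubin--Lions remark covers continuity but not this approximation step.
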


\begin{proof}
    We start by considering a smooth initial condition $(c_0,f_0)$, the propagation of regularity \cite[Theorem 3.6]{bertucci2024curvature}, ensures that the associated solution is smooth and the following computations hold.
    In the following, we use the following differential operator notations, 
    \newcommand{\Tildenablaxtheta}{\widetilde{\nabla}_{\xi}}
    \begin{equation*}
        \mathcal{L} = \sigma_x \Delta_x + \sigma_\theta 
        \partial_{\theta\theta}\text {, and }\Tildenablaxtheta = \begin{pmatrix}
            \sqrt{\sigma_x}\partial_{x_1} \\
            \sqrt{\sigma_x}\partial_{x_2} \\
            \sqrt{\sigma_\theta}\partial_{\theta}
        \end{pmatrix}.
    \end{equation*}
    Multiplying the equation for $f$ by $-\mathcal{L}f$ and integrating yields,
    \begin{align}
        \label{est:parabolicH1absorbfFirst}
        \frac{\dd}{\dd t} \int \frac{|\Tildenablaxtheta\f|^2}{2} &= -\int (\mathcal{L}f)^2 + \chi \int (\mathcal{L}f)\partial_\theta(B[c]f) + \lambda \int (\mathcal{L}f) v\cdot \nabla_x f,\nonumber\\
        &\leq -\frac{1}{2}\int (\mathcal{L}f)^2 + \chi^2 \int (\partial_\theta(B[c]f))^2 + \lambda^2 \int |\nabla_x f|^2.
    \end{align}

    Using integration by part, we obtain for the first term on the right and side,
    \begin{align}
        -\int (\mathcal{L}\f)^2 &= -\int \sigma_x^2(\Delta_x \f)^2 + 2\sigma_\theta \sigma \Delta_x \f \partial_{\thth} \f + \sigma_\theta^2(\partial_\thth \f)^2,\nonumber\\
        &= -\int \sigma_x^2(\Delta_x \f)^2 + 2\sigma_\theta \sigma (\nabla_x\partial_\theta\f)^2 + \sigma_\theta^2(\partial_{\thth} \f)^2,\nonumber\\
        &\leq -\sigma_x^2\wedge \sigma_\theta^2 \int |\nabalaxtheta^2 \f|^2.\label{est:parabolicH2Deltafsquared}
    \end{align}

    We then treat the second term as follows,
    \begin{align}
        \int (\partial_\theta(B[c]\f))^2 &= \int (\partial_\theta B)^2 \f^2 + 2\partial_\theta B[c] \f B[c]\partial_\theta \f + B^2(\partial_\theta \f)^2,\nonumber\\
        &= \int (\partial_\theta B)^2 \f^2 + \partial_\theta B[c] B[c] \partial_\theta \f^2 - 2\partial_\theta B[c] B[c] \partial_\theta \f \f - B^2f\partial_{\thth} f,\nonumber\\
        &= \int (\partial_\theta B)^2 \f^2 + \partial_\theta B[c] B[c] \partial_\theta \f^2 - \partial_\theta B[c] B[c] \partial_\theta \f^2 - B^2f\partial_{\thth} f,\nonumber\\
        &= \int (\partial_\theta B)^2 \f^2 - B^2f\partial_{\thth} f,\nonumber\\
        &\leq \int \left((\partial_\theta B)^2+ \frac{1}{\mu} B^4\right)f^2 + \mu \int( \partial_{\thth} f)^2,\label{est:parabolicH2ProductFirst}
    \end{align}

    for any $\mu >0$. We then define $\Phi_\mu[c]$ as,
    \begin{equation*}
        \Phi_\mu[c] = \left((\partial_\theta B)^2+ \frac{1}{\mu} B^4\right).
    \end{equation*}

    Plugging estimates \eqref{est:parabolicH2Deltafsquared} and \eqref{est:parabolicH2ProductFirst} with $\mu = \frac{\sigma_x^2\wedge \sigma_\theta^2}{4\chi^2}$, in \eqref{est:parabolicH1absorbfFirst}, we obtain,

    \begin{equation}
        \label{est:parabolicH1absorbfSecond}
        \frac{\dd}{\dd t} \int \frac{|\Tildenablaxtheta\f|^2}{2} \leq -\frac{\sigma_x^2\wedge \sigma_\theta^2}{4} \int |\nabalaxtheta^2 \f|^2  + \lambda^2 \int |\nabla_x f|^2 + \chi^2 \int \Phi[c]f^2,
    \end{equation}
    where we dropped the dependence in $\mu$ of $\Phi[c]$.

    Similarly as in the proof of Proposition~\ref{prop:parabolicH2L2Absorb}, differentiating two times the equation for $c$, with respect to $x_i,x_j$, multiplying by $\Delta_x c_{x_ix_j}$, using Young product inequality on the term involving $\Delta_x c_{x_ix_j} \rho_{x_ix_j}$, and summing over $i,j=1,2$, we obtain,

    \begin{equation}
        \label{est:parabolicH3absorbcFirst}
        \frac{\dd}{\dd t}\int |\nabla^3_x c|^2 \leq - \sigma_c \int |\nabla^4_x c|^2 - 2\gamma \int |\nabla^3_x c|^2 + \frac{1}{\sigma_c} \int |\nabla_x^2 f|^2.
    \end{equation}

    Adding the adequate multiples of estimates \eqref{est:parabolicH3absorbcFirst} and \eqref{est:parabolicH1absorbfSecond} we obtain,

    \begin{align}
          \frac{\dd}{\dd t}\left(\frac{\sigma_c\sigma_x^2\wedge\sigma_\theta^2}{8}\int |\nabla^3_x c|^2 + \int |\Tildenablaxtheta\f|^2\right) &\leq - \frac{\sigma_c^2\sigma_x^2\wedge\sigma_\theta^2}{8} \int |\nabla^4_x c|^2 -\frac{\sigma_x^2\wedge \sigma_\theta^2}{4} \int |\nabalaxtheta^2 \f|^2  \nonumber\\ 
          \label{est:nabla3cnabla1fTotal}
          & \hspace{2.em}+ 2\lambda^2 \int |\nabla_x f|^2 + 2\chi^2 \int \Phi[c]f^2.
    \end{align}

    We then treat the last product with H\"older inequality,
    \begin{equation}
        \label{est:PhifsqHolder}
        \int \Phi[c]f^2 \leq \|\Phi[c]\|_{L^{\frac{3}{2}}_x(L^\infty_\theta)}\|f^2\|_{L^3_x(L^1_\theta)},
    \end{equation}
    Similarly as in the proof of Proposition~\ref{prop:parabolicH2L2Absorb}, interpolating in $L^6_{x,\theta}\cap L^6_x(L^1_\theta)$ the phase space density, we obtain,
    \begin{equation}
        \label{est:f2HolderEstim}
        \|f^2\|_{L^3_x(L^1_\theta)} = \|f\|_{L^6_x(L^2_\theta)}^2\leq \|f\|_{L^6_x(L^1_\theta)}^{\frac{4}{5}} \|f\|_{L^6_{x,\theta}}^{\frac{6}{5}}.
    \end{equation}
    Then using Gagliardo-Nirenberg inequality from Proposition~\ref{thm:GagliardoNirenbeg} in dimension $3$, with parameters,
    \begin{equation*}
        j = 0, m = 2, p = 6, r = 2, q = 2, s = 1, \alpha = 1/2,
    \end{equation*}

    There exists a constant $C>0$ independent of $f$, such that,
    \begin{equation}
        \label{est:fL6GNS}
        \|f\|_{L^6_{x,\theta}} \leq C\left(\|\nabalaxtheta^2 f \|^{1/2}_{L^2_{x,\theta}}\|f\|^{1/2}_{L^2_{x,\theta}}+\|f\|_{L^1_{x,\theta}}\right).
    \end{equation}
    On the other end, there exists a constant $C_{\tau, \sigma_x, \sigma_\theta}>0$ depending on $\tau, \sigma_x, \sigma_\theta$, such that,
    \begin{align}
        \|\Phi[c]\|_{L^{3/2}_x(L^\infty_\theta)} \leq& C_{\tau, \sigma_x, \sigma_\theta}\left(1 + \||\nabla_xc|^4\|_{L^{3/2}_x} + \||\nabla^2_xc|^4\|_{L^{3/2}_x}\right),\nonumber \\
        \label{est:PhiL3voer2HesscL6}
        \leq& C_{\tau, \sigma_x, \sigma_\theta}\left(1 + \|\nabla_xc\|^4_{L^{6}_x} + \|\nabla^2_xc\|^4_{L^{6}_x}\right).        
    \end{align}
    Applying Gagliardo-Nirenberg inequality from Lemma~\ref{thm:GagliardoNirenbeg} in dimension $2$, on each of the gradient of $c$ with the parameters,
    \begin{equation*}
        j = 0, m = 1, p = 6, r = 2, q = 2, s = 2, \alpha = 2/3.
    \end{equation*}
    We proceed similarly for each component of the Hessian of $c$ with the parameters,
    \begin{equation*}
        j = 0, m = 2, p = 6, r = 2, q = 2, s = 2, \alpha = 1/3,
    \end{equation*}
    and summing over all the components, we obtain, that there exists $C>0$ independent of $c$, such that,
    \begin{align}
        \label{est:nablacL6GNS}
        \|\nabla_x c\|_{L^6_x} \leq C\left(\|\nabla^2_x c\|^{2/3}_{L^2_x}\|\nabla_x c\|^{1/3}_{L^2_x} + \|\nabla_x c\|_{L^2_x}\right),\\
        \label{est:HesscL6GNS}
        \|\nabla^2_x c\|_{L^6_x} \leq C\left(\|\nabla^4_x c\|^{1/3}_{L^2_x}\|\nabla^2_x c\|^{2/3}_{L^2_x} + \|\nabla^2_x c\|_{L^2_x}\right).
    \end{align}

    Since $\sigma_c, \sigma_\theta, \sigma_x >0$, from Proposition~\ref{prop:parabolicH2L2Absorb} and Proposition~\ref{lem:parabolicLpC}, we know that there exists $C_1>0$, only depending on the parameters, such that for any initial condition, there exists $t_1>0$ depending only on $\|c_0\|_{L^6_x}, \|\nabla^2_xc_0\|_{L^2_x},\|f\|_{L^2_{x,\theta}}, \|\rho_0\|_{L^6_x}$, such that for any $t \geq t_1$, the following norms are bounded by $C_1$,

    \begin{equation*}
        \|\nabla_x c(t)\|_{L^2_x},\|\nabla^2_x c(t)\|_{L^2_x},\|f(t)\|_{L^2_{x,\theta}},\|f(t)\|_{L^6_{x} (L^1_\theta)} \leq C_1.
    \end{equation*}

    Using the estimates \eqref{est:fL6GNS},\eqref{est:PhiL3voer2HesscL6}, \eqref{est:nablacL6GNS} and \eqref{est:HesscL6GNS}, in \eqref{est:f2HolderEstim}, we obtain that there exists $\widetilde{C}_0>0$ independent of $c$ and $f$, such that, for any $t \geq t_1 >0$,

    \begin{equation}
        \label{est:Phif2prod}
        \int \Phi[c(t)]f^2(t) \leq \widetilde{C}_0\left(1+\|\nabla^4_xc(t)\|^{4/3}_{L^2_x}\right)\left(1+\|\nabalaxtheta^2 f(t)\|^{3/5}_{L^2_x}\right).
    \end{equation}

    Since $2/3+ 3/10 < 1$, we can apply the above Young product inequality, as stated in \eqref{thm:YoungProductThreeTerms}, with $\varepsilon = \sigma_c^2(\sigma_x^2\wedge\sigma_\theta^2)/8$, and $\eta = \sigma_x^2\wedge\sigma_\theta^2/4$, on the right hand side of \eqref{est:Phif2prod}, and inject it in estimate \eqref{est:nabla3cnabla1fTotal}, leading to, after dropping the negative terms,

    \begin{equation}
        \label{est:}
          \frac{\dd}{\dd t}\left(\frac{\sigma_c\sigma_x^2\wedge\sigma_\theta^2}{2}\int |\nabla^3_x c(t)|^2 + \int |\Tildenablaxtheta\f(t)|^2\right) \leq 2\lambda^2 \int |\nabla_x f(t)|^2 + \widetilde{C}_1, \forall t \geq t_1,
    \end{equation}

    for some $\widetilde{C}_1>0$. Integrating this equation from $t$ to $t+1$, and using estimate \eqref{est:parabolicL2nabalfL2nabla3c} of Proposition~\ref{prop:parabolicH2L2Absorb}, that is,

    \begin{equation*}
        \int_t^{t+1} \int |\nabalaxtheta f|^2 \dd s \leq C_2, \ \ \forall t \geq t_1,
    \end{equation*}

    this implies, that there exists $C_3>0$, such that for any initial condition, there exists $t_2>0$, only depending on the norms $\|c_0\|_{L^6_x}, \|\nabla^2_x c_0\|_{L^2_x}, \|\rho_0\|_{L^6_x},\|f_0\|_{L^2_x}$, such that,
    \begin{equation*}
         \forall t \geq t_2(\|c_0\|_{L^6_x}, \|\nabla^2_x c_0\|_{L^2_x}, \|\rho_0\|_{L^6_x},\|f_0\|_{L^2_x}) >0,
    \end{equation*}
     the following estimate holds,
    \begin{equation*}
        \int |\nabla^3_x c(t)|^2 + \int |\nabalaxtheta f(t)|^2 \leq C_3.
    \end{equation*}
    
    Finally, since $t_2$ does not depend on the higher regularity of the initial condition, we can conclude the proof as follows. Let $(c^n_0,f^n_0)$ be a sequence of smooth initial conditions converging to $(c_0,f_0)\in W^{2,6}_x\times L^6_x$, with norms controlled as,
    \begin{equation*}
        \|c^n_0\|_{H^2_x} \leq \|c_0\|_{H^2_x},\\
        \|f^n_0\|_{L^6_{x,\theta}} \leq \|f_0\|_{L^6_{x,\theta}}.
    \end{equation*}
    By a weak compactness argument, the sequence of smooth solutions converges weakly to the unique weak solution associated with initial condition $(c_0,f_0)$. And we conclude from Lemma~\ref{lem:weakL2convergenceLinfty}, the required result.
    The last estimate \eqref{est:L2H4L2H2} follows similarly as in the proof of Theorem~\ref{prop:parabolicH2L2Absorb}, by integrating between $t$ and $t+1$ for $t\geq t_2$, the estimate \eqref{est:nabla3cnabla1fTotal} after absorbing the non-linear term.
\end{proof}

\begin{cor}
    \label{cor:H4CParabolic}
    Under the same hypothesis as Theorem~\ref{thm:AbsorbH3H1Parabolic} above.
    There exists $C_5>0$, such that for any initial condition, there exists $t_2>0$ depending on $\|c_0\|_{L^6_x}$, $\|\nabla^2_x c_0\|_{L^2_x}$, $\|\rho_0\|_{L^6_x}$ and $\|f_0\|_{L^2_x}$, such that,
    \begin{equation*}
        c\in C([t_2, \infty), H^4_x)
    \end{equation*}
    and for any $t\geq t_2(\|c_0\|_{L^6_x}, \|\nabla^2_x c_0\|_{L^2_x}, \|\rho_0\|_{L^6_x},\|f_0\|_{L^2_x})>0$, the following estimate hold,
    \begin{equation*}
        \|\rho(t)\|_{H^2_x} ,\|c(t)\|_{H^4_x} \leq C_5.
    \end{equation*}
    Finally, $C_5$ can be expressed as, $C_5 = (1+\chi^{p_3})$, where $\Tilde{C}_5$ depends on all the parameters but $\chi$, and $p_3>1$ is independent of the parameters.
\end{cor}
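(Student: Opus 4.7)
The plan is to bootstrap Theorem~\ref{thm:AbsorbH3H1Parabolic} in two energy-estimate steps, using its integrated-in-time bound~\eqref{est:L2H4L2H2} on $\nabla_x^4 c$ and $\nabla_\xi^2 f$ as controls on the forcing terms of two new differential inequalities. The conversion from an integrated source bound to a pointwise absorption relies on a Gr\"onwall iteration of the form $y(t+1) \le e^{-\alpha} y(t) + K$, where $K$ encodes the integrated source and a finite starting value for $y$ is guaranteed by the integrated source bound itself. As at the end of the proof of Theorem~\ref{thm:AbsorbH3H1Parabolic}, the high-order manipulations are first performed on a mollified smooth-solution sequence and transferred to the weak limit via Lemma~\ref{lem:weakL2convergenceLinfty}.

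First I absorb $\rho$ in $H^2_x$. Since $\rho = \int f \, \dd\theta$ satisfies $\partial_t \rho = \sigma_x \Delta_x \rho - \lambda \nabla_x \cdot \int v f \, \dd\theta$, testing against $\Delta_x^2 \rho$, integrating by parts, and applying Young's product inequality should yield
\begin{equation*}
\frac{\dd}{\dd t} \|\Delta_x \rho\|_{L^2_x}^2 + \sigma_x \|\nabla_x \Delta_x \rho\|_{L^2_x}^2 \le C \|\nabla_x^2 f\|_{L^2_{x,\theta}}^2.
\end{equation*}
Since $\Delta_x \rho$ has zero mean on $\Torus^2_1$ by periodicity, Poincar\'e's inequality provides $\alpha > 0$ with $\alpha \|\Delta_x \rho\|^2 \le \tfrac{\sigma_x}{2} \|\nabla_x \Delta_x \rho\|^2$. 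Combined with the integrated source bound $\int_t^{t+1} \|\nabla_\xi^2 f\|^2 \le C_4$ from~\eqref{est:L2H4L2H2}, the Gr\"onwall iteration yields $\|\rho(t)\|_{H^2_x} \le C$ for all $t$ sufficiently large. Integrating the same differential inequality over $[t,t+1]$ additionally produces the side-effect bound $\int_t^{t+1} \|\nabla_x^3 \rho\|^2 \, \dd s \le C$, which is the input for the next step.

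For the absorption of $c$ in $H^4_x$, I apply $\nabla_x^4$ to the $c$-equation and test with $\nabla_x^4 c$. One integration by parts redistributes the derivatives in the source term as
\begin{equation*}
\int \nabla_x^4 \rho \cdot \nabla_x^4 c \le C \|\nabla_x^3 \rho\|_{L^2_x} \|\nabla_x^5 c\|_{L^2_x} \le \frac{\sigma_c}{2} \|\nabla_x^5 c\|^2 + C \|\nabla_x^3 \rho\|^2,
\end{equation*}
producing
\begin{equation*}
\frac{\dd}{\dd t} \|\nabla_x^4 c\|^2 + \sigma_c \|\nabla_x^5 c\|^2 + 2\gamma \|\nabla_x^4 c\|^2 \le C \|\nabla_x^3 \rho\|^2.
\end{equation*}
The integrated bound on $\|\nabla_x^3 \rho\|^2$ from the previous step, together with $\int_t^{t+1} \|\nabla_x^4 c\|^2 \le C_4$ from~\eqref{est:L2H4L2H2} (which provides a finite starting value at some $t_0 \in [t_2, t_2+1]$), close the Gr\"onwall iteration with the $e^{-2\gamma}$ decay and yield $\|c(t)\|_{H^4_x} \le C_5$ for $t$ sufficiently large. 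The $\chi$-polynomial dependence of $C_5$ follows by tracking the $\chi$-dependence of the constants $C_3, C_4$ from Theorem~\ref{thm:AbsorbH3H1Parabolic} through both steps.

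The main technical subtlety is precisely this asymmetric redistribution of derivatives in $\int \nabla_x^4 \rho \cdot \nabla_x^4 c$: a naive Cauchy--Schwarz would demand $\rho \in H^4_x$, which is unavailable; integrating by parts once and absorbing the resulting $\nabla_x^5 c$ into the parabolic dissipation of the $c$-equation reduces the demand on $\rho$ to an integrated-in-time $H^3_x$ bound, which is exactly what the first step delivers. This hybrid pointwise/integrated absorption is the essence of why the Gr\"onwall iteration with exponential decay succeeds.
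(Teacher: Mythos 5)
Your proof is correct and uses essentially the same pair of energy estimates as the paper (one for $\|\nabla^2_x\rho\|^2_{L^2}$ with source $\|\nabla^2_x f\|^2$, one for $\|\nabla^4_x c\|^2_{L^2}$ with source $\|\nabla^3_x\rho\|^2$), closed against the integrated bound~\eqref{est:L2H4L2H2}. The only organizational difference is that the paper adds the two inequalities with coefficients chosen so that the $\|\nabla^3_x\rho\|^2$ terms cancel and runs a single uniform-Gr\"onwall step on the combined quantity, whereas you run the two Gr\"onwall iterations sequentially and make the Poincar\'e decay for $\Delta_x\rho$ explicit; both are valid and of comparable length.
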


\begin{proof}
    We apply the same strategy as above. We then obtain for a regular solution, the estimates,
    \begin{align*}
        \frac{\dd}{\dd t} \int |\nabla^4_x c|^2 &\leq -\sigma_c \int |\nabla^5_x c|^2 -2\gamma  \int |\nabla^4_x c|^2 + \frac{1}{\sigma_c} \int |\nabla^3_x \rho|^2,\\
        \frac{\dd}{\dd t} \int |\nabla^2_x \rho|^2 &\leq -\sigma_x \int |\nabla^3_x \rho|^2 + \frac{\lambda^2}{\sigma_x}\int \int |\nabla_x^2f|^2.
    \end{align*}

    Combining both estimates we obtain,
    \begin{equation*}
        \frac{\dd}{\dd t}\left( \frac{1}{\sigma_c\sigma_x}\int |\nabla^2_x \rho(t)|^2 +  \int |\nabla^4_x c(t)|^2 \right) \leq \frac{\lambda^2}{\sigma_c \sigma_x^2} \int \int |\nabla^2_x f(t)|.
    \end{equation*}
    We then integrate between $t$ and $t+1$ for $t>t_2$ and we bound the right-hand side with the estimate~\eqref{est:L2H4L2H2} of Theorem~\ref{thm:AbsorbH3H1Parabolic}. The claim follows for non-necessarily smooth initial datum using Lemma~\ref{lem:weakL2convergenceLinfty}.
\end{proof}

\subsection{Parabolic-Elliptic Case}
In this section, we prove that the elliptic semigroup~\eqref{thm:FormicidaeEllipticSemiGroup} is dissipative. For broader applicability, since we believe that this strategy can be applied with minor modifications to other active matter models, the estimates in Proposition~\ref{prop:elliL2f} and Proposition~\ref{prop:fabsorbedH1} are proved for general solutions to the active matter equation~\eqref{eq:ActiveMaterEquation} given scalar field $B$. Then in Theorem~\ref{thm:AbsorbH3H1Elliptic}, we use Proposition~\ref{prop:rhoestimLp} to obtain the required estimates on the scalar field $B$ and conclude the absorption of the spatial density $f$ in $H^1_{x,\theta}$.

We now prove the $L^2_{x,\theta}$ absorption.

\begin{prop}
    \label{prop:elliL2f}
    Let $B$, be a scalar field such that $B,\partial_\theta B \in L^\infty_t(L^6_x(L^\infty_\theta))$, $\sigma_x >0$ and $\sigma_\theta>0$. Then, there exists a constant $C_1>0$, depending on $\lambda, \sigma_x, \sigma_\theta, \|\partial_\theta B\|_{L^\infty_t(L^6_x(L^\infty_\theta))}$, such that for any solution $f$ in $C_t(L^6_{x,\theta})\cap L^2_{loc,t}(H^1_{x,\theta})$ of the Fokker-Planck equation~\eqref{eq:ActiveMaterEquation} associated to $B$, there exists $t_0>0$, depending on $\|\rho_0\|_{L^6_x},\|f\|_{L^2_{x,\theta}}$, such that,
    \begin{equation*}
        \forall t \geq t_0(\|\rho_0\|_{L^6_x},\|f\|_{L^2_{x,\theta}}),
    \end{equation*}
    the following estimates holds,
    \begin{equation}
        \label{est:absorbL2f}
        \|f(t)\|_{L^2_{x,\theta}} \leq C_1,
    \end{equation}
    and,
    \begin{equation}
        \int^{t+1}_t\int |\nabalaxtheta f|^2 \dd s \leq C_1.
        \label{est:absorbintTnablaf}
    \end{equation}
\end{prop}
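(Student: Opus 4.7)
The plan is to derive a closed linear differential inequality of the form $\dot y \leq -\alpha y + \beta$ for $y(t) := \|f(t)\|^2_{L^2_{x,\theta}}$ and then conclude by Gr\"onwall. The computations will be carried out on a sequence of mollified solutions as in the proof of Proposition~\ref{prop:ActiveMatterEquation}, and transferred to the weak solution $f$ by stability.

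First, I would multiply~\eqref{eq:ActiveMaterEquation} by $f$ and integrate by parts. The $\lambda$-drift term is exact and, after integrating by parts in $\theta$, the interaction term yields $-\frac{\chi}{2}\int f^2\,\partial_\theta B$, giving the energy identity
\begin{equation*}
    \frac{\dd}{\dd t}\int \frac{f^2}{2} = -\sigma_x \int |\nabla_x f|^2 - \sigma_\theta \int (\partial_\theta f)^2 - \frac{\chi}{2}\int f^2\, \partial_\theta B.
\end{equation*}
The sole obstruction is the indefinite cross term, which must be controlled by a \emph{subquadratic} power of $\|\nabalaxtheta f\|_{L^2_{x,\theta}}$ plus a pure constant, so that Young's inequality can absorb it into the dissipation without generating a term proportional to $\|f\|^2_{L^2}$ on the right.

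For this, I would apply H\"older's inequality in $L^{6/5}_{x,\theta}\times L^6_{x,\theta}$ to obtain $|\int f^2\,\partial_\theta B| \leq C M\, \|f\|^2_{L^{12/5}_{x,\theta}}$, using that $\|\partial_\theta B\|_{L^6_{x,\theta}} \leq CM$ since $\Torus_{2\pi}$ has finite measure. Then the three-dimensional Gagliardo--Nirenberg inequality of Proposition~\ref{thm:GagliardoNirenbeg}, applied with $p=12/5, r=2, m=1, d=3, q=1, s=1$, gives the admissible exponent $\alpha = 7/10 \in [0,1)$; combined with the mass conservation $\|f(t)\|_{L^1_{x,\theta}}=1$, it yields
\begin{equation*}
    \|f\|^2_{L^{12/5}_{x,\theta}} \leq C\bigl(\|\nabalaxtheta f\|^{7/5}_{L^2_{x,\theta}} + 1\bigr),
\end{equation*}
and hence $\chi\bigl|\int f^2\,\partial_\theta B\bigr| \leq C(\|\nabalaxtheta f\|^{7/5}_{L^2} + 1)$, where $C$ depends on $\chi, M$ only.

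The crucial feature is the subquadratic exponent $7/5<2$: Young's inequality $\|\nabalaxtheta f\|^{7/5} \leq \varepsilon\, \|\nabalaxtheta f\|^2 + C_\varepsilon$ will absorb this term into the $(\sigma_x\wedge\sigma_\theta)$-weighted dissipation, leaving only an additive constant. Combining with the Poincar\'e inequality on $\Torus^2_1\times\Torus_{2\pi}$ and mass conservation (which together give $\int|\nabalaxtheta f|^2 \geq \lambda_1(\int f^2 - 1/(2\pi))$) will yield the target inequality $\dot y \leq -\alpha y + \beta$, from which~\eqref{est:absorbL2f} follows by Gr\"onwall after a time $t_0$ depending on the initial norm. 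Estimate~\eqref{est:absorbintTnablaf} is then obtained by integrating the intermediate dissipation inequality $\dot y + (\sigma_x\wedge\sigma_\theta)\int|\nabalaxtheta f|^2 \leq C'$ over $[t,t+1]$ once $y$ is uniformly bounded. The main technical hurdle is extracting this subquadratic exponent on $\|\nabalaxtheta f\|_{L^2}$, which is precisely what the combination of mass conservation and the 3D Gagliardo--Nirenberg inequality at the subcritical exponent $12/5$ enables.
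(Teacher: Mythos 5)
Your proposal is correct and follows essentially the same route as the paper: the energy identity, the H\"older step reducing the nonlinear term to $\|f\|^2_{L^{12/5}_{x,\theta}}$, a subquadratic bound in $\|\nabalaxtheta f\|_{L^2}$ using mass conservation, Young's inequality to absorb into the dissipation, Poincar\'e plus mass conservation to close the inequality, Gr\"onwall for \eqref{est:absorbL2f}, and time-integration for \eqref{est:absorbintTnablaf}. The one cosmetic difference is that the paper obtains the subquadratic bound by interpolating $\|f\|_{L^{12/5}}$ between $L^1$ and $L^6$ (with exponent $3/10$) and then invoking the Sobolev embedding $H^1\hookrightarrow L^6$ in three dimensions, whereas you invoke a single three-dimensional Gagliardo--Nirenberg inequality with $\alpha=7/10$ that combines these two steps; the resulting exponent $7/5<2$ is identical.
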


\begin{proof}
    We start by recalling the same computations as in the proof of Proposition~\ref{prop:parabolicH2L2Absorb}, that follows from Lions-Magenes lemma,
    \begin{align*}
        \frac{\dd}{\dd t}\int \frac{f^2}{2} & = -\sigma_x \int |\nabla_x f|^2- \sigma_\theta \int |\partial_\theta f|^2 + \chi \int \partial_\theta f Bf + \lambda \int v \cdot \nabla_x f f,\\
        & = -\sigma_x \int |\nabla_x f|^2- \sigma_\theta \int |\partial_\theta f|^2 + \frac{\chi}{2} \int B \partial_\theta (f^2) + \frac{\lambda}{2} \int v\cdot \nabla_x (f^2),\\
        & = -\sigma_x \int |\nabla_x f|^2- \sigma_\theta \int |\partial_\theta f|^2 - \frac{\chi}{2} \int \partial_\theta B f^2 + \frac{\lambda}{2} \int \nabla_x \cdot (vf^2).
    \end{align*}

    We  drop the last integral term by the divergence theorem, and we estimate the third term by H\"older inequality to obtain,

    \begin{equation}
        \label{eq:fL2estim1}
        \frac{\dd}{\dd t}\int \frac{f^2}{2} \leq -\sigma_x \int |\nabla_x f|^2- \sigma_\theta \int |\partial_\theta f|^2 + \frac{\chi}{2} \|\partial_\theta B\|_{L^6_x(L^\infty_\theta)} \left\| \int f^2 \dd\theta \right\|_{L^{\frac{6}{5}}_x}.
    \end{equation}
    Noting that,
    \begin{equation*}
        \left\| \int f^2_t \dd\theta \right\|_{L^{\frac{6}{5}}_x} \leq \|f_t\|^2_{L^{\frac{12}{5}}_{x,\theta}},
    \end{equation*}
    and using the interpolation in $L^1_{x,\theta}\cap L^6_{x,\theta}$ with,
    \begin{equation*}
        \frac{5}{12} = \frac{\alpha}{1} + \frac{(1-\alpha)}{6},
    \end{equation*}
    leading to $\alpha = 3/10$, together with the conservation of mass, we obtain 

    \begin{equation}
         \left\| \int f^2_t \dd\theta \right\|_{L^{\frac{6}{5}}_x} \leq \|f_t\|^2_{L^{\frac{12}{5}}_{x,\theta}}\leq \left(\|f_t\|^{\frac{3}{10}}_{L^1_{x,\theta}}\|f_t\|^{\frac{7}{10}}_{L^6_{x,\theta}}\right)^2 \leq \|f_t\|^{\frac{14}{10}}_{L^6_{x,\theta}}.
    \end{equation}

    Using Sobolev inequality on the three-dimensional torus \cite[Prop. 1.1 p.3]{benyi2013sobolev} and the conservation of the mass, obtain,
    \begin{equation}
         \left\| \int f^2_t \dd\theta \right\|_{L^{\frac{6}{5}}_x} \leq \|f_t\|^{\frac{14}{10}}_{L^6_{x,\theta}}\leq C_{S}\left(\|\nabalaxtheta f_t\|^2_{L^2_{x,\theta}} + \frac{1}{2\pi}\right)^{\frac{7}{10}},
    \end{equation}

    for some $C_S >0$. Plugging the previous estimate in \eqref{eq:fL2estim1} together with Young product inequality for $p = \frac{10}{7}$,

    \begin{align*}
        \frac{\dd}{\dd t}\int \frac{f^2}{2} & \leq -\sigma_x\wedge \sigma_\theta \int |\nabalaxtheta f|^2 + C_S\frac{\chi}{2} \|\partial_\theta B\|_{L^6_x(L^\infty_\theta)}\left(  \|\nabalaxtheta f\|^2_{L^2_{x,\theta}} + 4\pi\right)^{\frac{7}{10}},\\
        & \leq -\sigma_x\wedge \sigma_\theta \int |\nabalaxtheta f|^2 + \mu\|\nabalaxtheta f\|^2_{L^2_{x,\theta}} + C_0\left(\mu^{-\frac{7}{3}}\|\partial_\theta B\|^{\frac{10}{3}}_{L^6_x(L^\infty_\theta)} + 1\right),\\
    \end{align*}
    for some constant $C_0>0$, for any $\mu> 0$. Then choosing $\mu$ to be $\frac{\sigma_x\wedge\sigma_\theta}{2}$, we obtain,
    \begin{equation}
        \label{eq:fL2estim2}
        \frac{\dd}{\dd t}\int \frac{f^2}{2} \leq -\frac{\sigma_x\wedge \sigma_\theta}{2} \int |\nabalaxtheta f|^2 + C\left(\|\partial_\theta B\|^{\frac{10}{3}}_{L^6_x(L^\infty_\theta)} + 1\right).
    \end{equation}
    Using Poincaré inequality together with the conservation of the mass, we have the estimate,
    \begin{equation*}
        \|f_t\|^2_{L^2} \leq 2C_P^2\|\nabalaxtheta f_t\|^2_{L^2} + 8\pi^2 \left|\int f_t \right|^2.
    \end{equation*}

    So that we obtain a bound only involving $f$,
    \begin{equation*}
        \frac{\dd}{\dd t}\int \frac{f^2}{2} \leq -\frac{\sigma_x\wedge \sigma_\theta}{4C_p^2} \int f^2 + C_0\left(\|\partial_\theta B\|^{\frac{10}{3}}_{L^6_x(L^\infty_\theta)} + 1\right),
    \end{equation*}
    for some constant $C_0>0$ independent of $f$ and $B$.

    We conclude that from Lemma~\ref{lem:gronwall}, that the following estimate holds, 
    \begin{equation*}
        \|f(t)\|_{L^2_{x,\theta}} \leq e^{-\alpha t}  \|f_{(0)}\|_{L^2_{x,\theta}} + C_0\left(\|\partial_\theta B\|^{\frac{10}{3}}_{L^\infty_t(L^6_x(L^\infty_\theta))} + 1\right), \text{ for all } t\geq 0,
    \end{equation*}
    with $\alpha = \frac{\sigma_x\wedge \sigma_\theta}{4C_p^2}$, and estimate~\eqref{est:absorbL2f} follows, by choosing 
    \begin{equation*}
        C_1 = C_0\left(\|\partial_\theta B\|^{\frac{10}{3}}_{L^\infty_t(L^6_x(L^\infty_\theta))} + 1\right) + 1
    \end{equation*}
     and $t_1>\log(\|f_{(0)}\|_{L^2_{x,\theta}} )/\alpha$.

    Finally, coming back to \eqref{eq:fL2estim2}, integrating between $t$ and $t+1$, and plugging the previous bound we obtain \eqref{est:absorbintTnablaf}.
\end{proof}

We now prove the $H^1_{x,\theta}$ absorption for a general phase $f$ space density associated with a scalar field $B$.

\begin{prop}
    \label{prop:fabsorbedH1}
    Let $B$ be a scalar field such that $B,\partial_\theta B, \partial_{\theta\theta}B \in L^\infty_t(L^6_x(L^\infty_\theta))$, $\sigma_x>0$ and $\sigma_\theta>0$. Then, there exists a constant $C_2>0$, depending on $\|B\|_{L^6_x(L^\infty_\theta)},\|\partial_{\theta}B\|_{L^6_x(L^\infty_\theta)}$ and $\|\partial_{\theta\theta}B\|^4_{L^6_x(L^\infty_\theta)}$, such that for any solution $f$ in $C_t(L^6_{x,\theta})\cap L^2_{loc,t}(H^1_{x,\theta})$ of the Fokker-Planck equation~\eqref{eq:ActiveMaterEquation} associated to $B$, there exists $t_2>0$, depending on $\|\rho_0\|_{L^6_x},\|f_0\|_{L^2_{x,\theta}}$, such that,
    \begin{equation*}
        f\in C([t_1,+\infty), H^1_{x,\theta}),
    \end{equation*}
    and, 
    \begin{equation*}
        \forall t\geq t_2(\|\rho_0\|_{L^6_x},\|f_0\|_{L^2_{x,\theta}})>0,
    \end{equation*}
    the following estimates hold,
    \begin{equation}
        \label{est:ActiveMatterH1Absorb}
        \|f(t)\|_{H^1_{x,\theta}} \leq C_2,
    \end{equation}
    and
    \begin{equation}
        \label{est:ActiveMatterL2H2}
       \int^{t_2+1}_{t_2} \int |\nabalaxtheta^2 f|^2 \dd s \leq C_2.
    \end{equation}
\end{prop}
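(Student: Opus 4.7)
I would mirror the strategy used in the proof of Theorem~\ref{thm:AbsorbH3H1Parabolic}, simplified by the fact that here the drift $B$ is supplied externally with uniform-in-time norms rather than being coupled through a chemical field equation. The four ingredients are: (i) an energy estimate obtained by testing against $-\curlL f$, where $\curlL = \sigma_x\Delta_x + \sigma_\theta\partial_{\theta\theta}$; (ii) bounds on the nonlinear term via integration by parts in $\theta$ together with Gagliardo--Nirenberg interpolation; (iii) the $L^2_{x,\theta}$ absorption of Proposition~\ref{prop:elliL2f}; and (iv) a uniform Gr\"onwall argument using the integral estimate~\eqref{est:absorbintTnablaf}. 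All manipulations are performed on smooth mollifications $(f_0^n, B^n)$, and the result is transferred to the stated class of data via Lemma~\ref{lem:weakL2convergenceLinfty}, as at the end of the proof of Theorem~\ref{thm:AbsorbH3H1Parabolic}.

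\textbf{Energy inequality.} Testing \eqref{eq:ActiveMaterEquation} against $-\curlL f$ produces
\begin{equation*}
    \frac{\dd}{\dd t}\int \frac{|\widetilde{\nabla}_\xi f|^2}{2} + \int (\curlL f)^2 = \chi\int (\curlL f)\,\partial_\theta(Bf) + \lambda\int (\curlL f)\, v\cdot\nabla_x f.
\end{equation*}
Exactly as in Theorem~\ref{thm:AbsorbH3H1Parabolic}, integration by parts shows $\int(\curlL f)^2\geq (\sigma_x^2\wedge\sigma_\theta^2)\int|\nabalaxtheta^2 f|^2$, and Young's inequality handles the last term at the cost of $\lambda^2\|\nabla_x f\|_{L^2}^2$. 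Expanding $\partial_\theta(Bf) = f\,\partial_\theta B + B\,\partial_\theta f$, using $\partial_\theta(f^2) = 2f\,\partial_\theta f$, and integrating the cross term by parts in $\theta$ yields, after a further Young's inequality with a small parameter $\mu>0$,
\begin{equation*}
    \chi^2\int (\partial_\theta(Bf))^2 \leq \chi^2 \int \Phi_\mu[B]\,f^2 + \mu \int (\partial_{\theta\theta}f)^2,
\end{equation*}
where $\Phi_\mu[B]$ is a polynomial in $B,\partial_\theta B,\partial_{\theta\theta}B$; choosing $\mu$ small enough absorbs the last term into $-c\int|\nabalaxtheta^2 f|^2$, and fixing $\mu$ produces the fourth-power dependence on $\|\partial_{\theta\theta}B\|_{L^6_x(L^\infty_\theta)}$ appearing in the statement.

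\textbf{Nonlinear control and uniform Gr\"onwall.} The crux is the estimate
\begin{equation*}
    \int \Phi[B]\, f^2 \leq \|\Phi[B]\|_{L^{3/2}_x(L^\infty_\theta)}\,\|f\|^2_{L^6_{x,\theta}},
\end{equation*}
where the first factor is uniformly bounded in time by the hypotheses on $B$. Applying the Gagliardo--Nirenberg inequality from Proposition~\ref{thm:GagliardoNirenbeg} in dimension three with $(j,m,p,r,q,s,\alpha)=(0,2,6,2,2,1,1/2)$, together with the conservation of mass, gives $\|f\|^2_{L^6_{x,\theta}} \leq C(\|\nabalaxtheta^2 f\|_{L^2}\|f\|_{L^2} + 1)$. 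Invoking the $L^2_{x,\theta}$ absorption of Proposition~\ref{prop:elliL2f}, so that $\|f(t)\|_{L^2}\leq C_1$ for $t\geq t_0$, and one more application of Young's inequality $ab\leq \epsilon a^2 + b^2/(4\epsilon)$ allows me to absorb $\|\nabalaxtheta^2 f\|_{L^2}^2$ into the dissipative term. Since $\|\nabla_x f\|^2_{L^2}\leq \sigma_x^{-1}\|\widetilde{\nabla}_\xi f\|^2_{L^2}$, the resulting inequality for $t\geq t_0$ takes the form
\begin{equation*}
    \frac{\dd}{\dd t}\|\widetilde{\nabla}_\xi f\|^2_{L^2} \leq \frac{\lambda^2}{\sigma_x}\|\widetilde{\nabla}_\xi f\|^2_{L^2} + C,
\end{equation*}
while estimate~\eqref{est:absorbintTnablaf} supplies $\int_t^{t+1}\|\widetilde{\nabla}_\xi f\|^2_{L^2}\dd s \leq C'$. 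The uniform Gr\"onwall argument then gives the pointwise bound: pick $s^\star\in[t,t+1]$ with $\|\widetilde{\nabla}_\xi f(s^\star)\|^2\leq C'$ by the mean value property, and integrate the inequality forward on $[s^\star,t+1]$ to obtain $\|\widetilde{\nabla}_\xi f(t+1)\|^2\leq (C' + C)\exp(\lambda^2/\sigma_x)$. This yields estimate~\eqref{est:ActiveMatterH1Absorb} for $t\geq t_2 \defeq t_0+1$, and integrating the dissipative differential inequality from $t$ to $t+1$ with the $H^1$ bound in hand produces~\eqref{est:ActiveMatterL2H2}.

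\textbf{Main obstacle.} The delicate point is the choice of interpolation exponents in the bound for $\int \Phi[B] f^2$: they must be such that only the second-order dissipation $\int|\nabalaxtheta^2 f|^2$ is activated, while the remainder is polynomial in the already controlled quantities $\|f\|_{L^2}$ and $\|B\|$, $\|\partial_\theta B\|$, $\|\partial_{\theta\theta}B\|$. Once this algebraic step is set up correctly, the uniform Gr\"onwall wrap-up, the reintegration to get~\eqref{est:ActiveMatterL2H2}, and the approximation argument using Lemma~\ref{lem:weakL2convergenceLinfty} are routine.
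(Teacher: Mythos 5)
Your proposal is correct and follows essentially the same strategy as the paper: energy estimate against $-\mathcal{L}f$, integration by parts in $\theta$ to control the drift term, Gagliardo--Nirenberg, the $L^2_{x,\theta}$ absorption of Proposition~\ref{prop:elliL2f} together with the time-averaged bound~\eqref{est:absorbintTnablaf}, uniform Gr\"onwall, and mollification plus Lemma~\ref{lem:weakL2convergenceLinfty}. The only minor variation is the interpolation step: you use the second-order Gagliardo--Nirenberg $\|f\|_{L^6_{x,\theta}}^2 \lesssim \|\nabla_\xi^2 f\|_{L^2}\|f\|_{L^2}+1$ (as the paper does in Theorem~\ref{thm:AbsorbH3H1Parabolic}) and absorb into the $\|\nabla_\xi^2 f\|^2$ dissipation, whereas the paper's Proposition~\ref{prop:fabsorbedH1} uses the finer $L^1_\theta$-$L^6_\theta$ interpolation followed by only first-order Sobolev $\|f\|_{L^6_{x,\theta}}\lesssim\|f\|_{H^1_{x,\theta}}$ and a Young inequality, keeping a linear $\|\nabla_\xi f\|^2$ term that is then closed through~\eqref{est:absorbintTnablaf}; both routes are valid and yield the same conclusion.
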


\begin{proof}
    We start by considering a smooth initial condition $f_0$, and a mollified scalar field $B$, so that the following computations hold.
    In the following, we use the following differential operator notations, 
    \begin{equation*}
        \mathcal{L} = \sigma_x \Delta_x + \sigma_\theta 
        \partial_{\theta\theta}\text {, and }\widetilde{\nabla} = \begin{pmatrix}
            \sqrt{\sigma_x}\partial_{x_1} \\
            \sqrt{\sigma_x}\partial_{x_2} \\
            \sqrt{\sigma_\theta}\partial_{\theta}
        \end{pmatrix}.
    \end{equation*}
    Multiplying the equation for $f$ by $-\mathcal{L}f$ and integrating yields,
    \begin{align}
        \label{eq:H1absorbf}
        \frac{\dd}{\dd t} \int \frac{|\widetilde{\nabla}_{\xi}f|^2}{2} &= -\int (\mathcal{L}f)^2 + \chi \int (\mathcal{L}f)\partial_\theta(Bf) + \lambda \int (\mathcal{L}f) v\cdot \nabla_x f,\nonumber\\
        &\leq -\frac{\sigma_x^2\wedge\sigma_\theta^2}{2}\int |\nabalaxtheta^2 f|^2 + \underset{\eqdef\text{(I)}}{\underline{\chi^2 \int (\partial_\theta(Bf))^2}} + \lambda^2 \int |\nabla_x f|^2.
    \end{align}
    Let us treat separately (I) as follows,
    \begin{align}
        \label{est:ItermH1Absorbf}
        \chi^2 \int (\partial_\theta(Bf))^2 &\leq 2\chi^2 \int (\partial_\theta B)^2f^2 + B^2(\partial_\theta f)^2,\nonumber\\
        &\leq 2\chi^2 \int \left[(\partial_\theta B)^2f^2 - \partial_\theta(B^2)(\partial_\theta f)f - B^2f \partial_{\theta\theta}f\right],\nonumber\\
        &\leq 2\chi^2 \int \left[(\partial_\theta B)^2f^2 - \partial_\theta(B^2)\partial_\theta \left(\frac{f^2}{2}\right) + \frac{\chi^2}{2\mu}B^4f^2 \right] + \mu \int (\partial_{\theta\theta}f)^2,\nonumber\\
        &\leq 2\chi^2 \int \left(\left[(\partial_\theta B)^2 + \partial_{\theta\theta}\left(\frac{B^2}{2}\right) + \frac{\chi^2}{2\mu}B^4\right]f^2\right) + \mu \int (\partial_{\theta\theta}f)^2.
    \end{align}
    We define $\Phi_\mu$ as,
    \begin{equation*}
        \Phi_\mu = 2\chi^2\left((\partial_\theta B)^2 + \partial_{\theta\theta}\left(\frac{B^2}{2}\right) + \frac{\chi^2}{2\mu}B^4\right) = \chi^2\left(3(\partial_\theta B)^2 + B\partial_{\theta\theta}B + \frac{\chi^2}{\mu}B^4\right).
    \end{equation*}

    We then estimate its norm as follows, developing the product and bounding the squares,
    \begin{equation*}
        |\Phi_\mu| \leq \chi^2C\left(1+(\partial_\theta B)^4 + (\partial_{\theta\theta}B)^4 + \frac{\chi^2}{\mu}B^4\right).
    \end{equation*}

    By hypothesis on $B,\partial_\theta B,\partial_{\theta,\theta}B$, and the fact that the domain as finite volume, we obtain that,  
    \begin{equation*}
        \Phi_\mu\in L^\infty_t(L^{\frac{3}{2}}_x(L^\infty_\theta)),
    \end{equation*}
    with estimate, 
    \begin{equation*}
        \|\Phi_\mu\|_{L^{\frac{3}{2}}_x(L^\infty_\theta)} \leq C\left(1+\frac{1}{\mu}\right)\left(1+\|B\|^4_{L^6_x(L^\infty_\theta)} + \|\partial_{\theta}B\|^4_{L^6_x(L^\infty_\theta)} +\|\partial_{\theta\theta}B\|^4_{L^6_x(L^\infty_\theta)} \right),
    \end{equation*}

    for some $C>0$ only depending on $\chi$.

    Coming back to \eqref{est:ItermH1Absorbf},
    bounding the first integral term with H\"older inequality,
    \begin{equation}
        \int \Phi_\mu \f^2 \leq \int \|\Phi_\mu\|_{L^\infty_\theta}\int \f^2 d\theta dx \leq \|\Phi_\mu\|_{L^{\frac{3}{2}}_x(L^\infty_\theta)}\left\|\int \f^2 \dd\theta\right\|_{L^3_x}.
        \label{est:ItermH1AbsorbffirstTerm}
    \end{equation}

    Interpolating $\f$ in $L^1_\theta\cap L^6_\theta$ with exponents,
    \begin{equation*}
            \frac{1}{2} = \frac{\alpha}{1}+\frac{(1-\alpha)}{6}  \implies \alpha = \frac{2}{5},
    \end{equation*}
    followed by H\"older inequality and Sobolev-Gagliardo-Nirenberg inequality Proposition~\ref{thm:GagliardoNirenbeg} we obtain,
    \begin{align}
        \left\|\int \f^2 \dd\theta\right\|_{L^3_x} & \leq \left( \int \left(\|\f\|_{L^1_\theta}^{\frac{2}{5}}\|\f\|_{L^6_\theta}^{\frac{3}{5}}\right)^6dx\right)^{\frac{1}{3}},\\
        &\leq \|\rho\|_{L^6_x}^{\frac{4}{5}}\|\f\|^{\frac{6}{5}}_{L^6_{x,\theta}},\\
        &\leq C_{GN}\|\rho\|_{L^6_x}^{\frac{4}{5}}\left(\|\nabalaxtheta \f\|^2_{L^2_{x,\theta}}+\|\f\|^2_{L^2_{x,\theta}}\right)^{\frac{3}{5}}.
    \end{align}
    Injecting the previous estimate in \eqref{est:ItermH1AbsorbffirstTerm}, we deduce that there exists a constant $C_2>0$ such that,
    \begin{align}
        \label{eq:Phif2estim}
        \int \Phi_\mu \f^2 &\leq C_{GN}\|\Phi_\mu\|_{L^{\frac{3}{2}}_x(L^\infty_\theta)}\|\rho\|_{L^6_x}^{\frac{4}{5}}\left(\|\nabalaxtheta\f\|^2_{L^2_{x,\theta}}+\|\f\|^2_{L^2_{x,\theta}}\right)^{\frac{3}{5}},\nonumber\\
        &\leq C_2\left(\|\Phi_\mu\|_{L^{\frac{3}{2}}_x(L^\infty_\theta)}\|\rho\|_{L^6_x}^{\frac{4}{5}}\right)^{\frac{5}{2}} + \left(\|\nabalaxtheta\f\|^2_{L^2_{x,\theta}}+\|f\|^2_{L^2_{x,\theta}}\right),
    \end{align}
    Plugging \eqref{eq:Phif2estim} and \eqref{est:ItermH1Absorbf} with  $\mu = \frac{\sigma_x^2\wedge\sigma_\theta^2}{4}$ in \eqref{eq:H1absorbf}, we obtain,
    \begin{align}
        \frac{\dd}{\dd t} \int \frac{|\widetilde{\nabla}_{\xi} f|^2}{2}
        &\leq -\frac{\sigma_x^2\wedge\sigma_\theta^2}{4}\int |\nabalaxtheta^2 f|^2 + C_2\left(\|\Phi\|_{L^{\frac{3}{2}}_x(L^\infty_\theta)}\|\rho\|_{L^6_x}^{\frac{4}{5}}\right)^{\frac{5}{2}} \nonumber\\
        \label{eq:H1estimeQuasiFinal}
        &\hspace{3em}+ \|\nabalaxtheta\f\|^2_{L^2_{\xi}}+\|\f\|^2_{L^2_{x,\theta}} + \lambda^2 \int |\nabla_x \f|^2.
    \end{align}
Finally integrating over $s$ between $t$ and $t+1$, 

    {\small
    \begin{equation}
        \int \frac{|\widetilde{\nabla}_{\xi} f(t+1)|^2}{2}
        \leq  C_2\left(\|\Phi\|_{L^\infty_t(L^{\frac{3}{2}}_x(L^\infty_\theta))}\|\rho\|_{L^\infty_t(L^6_x)}^{\frac{4}{5}}\right)^{\frac{5}{2}} + C\int^{t+1}_t \left(\int f^2 + \int|\nabalaxtheta \f|^2 \right),
    \end{equation}}

    for some $C>0$. We obtain the required estimate~\eqref{est:ActiveMatterH1Absorb} by using the estimates \eqref{est:absorbintTnablaf}, \eqref{est:rhoestimLp} and \eqref{est:absorbL2f}. Then estimate~\eqref{est:ActiveMatterL2H2} is also obtain by integrating equation~\eqref{eq:H1estimeQuasiFinal} but moving the first negative term on the left hand side and using estimate~\eqref{est:ActiveMatterL2H2}. The result for general scalar fields $B$ and initial conditions, holds since $t_2$ does not depend on the higher regularity of the initial condition nor the scalar field B. We thus conclude similarly as in Theorem~\ref{thm:AbsorbH3H1Parabolic} by taking a sequence of smooth initial conditions and mollified scalar fields converging to $f_0,B$, and the conclusion follows from Lemma~\ref{lem:weakL2convergenceLinfty}.
\end{proof}

\begin{thm} 
    \label{thm:AbsorbH3H1Elliptic}
    Let $\sigma_x>0$ and $\sigma_\theta>0$. Then, there exists a constant $C_2>0$, such that for any solution $f$ in $C_t(L^6_{x,\theta})\cap L^2_{loc,t}(H^1_{x,\theta})$ of the system~\eqref{sys:FormicidaeElliptic}, there exists $t_2>0$, depending on $\|\rho_0\|_{L^6_x},\|f_0\|_{L^2_{x,\theta}}$, such that,
    \begin{equation*}
        f\in C([t_1,+\infty), H^1_{x,\theta}),
    \end{equation*}
    and, 
    \begin{equation*}
        \forall t\geq t_2(\|\rho_0\|_{L^6_x},\|f_0\|_{L^2_{x,\theta}})>0,
    \end{equation*}
    the following estimates hold,
    \begin{equation}
        \label{est:fH1absorbElliptic}
        \|f(t)\|_{H^1_{x,\theta}} \leq C_2,
    \end{equation}
    and
    \begin{equation}
        \label{est:fL2H2absorbElliptic}
       \int^{t_2+1}_{t_2} \int |\nabalaxtheta^2 f|^2 \dd s \leq C_2.
    \end{equation}   
\end{thm}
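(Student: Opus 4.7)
The plan is to reduce this theorem to the general scalar-field results \textbf{Proposition~\ref{prop:elliL2f}} and \textbf{Proposition~\ref{prop:fabsorbedH1}} by producing uniform-in-time bounds on the elliptic chemical field $c$ and its spatial derivatives, which in turn give the required bounds on $B_\tau[c]$.

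First I would invoke \textbf{Proposition~\ref{prop:rhoestimLp}} with $p=6$ applied to the solution $f$ of \eqref{sys:FormicidaeElliptic}, which belongs to $C_t(L^6_x(L^1_\theta)_+) \cap L^2_{t,loc}(H^1_{x,\theta})$ by \textbf{Proposition~\ref{prop:ActiveMatterEquation}}, yielding a time $t_0(\|\rho_0\|_{L^6_x})>0$ and a constant $C_0>0$ (depending only on $\lambda, \sigma_x$) such that $\|\rho(t)\|_{L^6_x} \leq C_0$ for all $t \geq t_0$. Since $\gamma>0$ and $c(t)$ solves the elliptic problem $\gamma c(t) - \sigma_c \Delta_x c(t) = \rho(t)$ on $\mathbb{T}^2_1$, standard elliptic regularity (see \cite[Chapter 1]{krylov2008lectures}) yields the Schauder--Calder\'on--Zygmund estimate
\begin{equation*}
    \|c(t)\|_{W^{2,6}_x} \leq C_{\gamma, \sigma_c}\,\|\rho(t)\|_{L^6_x},
\end{equation*}
so that $\|c(t)\|_{W^{2,6}_x} \leq C_{\gamma,\sigma_c} C_0$ for all $t \geq t_0$.

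Next I would translate this control into uniform bounds on the scalar field $B_\tau[c]$ and its first two $\theta$-derivatives. Recalling that $B_\tau[c](x,\theta) = v^\perp(\theta) \cdot \nabla_x c(x) + \tau v^\perp(\theta)\cdot \nabla^2_x c(x) v(\theta)$, and that $\partial_\theta v$ and $\partial_\theta v^\perp$ are bounded, a direct computation and the Sobolev embedding $W^{2,6}_x \hookrightarrow W^{1,\infty}_x \cap L^\infty_x$ give, for some constant $C_\tau>0$,
\begin{equation*}
    \|B_\tau[c(t)]\|_{L^6_x(L^\infty_\theta)} + \|\partial_\theta B_\tau[c(t)]\|_{L^6_x(L^\infty_\theta)} + \|\partial_{\theta\theta} B_\tau[c(t)]\|_{L^6_x(L^\infty_\theta)} \leq C_\tau \|c(t)\|_{W^{2,6}_x},
\end{equation*}
which is uniformly bounded for $t \geq t_0$.

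Finally, treating $f(t_0)$ as a new initial condition (note $f(t_0) \in L^2_{x,\theta}$ follows from the regularity framework of \textbf{Theorem~\ref{thm:FormicidaeEllipticSemiGroup}}, and its norm is controlled by $\|f_0\|_{L^2_{x,\theta}}$ and $\|\rho_0\|_{L^6_x}$ via the growth estimates already used in the proof of \textbf{Proposition~\ref{prop:elliL2f}}), I apply \textbf{Proposition~\ref{prop:elliL2f}} to obtain $L^2_{x,\theta}$-absorption together with the space-time bound $\int_t^{t+1}\int |\widetilde{\nabla}_\xi f|^2 \leq C_1$, and then \textbf{Proposition~\ref{prop:fabsorbedH1}} to obtain the $H^1_{x,\theta}$ absorption \eqref{est:fH1absorbElliptic} and the second estimate \eqref{est:fL2H2absorbElliptic}. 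The resulting absorption time $t_2$ depends only on $\|\rho_0\|_{L^6_x}$ and $\|f_0\|_{L^2_{x,\theta}}$, as required.

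I do not anticipate a genuine obstacle in this argument; the only point to handle carefully is the chain of dependences between the absorption times: one must first absorb $\rho$ in $L^6_x$ (which controls $B_\tau[c]$), then absorb $f$ in $L^2_{x,\theta}$ given this control, then absorb in $H^1_{x,\theta}$. Each application requires that the preceding bound already be active, so the final $t_2$ is the maximum of the intermediate absorption times.
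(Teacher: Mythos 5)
Your proposal matches the paper's proof: apply Proposition~\ref{prop:rhoestimLp} with $p=6$ to absorb $\rho$ in $L^6_x$, use elliptic regularity to bound $c$ in $W^{2,6}_x$ (hence $B_\tau[c]$ and its $\theta$-derivatives in $L^\infty_t(L^6_x(L^\infty_\theta))$), then chain Propositions~\ref{prop:elliL2f} and~\ref{prop:fabsorbedH1}, restarting from $t_0$ while controlling the intermediate norms via the growth estimates. This is exactly the argument given in the paper.
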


\begin{proof}
    We first apply Proposition~\ref{prop:rhoestimLp}, to obtain that there exists $C_0>0$ such that for any initial condition, there exists $t_0$, such that,
    \begin{equation*}
        \|\rho(t)\|_{L^6_x} \leq C_0, \forall t\geq t_0 >0.
    \end{equation*}
    This implies from classical $L^p$-elliptic theory that,
    \begin{equation*}
        \|c(t)\|_{W^{2,6}_x} \leq C_0, \forall t\geq t_0 >0.
    \end{equation*}
    This yields the hypothesis of Proposition~\ref{prop:fabsorbedH1} after $t_0$. Using the estimates from Proposition~\ref{prop:elliL2f} on the finite interval $[0,t_0]$, to control the norms $\|\rho(t_0)\|_{L^6_x}$ and $\|f(t_0)\|_{L^2_{x,\theta}}$ only by the same noms at $t=0$, allows to conclude the result of Proposition~\ref{prop:fabsorbedH1}.
\end{proof}

\section{Linear analysis around the homogeneous solution}
\label{sec:LinearAnalysisHomogeneous}
\subsection{Linearized equations around the homogeneous solution}

In this section, we study the linearization of the parabolic system~\eqref{sys:FormicidaeParabolic} and the elliptic system~\eqref{sys:FormicidaeElliptic} around their respective homogeneous normalized steady states, 
$u_*$ and $\f_*$, given by
\begin{equation*}
 f_*(t,x,\theta)\equiv \frac{1}{2\pi}, c_*(t,x)\equiv  \frac{1}{\gamma}, u_* \defeq (f_*,c_*).
\end{equation*}
The linearized equation is obtained by considering a linear perturbation around the homogeneous solution. As such, the linearized equation for \eqref{sys:FormicidaeParabolic} is
\begin{equation}
    \label{eq:LinearizedHomogeneousParabolic}
    \begin{cases}
        \partial_t \ff=\nabla_\xx\cdot(\sigma_\xx\nabla_\xx\ff -\lambda v \ff)+\partial_\theta(\sigma_\theta\partial_\theta \ff) -\chi f_*  \partial_\theta B_\tau[c],\\
        \partial_t c =  \sigma_c \Delta_x c -\gamma c + \int f \dd\theta.
    \end{cases}
\end{equation}
We also define the linear operator $\LinOpPara :  L^2_{x,\theta} \times L^2_{x}  \supset D(\LinOpPara) \to L^2_{x,\theta} \times L^2_{x}$,

\begin{equation}
        \tag{$\LinOpPara$}
        \label{def:LinOpHomogPara}
        \LinOpPara \begin{pmatrix}
            g\\
            c
        \end{pmatrix} =\begin{pmatrix}\nabla_\xx\cdot(\sigma_\xx\nabla_\xx g -\lambda v g)+\partial_\theta(\sigma_\theta\partial_\theta g) -\chi f_* \partial_\theta B_\tau[c]\\
        \sigma_c \Delta_x c -\gamma c + \int g \dd\theta
        \end{pmatrix}
\end{equation}
The linearized equation for \eqref{sys:FormicidaeElliptic} is
\begin{equation}
    \label{eq:LinearizedHomogeneousElliptic}
    \begin{cases}
        \partial_t \ff=\nabla_\xx\cdot(\sigma_\xx\nabla_\xx\ff -\lambda v \ff)+\partial_\theta(\sigma_\theta\partial_\theta \ff) - \chi f_* \partial_\theta B_\tau[c],\\
        \gamma c - \sigma_c \Delta_x c = \int f \dd\theta.
    \end{cases}
\end{equation}

We also define the following linear operator $\LinOpEllip :  L^2_{x,\theta} \supset D(\LinOpEllip) \to L^2_{x,\theta}$,

\begin{equation}
        \tag{$\LinOpEllip$}
        \label{def:LinOpHomogEllip}
        \LinOpEllip g =\nabla_\xx\cdot(\sigma_\xx\nabla_\xx g -\lambda vg)+\partial_\theta(\sigma_\theta\partial_\theta g) -\chi f_* \partial_\theta B_\tau \left[(\gamma -\sigma_c \Delta_x)^{-1} \int g \dd\theta\right].
\end{equation}
The starting point for the linear analysis for both systems is the fully inviscid ($\sigma_\xx=\sigma_\theta=0$) instability condition for an integer wavenumber $k \geq 1$
\begin{equation}
        \chi(2\pi k \tau + 1) > \lambda (\gamma + 4\pi \sigma_c k^2).
\end{equation}
Given this condition we show a lower bound on the dimension of the eigenspace of the inviscid operator in terms of $k$. Using a perturbation argument with the Riesz projector, similar as in \cite{albritton2022non}, we conclude by demonstrating that we have the same lower bound to the dimension of the eigenspace of the viscous operator for $\sigma_\xx,\sigma_\theta$ sufficiently small.

\subsection{Parabolic-Elliptic Case}
In this section we give a lower bound on the dimension of the unstable manifold of the linearized equation for the parabolic-elliptic system \eqref{sys:FormicidaeElliptic}.
\begin{thm}
    \label{thm:ExistenceEigenFunLin}
    Suppose that there exists an integer wavenumber $k\geq 1$, such that,
    \begin{equation}
        \label{cond:InviscidInstabKwave}
        \chi(2\pi k \tau + 1) > \lambda (\gamma + 4\pi \sigma_c k^2).
    \end{equation}
    Then there exists $\sigma_\theta^*>0$ such that for $\sigma_\theta \in [0,\sigma_\theta^*)$ there exists $\sigma_x^*(\sigma_\theta)>0$ such that for $\sigma_x \in [0,\sigma_x^*)$ there exist $k$ unstable eigenvalues of $\LinOpEllip$,
    \begin{equation*}
        \mu^1,\dots,\mu^k \in \{\mu \in \dC | \Real\mu >0\} \cap \Sigma(\LinOpEllip),
    \end{equation*}
    where $\Sigma(\LinOpEllip)$ is the spectrum of $\LinOpEllip$. Moreover if $X$ is the real invariant subspace associated with the $\mu^i$'s and their complex conjugates, it has the dimensional lower bound,
    \begin{equation*}
        \dim X \geq 4k.
    \end{equation*}
    Furthermore, we can choose a $4k$-dimensional orthogonal sub-basis of real functions in $E$, such that each of the functions are constant in $x_1$ or $x_2$.
\end{thm}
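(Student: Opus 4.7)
The plan is to exploit translation invariance in $x$ and the rotation-exchange symmetry of $\LinOpEllip$. First, I would restrict to the invariant subspace of functions constant in $x_2$: if $g(x_1,\theta)$ is independent of $x_2$, then so are $\rho$ and the unique solution $c$ of $\gamma c - \sigma_c \Delta_x c = \rho$, hence $\LinOpEllip$ preserves this subspace. Fourier-decomposing in $x_1$, the eigenvalue problem for each mode $g = \phi(\theta)\, e^{2\pi i j x_1}$ reduces to a one-dimensional spectral problem $L_j \phi = \mu \phi$ on $L^2(\mathbb{T}_{2\pi})$, parametrized by the integer wavenumber $j$. Moreover, the map $(x,\theta)\mapsto(R_{\pi/2}x, \theta+\pi/2)$ is a symmetry of $\LinOpEllip$ that sends the $x_2$-constant sector onto the $x_1$-constant sector, so both sectors share identical spectra.

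Second, in the inviscid limit $\sigma_x=\sigma_\theta=0$, the reduced operator $L_j^{0}$ has the structure of a multiplication operator $M_j = -2\pi i j \lambda \cos\theta$ (whose spectrum is the imaginary segment given by the range of the multiplier) plus a finite-rank perturbation from the chemotactic feedback, which depends on $\phi$ only through the scalar $\int\phi\,d\theta$. Inverting $(\mu - M_j)$ and imposing consistency with $\int\phi\,d\theta$ yields a scalar dispersion relation $D_j(\mu)=0$. Explicit residue computations in $\theta$ show that $D_j(\mu)\to 0$ as real $\mu\to+\infty$, while a boundary-layer analysis near the singular set $\cos\theta = 0$ gives a finite limit as $\mu\to 0^+$ that, under the instability hypothesis $(U_j)$, exceeds the required threshold $\gamma + \sigma_c(2\pi j)^2$; the intermediate value theorem then produces a real positive eigenvalue $\mu_j>0$ of $L_j^{0}$.

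The main obstacle is passing from the inviscid operator to the viscous case $\sigma_x, \sigma_\theta > 0$, since the perturbation $\sigma_x\partial_{x_1}^2 + \sigma_\theta\partial_\theta^2$ is unbounded relative to $L_j^{0}$ and $\mu_j$ sits near the essential spectrum on the imaginary axis. Following the strategy of~\cite{albritton2022non}, I would fix a small contour $\Gamma$ encircling $\mu_j$ and contained in the resolvent set of $L_j^{0}$, verify convergence of the viscous resolvent to the inviscid resolvent uniformly on $\Gamma$ as $\sigma_x, \sigma_\theta \to 0$, and deduce that the Riesz projector $P_j(\sigma) = \tfrac{1}{2\pi i}\oint_\Gamma (z - L_j)^{-1}\,dz$ depends continuously on $\sigma$ in operator norm. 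Since its rank is a locally constant nonnegative integer, an unstable eigenvalue of $L_j$ close to $\mu_j$ must persist for sufficiently small $\sigma_x, \sigma_\theta$.

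Carrying this construction out for each $j = 1, \ldots, k$ --- after verifying that $(U_k)$ implies $(U_j)$ in the relevant parameter regime, for instance via the concavity of the instability polynomial $P(j) = \chi(2\pi j\tau + 1) - \lambda(\gamma + 4\pi \sigma_c j^2)$ --- yields $k$ real positive eigenvalues $\mu^1,\dots,\mu^k$. For each $\mu^j$, the $x_1\leftrightarrow x_2$ exchange symmetry and complex conjugation produce four real linearly independent eigenfunctions supported on the four disjoint Fourier modes $(\pm j,0)$ and $(0,\pm j)$, each constant in $x_1$ or $x_2$ and mutually orthogonal in $L^2$. Summing over $j = 1, \ldots, k$ assembles the orthogonal $4k$-dimensional real invariant subspace $X$ of $\LinOpEllip$.
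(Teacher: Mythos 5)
Your proposal follows essentially the same route as the paper: restrict to the sector of functions constant in $x_2$, Fourier-decompose in $x_1$ to get a family of one-dimensional spectral problems in $\theta$ consisting of a transport multiplier plus a finite-rank chemotactic coupling, solve the inviscid dispersion relation by the intermediate value theorem, perturb to small $\sigma_\theta,\sigma_x$ via a Riesz projector in the spirit of~\cite{albritton2022non}, and use the $(x_1,x_2,\theta)\mapsto(x_2,x_1,\theta+\pi/2)$ symmetry to double the eigenspace. The cosmetic differences are that you work with a single complex mode $e^{2\pi i j x_1}$ (scalar ODE in $\theta$, dispersion relation computed by residues/boundary-layer asymptotics near $\cos\theta=0$), whereas the paper works with the real pair $\cos(2\pi jx_1),\sin(2\pi jx_1)$ (a $2\times2$ vector ODE with the dispersion integral $\mathcal I(\breve\tau,\breve\lambda,\mu)$ computed in closed form), and that you treat $\sigma_x$ and $\sigma_\theta$ jointly in the resolvent limit, while the paper first observes that after the Fourier reduction $\sigma_x$ enters only as the scalar shift $-4\pi^2j^2\sigma_x\,\mathrm{Id}$, so that no compactness argument is needed for $\sigma_x$ and the projector is used only for $\sigma_\theta$. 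Both versions work for the single-wavenumber existence.

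Where there is a genuine gap is the step from one wavenumber to all $j=1,\dots,k$. You correctly flag that one must "verify that $(\mathcal U_k)$ implies $(\mathcal U_j)$," but the justification you propose — concavity of $P(j)=\chi(2\pi j\tau+1)-\lambda(\gamma+4\pi\sigma_c j^2)$ — is insufficient. $P$ is a downward-opening parabola, so its positivity set is an interval $(j_-,j_+)$; this interval contains $[1,k]$ only if in addition $P(1)\geq0$ (equivalently $j_-\leq1$, for which e.g.\ $\chi\geq\lambda\gamma$ suffices). If $\chi<\lambda\gamma$ one can have $1<j_-<k<j_+$, so that $(\mathcal U_k)$ holds while $(\mathcal U_1)$ fails — for instance $\chi=\lambda=1$, $\gamma=2$, $\sigma_c=1/(16\pi)$, $\tau=9/(16\pi)$, $k=2$ — and then your construction (and the paper's) only produces one unstable wavenumber, giving dimension $4$, not $4k=8$. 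The paper itself asserts ``$2$ times $k\times2$ linearly independent eigenfunctions'' without spelling out why $(\mathcal U_j)$ should hold for every $1\leq j\leq k$, so the omission is shared; nevertheless, as written the concavity argument does not close it, and an extra hypothesis (or a restatement of the count as $4\cdot\#\{j\in[1,k]\cap\mathbb Z:P(j)>0\}$) is needed.
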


For the proof we proceed as follows. We first introduce a family of ansatzes that are constant in the $x_2$ variable. Using the ansatz, we show that the fully inviscid operator has positive real eigenvalues if the fully inviscid instability condition~\eqref{cond:InviscidInstabKwave} is satisfied. This result is stated in Lemma~\ref{lem:InviscidEigenVal}. Then, in Theorem~\ref{thm:EigenValSystem}, we show that the Riesz projector for the case $\sigma_\xx=0,\sigma_\theta>0$ converges to the Riesz projector for $\sigma_\xx=0,\sigma_\theta=0$ as $\sigma_{\theta}$ goes to zero, using results of elliptic theory obtained in Lemma~\ref{lem:InversibilityEllipticOperator}. The link between the dimensional number in Theorem~\ref{thm:EigenValSystem} and Theorem~\ref{thm:ExistenceEigenFunLin} is made clear by exchanging $x_1$ with $x_2$ and rotating in $\theta$.

For an integer wavenumber $k\geq 1$, we define the ansatz $f^k(\theta,x)$ as
\begin{equation}
    \label{eq:LinFamilyEllip}
    f^k(\theta, x_1, x_2) = a(\theta) \cos(2\pi k x_1) +  b(\theta) \sin(2\pi k x_1),
\end{equation}
where $a,b$ are functions defined on $\mathbb{T}_{2\pi}$. We use the notation $\Bar{\cdot}$ to indicate the average in $\theta$. That is, given a function $a$ defined on $\mathbb{T}_{2\pi}$
\begin{equation}
    \label{def:BarnotationIntegral}
   \Bar{a} \defeq \int a(\theta) \dd\theta.
\end{equation}

We can solve explicitly the equation for the chemotactic field for the family $f^k$, such that

\begin{equation}
    \label{eq:LinFamilyChemotactic}
    (\gamma - \sigma_c \Delta _x)^{-1} \int f^k \dd\theta = \frac{1}{\gamma + 4\pi^2\sigma_ck^2}[\Bar{a} \cos(2\pi k x_1 ) + \Bar{b} \sin(2\pi  k x_1 )].
\end{equation}

Hence, applying $\LinOpEllip$ to $f^k$, and writing $\xonepik$ for $2\pi k x_1$, we obtain
\begin{align*}
    \LinOpEllip f^k = 
    & - 4\pi^2 k^2 \sigma_x \left(a(\theta) \cos(\xonepik) + b(\theta) \sin(\xonepik)\right) + \sigma_\theta \left(a''(\theta) \cos(\xonepik) + b''(\theta) \sin(\xonepik)\right) \\
     &- 2\pi k \lambda \cos(\theta) \left(b(\theta)\cos(\xonepik) - a(\theta)\sin(\xonepik)\right) \\
     &-  \frac{\chi f_*}{\gamma + 4\pi^2 \sigma_c k^2}\Big(
        2\pi k \left(\Bar{b}\cos(\xonepik) - \Bar{a}\sin(\xonepik)\right) \cos(\theta) \\
     & \hspace{8em} + 4\pi^2 k^2 \tau \left(\Bar{a}\cos(\xonepik) + \Bar{b}\sin(\xonepik)\right) 
        \left(\cos^2(\theta) - \sin^2(\theta)\right) \Big).
\end{align*}

Finally, we denote $\ABLinVec$ for the vector $(a,b)^\top$, such that 
\begin{equation*}
    f^k(x,\theta)=\ABLinVec\cdot(\cos\xonepik,\sin\xonepik)^\top.
\end{equation*}
By identifying the terms $\sin(\xonepik)$ and $\cos(\xonepik)$, the eigenproblem can be written in terms of $\ABLinVec$ as
\begin{equation}
    \begin{split}
        \label{eq:ABVecFull}
        \mu A & =-4\pi^2 k^2 \sigma_x\ABLinVec +\sigma_\theta \frac{d^2}{d\theta^2}\ABLinVec + 2\pi k \lambda\begin{bmatrix}0 & -\cos(\theta) \\ \cos(\theta) & 0 \end{bmatrix}\ABLinVec \\
        & \hspace{3em} + \frac{\chi k }{\gamma + 4\pi^2\sigma_c k^2}\begin{pmatrix} -2\pi k \tau \cos(2\theta) & \cos(\theta) \\ -\cos(\theta) & -2\pi k \tau \cos(2\theta) \end{pmatrix}\overline{\ABLinVec},    
    \end{split}
\end{equation}
where $\overline{\ABLinVec}$ indicates the integral of the vector, $\overline{\ABLinVec} \defeq \begin{pmatrix}\Bar{a}\\\bar{b}\end{pmatrix}$.

For the elliptic system, the $\sigma_\xx$-term will only shift the eigenvalues to the left in the complex plane. Hence, we study the eigenproblem for the case $\sigma_\xx=0$, and the eigenproblem can be written as
\begin{equation}
    \label{eq:SystemLinPhi}
    \LinOpThet \ABLinVec=\mu \ABLinVec, \ \text{where} \ \LinOpThet \ABLinVec\defeq \sigma \frac{d^2}{d\theta^2}\ABLinVec + \VOpLinVec \ABLinVec+  \cchi \AmatLinVec \overline{\ABLinVec}, 
\end{equation}
with the notation
\begin{equation}
    \label{def:LinearisedEllipticVandBDefinition}
    \VOpLinVec \defeq \llambda \begin{bmatrix}
        0 &  - \cos(\theta) \times \Id  \\
        \cos(\theta)\times\Id & 0
    \end{bmatrix}, \AmatLinVec(\theta) \defeq \begin{pmatrix}
    -\ttau \cos(2\theta) & \cos(\theta)\\
    -\cos(\theta) & -\ttau \cos(2\theta)\end{pmatrix},
\end{equation}
and with the parameter notation
\begin{equation}
    \cchi = \frac{\chi k}{\gamma + 4\pi^2\sigma_c k^2}, \ttau = 2\pi k \tau, \sigma = \sigma_\theta, \llambda = 2\pi k \lambda.
\end{equation}
For this eigenproblem written in terms of $\ABLinVec$ we have the following existence result.
\begin{thm}
    \label{thm:EigenValSystem}
    Suppose that for $\cchi >0, \ttau > 0$ and $\llambda > 0 $ the following condition holds,
    \begin{equation}
        \label{hyp:EigenHypotesis}
        2\pi\cchi(\ttau + 1) > \llambda.
    \end{equation}
    Then, there exists $\sigma^*>0$ such that for any $\sigma \in [0,\sigma^*)$, there exists a real strictly positive eigenvalue $\mu$ of multiplicity $2$ for the operator $\LinOpThet$ defined in \eqref{eq:SystemLinPhi}.

    
\end{thm}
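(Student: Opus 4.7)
The plan is to solve the inviscid case $\sigma = 0$ by an explicit reduction to a scalar equation for $\mu$, and then to transfer the conclusion to small $\sigma > 0$ via a Riesz-projector perturbation. At $\sigma = 0$ the operator $\VOpLinVec + \cchi \AmatLinVec \overline{\cdot}$ is a pointwise multiplication plus a rank-two term, and for any real $\mu > 0$ the matrix $\mu I - \VOpLinVec(\theta) = \mu I - \llambda \cos\theta\, J$ is pointwise invertible since its eigenvalues $\mu \pm i\llambda\cos\theta$ never vanish. The eigenvalue equation then rearranges to $\ABLinVec = \cchi(\mu I - \VOpLinVec)^{-1} \AmatLinVec \overline{\ABLinVec}$, and averaging in $\theta$ collapses it to the finite-dimensional condition that $1$ is an eigenvalue of the $2 \times 2$ matrix $M(\mu) := \cchi\, \overline{(\mu I - \VOpLinVec)^{-1} \AmatLinVec}$.

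Using the decomposition $\VOpLinVec = \llambda\cos\theta\, J$ and $\AmatLinVec = -\ttau\cos 2\theta\, I - \cos\theta\, J$ with $J = \bigl(\begin{smallmatrix} 0 & -1 \\ 1 & 0 \end{smallmatrix}\bigr)$ and $J^2 = -I$, the product $(\mu I + \VOpLinVec)\AmatLinVec$ splits into a scalar multiple of $I$ with coefficient $\llambda\cos^2\theta - \mu\ttau\cos 2\theta$ and a scalar multiple of $J$ with coefficient $-(\mu + \llambda\ttau\cos 2\theta)\cos\theta$. The $J$-coefficient is odd under $\theta \mapsto \theta + \pi$ and therefore integrates to zero, leaving $M(\mu) = \cchi\alpha(\mu) I$ with
\begin{equation*}
\alpha(\mu) = \int_0^{2\pi} \frac{\llambda\cos^2\theta - \mu\ttau\cos 2\theta}{\mu^2 + \llambda^2\cos^2\theta}\, d\theta.
\end{equation*}
The elementary identity $\int_0^{2\pi}(\mu^2 + \llambda^2\cos^2\theta)^{-1}\, d\theta = 2\pi/(\mu\sqrt{\mu^2+\llambda^2})$ combined with $\cos 2\theta = 2\cos^2\theta - 1$ yields $\alpha(0^+) = 2\pi(1+\ttau)/\llambda$ and $\alpha(\mu) = O(1/\mu^2)$ as $\mu \to +\infty$. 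The hypothesis~\eqref{hyp:EigenHypotesis} is precisely $\cchi\alpha(0^+) > 1$, so the intermediate value theorem provides $\mu_* \in (0, +\infty)$ with $\cchi\alpha(\mu_*) = 1$; since $M(\mu_*) = I$, every $\overline{\ABLinVec} \in \mathbb{C}^2$ generates an eigenfunction, and the geometric multiplicity at $\mu_*$ is two.

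For $\sigma \in (0, \sigma^*)$, I would fix a small contour $\Gamma \subset \{\Real \zeta > 0\}$ around $\mu_*$ that is disjoint from the rest of $\Sigma(\LinOpThet|_{\sigma = 0})$, and establish norm-resolvent convergence $(\zeta - \LinOpThet)^{-1} \to (\zeta - \LinOpThet|_{\sigma = 0})^{-1}$ uniformly in $\zeta \in \Gamma$ as $\sigma \downarrow 0$, via the invertibility arguments supplied by Lemma~\ref{lem:InversibilityEllipticOperator}. The Riesz projector $P_\sigma := (2\pi i)^{-1} \oint_\Gamma (\zeta - \LinOpThet)^{-1}\, d\zeta$ is then continuous in $\sigma$, so $\mathrm{rank}\,P_\sigma = \mathrm{rank}\,P_0 = 2$ for small $\sigma$, giving total algebraic multiplicity two inside $\Gamma$. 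To ensure the resulting cluster is a single real eigenvalue and not a complex-conjugate pair, I would use that $\LinOpThet$ commutes with the antilinear operator $K := J\,\mathcal{C}$ (pointwise multiplication by $J$ composed with complex conjugation), which satisfies $K^2 = -I$: the associated Kramers-type degeneracy forces any real eigenvalue of $\LinOpThet$ to have even multiplicity, and a first-order computation of the trace of the reduced operator $P_\sigma \LinOpThet P_\sigma$ on the two-dimensional range of $P_\sigma$ shows that the correction to $\mu_*$ is real, excluding the conjugate-pair scenario.

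I expect the principal technical obstacle to be the uniform norm-resolvent convergence on $\Gamma$: the inviscid operator $\LinOpThet|_{\sigma=0}$ has spectrum containing the essential-like segment $i[-\llambda, \llambda]$ coming from its multiplication part, so $\Gamma$ must be held at a fixed positive distance from the imaginary axis, and the resolvent of the multiplication-plus-finite-rank operator must be controlled sharply enough that the regularizing but degenerate term $\sigma \partial_\theta^2$ can be handled as a small perturbation uniformly in $\sigma$ down to $\sigma = 0$.
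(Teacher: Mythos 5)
Your overall plan matches the paper's: the inviscid case is reduced by averaging in $\theta$ to the scalar condition $\cchi\alpha(\mu) = 1$ (the paper's $\mathcal{I}(\ttau,\llambda,\mu)$, which you recover with the same $I/J$ decomposition and the vanishing of the $J$-coefficient by $\theta \mapsto \theta+\pi$), and the small-$\sigma$ case is handled by a Riesz projector around $\mu_*$ using uniform resolvent control from Lemma~\ref{lem:InversibilityEllipticOperator}. Your computation of $\alpha(0^+) = 2\pi(1+\ttau)/\llambda$ correctly accounts for the boundary-layer contributions at $\theta = \pi/2, 3\pi/2$ (it is not the naive $\mu=0$ evaluation of the integrand, which would give only $2\pi/\llambda$).

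The gap is in the final step, showing that the two units of algebraic multiplicity inside $\Gamma$ belong to a single real eigenvalue rather than a conjugate pair. Your Kramers argument with $K = J\mathcal{C}$ is correctly set up ($K$ is antilinear, $K^2 = -I$, $[\LinOpThet,K]=0$) and does show real eigenvalues have even multiplicity, but it does not exclude the conjugate-pair scenario: if $\LinOpThet v = \mu v$ with $\mu\notin\dR$, then $Kv$ is merely an eigenvector for $\bar\mu$, which is perfectly compatible with a pair $\{\mu,\bar\mu\}$ of simple eigenvalues inside $\Gamma$. The trace argument does not rescue this: $\mathrm{tr}(P_\sigma\LinOpThet P_\sigma)$ is real because $\LinOpThet$ and $P_\sigma$ are real operators, but this trace equals $2\mu$ for a double real eigenvalue and $2\Real\mu$ for a conjugate pair, so its reality carries no information. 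The paper closes this point differently, using the \emph{linear} commutation $[\LinOpThet,J]=0$: if $\ABLinVec$ is an eigenfunction for $\mu$ with $\Real\mu>0$, then so is $J\ABLinVec$, forcing geometric multiplicity two for every such eigenvalue; combined with total algebraic multiplicity two inside the conjugate-symmetric contour $\Gamma$, this rules out a conjugate pair. If you want to keep your framework, you could instead establish directly that the averaged $2\times 2$ matrix $\overline{R^\sigma(\mu)\AmatLinVec}$ stays a scalar multiple of $I$ for $\sigma>0$ (via the combined $\theta\mapsto\theta+\pi$, $J\mapsto -J$ symmetry), which reduces the eigenproblem to a single real-analytic scalar equation $\cchi m(\mu) = 1$ whose solution near $\mu_*$ is real by the implicit function theorem.
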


Using a rotation of the original ansatz
\begin{equation}
    (f^k)^\perp(x,\theta)=f^k(x_2,x_1,\theta+\pi/2),
\end{equation}
we note that we get $2$ times $k\times 2$ linearly independent eigenfunctions in total, so that Theorem~\ref{thm:EigenValSystem} implies Theorem~\ref{thm:ExistenceEigenFunLin} and gives the right eigenspace dimensional number.

Theorem~\ref{thm:EigenValSystem} will be proven at the end of this section. As explained above, we start with proving an existence result for the fully inviscid ($\mathsf{L}^E_{\sigma=0}$) eigenproblem.

\newcommand{\dLVecComp}{\mathbb{L}^2_\theta}
\newcommand{\dHVecComp}{\mathbb{H}_\theta}

For the proofs that are to come, we define the Hilbert space $\dLVecComp$ as the product space of square-integrable complex valued functions $(L^2(\mathbb{T}_{2\pi},\dC))^2$. Similarly, we define $\dHVecComp^k$ as the product space of square-integrable complex valued functions with $k\geq 1$ square integrable derivatives, both equipped with their natural norm.

\begin{lem}[Inviscid case]
    \label{lem:InviscidEigenVal}
    Suppose that for $\cchi >0, \ttau > 0$ and $\llambda > 0 $ the instability condition \eqref{hyp:EigenHypotesis} holds.

    Then, for the operator
    \begin{equation}
        \label{eq:SystemLinPhiInviscid}
        \mathsf{L}^E_{0}=\left(\VOpLinVec+  \cchi \AmatLinVec \overline{[\cdot]}\right),
    \end{equation}
    there exists an isolated real eigenvalue $ \muinvi> 0$ of multiplicity $2$.
\end{lem}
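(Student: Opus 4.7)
The plan is to reduce the eigenvalue problem $\mathsf{L}^E_0 \ABLinVec = \mu \ABLinVec$ to a scalar algebraic equation in $\mu$, exploiting that the perturbation $\cchi\AmatLinVec\overline{[\cdot]}$ has rank two. For any real $\mu > 0$, the pointwise matrix $\mu - \VOpLinVec(\theta)$ has determinant $\mu^2 + \llambda^2\cos^2\theta > 0$, so the multiplication operator $\mu - \VOpLinVec$ is boundedly invertible on $\mathbb{L}^2_\theta$. Rewriting the eigenvalue equation as $\ABLinVec = \cchi(\mu - \VOpLinVec)^{-1}\AmatLinVec(\theta)\,\overline{\ABLinVec}$ and averaging in $\theta$, the problem becomes finding $\mu > 0$ for which the $2\times 2$ matrix
\begin{equation*}
\mathsf{M}(\mu) \defeq \cchi \int_0^{2\pi} (\mu - \VOpLinVec(\theta))^{-1} \AmatLinVec(\theta)\, \dd\theta
\end{equation*}
admits $1$ as an eigenvalue, with $\overline{\ABLinVec} \in \dC^2$ as corresponding eigenvector.

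Next I would carry out the explicit $2\times 2$ computation. Using $(\mu - \VOpLinVec(\theta))^{-1} = (\mu^2 + \llambda^2\cos^2\theta)^{-1}\bigl(\begin{smallmatrix}\mu & -\llambda\cos\theta \\ \llambda\cos\theta & \mu\end{smallmatrix}\bigr)$ and multiplying by $\AmatLinVec(\theta)$, the decisive observation is a parity cancellation: the four off-diagonal entries of the product are all of the form $\cos\theta \cdot g(\cos^2\theta)/(\mu^2+\llambda^2\cos^2\theta)$, which is anti-$\pi$-periodic in $\theta$ and hence integrates to zero over $[0,2\pi]$, while the two diagonal entries coincide. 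Therefore $\mathsf{M}(\mu) = m(\mu)\,\Id$ with
\begin{equation*}
m(\mu) = \cchi \int_0^{2\pi} \frac{\llambda\cos^2\theta - \mu\ttau\cos(2\theta)}{\mu^2 + \llambda^2\cos^2\theta}\, \dd\theta,
\end{equation*}
and the eigenproblem reduces to the scalar equation $m(\mu) = 1$.

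The final step is the asymptotic analysis of $m$. Using the standard quadrature $\int_0^{2\pi}\dd\theta/(\mu^2+\llambda^2\cos^2\theta) = 2\pi/(\mu\sqrt{\mu^2+\llambda^2})$ together with the identity $\cos(2\theta) = 2\cos^2\theta - 1$, closed-form computation yields
\begin{equation*}
\lim_{\mu \to 0^+} m(\mu) = \frac{2\pi\cchi(1+\ttau)}{\llambda}, \qquad \lim_{\mu \to +\infty} m(\mu) = 0.
\end{equation*}
The hypothesis \eqref{hyp:EigenHypotesis} is precisely $m(0^+) > 1$, and $m$ is continuous on $(0,\infty)$ by dominated convergence, so the intermediate value theorem produces $\muinvi > 0$ with $m(\muinvi) = 1$. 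Since $\mathsf{M}(\muinvi) = \Id$, every nonzero $\vec{u} \in \dC^2$ yields an eigenfunction $\ABLinVec = \cchi(\muinvi - \VOpLinVec)^{-1}\AmatLinVec \vec{u}$ of $\mathsf{L}^E_0$, and this map is injective because $\AmatLinVec(\theta)\vec{u}$ does not vanish identically; hence the geometric multiplicity is at least (and, by construction, exactly) $2$. Isolation of $\muinvi$ follows from Weyl's theorem on the essential spectrum: $\mathsf{L}^E_0$ is a rank-two (hence compact) perturbation of the multiplication operator $\VOpLinVec$, whose spectrum is the purely imaginary segment $i[-\llambda,\llambda]$, so any $\muinvi \in \dCPositivHalf$ lies in the discrete spectrum of $\mathsf{L}^E_0$.

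I expect the main technical obstacle to be the singular $\mu \to 0^+$ analysis of $m$: the integrand $\cos(2\theta)/(\mu^2+\llambda^2\cos^2\theta)$ blows up on $\{\cos\theta = 0\}$, but the explicit quadrature above absorbs this divergence against the prefactor $\mu\ttau$ and reproduces exactly the $(1+\ttau)$ factor that matches the inviscid instability condition.
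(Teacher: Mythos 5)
Your proposal is correct and follows essentially the same route as the paper: rewrite the eigenproblem as $\ABLinVec = \cchi(\mu-\VOpLinVec)^{-1}\AmatLinVec\overline{\ABLinVec}$, average in $\theta$, observe by parity that the resulting $2\times2$ matrix is a scalar multiple $m(\mu)\Id$, and solve $m(\mu)=1$ by the intermediate value theorem using $m(0^+)=2\pi\cchi(1+\ttau)/\llambda$ and $m(+\infty)=0$. The only point where you diverge is in how you justify isolation: the paper notes that $\mathcal{I}(\ttau,\llambda,\cdot)$ (your $m/\cchi$) is holomorphic on $\{\Real\mu>0\}$, so solutions of $\cchi\mathcal{I}=1$ are isolated, whereas you invoke the invariance of the essential spectrum under the rank-two (compact) perturbation $\cchi\AmatLinVec\overline{[\cdot]}$ of the skew-adjoint multiplication operator $\VOpLinVec$, whose spectrum is $i[-\llambda,\llambda]$. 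Both arguments are valid; the Fredholm/Weyl route is a touch heavier but has the small advantage of locating all of the right-half-plane spectrum of $\mathsf{L}^E_0$ in the discrete spectrum at once, which is in fact what the paper needs later when drawing the contour $\Gamma$.
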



\begin{proof}
    We first rewrite the eigenproblem using the explicit inverse of $\mu\Id-\VOpLinVec$. Indeed, for any $\mu \in \dC$, such that $\Real(\mu) >0$, the operator
    \begin{equation}
        \label{eq:MuVopInvertible}
        \mu \Id - \VOpLinVec,
    \end{equation}
    has inverse
    \begin{equation}
        \label{eq:InverseTransportTerm}
        \left(\mu \Id - \VOpLinVec\right)^{-1} = \frac{1}{\mu^2 + \llambda^2\cos^2(\theta)}\begin{bmatrix}
            \mu \Id & -\llambda \cos(\theta) \times \Id\\
             \llambda \cos(\theta) \times \Id & \mu \Id
            \end{bmatrix}.
    \end{equation}
    Therefore, the eigenproblem associated to the operator \eqref{eq:SystemLinPhiInviscid}, can be recasted as
    \begin{equation}
        \label{eq:EigenProbInviscidRecast}
        \ABLinVec = \cchi \left(\mu \Id - \VOpLinVec\right)^{-1} \AmatLinVec \overline{\ABLinVec}.
    \end{equation}
    Integrating in $\theta$ and rewriting gives
    \begin{equation*}
        \cchi \overline{\left(\mu \Id - \VOpLinVec\right)^{-1} \AmatLinVec}\overline{\ABLinVec}=\overline{\ABLinVec}.
    \end{equation*}
    We now observe that the eigenproblem, as in \eqref{eq:SystemLinPhiInviscid}, is equivalent to the finite-dimensional eigenproblem
    \begin{equation*}
        M_\mu\overline{\ABLinVec}=\overline{\ABLinVec}, \text{ with } M_\mu \defeq \cchi \overline{\left(\mu \Id - \VOpLinVec\right)^{-1} \AmatLinVec}.
    \end{equation*}
    We note that $M_\mu\overline{\ABLinVec}=\overline{\ABLinVec}$ implies the existence of an eigenvalue for the original problem by mapping $w=(\Bar{a},\Bar{b})^\top$ to $\left( \cchi \left(\mu \Id - \VOpLinVec \right)^{-1} \AmatLinVec w\right)$.
    By explicit computations, we obtain that $M_\mu$ is of the form
    \begin{equation}
        \label{eq:InviscidEllipMdef}
        M_\mu = \cchi\begin{pmatrix}
           \mathcal{I}(\ttau, \llambda, \mu) & 0\\
           0 & \mathcal{I}(\ttau, \llambda, \mu)
        \end{pmatrix},
    \end{equation}
    with,
    \begin{align}
        \mathcal{I}(\ttau, \llambda, \mu) & = \int_0^{2\pi}\frac{-\ttau \mu \cos(2\theta)+ \llambda \cos^2(\theta)}{\mu^2+\llambda^2 \cos^2(\theta)}\dd\theta \nonumber\\
         \label{eq:InviscidEllipIntegraldef}
        &= \frac{2\pi}{\llambda^2 \sqrt{\mu^2 + \llambda^2}}\left(\llambda^2\ttau + (\llambda - 2\ttau \mu)\left(\sqrt{\mu^2+\llambda^2}-\mu\right)\right).
    \end{align}
    Since $\mathcal{I}$ is a strictly decreasing, continuous, convex function with respect to $\mu$ on $[0,+\infty)$ and its limit is zero for $\mu\to+\infty$, the existence for the eigenproblem, by the intermediate value theorem, is equivalent to $\cchi\mathcal{I}(\ttau, \llambda, 0) > 1$. That is,
    \begin{equation}
        \label{eq:ConditionCchiTTauLlambda}
        2\pi \cchi (\ttau + 1) > \llambda.
    \end{equation}
    Since $\mathcal{I}(\ttau, \llambda, \cdot)$ is holomorphic for $\mu\in\dC\cap\{\Real(\mu)>0\}$, under the inviscid instability condition, the fully inviscid eigenproblem admits an isolated real eigenvalue in the right half plane, that we denote by $\muinvi>0$. The two-dimensional eigenspace for this eigenvalue is explicitly given by the image of the map $\dR^2\to \dLVecComp$ given by
    \begin{equation}
        w  \mapsto  \cchi \left(\muinvi \Id - \VOpLinVec \right)^{-1} \AmatLinVec w.
    \end{equation}
\end{proof}

We proceed by showing a stability estimate for the operator $\LinOpThet$ with $\chi = 0$, 
\begin{equation*}
    \left(\sigma \frac{\dd^2}{\dd\theta^2} + \VOpLinVec\right).
\end{equation*}

\begin{lem}
    \label{lem:InversibilityEllipticOperator}
    Suppose that $\mu \in \dC$ such that $\Real (\mu) > 0$. Then, for any real $\llambda$, and for any $\sigma \geq 0$, the operator,
    \begin{equation*}
        \left(\mu - \left(\sigma \frac{\dd^2}{\dd\theta^2} + \VOpLinVec\right)\right),
    \end{equation*}
    admits an inverse $\mathsf{R}^\sigma(\mu)$ as a map $\dLVecComp \to \dHVecComp^2$ for $\sigma > 0$ and as a map $\dLVecComp\to\dLVecComp$ for $\sigma = 0$. The operator $\mathsf{R}^\sigma(\mu)$ satisfies the estimate
    \begin{equation}
        \label{est:EllipResolLCompL2L2}
        \|\mathsf{R}^\sigma(\mu)\|_{\dLVecComp\to \dLVecComp} \leq \frac{1}{\Real(\mu)}, \ \ \forall \sigma \geq 0.
    \end{equation}
    Furthermore, for any $g \in \dHVecComp^2$, we have the stability estimate,
    \begin{equation}
        \label{est:RsigToR0}
        \| [\mathsf{R}^\sigma(\mu) - \mathsf{R}^0(\mu)]g\|_{\dLVecComp} \leq \frac{\sqrt{\sigma}}{\Real(\mu)} \sqrt{\|g\|_{\dLVecComp} \left\|\frac{\dd^2}{\dd\theta^2}\mathsf{R}^0(\mu)g\right\|_{\dLVecComp}}.
    \end{equation}
\end{lem}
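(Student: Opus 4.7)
The plan proceeds in three stages. First I derive a real-part energy identity for $\mu - A_\sigma$ that simultaneously yields the $L^2 \to L^2$ operator norm bound \eqref{est:EllipResolLCompL2L2} and injectivity uniformly in $\sigma \geq 0$. Second I use the explicit formula \eqref{eq:InverseTransportTerm} to dispose of the case $\sigma = 0$, and a Fredholm-alternative argument for $\sigma > 0$. Third I combine the first resolvent identity with a single integration by parts and the elementary interpolation $\|w'\|^2 \le \|w\| \|w''\|$ on $\mathbb{T}_{2\pi}$ to produce the $\sqrt\sigma$-gain required by \eqref{est:RsigToR0}.

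Writing $A_\sigma \defeq \sigma \frac{d^2}{d\theta^2} + \VOpLinVec$, the central observation is that the real matrix field $\VOpLinVec(\theta)$ is pointwise skew-symmetric, hence skew-adjoint as a multiplication operator on the complex Hilbert space $\dLVecComp$, so that $\Real \langle \VOpLinVec u, u\rangle_{\dLVecComp} = 0$. Integration by parts against $-\frac{d^2}{d\theta^2}$ on the torus then produces, for $u$ in the appropriate domain,
\begin{equation*}
    \Real \langle (\mu - A_\sigma) u, u \rangle_{\dLVecComp} = \Real(\mu)\, \|u\|_{\dLVecComp}^2 + \sigma\, \|u'\|_{\dLVecComp}^2 .
\end{equation*}
Dropping the non-negative gradient term and applying Cauchy--Schwarz to the pairing yields $\|u\|_{\dLVecComp} \le \tfrac{1}{\Real(\mu)}\|(\mu-A_\sigma)u\|_{\dLVecComp}$, which gives injectivity and, once surjectivity is known, the bound \eqref{est:EllipResolLCompL2L2}. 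For $\sigma = 0$ surjectivity is manifest from \eqref{eq:InverseTransportTerm}, whose pointwise denominator $\mu^2 + \llambda^2 \cos^2\theta$ never vanishes under $\Real(\mu) > 0$ (since $\mu^2$ cannot then be a nonpositive real number). For $\sigma > 0$, the operator $\sigma \tfrac{d^2}{d\theta^2}$ has compact resolvent on $\mathbb{T}_{2\pi}$ and $\VOpLinVec$ is a bounded perturbation, so $\mu - A_\sigma$ is Fredholm of index zero on $\dLVecComp$ and injectivity forces surjectivity with inverse valued in $\dHVecComp^2$.

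For the stability estimate \eqref{est:RsigToR0}, I set $h \defeq (\mathsf{R}^\sigma(\mu) - \mathsf{R}^0(\mu)) g$ and $w \defeq \mathsf{R}^0(\mu) g$. The first resolvent identity rearranges to $(\mu - A_\sigma) h = \sigma w''$. Pairing in $\dLVecComp$ with $h$, taking real parts, and integrating by parts on the right-hand side gives
\begin{equation*}
    \Real(\mu)\, \|h\|_{\dLVecComp}^2 + \sigma\, \|h'\|_{\dLVecComp}^2 = -\sigma\, \Real \langle w', h' \rangle_{\dLVecComp} \le \tfrac{\sigma}{2}\, \|w'\|_{\dLVecComp}^2 + \tfrac{\sigma}{2}\, \|h'\|_{\dLVecComp}^2,
\end{equation*}
so $\Real(\mu)\, \|h\|^2 \le \tfrac{\sigma}{2} \|w'\|^2$. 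The one-dimensional interpolation $\|w'\|^2 \le \|w\| \|w''\|$ on $\mathbb{T}_{2\pi}$ (itself a one-line integration by parts) together with the already established bound $\|w\| \le \tfrac{1}{\Real(\mu)}\|g\|$ then produces \eqref{est:RsigToR0} up to a harmless $\sqrt{2}$ constant. The only subtle point is this last step: a naive use of the resolvent identity with the $L^2 \to L^2$ bound would yield only an $\mathcal{O}(\sigma)$ estimate, whereas distributing a derivative onto $h$ via integration by parts and absorbing into the naturally appearing $\sigma \|h'\|^2$ term is precisely what provides the $\sqrt\sigma$ gain, and this gain is essential for the Riesz-projector perturbation argument in Theorem~\ref{thm:EigenValSystem}.
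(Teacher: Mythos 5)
Your proof is correct, and it establishes the lemma by a route that genuinely differs from the paper's in two places. For existence the paper runs a continuity method with a Banach fixed-point argument, interpolating the coefficient of $\VOpLinVec$ from $0$ to $1$; you instead observe that $(\mu-\sigma\frac{\dd^2}{\dd\theta^2})^{-1}\VOpLinVec$ is compact on $\dLVecComp$ and invoke the Fredholm alternative, then bootstrap $H^2$ regularity from the equation. Both hinge on the same skew-adjointness of $\VOpLinVec$ to get the a priori bound; your route is shorter and more standard, theirs is more self-contained. For the stability estimate, the paper rearranges to $(\mu-\VOpLinVec)\tilde\phi_\sigma = \sigma\phi_\sigma''$, tests against $\tilde\phi_\sigma$, expands $\tilde\phi_\sigma^*=\phi_\sigma^*-\phi_0^*$, drops the favourable $-\sigma\|\phi_\sigma'\|^2$ term, and applies Cauchy--Schwarz; you rearrange the other way, $(\mu-A_\sigma)h=\sigma w''$, test against $h$, do one integration by parts, absorb $\sigma\|h'\|^2$ by Young, and finish with the 1D interpolation $\|w'\|^2 \le \|w\|\,\|w''\|$. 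Both yield \eqref{est:RsigToR0} (yours with the slightly better constant $1/\sqrt2$ in front).

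One aside in your write-up is backwards and worth fixing: you say the naive resolvent-identity bound would give only $\mathcal{O}(\sigma)$ and that the $\sqrt\sigma$ ``gain'' is essential. In fact, the second resolvent identity $\mathsf{R}^\sigma - \mathsf{R}^0 = \sigma\,\mathsf{R}^\sigma \frac{\dd^2}{\dd\theta^2}\mathsf{R}^0$ together with \eqref{est:EllipResolLCompL2L2} gives $\|(\mathsf{R}^\sigma - \mathsf{R}^0)g\|_{\dLVecComp} \le \frac{\sigma}{\Real\mu}\big\|\frac{\dd^2}{\dd\theta^2}\mathsf{R}^0 g\big\|_{\dLVecComp}$ directly, which for $g\in\dHVecComp^2$ is a stronger estimate than \eqref{est:RsigToR0} when $\sigma$ is small ($\sigma < \sqrt\sigma$). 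So the more elaborate integration-by-parts manoeuvre is not what makes the Riesz-projector argument work; the lemma as stated is simply not sharp in $\sigma$, and either route suffices. This does not affect the validity of your proof of the lemma as written.
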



\begin{proof}
    We first use the continuity method to prove the existence of the inverse. After this, we prove the stability estimate.
    
    For the existence part, the case $\sigma = 0$ is explicit and already treated in the proof of Lemma~\ref{lem:InviscidEigenVal}. 
    
    For the continuity argument, for fixed $\sigma >0$ and $\mu \in \dC$, such that $\Real(\mu) > 0$, we show that if for $\eta \in [0,1]$, and any $g \in \dLVecComp$, there exists a unique solution $\phi \in \dHVecComp^2$ to the equation,
    \begin{equation}\label{eq:etag}
        \left(\mu - \left(\sigma \frac{\dd^2}{\dd\theta^2} + \eta \VOpLinVec\right)\right) \phi = g,
    \end{equation}
    then there exists a larger number strictly greater than $\eta$ for which the equation \eqref{eq:etag} can also be uniquely solved. For the case $\eta = 0$ equation~\eqref{eq:etag} can be uniquely solved by classical elliptic theory. 
    
    Now, let $\varepsilon > 0$, which is to be fixed later, and introduce the map $F : \dLVecComp \to \dLVecComp$ which writes as $\psi\mapsto \phi$ where $\phi$ is the unique solution of 
    \begin{equation*}
    \left(\mu - \left(\sigma \frac{\dd^2}{\dd\theta^2} + \eta \VOpLinVec\right)\right) \phi = g + \varepsilon \VOpLinVec \psi.
    \end{equation*}
    We will show by a Banach contraction mapping theorem that $F$ has a fixed point, and as such, the continuity method works and \eqref{eq:etag} can be uniquely solved for all $\eta\in[0,1]$.
    
    For showing the contraction property, take $\psi_1, \psi_2 \in \dLVecComp$, let $\phi_1 = F(\psi_1), \phi_2 = F(\psi_2)$ and denote the differences by $\Tilde{\psi} = \psi_2 - \psi_1$, $\Tilde{\phi} = \phi_2 - \phi_1$. Then, $\Tilde{\phi}$ is the unique solution to,

    \begin{equation}
        \label{eq:EllipVectExistenceLip}
        \left(\mu - \left(\sigma \frac{\dd^2}{\dd\theta^2} + \eta \VOpLinVec\right)\right) \Tilde{\phi} = \varepsilon \VOpLinVec \Tilde{\psi}.
    \end{equation}
    Let $\Tilde{\phi}^*$ denote the complex conjugate of the vector $\Tilde{\phi}$. Taking the scalar product of \eqref{eq:EllipVectExistenceLip} with $\Tilde{\phi}^*$ and integrating in $\theta$ we obtain,
    \begin{equation}
        \label{eq:EllipticComplexEstim1}
        \mu \int |\Tilde{\phi}|^2 \dd\theta + \sigma \int \left|\frac{\dd}{\dd\theta} \Tilde{\phi}\right|^2 \dd\theta - \eta \int \Tilde{\phi}^* \VOpLinVec \Tilde{\phi} \dd\theta = \varepsilon \int \Tilde{\phi}^* \VOpLinVec \Tilde{\psi} \dd\theta.
    \end{equation}

    The following computation shows that $\Tilde{\phi}^* \VOpLinVec \Tilde{\phi}$ is purely imaginary. 
    
    Denote $\Tilde{\phi} = \begin{bmatrix}
        \Tilde{\phi}_1\\\Tilde{\phi}_2
    \end{bmatrix}=\begin{bmatrix}
        \Real\Tilde{\phi}_1+i\Imag\Tilde{\phi}_1\\ \Real\Tilde{\phi}_2+i\Imag\Tilde{\phi}_2
    \end{bmatrix}$, 
    
    \begin{align}
        \label{eq:IdentityVopPhiPhistar}
         \Tilde{\phi}^* \VOpLinVec \Tilde{\phi} & = \begin{bmatrix}
             \Real\Tilde{\phi}_1 - i \Imag\Tilde{\phi}_1\\\Real\Tilde{\phi}_2 - i \Imag\Tilde{\phi}_2 
         \end{bmatrix} \cdot \begin{bmatrix}
             -\cos \times (\Real\Tilde{\phi}_2 + i \Imag\Tilde{\phi}_2)\\\cos \times (\Real\Tilde{\phi}_2 + i \Imag\Tilde{\phi}_2)
         \end{bmatrix},\nonumber\\
         & = - \cos\times (\Real \Tilde{\psi}_1\Real \Tilde{\psi}_2 + \Imag \Tilde{\psi}_1\Imag \Tilde{\psi}_2 + i \Real \Tilde{\psi}_1\Imag \Tilde{\psi}_2 - i \Imag \Tilde{\psi}_1\Real \Tilde{\psi}_2)\nonumber\\
         & \hspace{1.em} + \cos\times (\Real \Tilde{\psi}_1\Real \Tilde{\psi}_2 + \Imag \Tilde{\psi}_1\Imag \Tilde{\psi}_2 - i \Real \Tilde{\psi}_1\Imag \Tilde{\psi}_2 + i \Imag \Tilde{\psi}_1\Real \Tilde{\psi}_2), \nonumber\\
         & = i2\cos \times (- \Real \Tilde{\psi}_1\Imag \Tilde{\psi}_2 + \Imag \Tilde{\psi}_1\Real \Tilde{\psi}_2).
    \end{align}
    Thus, taking the real part on both sides yields
    \begin{equation*}
        \mu \int |\Tilde{\phi}|^2 \dd\theta + \sigma \int \left|\frac{\dd}{\dd\theta} \Tilde{\phi}\right|^2 \dd\theta = \varepsilon \Real \left[ \int \Tilde{\phi}^* \VOpLinVec \Tilde{\psi}\dd\theta\right].
    \end{equation*}

    Now, bounding the right term by the modulus and applying the Cauchy--Schwartz inequality, we get

    \begin{equation}
        \Real \mu \int |\Tilde{\phi}|^2\dd\theta + \sigma \int \left|\frac{\dd}{\dd\theta} \Tilde{\phi}\right|^2 \dd\theta\leq \varepsilon \llambda \|\Tilde{\phi}\|_{\dLVecComp} \|\Tilde{\psi}\|_{\dLVecComp},
    \end{equation}
    where we used that $\| \VOpLinVec \|_{\dLVecComp\to \dLVecComp} = \llambda$. This implies the contraction property of $F$ for $\varepsilon$ sufficiently small, as

    \begin{equation*}
        \|F(\psi_1) - F(\psi_2)\|_{\dLVecComp} \leq \frac{\varepsilon\llambda}{\Real (\mu)} \|\psi_1 - \psi_2\|_{\dLVecComp}.
    \end{equation*}

    Now, applying the Banach contraction mapping theorem, this implies that if equation~\eqref{eq:etag} can be uniquely solved for any $\eta \in [0,\eta^*)$, it can also be uniquely solved for $\eta\in[0,\eta^\ast+\frac{\Real(\mu)}{\llambda})$. 
    We now denote by $R^\sigma(\mu) : \dLVecComp \to \dLVecComp$ $g\mapsto\phi$ for $\eta=1$.
    This concludes the proof of the existence of the inverse.
    
    Using the same computations as above, we obtain the estimate,
    \begin{equation*}
        \|\mathsf{R}^\sigma(\mu)g\|_{\dLVecComp} \leq \frac{1}{\Real (\mu)}\|g\|_{\dLVecComp}, \ \ \forall g \in \dLVecComp.
    \end{equation*}

    We now prove the stability estimate~\eqref{est:RsigToR0}. For this, let $g\in \dHVecComp^2$, and let $\phi_\sigma, \phi_0$ be the unique solutions of the equations,
    \begin{align*}
            \left(\mu -\left(\sigma \frac{\dd^2}{\dd\theta^2}+ \VOpLinVec\right)\right)\phi_\sigma = g,\\
            (\mu - \VOpLinVec)\phi_0 = g.
    \end{align*}
    The difference $\Tilde{\phi}_\sigma = \phi_\sigma - \phi_0$ solves,
    \begin{equation*}
        (\mu - \VOpLinVec)\Tilde{\phi}_\sigma = \sigma \frac{\dd^2}{\dd\theta^2} \phi_\sigma.
    \end{equation*}
    Multiplying by the complex conjugate $\Tilde{\phi}_\sigma^*$, using the identity~\eqref{eq:IdentityVopPhiPhistar}, integrating and taking the real part, we obtain,
    \begin{align*}
        \Real (\mu) \int |\Tilde{\phi}_\sigma|^2\dd\theta & = -\sigma \int \left|\frac{\dd}{\dd\theta}\phi_\sigma\right|^2\dd\theta + \sigma \Real \left[\int \phi_\sigma \frac{\dd^2}{\dd\theta^2}\phi^*_0 \dd\theta\right],\\
        & \leq \sigma \|\phi_\sigma\|_{\dLVecComp}\left\|\frac{\dd^2}{\dd\theta^2}\phi^*_0\right\|_{\dLVecComp},\\
        & \leq \frac{\sigma}{\Real (\mu)} \|g\|_{\dLVecComp}\left\|\frac{\dd^2}{\dd\theta^2}\mathsf{R}^0(\mu)g\right\|_{\dLVecComp},
    \end{align*}
    where we used the estimate~\eqref{est:EllipResolLCompL2L2} on $\mathsf{R}^\sigma(\mu)$. Rearranging the last inequality we obtain the desired estimate.
\end{proof}


We now use a perturbation argument to prove Theorem~\ref{thm:EigenValSystem}.


\begin{proof}[Proof of Theorem~\ref{thm:EigenValSystem}]

We first note that from Lemma~\ref{lem:InversibilityEllipticOperator} and adapting the computations of the proof, we obtain that for any $\cchi, \llambda, \ttau, \sigma > 0$, there exists a real positive $\mu$ sufficiently large, such that the operator,
\begin{equation*}
    \left(\mu - \LinOpThet\right),
\end{equation*}

admits an inverse. The inverse is a compact operator since its image is in $\dHVecComp^1$. This implies from \cite[Theorem 6.29 in Chapter III]{kato2013perturbation} that the spectrum of $\LinOpThet$ consists of isolated eigenvalues with finite multiplicities. Thus we are left to prove that the Riesz projector of $\LinOpThet$ associated with some curve lying in the positive half complex plane is well-defined and has non-zero image given $\sigma$ sufficiently small, concluding by \cite[Theorem 6.17 Chapter III]{kato2013perturbation}.

We proceed by defining a positively oriented closed simple curve $\Gamma$, such that
\begin{equation*}
    \Gamma \subset \dC \cap \{\zeta \in \dC | \Real \zeta > 0\} \cap \rho\left(\mathsf{L}^E_{0})\right), 
\end{equation*} 
where $\rho\left(\mathsf{L}^E_{0}\right)$ is the resolvent set of the operator $\mathsf{L}^E_{0}$, and such that the inside of the curve only contains $\muinvi$ and no other elements of the spectrum of the inviscid operator $\mathsf{L}^E_{0}$.

For any $\mu \in \Gamma$, we note that the operators can be written as
\begin{align*}
        (\mu - \mathsf{L}^E_0) &=(\mu - \VOpLinVec)\left(\Id- \cchi \mathsf{R}^0(\mu)\AmatLinVec\overline{[\cdot]}\right),\\
        (\mu - \LinOpThet) &= \left(\mu - \left(\sigma\frac{\dd^2}{\dd\theta^2} +\VOpLinVec\right)\right)\left(\Id- \cchi \mathsf{R}^\sigma(\mu)\AmatLinVec\overline{[\cdot]}\right).
\end{align*}

We recall the following result from functional analysis.

Let $T$ be a bounded operator, and suppose that it admits a bounded inverse $T^{-1}$,
then for any bounded operator $S$, such that, $\|S-T\| < \|T^{-1}\|^{-1}$, then $S$ is invertible, and $S^{-1}$ is bounded with 
\begin{equation*}
    \|S^{-1}\| \leq \frac{\|T^{-1}\|}{1-\|T^{-1}\|\|S-T\|}, \ \  \|S^{-1} - T^{-1}\| \leq \frac{\|T^{-1}\|^2\|S-T\|}{1-\|T^{-1}\|\|S-T\|},
\end{equation*}
where $\|\cdot\|$ is the operator norm.

Thus, since $(\mu - \VOpLinVec)$ is invertible for any $\Real \mu > 0$, and since $\mu \in \Gamma$, then 
\begin{equation*}
T^0(\mu) = \left(\Id- \cchi \mathsf{R}^0(\mu)\AmatLinVec\overline{[\cdot]}\right),    
\end{equation*}
is invertible, and we denote its inverse by $R\left(\mu, \mathsf{L}^E_{0}\right)$. 

Similarly, define $T^\sigma(\mu)$ as $T^\sigma(\mu) \defeq \left(\Id- \cchi \mathsf{R}^\sigma(\mu)\AmatLinVec\overline{[\cdot]}\right)$. Then
from the results of Lemma~\ref{lem:InversibilityEllipticOperator} and the fact that $\AmatLinVec \overline{[\cdot]}$ is of finite rank, we obtain that,

\begin{equation*}
    \sup_{\mu \in \Gamma} \| T^\sigma(\mu) - T^0(\mu)\| \leq \sqrt{\sigma}\cchi\sup_{\mu \in \Gamma} \frac{1}{\Real(\mu)}\sqrt{2\|\mathsf{B}_{\ttau}^{:,1}\|_{\dLVecComp}\left\|\frac{\dd^2}{\dd\theta^2}R^0(\mu)\mathsf{B}_{\ttau}^{:,1}\right\|_{\dLVecComp}},
\end{equation*}
where $\mathsf{B}_{\ttau}^{:,1}$ is the first column of the matrix $\AmatLinVec$. This implies, by the functional analysis result for inverses above, that $T^\sigma(\mu)$ is invertible for $\sigma$ small enough. Also, since $\left(\mu - \left(\sigma\frac{\dd^2}{\dd\theta^2} +\VOpLinVec\right)\right)$ is invertible, it follows that for some $\sigma^{\text{def}}>0$, sufficiently small, $\left(\mu - \LinOpThet\right)$ is invertible for any $\mu \in \Gamma, 0\leq\sigma<\sigma^{\text{def}}$. We denote by $R\left(\mu, \LinOpThet\right)$ this inverse.


This allows us to define the associated Riesz projector for any $0 \leq \sigma < \sigma^{\text{def}}$,
\begin{equation*}
    P_\Gamma(\LinOpThet) \defeq -\frac{1}{2\pi i}\oint_\Gamma R\left(\mu, \LinOpThet\right) \dd\mu.
\end{equation*}

The pointwise convergence of the projector to the non-trivial projector of $\mathsf{L}^E_0$ as $\sigma$ goes to zero gives that the projector is non-trivial given $\sigma$ sufficiently small. 




We now show that any eigenvalue of $\LinOpThet$ with a positive real part must be real and of multiplicity two.

Let $\ABLinVec$ be an eigenfunction of $\LinOpThet$ associated with an eigenvalue $\mu$ such that $\Real(\mu) >0$. Recalling the definition of $\LinOpThet$ and the elliptic theory result stated in Lemma~\ref{lem:InversibilityEllipticOperator}, we must have that, $\overline{\ABLinVec} \neq 0$, otherwise that would imply that $\ABLinVec=0$.
We introduce $\ABLinVec^\perp$, the rotation of $\ABLinVec$, using the following notations,
\begin{equation*}
    \ABLinVec \defeq \begin{pmatrix}
        a\\ b
    \end{pmatrix},\overline{\ABLinVec} \defeq \begin{pmatrix}
         \bar{a}\\ \bar{b}
    \end{pmatrix},
    \ABLinVec^\perp \defeq \begin{pmatrix}
        0 & -1\\
        1 & 0
    \end{pmatrix} \ABLinVec, \overline{\ABLinVec}^\perp = \begin{pmatrix}
         -\bar{b}\\ \bar{a}
    \end{pmatrix}.
\end{equation*}

We then check through explicit computations, that if $\ABLinVec$ is an eigenfunction so is $\ABLinVec^\perp$. Indeed, we have that

\begin{align*}
    \left(\mu - \left(\sigma\frac{\dd^2}{\dd\theta^2} +\VOpLinVec\right)\right)\ABLinVec^\perp & = \mu \begin{pmatrix}
         -b\\ a
    \end{pmatrix} - \sigma\frac{\dd^2}{\dd\theta^2}  \begin{pmatrix}
         -b\\ a
    \end{pmatrix} - \begin{pmatrix}
         -\llambda \cos_1 a\\ -\llambda \cos_1 b
    \end{pmatrix},\\
    & = \cchi \begin{pmatrix}
        \cos_1 \bar{a} + \ttau \cos_2 \bar{b}\\
        \cos_1 \bar{b} - \ttau \cos_2 \bar{a}\\
    \end{pmatrix},\\
    & = \cchi \AmatLinVec \overline{\ABLinVec}^\perp.
\end{align*}

Using  the invertibility of operator $\left(\mu - \left(\sigma\frac{\dd^2}{\dd\theta^2} +\VOpLinVec\right)\right)$ from Lemma~\ref{lem:InversibilityEllipticOperator}, since $ \Real(\mu) >0$, and integrating in $\theta$ on both sides, we get
\begin{equation}
    \label{eq:LinOpisRealIntegral}
    \begin{split}
        \overline{\ABLinVec} &= \cchi \overline{\left(\mu - \left(\sigma\frac{\dd^2}{\dd\theta^2} +\VOpLinVec\right)\right)^{-1}\AmatLinVec}\overline{\ABLinVec},\\
        \overline{\ABLinVec}^\perp &= \cchi \overline{\left(\mu - \left(\sigma\frac{\dd^2}{\dd\theta^2} +\VOpLinVec\right)\right)^{-1}\AmatLinVec}\overline{\ABLinVec}^\perp.
    \end{split}
\end{equation}
Since $\overline{\ABLinVec} \neq 0$ and hence $\{\overline{\ABLinVec},\overline{\ABLinVec}^\perp\}$ spans $\dC^2$,\eqref{eq:LinOpisRealIntegral} implies,

\begin{equation*}
    \cchi \overline{\left(\mu - \left(\sigma\frac{\dd^2}{\dd\theta^2} +\VOpLinVec\right)\right)^{-1}\AmatLinVec} = \Id_2.
\end{equation*}
We conclude that the eigenspace associated with $\mu$ is of multiplicity $2$, and is given by the image of the map $\dC^2\to \dHVecComp^2$ defined as,

\begin{equation*}
    w \mapsto \left(\mu - \left(\sigma\frac{\dd^2}{\dd\theta^2} +\VOpLinVec\right)\right)^{-1}\AmatLinVec w.
\end{equation*}

\end{proof}

\subsection{Parabolic-Parabolic Case}
In this section we give a lower bound on the dimension of the unstable manifold of the linearized equation for the parabolic-parabolic system \eqref{sys:FormicidaeParabolic}.

\begin{thm}
    \label{thm:ExistenceEigenFunLinParabolic}
    Suppose that, there exists an integer wavenumber $k\geq 1$, such that,
    \begin{equation}
        \chi(2\pi k \tau + 1) > \lambda (\gamma + 4\pi \sigma_c k^2).
    \end{equation}
    Then there exists $\sigma_\theta^*>0$ such that for $\sigma_\theta \in [0,\sigma_\theta^*)$ there exists $\sigma_x^*(\sigma_\theta)>0$ such that for $\sigma_x \in [0,\sigma_x^*)$ there exist $k$ unstable eigenvalues of $\LinOpPara$,
    \begin{equation*}
        \mu^1,\dots,\mu^k \in \{\mu \in \dC | \Real\mu >0\} \cap \Sigma(\LinOpPara),
    \end{equation*}
    where $\Sigma(\LinOpPara)$ is the spectrum of $\LinOpPara$, and such that $X$ is the real invariant subspace associated with the $\mu^i$'s and their complex conjugates, and has the dimensional lower bound,
    \begin{equation*}
        \dim X \geq 4k.
    \end{equation*}
    Furthermore, we can choose $4k$-dimensional orthogonal sub-basis of real functions in $X$, such each of the functions are constant in $x_1$ or $x_2$.
\end{thm}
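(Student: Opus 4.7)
The proof parallels that of Theorem~\ref{thm:ExistenceEigenFunLin}, with one structural modification: the chemical field $c$ now evolves by a parabolic equation, so when solving for $c$ in terms of $f$ in the spectral problem, the effective coupling constant becomes $\mu$-dependent. The plan is to use the same family of ansatzes $f^k(x_1,x_2,\theta) = a(\theta)\cos(2\pi k x_1) + b(\theta)\sin(2\pi k x_1)$, paired with $c^k = A\cos(2\pi k x_1) + B\sin(2\pi k x_1)$. Imposing $\LinOpPara(f^k,c^k) = \mu(f^k,c^k)$ on the chemical-field component yields $(\mu + \gamma + 4\pi^2\sigma_c k^2)c^k = \int f^k\,\dd\theta$, so that $c^k = (\mu+\gamma + 4\pi^2\sigma_c k^2)^{-1}[\bar a\cos(2\pi k x_1) + \bar b\sin(2\pi k x_1)]$. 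Substituting back and identifying Fourier modes reduces the problem, just as in the elliptic case, to
\begin{equation*}
\mu\ABLinVec = \left(-4\pi^2 k^2\sigma_x + \sigma_\theta\frac{\dd^2}{\dd\theta^2}\right)\ABLinVec + \VOpLinVec\ABLinVec + \cchi(\mu)\AmatLinVec\overline{\ABLinVec},
\end{equation*}
with $\llambda = 2\pi k\lambda$, $\ttau = 2\pi k\tau$, and an eigenvalue-dependent coupling $\cchi(\mu) \defeq \chi k/(\mu+\gamma+4\pi^2\sigma_c k^2)$ in place of the constant $\cchi$ appearing in~\eqref{eq:SystemLinPhi}.

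I would then repeat the two-step analysis of Section~\ref{sec:LinearAnalysisHomogeneous}. In the fully inviscid case $\sigma_x=\sigma_\theta=0$, the finite-dimensional reduction of Lemma~\ref{lem:InviscidEigenVal} leads to the scalar transcendental equation $\cchi(\mu)\mathcal{I}(\ttau,\llambda,\mu) = 1$, with $\mathcal{I}$ as in~\eqref{eq:InviscidEllipIntegraldef}. Both factors are continuous, positive, and strictly decreasing on $[0,+\infty)$, and the product equals at $\mu=0$ precisely $\chi(2\pi k\tau + 1)/(\lambda(\gamma+4\pi^2\sigma_c k^2))$, which is greater than $1$ under the hypothesis. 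Since the product vanishes as $\mu\to+\infty$, the intermediate value theorem yields a real $\muinvi > 0$ solving the equation, and the algebraic argument of Lemma~\ref{lem:InviscidEigenVal} upgrades this to a 2-dimensional real eigenspace of the inviscid operator. To pass to small positive viscosities, I would adapt the Riesz projector perturbation argument of Theorem~\ref{thm:EigenValSystem}: factor $(\mu - \LinOpPara)$ on the reduced ansatz through $(\mu + 4\pi^2k^2\sigma_x) - (\sigma_\theta\frac{\dd^2}{\dd\theta^2} + \VOpLinVec)$ and the finite-rank perturbation $\Id - \cchi(\mu)\mathsf{R}^{\sigma_\theta}(\mu+4\pi^2k^2\sigma_x)\AmatLinVec\overline{[\cdot]}$. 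The stability estimate~\eqref{est:RsigToR0} combined with the holomorphy of $\mu\mapsto\cchi(\mu)$ on a closed contour $\Gamma$ in the right half plane around $\muinvi$ yields uniform convergence of the Riesz projector to its nontrivial inviscid limit as $(\sigma_x,\sigma_\theta)\to(0,0)$.

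Finally, the multiplicity-two argument from the end of Theorem~\ref{thm:EigenValSystem}, based on the $\ABLinVec\mapsto\ABLinVec^\perp$ symmetry, applies verbatim; the spatial rotation exchanging $x_1 \leftrightarrow x_2$ together with a $\pi/2$ shift in $\theta$ produces an additional independent 2-dimensional block per wavenumber, and iterating over the wavenumbers $1\leq j\leq k$ for which the inviscid condition holds yields the claimed lower bound $\dim X \geq 4k$. The main, and essentially only, obstacle compared to the elliptic case is the bookkeeping for the $\mu$-dependence of $\cchi(\mu)$; but since $\cchi$ is holomorphic on any compact subset of the right half plane bounded away from the pole at $\mu = -\gamma-4\pi^2\sigma_c k^2$, all resolvent and projector continuity estimates carry over from Section~\ref{sec:LinearAnalysisHomogeneous} without essential change.
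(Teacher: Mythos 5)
Your proposal is correct and takes essentially the same route as the paper, with two organizational differences worth noting. First, you eliminate the chemical component at the outset, obtaining a spectral problem for $\ABLinVec=(a,b)^\top$ alone with the $\mu$-dependent coupling $\cchi(\mu)=\chi k/(\mu+\gamma+4\pi^2\sigma_c k^2)$; the paper instead keeps $c$ as an explicit unknown $(\alpha,\beta)\in\dR^2$, works on the extended fiber $\dLVecComp\times\dR^2$ with the operator $\LinOpThetPara$, and only eliminates $(\alpha,\beta)$ \emph{after} integrating in $\theta$ to obtain the finite-dimensional reduction. Both routes produce the same scalar transcendental equation $\cchi\,\mathcal{I}(\ttau,\llambda,\mu+\ssigmax)/(\mu+\vvarphi)=1$ and the same eigenspace. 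Second, the paper's Lemma~\ref{lem:lininstabparab} treats $\sigma_x>0,\sigma_\theta=0$ \emph{exactly} — the translational viscosity enters only as the explicit shift $\mu\mapsto\mu+\ssigmax$ inside $\mathcal{I}$ — and then runs the Riesz-projector perturbation in $\sigma_\theta$ alone, whereas you propose to anchor at $\sigma_x=\sigma_\theta=0$ and perturb jointly. Your joint perturbation is sound (the $\sigma_x$-shift is a trivial resolvent continuity argument, and $\cchi(\mu)$ is holomorphic away from its pole in the left half plane, as you note), but the paper's exact treatment of $\sigma_x$ is a bit cleaner since it avoids needing to track two small parameters in the projector estimate.

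One point you should state explicitly: after solving the reduced problem for $\ABLinVec$, the eigenvectors of $\LinOpPara$ on the product space must be reconstructed by pairing $f^k$ with the recovered $c^k=(\mu+\gamma-\sigma_c\Delta_x)^{-1}\int f^k\,\dd\theta$; this is automatic in the paper's formulation where $(\alpha,\beta)$ are carried along, whereas in yours it is a separate (easy) reconstruction step needed to make sense of the claimed $4k$-dimensional invariant subspace of $\LinOpPara$ on $L^2_{x,\theta}\times L^2_x$.
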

Similarly as in the elliptic case, we first introduce a family of ansatzes. Then we show that the parabolic linear operator has positive real eigenvalues for the case $\sigma_\xx>0,\sigma_\theta=0$. This result is stated in Lemma~\ref{lem:lininstabparab}. Then we can use a same Riesz projector argument to conclude with Theorem~\ref{thm:ExistenceEigenFunLinParabolic}, and the rotation argument for the multiplicity. 

For an integer wavenumber $k\geq 1$, we define the ansatz $(f^k, c^k)^\top$ as

\begin{equation}
    \label{eq:LinFamilyPara}
    \begin{cases}
        f^k(\theta, x_1, x_2) = a(\theta) \cos(2\pi k x_1) +  b(\theta) \sin(2\pi k x_1),\\
        c^k(\theta, x_1, x_2) = \alpha \cos(2\pi k x_1) +  \beta \sin(2\pi k x_1),
    \end{cases}
\end{equation}
where $a,b$ are some functions defined on $\mathbb{T}_{2\pi}$, and $\alpha, \beta$ are real numbers.

Applying $\LinOpPara$ to $(f^k,c^k)^\top$, and writing $\xonepik$ for $2\pi k x_1$, we obtain component-wise

\begin{align*}
    \left(\LinOpPara \begin{pmatrix}
        f^k\\
        c^k
    \end{pmatrix} \right)_1 = &  - 4\pi^2 k^2 \sigma_x (a(\theta) \cos(\xonepik) + b(\theta) \sin(\xonepik)) + \sigma_\theta (a''(\theta) \cos(\xonepik) + b''(\theta) \sin(\xonepik))\\
    & - 2\pi k \lambda \cos(\theta)(b(\theta) \cos(\xonepik) - a(\theta)\sin(\xonepik))\\
    &-2\pi k \chi(\cos(\theta)(\alpha \sin(\xonepik) - \beta \cos(\xonepik)) + 2\pi k \tau \cos(2\theta)(\alpha \cos(\xonepik) + \beta\sin(\xonepik))),\\
    \left(\LinOpPara \begin{pmatrix}
        f^k\\
        c^k
    \end{pmatrix} \right)_2 =& -(4\pi^2k^2\sigma_c + \gamma)(\alpha \cos(\xonepik) + \beta \sin(z)) + (\Bar{a}\cos(\xonepik) + \Bar{b} \sin(\xonepik))
\end{align*}

We denote $\ABLinVecPara$ for the vector $(a,b,\alpha, \beta)^\top \in \dLVecComp \times \dR^2$, and we use the following notations
\begin{equation*}
    (f^k,c^k)^\top=\ABLinVecPara\cdot(\cos\xonepik,\sin\xonepik)^\top,
\end{equation*}
and 
\begin{equation*}
    \Bar{\ABLinVecPara} \defeq (\Bar{a}, \Bar{b}, \alpha, \beta)^\top, \ \ \  \frac{\dd^2}{\dd\theta^2}\ABLinVecPara \defeq ( a'', b'', 0, 0)^\top.
\end{equation*}
We first study the eigenproblem for $\sigma_\theta=0$, such that
\begin{equation}\label{eq:EigenproblemParaAnsatz}
    \LinOpThetPara\ABLinVecPara=\mu\ABLinVecPara, \ \text{where} \ \LinOpThetPara\ABLinVecPara\defeq\SmatLinVecPara \ABLinVecPara + \VOpLinVecPara \ABLinVecPara + \AmatLinVecPara \Bar{\ABLinVecPara},
\end{equation}
with the notation

\newcommand{\zeroLR}{0_{\dLVecComp\to\dR^2}}
\newcommand{\zeroRL}{0_{\dR^2\to \dLVecComp}}
\newcommand{\zeroMat}{0}

\begin{equation*}
    \SmatLinVecPara = \begin{bmatrix}
                        -\ssigmax \Id & \zeroRL\\
                         \zeroLR &  \zeroMat_2
                        \end{bmatrix}, \VOpLinVecPara = \begin{bmatrix}
                        \VOpLinVec & \zeroRL\\
                         \zeroLR &  \zeroMat_2
                        \end{bmatrix}, \AmatLinVecPara = \begin{bmatrix}
                         \zeroRL & \cchi \AmatLinVec\\
                         \Id_2 & -\vvarphi \Id_2
                        \end{bmatrix},
\end{equation*}

where,
\begin{equation*}
    \zeroLR :  \dLVecComp \ni u \mapsto 0\in \dR^2, \zeroRL :  \dR^2 \ni w \mapsto \mathbf{0}\in \dLVecComp,
\end{equation*}
$\Id$ is the identity in $\dLVecComp$, $\Id_2$ is the identity matrix in $\mathcal{M}_{2,2}(\dR)$, and $\VOpLinVec$ and $\AmatLinVec$ are defined in \eqref{def:LinearisedEllipticVandBDefinition}.

In this section, for the parabolic-parabolic system, we use the following notations
\begin{equation*}
    \ssigmax = 4\pi^2 k^2\sigma_\xx, \sigma = \sigma_\theta, \llambda = 2\pi k \lambda, \cchi = 2\pi k \chi, \ttau = 2\pi k \tau, \vvarphi = 4\pi^2 \sigma_c + \gamma.
\end{equation*}







\begin{lem}[$\ssigmax>0,\sigma=0$]\label{lem:lininstabparab}
    Suppose that for $\cchi>0,\ttau>0$ and $\llambda>0$ the following condition holds
    \begin{equation}\label{ineq:parainstab}
       2\pi\cchi(\ttau + 1) > \vvarphi\llambda.
    \end{equation}
    Then, there exists $\sigma_x^*>0$ such that for any $\sigma_x\in[0,\sigma_x^*)$, there exists a real strictly positive isolated eigenvalue $\mu$ of multiplicity 2 for the operator $\LinOpThetPara$.
\end{lem}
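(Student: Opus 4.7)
The plan is to mirror the two-step approach of Lemma~\ref{lem:InviscidEigenVal} and Theorem~\ref{thm:EigenValSystem}: first I would reduce the eigenproblem to a scalar dispersion relation by explicitly eliminating $\alpha,\beta$ and inverting the transport block, and then handle the $\ssigmax>0$ case by continuity.

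More concretely, in the eigenproblem \eqref{eq:EigenproblemParaAnsatz} with $\sigma=0$, the last two scalar lines read $\mu\alpha=\bar{a}-\vvarphi\alpha$ and $\mu\beta=\bar{b}-\vvarphi\beta$, which for $\Real\mu>-\vvarphi$ are solved by $\alpha=\bar{a}/(\mu+\vvarphi)$ and $\beta=\bar{b}/(\mu+\vvarphi)$. Substituting back into the first block and inverting $((\mu+\ssigmax)\Id-\VOpLinVec)$ via the explicit formula \eqref{eq:InverseTransportTerm} with $\mu$ shifted to $\mu+\ssigmax$ (whose validity is guaranteed by Lemma~\ref{lem:InversibilityEllipticOperator}) gives
\begin{equation*}
\ABLinVec = \frac{\cchi}{\mu+\vvarphi}\,\bigl((\mu+\ssigmax)\Id-\VOpLinVec\bigr)^{-1}\AmatLinVec\,\bar{\ABLinVec}.
\end{equation*}
Integrating over $\theta$ and invoking the diagonal identity $\overline{((\mu+\ssigmax)\Id-\VOpLinVec)^{-1}\AmatLinVec}=\mathcal{I}(\ttau,\llambda,\mu+\ssigmax)\Id_2$ derived in Lemma~\ref{lem:InviscidEigenVal}, the eigenproblem collapses to the scalar dispersion equation
\begin{equation*}
\cchi\,\mathcal{I}(\ttau,\llambda,\mu+\ssigmax)=\mu+\vvarphi.
\end{equation*}

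Next I would argue existence of a positive root by monotonicity. The map $F_{\ssigmax}(\mu)\defeq\cchi\,\mathcal{I}(\ttau,\llambda,\mu+\ssigmax)-\mu-\vvarphi$ is continuous, strictly decreasing on $[0,\infty)$ (since $\mathcal{I}(\ttau,\llambda,\cdot)$ is), and tends to $-\infty$ as $\mu\to\infty$. At $\ssigmax=\mu=0$ the identity $\mathcal{I}(\ttau,\llambda,0)=2\pi(\ttau+1)/\llambda$ together with the hypothesis \eqref{ineq:parainstab} yields $F_0(0)>0$, so by continuity of $\mathcal{I}$ in its third argument there exists $\sigma_x^{\ast}>0$ such that $F_{\ssigmax}(0)>0$ for every $\ssigmax\in[0,\sigma_x^{\ast})$. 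The intermediate value theorem then produces a unique positive real root $\mu_{\ssigmax}$, and the real-analyticity of $F_{\ssigmax}$ on the right half-plane ensures that $\mu_{\ssigmax}$ is isolated.

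For multiplicity~$2$, I would recycle the rotation argument from the end of the proof of Theorem~\ref{thm:EigenValSystem}. The matrices $\VOpLinVec$ and $\AmatLinVec$ both commute with $J=\bigl(\begin{smallmatrix}0 & -1 \\ 1 & 0\end{smallmatrix}\bigr)$, so the involution $(a,b,\alpha,\beta)\mapsto(-b,a,-\beta,\alpha)$ sends the eigenspace at $\mu_{\ssigmax}$ to itself and produces from any eigenvector a linearly independent partner. Conversely, tracing the reduction shows that any eigenvector with $\Real\mu>0$ must satisfy $\bar{\ABLinVec}\ne 0$ (otherwise the invertibility of $(\mu+\ssigmax)\Id-\VOpLinVec$ would force $\ABLinVec\equiv 0$ and hence $\alpha=\beta=0$), so the eigenspace is parametrized exactly by $\bar{\ABLinVec}\in\dC^2$ and is two-dimensional. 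The main obstacle, as I see it, is the bookkeeping in Step~1: one must verify that the $\ssigmax$-shift enters the computation of Lemma~\ref{lem:InviscidEigenVal} purely by the substitution $\mu\mapsto\mu+\ssigmax$, and that the nontrivial spectrum in the right half-plane is captured entirely by this scalar dispersion equation despite the loss of $\theta$-regularization at $\sigma=0$.
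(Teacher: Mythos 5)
Your proposal is correct and follows essentially the same reduction as the paper: eliminate $(\alpha,\beta)$ via the scalar block, invert $(\mu+\ssigmax)\Id-\VOpLinVec$ to arrive at the dispersion relation $\cchi\,\mathcal{I}(\ttau,\llambda,\mu+\ssigmax)=\mu+\vvarphi$, and apply the intermediate value theorem together with continuity in $\ssigmax$ to get $\sigma_x^*>0$. The only small difference is that you spell out the multiplicity-$2$ argument (which the paper merely asserts for this lemma, by analogy with Theorem~\ref{thm:EigenValSystem}); note that the logical order there is: $\bar{\ABLinVec}=0$ forces $(\alpha,\beta)=0$ via the scalar block, which in turn forces $\ABLinVec\equiv 0$ by invertibility of $(\mu+\ssigmax)\Id-\VOpLinVec$.
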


\begin{proof}
    Similar as in the elliptic case we first rewrite the eigenproblem. For $\mu \in \dC$, such that $\Real(\mu)>0$, the operator,
    \begin{equation*}
        \left(\mu - \SmatLinVecPara - \VOpLinVecPara \right),
    \end{equation*}
    has inverse
    \begin{equation*}
        \left(\mu - \SmatLinVecPara - \VOpLinVecPara \right)^{-1} = \begin{bmatrix}
            \left((\mu + \ssigmax )-\VOpLinVec\right)^{-1} & \zeroRL\\
            \zeroLR & \mu^{-1}\Id_2
        \end{bmatrix}
    \end{equation*}
    Therefore, the eigenproblem \eqref{eq:EigenproblemParaAnsatz}, can be recasted as
    \begin{equation*}
        \ABLinVecPara =  \left(\mu - \SmatLinVecPara - \VOpLinVecPara \right)^{-1}\AmatLinVecPara \Bar{\ABLinVecPara}.
    \end{equation*}
    Integrating in $\theta$ gives
    \begin{equation}
        \label{eq:IntegratedProblemParabolicInviscid}
        \overline{\left(\mu - \SmatLinVecPara - \VOpLinVecPara \right)^{-1}\AmatLinVecPara} \Bar{\ABLinVecPara}=\Bar{\ABLinVecPara}.
    \end{equation}
    By explicit computations we obtain,
    \begin{equation*}
        \overline{\left(\mu - \SmatLinVecPara - \VOpLinVecPara \right)^{-1}\AmatLinVecPara} = \begin{bmatrix}
            \zeroRL & \cchi\overline{((\mu + \ssigmax) - \VOpLinVec)^{-1}}\\
            \mu^{-1}\Id_2 & -\vvarphi\mu^{-1}\Id_2
        \end{bmatrix}.
    \end{equation*}

    Hence, the eigenproblem is equivalent to finding positive real $\mu\in\dC$ and $(\Bar{a},\Bar{b}, \alpha, \beta)^\top\in \dR^4$, such that,

    \begin{align*}
            \begin{pmatrix}
                \Bar{a} \\
                \Bar{b}
            \end{pmatrix} & = \cchi\overline{\left((\mu + \ssigmax) - \VOpLinVec)^{-1}\right)} \begin{pmatrix}
                \alpha \\
                \beta
            \end{pmatrix}, \\
            \begin{pmatrix}
                \alpha \\
                \beta
            \end{pmatrix} & = \mu^{-1}\begin{pmatrix}
                \Bar{a} \\
                \Bar{b}
            \end{pmatrix} - \vvarphi\mu^{-1} \begin{pmatrix}
                \alpha \\
                \beta
            \end{pmatrix}.
    \end{align*}
    We can rewrite this as, after some algebra,
    \begin{align*}
            \begin{pmatrix}
                \Bar{a} \\
                \Bar{b}
            \end{pmatrix} & = \frac{\cchi}{\mu+\vvarphi}\overline{\left((\mu + \ssigmax) - \VOpLinVec)^{-1}\right)} \begin{pmatrix}
                \Bar{a} \\
                \Bar{b}
            \end{pmatrix}, \\
            \begin{pmatrix}
                \alpha \\
                \beta
            \end{pmatrix} & = \frac{1}{\mu+\vvarphi}\begin{pmatrix}
                \Bar{a} \\
                \Bar{b}
            \end{pmatrix}.
    \end{align*}
    Using the computations from the proof of Lemma~\ref{lem:InviscidEigenVal}, we obtain for the eigenproblem the equation
    \begin{equation*}
        \frac{\cchi}{\mu+\vvarphi}\overline{\left((\mu + \ssigmax) - \VOpLinVec)^{-1}\right)} = \frac{\cchi}{\mu+\vvarphi}\begin{pmatrix}
            \mathcal{I}(\ttau, \llambda, \mu + \ssigmax) & 0\\
            0 & \mathcal{I}(\ttau, \llambda, \mu + \ssigmax)
        \end{pmatrix},
    \end{equation*}
    with $\mathcal{I}$ as defined in \eqref{eq:InviscidEllipIntegraldef}.
    
    Finally, since $\cchi \mathcal{I}(\ttau, \llambda, \mu + \ssigmax)/(\mu + \vvarphi)$ is a strictly decreasing, continuous, convex function with respect to $\mu$ on $[0,+\infty)$ and its limit is zero for $\mu\to+\infty$, the existence for the eigenproblem \eqref{eq:EigenproblemParaAnsatz}, by the intermediate value theorem is equivalent to $\cchi \mathcal{I}(\ttau, \llambda, \ssigmax)/\vvarphi>1$.
    
    This condition can be written as,


    \begin{equation}
        2\pi \cchi\left(\ttau + \frac{1-2\ttau (\ssigmax/\llambda)}{\sqrt{(\ssigmax/\llambda)^2 + 1}+\ssigmax/\llambda}\right) > \vvarphi\llambda\sqrt{(\ssigmax/\llambda)^2 + 1}.
    \end{equation}
    The condition is satisfied given \eqref{ineq:parainstab} and $\ssigmax$ is sufficiently small.

    Lastly, since $\mathcal{I}(\ttau, \llambda, \mu+\ssigmax)$ is holomorphic for $\mu\in\dC\cap\{\Real(\mu)>0\}$, under the instability condition, the fully inviscid eigenproblem admits an isolated real eigenvalue $\mu$ in the right half plane. The two-dimensional eigenspace for this eigenvalue is explicitly given by the image of the map $\dR^4\to \dLVecComp\times\dR^2$ given by
    \begin{equation}
        \Bar{\ABLinVecPara}  \mapsto \left(\mu - \SmatLinVecPara - \VOpLinVecPara \right)^{-1}\AmatLinVecPara \Bar{\ABLinVecPara}.
    \end{equation}
\end{proof}

The proof of the existence for the eigenproblem for the case $\ssigmax>0,\sigma>0$ follows by the same stability argument as used for Theorem~\ref{thm:EigenValSystem} and making use of the arguments as in Lemma~\ref{lem:InversibilityEllipticOperator}. This gives a proof for Theorem~\ref{thm:ExistenceEigenFunLinParabolic}. Also, similarly, the operator $\LinOpPara$ for $\ssigmax>0,\sigma > 0 $ is of compact resolvent, and thus the spectrum consists of isolated eigenvalues. 

\section{Stability and Instability}
\label{sec:StabilityInstability}
\subsection{Linear Instability implies NonLinear Instability}
In this section we show that the existence of a real strictly positive eigenvalue of the linearized operator around the homogeneous steady state implies a nonlinear instability result around the homogeneous state. In other words, we show a lower bound for the dimension of the unstable manifold of the steady state, as precluded in Section~\ref{sec:mainresults}.

We first show an adapted instability theorem for sectorial operators based on \cite[Theorem 5.1.3]{henry2006geometric}. We then apply this theorem to the parabolic-parabolic system \eqref{sys:FormicidaeParabolic}.

We start by recalling the following definitions and results from semigroup theory of sectorial operators. 

\begin{definition}
    We call a linear operator $A$ in a Banach space a \textit{sectorial operator} if it is a closed densely defined operator such that, for some $\omega \in (0, \pi/2)$, some $M \geq 1$ and a real number $a\in\mathbb{R}$, the sector,
    \begin{equation*}
        S_{a,\omega} = \left\{ \mu \ \ \big| \ \ \omega \leq |\arg (\mu - a)| \leq \pi, \mu \neq a\right\},
    \end{equation*}
    is in the resolvent set of $A$ and
    \begin{equation*}
        \|(\mu - A)^{-1}\| \leq M/|\mu -a| \text{ for all } \mu \in S_{a,\omega}.
    \end{equation*}
\end{definition}

Sectorial operators generate analytical semi-groups and possess regularization properties allowing the control of stronger norms of solutions. We will denote by $(e^{-At})_{t\geq 0}$ such a semigroup.

\begin{definition}
    \label{def:betaBnormwrtA}
    Let $A$ and $B$ be sectorial operators densely defined on a Banach space $(X, \|\cdot\|)$. We say that $B$ is $\beta$-controlled by $A$ if,
    $D(B) = D(A)$, the spectrum of $B$ is in the right-half plane $\Real \Sigma(B) > 0$, and $(B-A)B^{-\beta}$ is a bounded operator for $0\leq \beta < 1$. 
    
    This gives a norm on $X^\beta \defeq D(B^\beta) \subset X$ defined as
    \begin{equation*}
        \left\| y \right\|_\beta \defeq \left\| B^\beta y \right\|.
    \end{equation*}
\end{definition}

\begin{remark}
We refer to \cite[Chapter 1]{henry2006geometric} for the constructions of $\beta$-fractional powers of sectorial operators. 

\end{remark}
We use the notation $\Real(\Sigma)\defeq\{\Real(\zeta)\in\Sigma:\zeta\in\Sigma\}$.
\begin{prop}[Theorem 1.5.2, 1.5.3,  and 1.5.4 in \cite{henry2006geometric}]
    \label{prop:SectorialOperatorEstim}
    Suppose that $A$ is a sectorial operator and $\Sigma_1$ a bounded subset of $\Sigma(A)$ such that $\Sigma_1$ and $\Sigma_2 \defeq \Sigma(A)\setminus\Sigma_1$ are closed in the extended plane $\dC \cup \{\infty\}$. Furthermore, assume that $\inf\Real (\Sigma_2)> \gamma$.
    
    Then, there exists a unique decomposition $X = X_1 \oplus X_2$, with associated projections $\Pi_1, \Pi_2$, such that each $X_i$ is invariant under $A$, and the restrictions $A_{i}$ of $A$ to the subspaces $X_i$ satisfy
    \begin{align*}
        A_1 : X_1 \to X_1, \text{ is bounded }, \Sigma(A_1) = \Sigma_1,\\
        D(A_2) = D(A)\cap X_2 \text{ and } \Sigma(A_2) = \Sigma_2.
    \end{align*}
    Furthermore, there exists $M>0$, such that the semigroup satisfies the following estimates, for $y \in X_2 \cap X^\beta$ and $t > 0$,
    \begin{align*}
        \left\| e^{-A_2 t}y\right\|_\beta &\leq M \left\|y\right\|t^{-\beta}e^{-\gamma t},\\
        \left\| e^{-A_2 t}y\right\|_\beta &\leq M \left\|y\right\|_\beta e^{-\gamma t}.
    \end{align*}
\end{prop}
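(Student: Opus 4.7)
The plan is to follow the standard semigroup-theoretic approach, which I would organize in three stages corresponding to the three parts of the statement. The result is classical and appears in Henry's monograph, so in practice I would just cite it; here I sketch the argument one would give.

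\emph{Stage 1 (spectral projection and decomposition).} Since $\Sigma_1$ and $\Sigma_2 = \Sigma(A) \setminus \Sigma_1$ are closed in the extended plane and $\Sigma_1$ is bounded, there exists a finite system of simple closed rectifiable curves $\Gamma \subset \rho(A)$ enclosing $\Sigma_1$ with $\Sigma_2$ lying in the exterior. Define the Dunford--Riesz projector
\begin{equation*}
\Pi_1 \defeq \frac{1}{2\pi i} \oint_\Gamma R(\mu, A)\, d\mu, \qquad \Pi_2 \defeq \Id - \Pi_1.
\end{equation*}
Using the resolvent identity together with Fubini on two nested contours, one checks $\Pi_1^2 = \Pi_1$, and since $R(\mu, A)$ commutes with $A$ on $D(A)$, the subspaces $X_i \defeq \Pi_i X$ are $A$-invariant. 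The functional calculus gives $\Sigma(A_i) = \Sigma_i$ for the restrictions $A_i$.

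\emph{Stage 2 (boundedness of $A_1$).} Because $\Sigma_1$ is bounded and $R(\cdot, A)$ is holomorphic on the compact contour $\Gamma$, the operator
\begin{equation*}
A_1 = A \Pi_1 = \frac{1}{2\pi i} \oint_\Gamma \mu\, R(\mu, A)\, d\mu
\end{equation*}
is a norm-convergent Bochner integral of bounded operators, hence bounded on $X$ (and its image lies in $X_1$).

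\emph{Stage 3 (semigroup estimates on $X_2$).} The restriction $A_2$ inherits sectoriality from $A$, with spectrum in $\{\Real \mu > \gamma\}$. Choose a contour $\tilde\Gamma$ lying inside the sector of analyticity but to the right of the vertical line $\Real \mu = \gamma$, and represent
\begin{equation*}
e^{-A_2 t} \Pi_2 = \frac{1}{2\pi i} \int_{\tilde\Gamma} e^{-\mu t} R(\mu, A_2)\, d\mu.
\end{equation*}
Parameterizing $\tilde\Gamma$ and using the resolvent bound $\|R(\mu, A)\| \leq M/|\mu - a|$ gives the exponential factor $e^{-\gamma t}$. For the $\beta$-norm estimates, recall the integral representation $B^{-\beta} = \Gamma(\beta)^{-1} \int_0^\infty s^{\beta - 1} e^{-Bs}\, ds$, which allows one to commute $B^\beta$ past $e^{-A_2 t}$ modulo a commutator controlled by the assumption that $(B-A)B^{-\beta}$ is bounded. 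Combining this with the contour estimate produces the two inequalities, where the factor $t^{-\beta}$ arises from the identity $B^\beta e^{-A_2 t} = (e^{-A_2 t/2})\, B^\beta e^{-A_2 t/2}$ and the standard bound $\|B^\beta e^{-A_2 s}\| \lesssim s^{-\beta} e^{-\gamma s}$.

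The principal technical obstacle is the careful interplay between the two sectorial operators $A$ and $B$ in Stage 3: one must show that the $\beta$-fractional power constructed from $B$ is compatible with the spectral projection associated with $A$, which hinges on the boundedness hypothesis for $(B-A)B^{-\beta}$. Everything else is contour bookkeeping of the type carried out in Chapter 1 of \cite{henry2006geometric}, which is why we simply quote the result.
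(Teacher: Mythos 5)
The paper does not prove this proposition at all: it is stated purely as a citation of Henry's Theorems 1.5.2, 1.5.3 and 1.5.4, with no argument supplied. Your decision to quote the result is therefore exactly what the paper does, and Stages 1 and 2 of your sketch correctly reproduce the standard mechanism (Dunford--Riesz projector on a contour separating $\Sigma_1$ from $\Sigma_2$, boundedness of $A_1$ from a norm-convergent contour integral).

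Stage 3, however, contains a genuine gap. The identity $B^\beta e^{-A_2 t} = e^{-A_2 t/2}\, B^\beta e^{-A_2 t/2}$ holds only if $B^\beta$ commutes with the semigroup $e^{-A_2 t}$, i.e.\ only if $B$ commutes with $A$. In the paper's Definition~\ref{def:betaBnormwrtA} $B$ is an operator genuinely distinct from $A$, and in the application (proof of Theorem~\ref{thm:InstabilityParabolic}) $A$ is the full linearization containing first-order transport and the chemotactic coupling while $B$ is the bare diagonal diffusion operator; these do not commute, so the identity fails precisely in the generality being asserted. Worse, the ``standard bound'' $\|B^\beta e^{-A_2 s}\| \lesssim s^{-\beta}e^{-\gamma s}$ that you invoke to finish is the very estimate the proposition asserts, so that step is circular. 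You rightly flag the interplay between $A$ and $B$ as the principal obstacle in your final paragraph, but the sketch above it assumes the obstacle away. The missing ingredient is a comparison of fractional powers: one must show that $\|B^\beta\cdot\|$ and $\|(a+A)^\beta\cdot\|$ (for a suitable shift $a$) are equivalent norms on $D(B^\beta)=D((a+A)^\beta)$, using the boundedness of $(B-A)B^{-\beta}$ together with a moment/interpolation inequality for fractional powers; only after this reduction can one apply the commuting estimate $\|(a+A_2)^\beta e^{-A_2 t}\| \lesssim t^{-\beta}e^{-\gamma t}$ obtained from the sectorial contour representation. That comparison is the actual content of Henry's proof and is not merely ``contour bookkeeping.''
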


We now state and prove a nonlinear instability theorem for sectorial operators with a $\beta$-controllable nonlinearity. A close result can be found in \cite[Theorem 5.1.3]{henry2006geometric}, proved using the same strategy, with the difference that we here control the dimension of the unstable manifold via the Hilbertian structure of the Banach space we use.

\begin{prop}
    \label{prop:UnstableManifoldAbstract}
    Suppose that $(-L)$ is a sectorial operator with compact resolvent on a real Banach space $(X, \|\cdot\|)$.
    Suppose that the set
    \begin{equation*}
        \Sigma_1 = \Sigma(L) \cap \{\Real (\mu) > 0 \},
    \end{equation*}
    is non-empty, and let $N:D(B^\beta) \subset X \to X$ be a closely defined operator for some $\beta>0$ such that $N(0) = 0$ satisfying, for $y^1, y^2\in D(B^\beta)$,
    \begin{equation*}
        \|N(y^1) - N(y^2)\|\leq \varpi(r) \|y^1 -y^2\|_\beta, \text{  if } \|y^1\|_\beta, \|y^2\|_\beta \leq r,
    \end{equation*}
    for a continuity modulus $\varpi$, $\lim_{r \to 0^+} \varpi(r) = 0$.  

    Then, there exists $y$ in the space,
    \begin{equation}
        \label{def:SolutionSpaceIntegralInsta}
        L^2_{t,loc}(D(L))\cap C\left((-\infty, 0], H^\beta\right)\cap H^1_{t,loc}(H),
    \end{equation}
    which solves the following equation,
    \begin{equation}
    \label{eq:CauchyProblemSectorialOp}
        \frac{\dd}{\dd t} y = L y + N(y),
    \end{equation}
    which satisfies the decay estimate
    \begin{equation*}
        \|y(t)\|_{\beta} \leq C e^{2\alpha t}, \ \text{for all} \ t \leq 0,
    \end{equation*}
    and for some $\alpha>0,C > 0$.
     
    Furthermore, denoting $X_1$ is the invariant subspace associated with $\Sigma_1$, then for any $0< \varepsilon < 1$, there exists $C_\varepsilon >0$, such that for any $x \in X_1$, such that $\|x\|_{\beta} = C_\varepsilon$, there exists a solution $y$ as above, satisfying the estimate,
    \begin{equation}
        \label{est:UnstableAbstractEstimate}
        \|y(0) - x\|_{\beta} \leq \varepsilon\|x\|_{\beta}.
    \end{equation}
\end{prop}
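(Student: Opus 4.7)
The plan is to follow the Lyapunov--Perron construction of the local unstable manifold in the abstract sectorial setting, essentially as in \cite[Ch.~5]{henry2006geometric}, while keeping track of the geometry needed for the estimate $\|y(0) - x\|_\beta \leq \varepsilon\|x\|_\beta$.

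First I would apply Proposition~\ref{prop:SectorialOperatorEstim} to the sectorial operator $-L$. Compact resolvent forces $\Sigma(L)$ to be discrete, and the sectorial geometry confines it to a parabolic region, so $\Sigma_1 \subset \{\Real \mu > 0\}$ is in fact finite with finite-dimensional generalized eigenspaces, while $\Sigma(L)\setminus \Sigma_1$ is bounded away from the imaginary axis. This yields an invariant splitting $X = X_1 \oplus X_2$ with $\dim X_1 < \infty$ and projections $\Pi_1,\Pi_2$, and allows a choice of constants $M\geq 1$ and $\gamma > 2\alpha > 0$ satisfying
\begin{align*}
    \|e^{Lt}\Pi_1 y\|_\beta &\leq M e^{2\alpha t}\|\Pi_1 y\|_\beta, & t \leq 0, \\
    \|e^{Lt}\Pi_2 y\|_\beta &\leq M t^{-\beta} e^{-\gamma t}\|\Pi_2 y\|, & t > 0,
\end{align*}
where $2\alpha$ is placed strictly inside the spectral gap of $L$ around the imaginary axis.

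Next, for a prescribed $x\in X_1$ I would recast the Cauchy problem~\eqref{eq:CauchyProblemSectorialOp} as the Lyapunov--Perron integral equation
\begin{equation*}
    y(t) = e^{Lt}x + \int_0^t e^{L(t-s)}\Pi_1 N(y(s)) \, \dd s + \int_{-\infty}^t e^{L(t-s)}\Pi_2 N(y(s)) \, \dd s,
\end{equation*}
on the exponentially weighted Banach space $\mathcal{Y} = \{y \in C((-\infty,0];X^\beta) : \|y\|_{\mathcal{Y}} \defeq \sup_{t\leq 0} e^{-2\alpha t}\|y(t)\|_\beta < \infty\}$. Using $N(0)=0$ together with the modulus-of-continuity estimate on $N$, both integrals can be bounded by $C_0 M \varpi(r)\|y\|_{\mathcal{Y}}$ on the ball $B_r \subset \mathcal{Y}$: the $\Pi_1$-convolution shrinks via the backward-contracting finite-dimensional semigroup on $X_1$, and the $\Pi_2$-convolution is integrable because $\beta<1$ kills the $t^{-\beta}$ singularity at zero while $\gamma + 2\alpha > 0$ tames the tail at $-\infty$. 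A standard Banach fixed point argument on $B_r$ then produces a unique $y$, provided $r$ and $\|x\|_\beta$ are chosen so that $C_0 M \varpi(r) < \tfrac12$ and $M\|x\|_\beta \leq r/2$; the decay $\|y(t)\|_\beta \leq r e^{2\alpha t}$ is built into the weight.

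Evaluating the fixed point equation at $t=0$ gives $y(0) - x = \int_{-\infty}^0 e^{-Ls}\Pi_2 N(y(s))\,\dd s$, whose $X^\beta$-norm is bounded by $C_0 M \varpi(r)r$. Setting $r = 2MC_\varepsilon$ and choosing $C_\varepsilon$ small enough that $\varpi(2MC_\varepsilon) \leq \varepsilon/(2M^2 C_0)$ produces the advertised estimate $\|y(0)-x\|_\beta \leq \varepsilon\|x\|_\beta$ on the sphere $\{\|x\|_\beta = C_\varepsilon\}$. Upgrading the mild solution to the regularity class $L^2_{t,\mathrm{loc}}(D(L)) \cap H^1_{t,\mathrm{loc}}(H)$ then follows from maximal $L^2$-regularity for the analytic semigroup generated by $L$, applied to the inhomogeneity $s\mapsto N(y(s))$, which lies in $L^2_{t,\mathrm{loc}}(H)$ along the orbit by the local Lipschitz bound on $N:X^\beta \to X$. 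The hard part will be the bookkeeping of the four constants $\alpha, \gamma, r, C_\varepsilon$: the weight rate $2\alpha$ must lie strictly in the spectral gap, $\gamma$ must remain strictly above $2\alpha$ so the $\Pi_2$-integral converges against the weight, and $r$ must be small enough for $\varpi$ to drive the contraction yet large enough to absorb the free term $e^{Lt}x$.
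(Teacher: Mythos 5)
Your proposal follows the same Lyapunov--Perron construction as the paper's proof: the spectral splitting $X = X_1 \oplus X_2$ supplied by Proposition~\ref{prop:SectorialOperatorEstim}, the fixed point for the integral equation in the exponentially weighted space $\mathcal{Y}^\alpha_\beta$, and the closing estimate on $\|y(0)-x\|_\beta$ coming from the $\Pi_2$-tail integral. One step, however, is not available as written. You assert $\|e^{Lt}\Pi_2 y\|_\beta \leq M t^{-\beta} e^{-\gamma t}\|\Pi_2 y\|$ for $t>0$ with $\gamma > 2\alpha > 0$, i.e.\ strict exponential \emph{decay} on $X_2$. The hypotheses only say $\Sigma_1 = \Sigma(L)\cap\{\Real\mu > 0\}$ is nonempty; nothing excludes elements of $\Sigma_2 = \Sigma(L)\setminus\Sigma_1$ sitting on the imaginary axis, in which case no $\gamma>0$ exists. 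The paper sidesteps this by using only $\sup\Real(\Sigma_2)\leq 0 < \alpha$ (which is automatic) together with $\inf\Real(\Sigma_1) > 3\alpha$, giving the weaker \emph{growth} bound $\|e^{L_2 t}y\|_\beta \leq M\|y\|t^{-\beta}e^{\alpha t}$. This is still sufficient: against the weight $e^{-2\alpha t}$, the $\Pi_2$-convolution kernel behaves like $u^{-\beta}e^{(\alpha - 2\alpha)u}= u^{-\beta}e^{-\alpha u}$, integrable on $(0,\infty)$. Your closing remark that $\gamma$ must be ``strictly above $2\alpha$'' compounds the slip; the correct integrability condition, which you in fact state correctly mid-proof, is $\gamma + 2\alpha > 0$, and in the paper's normalization this reads $-\alpha + 2\alpha = \alpha > 0$. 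Once the $X_2$ estimate is replaced by the growth bound, the rest of your argument, including the computation of $y(0)-x$ and the calibration of $r$ and $C_\varepsilon$, matches the paper's.
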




\begin{remark}
    The proof is a modification of the proof of \cite[Theorem 5.1.3]{henry2006geometric} in order to obtain estimate~\eqref{est:UnstableAbstractEstimate}, and is given in the Appendix\ref{App:NonLinearInstab}.
\end{remark}

We will use the following result on the perturbation of an orthogonal basis in an Hilbert space.
\begin{lem}
    \label{lem:HilbertSpacePerturbationOfBasis}
    Let $\{v^i\}_{i=1}^d$ be a collection of $d$ orthogonal vectors in a Hilbert space $(H, \langle \cdot, \cdot \rangle)$, with constant norm $\|v^i\| = r$ for all $i$.
    If $\{w^i\}_{i=1}^d$ is a family such that, for all $i$, the following estimate holds,
    \begin{equation*}
        \|w^i-v^i\|< r/d.
    \end{equation*}
    Then $\{w^i\}_{i=1}^d$, forms a linearly independent family in $H$.
\end{lem}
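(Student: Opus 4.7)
The plan is to argue by contradiction via the coefficient of maximal modulus. Suppose $\{w^i\}_{i=1}^d$ is linearly dependent, so there exist scalars $(\alpha_i)_{i=1}^d$, not all zero, with $\sum_{i=1}^d \alpha_i w^i = 0$. Set $e^i \defeq w^i - v^i$, so $\|e^i\| < r/d$. Let $M \defeq \max_{1\leq i \leq d} |\alpha_i| > 0$ and pick $k$ with $|\alpha_k| = M$.

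The key step is to test the dependence relation against $v^k$. Using $\langle v^i, v^k\rangle = r^2 \delta_{ik}$ and expanding $w^i = v^i + e^i$, one obtains
\begin{equation*}
    0 = \Bigl\langle \sum_{i=1}^d \alpha_i w^i, v^k \Bigr\rangle = \alpha_k r^2 + \sum_{i=1}^d \alpha_i \langle e^i, v^k\rangle.
\end{equation*}
Hence, by the Cauchy--Schwarz inequality and the hypothesis $\|e^i\| < r/d$,
\begin{equation*}
    M r^2 = |\alpha_k| r^2 \leq \sum_{i=1}^d |\alpha_i| \, \|e^i\| \, \|v^k\| < \sum_{i=1}^d M \cdot \frac{r}{d} \cdot r = M r^2,
\end{equation*}
which is a contradiction. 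Therefore all $\alpha_i$ must vanish, proving linear independence.

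There is no real obstacle here: the argument is a one-line pigeonhole-style estimate on the coefficient of largest modulus. The only subtlety worth noting is that strict inequality $\|e^i\| < r/d$ is used in an essential way (equality would only yield $\leq$, which is insufficient). No completeness or separability of $H$ is needed, and the conclusion is purely algebraic once one takes the pairing with $v^k$.
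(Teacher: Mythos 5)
Your proof is correct and uses essentially the same approach as the paper: both test the dependence relation against the orthogonal vectors $v^i$, use orthogonality and Cauchy--Schwarz to isolate the coefficients, and then exploit the strict bound $\|w^i - v^i\| < r/d$. The only cosmetic difference is the concluding step — you pick the coefficient of maximal modulus and derive a strict self-contradiction, while the paper sums the resulting inequalities over $i$ to get $\sum_j |a_j| \leq (\tfrac{d}{r}\max_i\|\delta^i\|)\sum_j |a_j|$ with a prefactor strictly less than one; both are instances of the same pigeonhole/contraction estimate.
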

\begin{proof}
    Let $a_i$ be such that $\sum_{j=1}^d a_i w^i = 0$, denoting $\delta^i = w^i - v^i$, and taking the scalar product with $v^i$, we obtain,
    \begin{align*}
        a_i \|v^i\|^2 &= - \sum_{j=1}^da_j \langle \delta^j, v^i\rangle,\\
        |a_i| \|v^i\|^2 &\leq \sum_{j=1}^d |a_j| \|\delta^j\| \|v^i\|,\\
        \sum_{j=1}^d |a_j| &\leq \left(\frac{d}{r} \max \|\delta^i\|\right)  \sum_{j=1}^d |a_j|,
    \end{align*}
    thus by hypothesis $\sum_{j=1}^d |a_i| = 0$.
\end{proof}

\subsection{Parabolic Nonlinear Instability}

\begin{thm}
    \label{thm:InstabilityParabolic}
    Suppose that, there exists an integer wave-number $k\geq 1$, such that,
    \begin{equation}
        \label{hyp:UnstableManifoldParabolic}
        \chi(2\pi k \tau + 1) > \lambda (\gamma + 4\pi \sigma_c k^2).
    \end{equation}
    Then there exists $\sigma_x^*>0$ such that for $\sigma_x \in (0,\sigma_x^*)$ there exists $\sigma_\theta^*(\sigma_x)>0$ such that for any $\sigma_\theta \in (0,\sigma_\theta^*)$, the following lower bound on the dimension of the unstable manifold of the homogeneous state $u_*=(1/2\pi,1/\gamma)^\top$ holds,
    \begin{equation*}
        \dim \mathcal{M}^u(u_*) \geq 4k.
    \end{equation*}
\end{thm}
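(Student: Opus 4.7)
The plan is to write the perturbation $(f,c) = (f_*, c_*) + y$ with $y = (g,h)$, so that the system~\eqref{sys:FormicidaeParabolic} becomes the abstract evolution equation
\begin{equation*}
\partial_t y = \LinOpPara y + N(y),
\end{equation*}
where $\LinOpPara$ is the linearised operator defined in~\eqref{def:LinOpHomogPara} and $N(y) = \bigl(-\chi \partial_\theta(B_\tau[h]\,g),\, 0\bigr)$ is the purely quadratic nonlinear remainder. I would verify the hypotheses of Proposition~\ref{prop:UnstableManifoldAbstract} applied to $L = \LinOpPara$ on a suitable Hilbert space $X$, combine its output with the invariant subspace produced by Theorem~\ref{thm:ExistenceEigenFunLinParabolic}, and conclude linear independence of the resulting $4k$ endpoints via Lemma~\ref{lem:HilbertSpacePerturbationOfBasis}.

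Taking $X = L^2_{x,\theta} \times L^2_x$, the first step is to check that $-\LinOpPara$ is sectorial with compact resolvent. The principal parts $-\sigma_x\Delta_x - \sigma_\theta\partial_{\theta\theta}$ on the $g$-component and $-\sigma_c \Delta_x + \gamma$ on the $h$-component each generate analytic semigroups, and a standard perturbation argument (exploiting the relative boundedness of the transport term $\lambda v \cdot \nabla_x g$, the second-order coupling $-\chi f_* \partial_\theta B_\tau[h]$ and the zeroth-order coupling $\int g\,\dd\theta$ with respect to the principal part) preserves sectoriality. Compactness of the resolvent follows from Rellich--Kondrachov on the compact manifold $\Torus^2_1 \times \Torus_{2\pi}$. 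For an appropriate exponent $\beta \in (0,1)$ such that $X^\beta$ embeds continuously into a space such as $H^{1+\delta}_{x,\theta} \times H^{3+\delta}_x$ with $\delta > 0$, the quadratic nonlinearity $N$ is $\beta$-controlled: using $B_\tau[h^1]g^1 - B_\tau[h^2]g^2 = B_\tau[h^1-h^2]\,g^1 + B_\tau[h^2](g^1 - g^2)$ together with Hölder's inequality and Sobolev embeddings,
\begin{equation*}
\|N(y^1) - N(y^2)\|_X \leq C\bigl(\|y^1\|_\beta + \|y^2\|_\beta\bigr)\|y^1 - y^2\|_\beta,
\end{equation*}
so the continuity modulus is $\varpi(r) = Cr \to 0$ as $r \to 0^+$, and $N(0) = 0$.

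By Theorem~\ref{thm:ExistenceEigenFunLinParabolic}, under assumption~\eqref{hyp:UnstableManifoldParabolic} and sufficiently small $\sigma_x, \sigma_\theta$, the real invariant subspace $X_1$ of $\LinOpPara$ associated with the unstable eigenvalues admits an orthogonal family $\{v^1,\dots,v^{4k}\}$ of real functions (each constant in $x_1$ or $x_2$); set $d := 4k$. Choosing $\varepsilon = 1/(2d)$, Proposition~\ref{prop:UnstableManifoldAbstract} yields a constant $C_\varepsilon > 0$, and after rescaling each $v^i$ so that $\|v^i\|_\beta = C_\varepsilon$, it produces backward solutions $y^i \in L^2_{t,loc}(D(\LinOpPara)) \cap C((-\infty,0], X^\beta) \cap H^1_{t,loc}(X)$ with $\|y^i(t)\|_\beta \lesssim e^{2\alpha t}$ as $t \to -\infty$ and $\|y^i(0) - v^i\|_\beta \leq C_\varepsilon/(2d)$. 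Lemma~\ref{lem:HilbertSpacePerturbationOfBasis}, applied to the $d$ orthogonal vectors $\{v^i\}$ of common norm $r = C_\varepsilon$ and perturbations $w^i = y^i(0)$ satisfying $\|w^i - v^i\|_\beta < r/d$, then ensures that $\{y^i(0)\}_{i=1}^{d}$ is linearly independent in $X^\beta$. Each trajectory $t \mapsto u_* + y^i(t)$ is a global backward solution of~\eqref{sys:FormicidaeParabolic} converging to $u_*$, so it lies in $\mathcal{M}^u(u_*)$, and the $4k$ linearly independent endpoints yield $\dim \mathcal{M}^u(u_*) \geq 4k$.

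The principal technical obstacle is ensuring that the abstract backward trajectories genuinely embed into $\mathcal{M}^u(u_*)$ viewed as a subset of the state space $Y^P$: convergence to $u_*$ in the $X^\beta$-norm must imply convergence in the $Y^P$-topology, which requires bootstrapping from the abstract regularity of Proposition~\ref{prop:UnstableManifoldAbstract} to the norms of $Y^P$ using analytic-semigroup smoothing together with the absorption estimates of Theorem~\ref{thm:AbsorbH3H1Parabolic} and Corollary~\ref{cor:H4CParabolic} from Section~\ref{sec:ExistenceGlobalAttractor}. A secondary subtlety is the precise choice of $\beta$ and the interpolation/Sobolev inequalities used to control $N$, since $B_\tau[h]$ contains second spatial derivatives of $h$ and the products with $g$ must land in $L^2_{x,\theta}$; this dictates the working domain of $\LinOpPara$ and the norm in which the directions $v^i$ are measured.
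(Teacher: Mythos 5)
Your overall architecture matches the paper's: linearize around $u_*$, invoke Theorem~\ref{thm:ExistenceEigenFunLinParabolic} for a $4k$-dimensional unstable eigenspace of mutually orthogonal functions, apply the abstract instability result Proposition~\ref{prop:UnstableManifoldAbstract} to each direction, and use Lemma~\ref{lem:HilbertSpacePerturbationOfBasis} to deduce linear independence of the endpoints. However, there are two concrete gaps.

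\textbf{Functional framework.} You take $X = L^2_{x,\theta} \times L^2_x$ and ask for $\beta \in (0,1)$ with $X^\beta \hookrightarrow H^{1+\delta}_{x,\theta} \times H^{3+\delta}_x$. This is impossible: the sectorial generator on $X$ is second order in both components, so $D(L) \simeq H^2_{x,\theta}\times H^2_x$ and fractional powers give at most $X^\beta \simeq H^{2\beta}_{x,\theta}\times H^{2\beta}_x$; reaching $H^{3+\delta}_x$ in the $c$-slot would force $\beta > 3/2$. The paper resolves this by building the extra two derivatives of $c$-regularity directly into the base Hilbert space, defining $\mathcal{H}^m$ with $h\in H^m$, $c\in H^{m+2}$ and $\int h\,\dd\theta \in H^{m+1}$ (which is consistent because $\rho$ sources the smoothing $c$-equation); then $\beta = 1/2$ suffices. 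Moreover the paper works on the dimensionally reduced domain $\Torus_{2\pi}\times\Torus_1$, exploiting that the eigenfunctions from Theorem~\ref{thm:ExistenceEigenFunLinParabolic} are constant in one spatial direction, so that the Morrey embedding $H^3(\Torus_1)\hookrightarrow C^{2,1/2}$ in dimension one gives the $L^\infty$-control of $B_\tau[c]$ and $\partial_\theta B_\tau[c]$ needed to verify the Lipschitz estimate on $N$; the constructed trajectories are then re-extended as constants in the other variable at the end.

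\textbf{Positivity.} You flag that convergence in the $X^\beta$-norm must be reconciled with the $Y^P$-topology, but the decisive obstruction is the cone constraint: $Y^P$ requires $f = f_* + g$ to be \emph{non-negative}, and Proposition~\ref{prop:UnstableManifoldAbstract} gives no sign information on $g$. The paper addresses this with a bootstrap: from the $\mathcal{H}^1$-decay $\|u^i(t)\|_{\mathcal{H}^1}\leq Me^{\alpha t}$ and the Duhamel representation with the anisotropic heat kernel, Young's convolution inequality upgrades to $\|\nabla_\xi f(t)\|_{L^q_{x,\theta}}\lesssim e^{2\alpha t}$ for any $q<\infty$, which by Morrey in dimension two yields uniform convergence $f(t)\to f_*>0$, hence $f(t)\geq 0$ for $t\leq t^*$; the positivity is then propagated forward in time by the sign-preservation result of \cite[Lemma 4.3]{bertucci2024curvature}. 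Without this argument the trajectories you construct are not known to live in $Y^P$, so the conclusion $\dim\mathcal{M}^u(u_*)\geq 4k$ does not yet follow.
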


\begin{proof}
    Since condition \eqref{hyp:UnstableManifoldParabolic} holds, Theorem~\ref{thm:ExistenceEigenFunLinParabolic} ensures the existence of $k$ eigenvalues in the positive half-plane, each associated with a four dimensional orthogonal basis, such that two of these vectors are constant in $x_1$ and two are constant in $x_2$. This enables us to apply individually Theorem~\ref{prop:UnstableManifoldAbstract} to the $4k$-vectors, imposing $\varepsilon = \frac{1}{4k + 1}$ in the estimate~\eqref{est:UnstableAbstractEstimate}, so that one concludes the dimensional lower bound on the nonlinear unstable manifold using the Lemma~\ref{lem:HilbertSpacePerturbationOfBasis}.
    
    For these functions with domain $\mathbb{T}_{2\pi}\times\mathbb{T}_1$ we use the following functional setup.
    \newcommand{\HspaceParabolic}{\mathcal{H}}
    \newcommand{\LinOpParabolic}{\mathcal{L}^P}
    For $m\geq 0$, we define the following Hilbert spaces $\HspaceParabolic^m$ as,
    \begin{equation*}
        \left\{ (h,c) \in H^{m}(\Torus_{2\pi}\times \Torus_1) \times H^{m+2}(\Torus_1)\Big| \int h\dd\theta \in H^{m+1}(\Torus_1),
        \int h \dd \theta \dd x = \int c \dd x = 0 \right\},
    \end{equation*}
    equipped with its canonical scalar product defined as follows.
    
    For $u^1 = (h^1,c^1) \in \HspaceParabolic^m $ and $u^2 = (h^2,c^2)\in \HspaceParabolic^m$,

    \begin{equation*}
        \langle u^1,u^2\rangle_{\HspaceParabolic^m} \defeq \langle h^1,h^2\rangle_{H^m_{x,\theta}} + \langle c^1,c^2\rangle_{H^{m+2}_{x}} + \left\langle \int h^1\dd\theta ,\int h^2 \dd\theta\right\rangle_{H^{m+1}_{x}}.
    \end{equation*}
    We then introduce the following closed densely defined linear operator $\mathcal{L}^P : \HspaceParabolic^2 \subset \HspaceParabolic^0 \to \HspaceParabolic^0$, 
    
    For $u = (h,c)\in \HspaceParabolic^2$,
    \begin{equation}
        \label{def:LinOpParabolic}
        \LinOpParabolic u \defeq \begin{pmatrix}
                                \sigma_x \partial_{xx} h + \sigma_\theta \partial_{\thth}h\\
                                \sigma_c \partial_{xx} c -\gamma c 
                            \end{pmatrix}
                            +\begin{pmatrix}
                                -\lambda \cos \partial_x h\\
                                \int h \dd\theta
                            \end{pmatrix}
                            +\begin{pmatrix}
                                -\chi f_*\partial_\theta (B_\tau[c]))\\
                                0
                            \end{pmatrix},
    \end{equation}
    with $B_\tau : H^2_x \to L^2_{x,\theta}$ defined as,
    \begin{equation*}
        B_\tau[c](x,\theta) \defeq -\sin(\theta) \partial_x c(x) -\tau\sin(\theta)\cos(\theta) \partial_{xx} c(x).
    \end{equation*}    
    
    The negative of the first bracketed term is sectorial, since it only has non-negative eigenvalues. The third term is a linear bounded operator. We can apply \cite[Theorem 1.3.2]{henry2006geometric} to control the middle term through Sobolev embeddings and conclude that $-\LinOpParabolic$ is sectorial.
    
    We denote by $A : \HspaceParabolic^2 \subset \HspaceParabolic^0 \to \HspaceParabolic^0 $, the operator,
    \begin{equation*}
        A u \defeq \begin{pmatrix}-\sigma_x \partial_{xx} h - \sigma_\theta \partial_{\thth}h\\
                                -\sigma_c \partial_{xx} c
                            \end{pmatrix}.
    \end{equation*}
    We first note from Fourier analysis that $\Sigma(A) < 0$ and we have the map $A^{-\frac{1}{2}} : \HspaceParabolic^{m} \to \HspaceParabolic^{m+1}$, so that,
    \begin{equation*}
        (A-(-\LinOpParabolic))A^{-\frac{1}{2}} \text{ is a bounded linear operator from } \HspaceParabolic^0 \text{ into itself.}
    \end{equation*}
    Let  $\|\cdot \|_{\HspaceParabolic^{m}, \frac{1}{2}}$ be the norm associated with $(\frac{1}{2}, A)$ as defined in Definition~\ref{def:betaBnormwrtA},
    \begin{equation*}
        \left\|u\right\|_{\HspaceParabolic^{m}, \frac{1}{2}} \defeq \left\|A^{\frac{1}{2}} u\right\|_{\HspaceParabolic^{m}}.
    \end{equation*}
    We further note that this norm is equivalent to $ \|\cdot\|_{\HspaceParabolic^{m+1}}$.
    To apply Theorem~\ref{prop:UnstableManifoldAbstract}, we need to control the norm of the nonlinearity. 
    
    Let $N: \HspaceParabolic^1 \subset \HspaceParabolic^0 \to \HspaceParabolic^0$, defined as,
    \begin{equation*}
        N(u) \defeq \begin{pmatrix}
                    -\chi \partial_\theta (B_\tau[c] h)\\
                    0
                 \end{pmatrix}.
    \end{equation*}
    Since the first term is equal to zero when integrating with respect to the $\theta$-variable, the $\HspaceParabolic^0$-norm is given by,
    \begin{equation*}
        \|N(u^1) - N(u^2)\|_{\HspaceParabolic^0} = \left\|\chi \partial_\theta (B_\tau[c^1] h^1) -\chi \partial_\theta (B_\tau[c^2] h^2) \right\|_{L^2_{x,\theta}}.
    \end{equation*}
    We then use the Morrey embedding theorem in dimension one, $H^3_x \hookrightarrow C^{2,\frac{1}{2}}_x$, so that, from the definition of $B_\tau$, we obtain that there exists a constant $C > 1$, depending on $\tau$, such that
    \begin{equation*}
        \|\partial_\theta B[c]\|_{L^\infty_{x,\theta}}+\| B[c]\|_{L^\infty_{x,\theta}} \leq C \|c\|_{H^3_x}.
    \end{equation*}
    Thus, taking $u^1, u^2$, such that $\|u^i\|_{\HspaceParabolic^1}\leq r$, for any $r>0$, we obtain
    \begin{align*}
        \|N(u^1) - N(u^2)\|_{\HspaceParabolic^0} & \leq \chi \left(\|\partial_\theta (B[c^1 - c^2] h^1)\|_{L^2_{x,\theta}} + \|\partial_\theta (B[c^1] (h^1-h^2))\|_{L^2_{x,\theta}}\right),\\
        & \leq \chi (\|\partial_\theta B[c^1 - c^2]\|_{L^\infty_{x,\theta}} \|h^1\|_{L^2_{x,\theta}} + \|B[c^1 - c^2]\|_{L^\infty_{x,\theta}} \|\partial_\theta h^1\|_{L^2_{x,\theta}}\\
        & \hspace{2em}+ \|\partial_\theta B[c^2]\|_{L^\infty_{x,\theta}} \|h^1-h^2\|_{L^2_{x,\theta}} + \|B[c^2]\|_{L^\infty_{x,\theta}} \|\partial_\theta (h^1-h^2)\|_{L^2_{x,\theta}}),\\
        &\leq \varpi(r) \|u^1-u^2\|_{\HspaceParabolic^1},
    \end{align*}
    with $\varpi(r) \defeq 4C\chi r$.

    We can then apply Theorem~\ref{prop:UnstableManifoldAbstract} to each of the $(4k)$-orthogonal vectors $\{e^i\}$ from Theorem~\ref{thm:ExistenceEigenFunLinParabolic}, since they are constant in one of their spatial variables, and obtain unstable solutions $u^i\in C((-\infty,0],\HspaceParabolic^1)$ such that, for some $M > 0, \alpha > 0$, the following estimate holds,
    \begin{equation}
        \label{est:DecayEstimNonLinIstaParabolic}
        \|u^i(t)\|_{\HspaceParabolic^1} \leq Me^{\alpha t}, \text{ for all } t\leq 0,
    \end{equation}
    and,
    \begin{equation*}
        \|u^i(0) - e^i\|_{\HspaceParabolic^1} \leq \frac{ \|e^i\|_{\HspaceParabolic^1}}{4k+1},
    \end{equation*}
    the dimensional lower bound then follows from Lemma~\ref{lem:HilbertSpacePerturbationOfBasis} after extending again the functions in $H^1(\Torus_{2\pi}\times \Torus^2_1)\times H^3(\Torus^2_1)$ by making them constant in the other variable and using that the norm in $\HspaceParabolic$ controls the norm in the full space for this injection.   
    
   We now show that these unstable solutions for the perturbation equation can be modified into a semigroup solution of the system~\eqref{sys:FormicidaeParabolic}, such that the first component of $u^i$, $h^i$, gives a solution
    \begin{align*}
        f^i = \frac{1}{2\pi} + h^i.
    \end{align*}
    We directly obtain that the functions $f^i$ are of mass 1. We finally check that they are positive by proving a regularity result for these functions. We drop the $i$ dependency in the rest of the computations.
    
    Let $g : (0,+\infty)\times \Torus_1\times \Torus_{2\pi}\to \dR_+$, be the fundamental solution to the anisotropic heat equation,
    \begin{equation*}
        \partial_t g = \sigma_x \partial_{xx} g + \sigma_{\thth} \partial_{\thth} g.
    \end{equation*}
    We recall the following integrability estimates on the fundamental solution of the heat equation on $\mathbb{T}_1\times \mathbb{T}_{2\pi}$ \cite[Appendix]{bertucci2024curvature}. That is, for any $1\leq p \leq +\infty $ there exists a constant $C>1$, such that for any $t>0$, the following estimates hold,
    \begin{equation*}
        \|g_t\|_{L^p} \leq C\left(1+ t^{-\frac{p-1}{p}}\right), \|\partial_\xi g_t\|_{L^p} \leq C\left(1+ t^{-\left(\frac{p-1}{p}+\frac{1}{2}\right)}\right),
    \end{equation*}
    where $\partial_\xi$ represents $\partial_\theta$ or $\partial_x$.
    
    Then using the Duhamel formula, for any $T>0$ and  $t\in (-T,0]$, we have that $f$ is the solution of the integral equation,
    \begin{equation}
        \label{eq:DuhamelNonLinInsta}
        f_t = g_T \ast f_{t-T} - \int^t_{t-T}  g_{t-s} \ast F[f,c](s)\dd s, 
    \end{equation}
    where $\ast$ represent the convolution on the variables $\theta,x$, and with $F[f,c]$ defined as,
    \begin{equation*}
        F[f,c] = \partial_\theta(B_\tau[c]f) + \cos \partial_x f.
    \end{equation*}
    Now, the Morrey embedding in dimension 1 for $c$,
    \begin{equation*}
        H^3(\Torus_1) \hookrightarrow C^{2,\frac{1}{2}}(\Torus_1),
    \end{equation*}
    and the decay estimates of $(f,c)$ in $\HspaceParabolic^1$ ensure that $F[f,c] \in C((-\infty,0],L^2_{x,\theta})$, with estimate
    \begin{equation*}
        \|F[f,c](t)\|_{L^2_{x,\theta}} \leq C e^{2\alpha t}, \text{ for all } t \geq 0.
    \end{equation*}
    Taking the derivative of the equation \eqref{eq:DuhamelNonLinInsta}, and applying the Young convolution inequality in dimension one with the exponents
    \begin{equation*}
        1 + \frac{1}{q} = \frac{1}{p} + \frac{1}{2},
    \end{equation*}
    for some $1<p<2$ so that $2<q<+\infty$, we obtain the estimate,
    \begin{equation}
        \|\partial_\xi f_t\|_{L^q_{x,\theta}} \leq \|g_T\|_{L^p_{x,\theta}} \|\partial_\xi f_{t-T}\|_{L^{2}_{x,\theta}} + C \int^t_{t-T} \|\partial_\xi g_{t-s}\|_{L^p_{x,\theta}} \|F[f,c](s)\|_{L^2_{x,\theta}}\dd s, 
    \end{equation}
    using the estimate on the fundamental solution to the heat equation above and the decay estimates on $\|\partial f_{t-T}\|_{L^2}$ and $\|F[f,c]\|_{L^2}$, we obtain,
    \begin{align*}
        \|\partial_\xi f_t\|_{L^q_{x,\theta}} & \leq C e^{2\alpha t} \left(\left(1+ T^{-\frac{p-1}{p}}\right) + \int^t_{t-T} \left(1+ {(t-s)}^{-\left(\frac{p-1}{p}+\frac{1}{2}\right)}\right) \dd s\right), \\
        &\leq C e^{2\alpha t} \left(1 + T +  T^{-\frac{p-1}{p}} + \frac{2p}{2-p}T^{\frac{1}{2}-\frac{p-1}{p}} \right),
    \end{align*}
    for all $T>0$ and $t<0$. This implies the converges of $f$ to the steady state $f_*$ in Sobolev space $W^{1,q}_{x,\theta}$ for any $2< q <+\infty$. Hence, by using the Morrey embedding in dimension $2$,
    \begin{equation*}
        W^{1,q}(\Torus_1 \times \Torus_{2\pi}) \hookrightarrow C^{0,1-\frac{2}{q}}(\Torus_1\times \Torus_{2\pi}),    
    \end{equation*}
    we have a uniform convergence of $f$ to $f_*$ as $t$ goes to $-\infty$. This implies that there exists $t^*< 0$, such that, 
    \begin{equation*}
        f(t) \geq 0 \text{ for all } t \leq t^*,
    \end{equation*}
    Finally, using the preservation of the sign forward in time \cite[Lemma 4.3]{bertucci2024curvature}, we obtain that, all the $f^i$ are positive on $(-\infty,0]$.

    \begin{equation*}
        f^i(t) \geq 0 \text{ for all } t \leq 0, \text{ for all } 1\leq i \leq k,
    \end{equation*}
    
    This concludes that $(f^i,c^i)$ is a semigroup solution of the system~\eqref{sys:FormicidaeParabolic} in the sense stated in Proposition~\ref{thm:FormicidaeParabolicSemiGroup}.
    
    
\end{proof}

\subsection{Elliptic Nonlinear Instability}

\begin{thm}
    \label{thm:InstabilityElliptic}
    Suppose that, there exists $k\geq 1$, such that,
    \begin{equation}
        \label{hyp:UnstableManifoldElliptic}
        \chi(2\pi k \tau + 1) > \lambda (\gamma + 4\pi \sigma_c k^2).
    \end{equation}

    Then there exists $\sigma_\theta^*>0$ such that for any $\sigma_\theta \in (0,\sigma_\theta^*)$ there exists $\sigma_x^*>0$ such that for any   $\sigma_x \in (0,\sigma_x^*)$, the following lower bound on the dimension of the unstable manifold of the homogeneous state $f_*$ holds,
    \begin{equation*}
        \dim \mathcal{M}^u(f_*) \geq 4k.
    \end{equation*}
\end{thm}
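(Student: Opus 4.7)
The plan is to mirror closely the strategy of Theorem~\ref{thm:InstabilityParabolic}: combine the linear spectral information from Theorem~\ref{thm:ExistenceEigenFunLin} with the abstract nonlinear instability result Proposition~\ref{prop:UnstableManifoldAbstract}, and then use Lemma~\ref{lem:HilbertSpacePerturbationOfBasis} to upgrade the $4k$ unstable orbits into a lower bound on the dimension of $\mathcal{M}^u(f_*)$. Under \eqref{hyp:UnstableManifoldElliptic}, Theorem~\ref{thm:ExistenceEigenFunLin} provides $4k$ linearly independent real eigenvectors $\{e^i\}_{i=1}^{4k}$ of $\LinOpEllip$, orthogonal in the ambient Hilbert space, with eigenvalues of strictly positive real part, and such that each $e^i$ depends only on $\theta$ and on either $x_1$ or $x_2$. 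This lets me reduce to the one--spatial--variable setting, exactly as in the parabolic proof.

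The functional framework I will use is a family of Hilbert spaces $\HspaceParabolic^m$ of $h(x,\theta)\in H^m(\Torus_1\times\Torus_{2\pi})$ with zero mean and $\int h\dd\theta\in H^{m+1}(\Torus_1)$, equipped with a canonical norm; here $c$ is eliminated via $c[h]=(\gamma-\sigma_c\Delta_x)^{-1}\int h\dd\theta$, which gains two spatial derivatives by standard elliptic regularity on $\Torus_1$. Writing
\begin{equation*}
    \LinOpEllip h=\sigma_x\partial_{xx}h+\sigma_\theta\partial_{\thth}h-\lambda\cos\theta\,\partial_x h-\chi f_*\partial_\theta B_\tau[c[h]],
\end{equation*}
I will show that $-\LinOpEllip$ is sectorial on $\HspaceParabolic^0$ with domain $\HspaceParabolic^2$: the principal part $A h\defeq -\sigma_x\partial_{xx}h-\sigma_\theta\partial_{\thth}h$ is manifestly sectorial with compact resolvent, while the transport term is $\frac12$-subordinated to $A$ via Sobolev embedding, and the interaction term is even more regular since $B_\tau[c[h]]$ gains a net spatial derivative on $\int h\dd\theta$. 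Applying \cite[Theorem 1.3.2]{henry2006geometric} gives sectoriality, and the $\HspaceParabolic$-framework makes $\LinOpEllip$ of compact resolvent so that $\Sigma(\LinOpEllip)$ consists of isolated eigenvalues, with $\Sigma(\LinOpEllip)\cap\{\Real\mu>0\}$ non-empty by Theorem~\ref{thm:ExistenceEigenFunLin}.

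The next step is to verify the nonlinear Lipschitz hypothesis of Proposition~\ref{prop:UnstableManifoldAbstract} for
\begin{equation*}
    N(h)\defeq -\chi\partial_\theta\bigl(B_\tau[c[h]]\,h\bigr).
\end{equation*}
Using elliptic regularity $\|c[h]\|_{H^3_x}\lesssim\|\int h\dd\theta\|_{H^1_x}\lesssim\|h\|_{\HspaceParabolic^1}$, and the Morrey embedding $H^3(\Torus_1)\hookrightarrow C^{2,1/2}(\Torus_1)$ that gives $\|B_\tau[c[h]]\|_{L^\infty_{x,\theta}}+\|\partial_\theta B_\tau[c[h]]\|_{L^\infty_{x,\theta}}\lesssim\|h\|_{\HspaceParabolic^1}$, the very same bilinear estimate as in the proof of Theorem~\ref{thm:InstabilityParabolic} yields $\|N(h^1)-N(h^2)\|_{\HspaceParabolic^0}\le \varpi(r)\|h^1-h^2\|_{\HspaceParabolic^1}$ whenever $\|h^i\|_{\HspaceParabolic^1}\le r$, with $\varpi(r)=Cr$. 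Choosing the $\frac12$-fractional norm with respect to $A$ (equivalent to $\|\cdot\|_{\HspaceParabolic^1}$), all the hypotheses of Proposition~\ref{prop:UnstableManifoldAbstract} are met.

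I then apply Proposition~\ref{prop:UnstableManifoldAbstract} separately to each $e^i$ with $\varepsilon=1/(4k+1)$, producing $4k$ global backward-in-time solutions $h^i\in C((-\infty,0],\HspaceParabolic^1)$ with exponential decay $\|h^i(t)\|_{\HspaceParabolic^1}\le Me^{\alpha t}$ as $t\to-\infty$ and $\|h^i(0)-e^i\|_{\HspaceParabolic^1}\le\|e^i\|_{\HspaceParabolic^1}/(4k+1)$. Lemma~\ref{lem:HilbertSpacePerturbationOfBasis}, applied to the orthogonal family $\{e^i\}$, ensures that the $h^i(0)$ are linearly independent, giving $\dim\mathcal{M}^u(f_*)\ge 4k$ once positivity of $f^i=f_*+h^i$ is established. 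Positivity is obtained exactly as in the parabolic proof: a Duhamel representation of $h^i$ against the fundamental solution of $\sigma_x\partial_{xx}+\sigma_\theta\partial_{\thth}$ on $\Torus_1\times\Torus_{2\pi}$, together with the exponential decay and the estimate $\|F[f,c[h]](t)\|_{L^2_{x,\theta}}\le Ce^{2\alpha t}$, upgrades the convergence to $f_*$ to uniform convergence via a Young-convolution-plus-Morrey argument in dimension two, so $f^i(t)\ge 0$ for $t\ll 0$; preservation of sign forward in time \cite[Lemma 4.3]{bertucci2024curvature} then yields positivity on $(-\infty,0]$.

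I expect the main obstacle to be the sectoriality verification for $-\LinOpEllip$: because $c[h]$ is a nonlocal function of $h$, the perturbation of the principal part $A$ is not purely differential, and care is needed to make sure the $\frac12$-subordination hypothesis of \cite[Theorem 1.3.2]{henry2006geometric} is satisfied uniformly in $\sigma_x,\sigma_\theta$ small. Once this is in place, the rest of the argument is a direct adaptation of the parabolic-parabolic case.
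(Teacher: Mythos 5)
Your proposal is correct and follows essentially the same approach as the paper: the paper's own proof of this theorem simply says it is the same as the parabolic case with the Hilbert space adjusted to $\mathcal{H}^k=\{f\in H^{k}(\Torus_{2\pi}\times\Torus_1)\,|\,\int f\,\dd\theta\in H^{k+1}(\Torus_1),\,\int f\,\dd x\,\dd\theta=0\}$, eliminating $c$ via elliptic inversion, which is precisely the reduction you carry out. The details you fill in (sectoriality of $-\LinOpEllip$, the Lipschitz estimate for the nonlinearity using $c[h]=(\gamma-\sigma_c\Delta_x)^{-1}\int h\,\dd\theta$, the application of Proposition~\ref{prop:UnstableManifoldAbstract} with $\varepsilon=1/(4k+1)$, Lemma~\ref{lem:HilbertSpacePerturbationOfBasis}, and the Duhamel-plus-sign-preservation positivity argument) are a faithful adaptation of the parabolic proof.
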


The proof is rigorously the same as in the proof of Theorem~\ref{thm:InstabilityParabolic} by adjusting the Hilbert space structure to,

\begin{equation*}
    \mathcal{H}^k = \left\{ f\in H^{k}(\Torus_{2\pi}\times \Torus_1) \Bigg| \int f\dd\theta \in H^{k+1}(\Torus_1)
        ,\int f \dd x \dd\theta = 0 \right\}.
\end{equation*}
\subsection[Global Asymptotic Stability of the homogeneous solution]{Global Asymptotic Stability of the homogeneous solution under smallness of the interaction}
We here prove the global asymptotic stability of the homogeneous solution if $\chi$ is sufficiently small. We make use of the higher regularity estimates we encountered in the proof for the existence of the global attractor.

\begin{thm}
    \label{thm:GlobalAsymptoticStability}
    Suppose that $\sigma_x, \sigma_\theta, \sigma_c, \gamma >0$,
    Then for any $\lambda,\tau \geq 0$, there exists $\chi^*>0$, such that for any $0\leq \chi < \chi^*$, there exists $\alpha>0$ such that, for any initial condition for the parabolic system \eqref{sys:FormicidaeParabolic}, there exists $t_0>0$ such that the following estimate holds,
    \begin{equation*}
        \left\|f(t)-\frac{1}{2\pi}\right\|_{L^2_{x,\theta}} +  \|\nabla_x c\|_{L^2_x}+ \|\nabla^2_x c\|_{L^2_x} \leq C e^{-\alpha t}, \text{ for all } t\geq t_0,
    \end{equation*}
    where $C$ depends on the initial condition.
\end{thm}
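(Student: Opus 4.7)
The plan is to pass to the perturbations $h \defeq f - f_*$ and $\tilde c \defeq c - c_*$; since $c_* \equiv 1/\gamma$ is constant and $B_\tau$ depends only on spatial derivatives of $c$, one has $B_\tau[c]=B_\tau[\tilde c]$, and mass conservation gives $\int h\,\dd x\,\dd\theta=0$. The pair $(h,\tilde c)$ solves
\begin{align*}
\partial_t h &= \sigma_x \Delta_x h + \sigma_\theta \partial_{\theta\theta} h - \lambda v \cdot \nabla_x h - \chi f_* \partial_\theta B_\tau[\tilde c] - \chi \partial_\theta\bigl(B_\tau[\tilde c]\,h\bigr),\\
\partial_t \tilde c &= \sigma_c \Delta_x \tilde c - \gamma \tilde c + (\rho-1),
\end{align*}
with $\rho-1 = \int h\,\dd\theta$. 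Theorem~\ref{thm:AbsorbH3H1Parabolic} and Corollary~\ref{cor:H4CParabolic} will provide, for $\chi$ restricted to a bounded range, a time $t_0 > 0$ depending on the initial datum and a constant $M$ independent of $\chi$ such that $\|c(t)\|_{H^4_x}+\|f(t)\|_{H^1_{x,\theta}}\leq M$ for $t\geq t_0$. Since in two dimensions $H^4_x\hookrightarrow C^2_x$, this will yield a uniform bound $\|\partial_\theta B_\tau[\tilde c(t)]\|_{L^\infty_{x,\theta}} \leq M'$ on $[t_0,\infty)$.

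Next I would introduce the candidate Lyapunov functional $E(t) \defeq \|h\|_{L^2_{x,\theta}}^2 + A\|\nabla_x \tilde c\|_{L^2_x}^2 + B\|\nabla_x^2 \tilde c\|_{L^2_x}^2$ with constants $A,B>0$ to be tuned. Testing the $h$-equation against $2h$, the transport term vanishes, and integration by parts rewrites the two $\chi$-terms as $-\chi\int h^2\,\partial_\theta B_\tau[\tilde c] + 2\chi f_*\int(\partial_\theta h)\,B_\tau[\tilde c]$; a Young split absorbing half of the rotational dissipation, together with the pointwise bound $|B_\tau[\tilde c]|\leq|\nabla_x\tilde c|+\tau|\nabla_x^2\tilde c|$, should give
\begin{equation*}
\tfrac{\dd}{\dd t}\|h\|^2 \leq -2\sigma_x\|\nabla_x h\|^2 - \sigma_\theta\|\partial_\theta h\|^2 + \chi M'\|h\|^2 + \tfrac{C\chi^2}{\sigma_\theta}\bigl(\|\nabla_x\tilde c\|^2 + \|\nabla_x^2\tilde c\|^2\bigr).
\end{equation*}
Testing the $\tilde c$-equation against $-2\Delta_x\tilde c$ and differentiating it once before testing against $-2\Delta_x\nabla_x\tilde c$, together with the Cauchy--Schwarz bounds $\|\rho-1\|_{L^2_x}^2\leq 2\pi\|h\|^2$ and $\|\nabla_x\rho\|_{L^2_x}^2\leq 2\pi\|\nabla_x h\|^2$, should yield
\begin{align*}
\tfrac{\dd}{\dd t}\|\nabla_x\tilde c\|^2 &\leq -2\gamma\|\nabla_x\tilde c\|^2 + \tfrac{2\pi}{\sigma_c}\|h\|^2,\\
\tfrac{\dd}{\dd t}\|\nabla_x^2\tilde c\|^2 &\leq -2\gamma\|\nabla_x^2\tilde c\|^2 + \tfrac{2\pi}{\sigma_c}\|\nabla_x h\|^2.
\end{align*}

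Finally I would choose $B = \sigma_c\sigma_x/(2\pi)$ so that the $\|\nabla_x h\|^2$-source in $\|\nabla_x^2\tilde c\|^2$ is absorbed by half of the spatial dissipation of $\|h\|^2$; then, using Poincar\'e's inequality on the mean-zero $h$, pick $A>0$ small enough that $2A\pi/\sigma_c$ stays well below $\min(\sigma_x,\sigma_\theta)/C_P^2$, and last fix $\chi^*>0$ so small that $\chi^* M'$ also lies strictly below the residual Poincar\'e coefficient and that $C(\chi^*)^2/\sigma_\theta < A\gamma \wedge B\gamma$, keeping the $\tilde c$-coefficients $-2A\gamma+C\chi^2/\sigma_\theta$ and $-2B\gamma+C\chi^2/\sigma_\theta$ strictly negative. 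Since $M'$, $C$, $A$, $B$ and $C_P$ do not depend on $\chi$, such a $\chi^*>0$ depends only on the model parameters. Combining the three inequalities then gives $\tfrac{\dd E}{\dd t}\leq -\alpha E$ on $[t_0,\infty)$ for some $\alpha>0$, and Lemma~\ref{lem:gronwall} delivers $E(t)\leq E(t_0)e^{-\alpha(t-t_0)}$, from which the claimed exponential decay follows with a constant depending on the initial datum through $t_0$ and $E(t_0)$. The main obstacle will be the triangular coupling: the $\|\nabla_x^2\tilde c\|^2$-equation is driven by $\|\nabla_x h\|^2$, which prevents spending the full $h$-dissipation on Poincar\'e and forces the hierarchy $B$ fixed first, then $A$ small, and lastly $\chi^*$ small, with careful bookkeeping to ensure $M'$ (whose value uses the higher-regularity attractor bound of Corollary~\ref{cor:H4CParabolic}) remains uniform in the admissible range of $\chi$; the elliptic analogue Theorem~\ref{thm:GlobalAsymptoticStabilityElliptic} should be simpler, as there $\tilde c$ is controlled directly by $\rho-1$ in $H^2_x$ through the elliptic equation.
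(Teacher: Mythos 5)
Your proposal matches the paper's proof in all essentials: both use the higher-regularity absorbing bound of Corollary~\ref{cor:H4CParabolic} plus Morrey/Sobolev embedding to obtain an $L^\infty$ control on $B_\tau[c]$ and $\partial_\theta B_\tau[c]$ for $t\geq t_0$, then build a weighted Lyapunov functional combining $\|f-f_*\|_{L^2_{x,\theta}}^2$, $\|\nabla_x c\|_{L^2_x}^2$ and $\|\nabla_x^2 c\|_{L^2_x}^2$, and close a Gr\"onwall estimate via Poincar\'e's inequality and smallness of $\chi$. The differences are cosmetic (your explicit perturbation variables $(h,\tilde c)$ and tuning weights $A,B$ versus the paper's $1/\chi$-normalized functional $\Psi_{\chi,\tau,\gamma}$); one small caveat worth making explicit is that the constant in Corollary~\ref{cor:H4CParabolic} carries a factor $(1+\chi^{p_3})$, so your $M'$ is not literally $\chi$-independent, but restricting $\chi$ to a bounded range (as you note) makes it uniform, which is all that is needed.
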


\begin{proof}
        In the following we take $\chi>0$ and we will impose the smallness conditions at the end of the proof. The case $\chi=0$ follows easily from the computations.
        
        Recall from Corollary~\ref{cor:H4CParabolic}, we know that there exist $p>1$ and $C$ depending on all the parameters but $\chi$, such that for any initial condition there exists $t_0>0$ such that $c\in C([t_0, +\infty), H^4_x)$, and such that the following estimate holds,
        \begin{equation*}
            \|c(t)\|_{H^4_x} \leq (1+\chi^p)C, \text{ for all } t\geq t_0.
        \end{equation*}
        Using Morrey's embedding theorem in dimension two, that is, $H^4_x \hookrightarrow C^{2,\alpha}$ for any $\alpha < 1$, implies that
        \begin{equation}
            \label{est:LinftynablaHessc}
            \|\nabla_x c(t)\|_{L^\infty_x} + \|\nabla^2_x c(t)\|_{L^\infty_x} \leq (1+\chi^p)C_1, \text{ for all } t\geq t_0,
        \end{equation}
        for some constant $C_1>0$ depending on all the parameters but $\chi$.
        We refer to Section~\ref{sec:ExistenceGlobalAttractor} for the derivation of the following estimates,
        \begin{align}
            \label{est:StabilityFokkerPlanck}
            \frac{\dd}{\dd t} \int  \left(f-1/2\pi\right)^2 &= -2\sigma_x \int |\nabla_x f|^2 -2\sigma_\theta \int (\partial_\theta f)^2 -  2\chi \int B[c]f\partial_\theta f,\\
            \label{est:StabilityChemotaticHess}
            \frac{\dd}{\dd t} \int |\nabla^2_x c |^2 &\leq - 2\gamma \int |\nabla^2_x c|^2 + \frac{1}{\sigma_c} \int |\nabla_x \rho|^2,\\
            \label{est:StabilityChemotaticNabla}
             \frac{\dd}{\dd t} \int |\nabla_x c|^2 &  \leq - 2\gamma \int |\nabla_x c|^2  + \frac{1}{\sigma_c}\int (\rho-1)^2.
        \end{align}
        We bound the nonlinear term as follows, for $t\geq t_0$,
        \begin{align}
             2 \int |B[c]f\partial_\theta f| & \leq  \left(\int f^2 B[c]^2 + \int |\partial_\theta f|^2\right),\nonumber\\
                                                 & \leq  \left(2\int B[c]^2\left[\left(f-\frac{1}{2\pi}\right)^2 + \frac{1}{4\pi^2} \right] + \int |\partial_\theta f|^2\right),\nonumber\\
                                                 & \leq  \left(2 (1+\chi^p)C_2 \int \left(f-\frac{1}{2\pi}\right)^2 + \frac{1}{2\pi^2}\int B[c]^2 + \int |\partial_\theta f|^2\right),\nonumber\\
                                                 &  \leq 2(1+\chi^p)C_2 \int \left(f-\frac{1}{2\pi}\right)^2 + \int |\partial_\theta f|^2 \nonumber\\
                                                 \label{est:StabilityNonlinearTerm}
                                                 & \hspace{4em} + \frac{1}{2\pi^2}C_\tau \left(\int |\nabla_x c|^2 + \int |\nabla^2_x c|^2 \right),
        \end{align}
        for some $C_2$ depending on all the parameters but $\chi$, and $C_\tau$ depending only on $\tau$.
        We introduce the following notation for the sake of conciseness,
        \begin{equation*}
            \int \Psi_{\chi,\tau, \gamma} = \left( \frac{1}{\chi}\int (f-1/2\pi)^2 + \frac{C_\tau}{\gamma}\int \left(|\nabla_x c|^2 + |\nabla^2_x c|^2\right) \right).
        \end{equation*}
        Adding the appropriate multiples of estimates \eqref{est:StabilityFokkerPlanck}, \eqref{est:StabilityChemotaticHess}, and \eqref{est:StabilityChemotaticNabla}, we can drop the last term of estimate \eqref{est:StabilityNonlinearTerm}, and obtain,
        
        \begin{align*}
            \frac{\dd}{\dd t} \int \Psi_{\chi,\tau, \gamma}  & \leq -2\frac{\sigma_x}{\chi} \int |\nabla_x f|^2 - 2\frac{\sigma_\theta}{\chi} \int (\partial_\theta f)^2 + 2(1+\chi^p)C_2 \int \left(f-\frac{1}{2\pi}\right)^2 + \int |\partial_\theta f|^2\\
            & \hspace{1em}-C_\tau\int \left(|\nabla_x c|^2 + |\nabla^2_x c|^2\right) + \frac{C_\tau}{\sigma_c \gamma}\left(\int |\nabla_x \rho|^2 + \int (\rho -1)^2 \right),\\
            & \leq -\left(2\frac{\sigma_x}{\chi} -  \frac{C_\tau}{\sigma_c \gamma}\right) \int |\nabla_x f|^2 - \left(2\frac{\sigma_\theta}{\chi} - 1\right) \int (\partial_\theta f)^2\\
            & \hspace{1em}-C_\tau\int \left(|\nabla_x c|^2 + |\nabla^2_x c|^2\right)\\
            & \hspace{1em} + \left(\frac{C_\tau}{\sigma_c \gamma} + 2(1+\chi^p)C_2\right) \int \left(f-1/2\pi\right)^2
        \end{align*}
        Now assuming that,
        \begin{equation}
            \label{hyp:FirstSmallnessCondition}
            \left(2\frac{\sigma_x}{\chi} -  \frac{C_\tau}{\sigma_c \gamma}\right) \bigwedge \left(2\frac{\sigma_\theta}{\chi} - 1\right) >0.
        \end{equation}
        And using Poincaré inequality, we obtain,
        \begin{equation*}
            \frac{\dd}{\dd t} \int \Psi_{\chi,\tau, \gamma} \leq -W(\chi)\frac{1}{\chi}\int (f-1/2\pi)^2 - C_\tau \int \left(|\nabla_x c|^2 + |\nabla^2_x c|^2\right),
        \end{equation*}
        where $W$ is defined as,
        \begin{equation}
            \label{hyp:SecondSmallnessCondition}
            W(\chi) =  \left(\frac{1}{C^2_{P}}\left(2\sigma_x -  \chi\frac{C_\tau}{\sigma_c \gamma}\right) \bigwedge \left(2\sigma_\theta - \chi\right) - \chi\left(\frac{C_\tau}{\sigma_c \gamma} + 2(1+\chi^p)C_2\right)\right),
        \end{equation}
        with $C_{P}$ the Poincaré constant. This ensures that there exists  $\chi^* >0$ sufficiently small so for any $0< \chi< \chi^*$, \eqref{hyp:FirstSmallnessCondition} is satisfied and such that $W(\chi) > 0$.
        \begin{equation*}
            \frac{\dd}{\dd t} \int \Psi_{\chi,\tau, \gamma} \leq - W(\chi) \wedge \gamma \int \Psi_{\chi,\tau, \gamma}.
        \end{equation*}

        We conclude using Gr\"onwall inequality, that if $\chi^*$ is sufficiently small, from any initial condition, there exists $t_0 > 0$ and $C_0 > 0$ depending on the initial condition such that the following holds,
        \begin{equation*}
            \int (f(t)-1/2\pi)^2 + \frac{\chi C_\tau}{\gamma}\int \left(|\nabla_x c(t)|^2 + |\nabla^2_x c(t)|^2\right) \leq C_0 e^{-(W(\chi) \wedge \gamma )t}, \text{ for all } t\geq t_0.
        \end{equation*}
\end{proof}

Similar computations lead to the same result in the elliptic case.
\begin{thm}
    \label{thm:GlobalAsymptoticStabilityElliptic}
    Suppose that $\sigma_x, \sigma_\theta, \sigma_c, \gamma >0$,
    Then for any $\lambda,\tau \geq 0$, there exists $\chi^*>0$, such that for any $0\leq \chi < \chi^*$, there exists $\alpha>0$ such that, for any initial condition for the parabolic system \eqref{sys:FormicidaeElliptic}, there exists $t_0>0$ such that the following estimate holds,
    \begin{equation*}
        \left\|f(t)-\frac{1}{2\pi}\right\|_{L^2_{x,\theta}} \leq C e^{-\alpha t}, \text{ for all } t\geq t_0,
    \end{equation*}
    where $C$ depends on the initial condition.
\end{thm}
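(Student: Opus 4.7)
The plan is to mirror the proof of Theorem~\ref{thm:GlobalAsymptoticStability} while exploiting the fact that in the elliptic coupling, $c$ is determined instantaneously by $\rho$ via $c = (\gamma - \sigma_c\Delta_x)^{-1}\rho$, so the chemical field need not be tracked as an independent dynamical variable. Writing $g = f - 1/(2\pi)$ and $\tilde c = c - 1/\gamma$, the equation $(\gamma - \sigma_c\Delta_x)\tilde c = \rho - 1 = \int g\,\dd\theta$ together with Fourier multiplier estimates on $\Torus^2_1$ yields
\begin{equation*}
\|\nabla_x\tilde c\|_{L^2_x}^2 + \|\nabla_x^2\tilde c\|_{L^2_x}^2 \leq C_{\sigma_c,\gamma}\|\rho-1\|_{L^2_x}^2 \leq 2\pi\, C_{\sigma_c,\gamma}\,\|g\|_{L^2_{x,\theta}}^2,
\end{equation*}
which directly replaces the evolution inequalities \eqref{est:StabilityChemotaticHess}--\eqref{est:StabilityChemotaticNabla} from the parabolic case. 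I will first invoke Theorem~\ref{thm:AbsorbH3H1Elliptic} to obtain $M>0$ and $t_0$ with $\|f(t)\|_{H^1_{x,\theta}}\leq M$ for $t\geq t_0$; combining with elliptic regularity $\|c\|_{W^{2,p}_x}\leq C_p\|\rho\|_{L^p_x}$ and the 2D Sobolev embedding $H^1_x\hookrightarrow L^p_x$ for all $p<\infty$, this yields uniform bounds on $\|\nabla_x c(t)\|_{L^\infty_x}$ and on $\|\partial_\theta B_\tau[c(t)]\|_{L^q_x(L^\infty_\theta)}$ for any $q<\infty$.

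Next, from the Fokker--Planck equation, using $B_\tau[c]=B_\tau[\tilde c]$, I derive
\begin{equation*}
\frac{\dd}{\dd t}\int g^2 = -2\sigma_x\int|\nabla_x g|^2 - 2\sigma_\theta\int(\partial_\theta g)^2 - 2\chi\int B_\tau[\tilde c]\,f\,\partial_\theta g.
\end{equation*}
Decomposing $f = 1/(2\pi)+g$ and integrating by parts in $\theta$ splits the nonlinear term as
\begin{equation*}
-2\chi\int B_\tau[\tilde c]f\partial_\theta g = -\frac{\chi}{\pi}\int B_\tau[\tilde c]\partial_\theta g + \chi\int \partial_\theta B_\tau[\tilde c]\, g^2.
\end{equation*}
The first piece is bounded via Cauchy--Schwarz and the displayed elliptic estimate on $\tilde c$ by $(\sigma_\theta/2)\|\partial_\theta g\|_{L^2}^2 + (C\chi^2/\sigma_\theta)\|g\|_{L^2}^2$; the cubic piece is estimated through H\"older as $\chi\|\partial_\theta B_\tau[\tilde c]\|_{L^q_x(L^\infty_\theta)}\|g\|_{L^{q'}_x(L^2_\theta)}^2$ for $q$ large, with the second factor interpolated between $L^2_{x,\theta}$ (decaying) and $H^1_{x,\theta}$ (uniformly bounded by $M$) using Gagliardo--Nirenberg (Proposition~\ref{thm:GagliardoNirenbeg}). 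Combining the estimates, absorbing all the $\|\partial_\theta g\|_{L^2}^2$ and $\|\nabla_x g\|_{L^2}^2$ contributions into the dissipation, and finally applying Poincar\'e (using that $\int g\,\dd x\dd\theta = 0$) produces
\begin{equation*}
\frac{\dd}{\dd t}\int g^2 \leq -\bigl(A-B(\chi)\bigr)\int g^2,
\end{equation*}
with $A>0$ depending only on $\sigma_x,\sigma_\theta$ and the Poincar\'e constant, and $B(\chi)\to 0$ as $\chi\to 0^+$. Selecting $\chi^*$ small enough that $B(\chi^*)<A$ and applying Lemma~\ref{lem:gronwall} concludes the proof.

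The main technical obstacle compared to the parabolic case is the absence of a uniform $L^\infty_x$ bound on $\nabla_x^2 c$: Theorem~\ref{thm:AbsorbH3H1Elliptic} provides only $H^1_{x,\theta}$ absorption of $f$, hence $H^1_x$ for $\rho$ and $H^3_x$ for $c$, and the 2D embedding $H^3_x\hookrightarrow C^{1,\alpha}_x$ does not extend to second derivatives. This forces the cubic term $\chi\int \partial_\theta B_\tau[\tilde c]\,g^2$ to be controlled via $L^q_x(L^\infty_\theta)$ bounds on $\partial_\theta B_\tau[\tilde c]$ for large but finite $q$ rather than by $L^\infty_{x,\theta}$ bounds, with the lost integrability recovered via the interpolation of $g$ between $L^2_{x,\theta}$ and $H^1_{x,\theta}$ made possible by the uniform absorption in $H^1$.
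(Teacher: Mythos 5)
Your approach correctly identifies and addresses a genuine subtlety that a naive transcription of the parabolic proof would miss: in the elliptic case, Theorem~\ref{thm:AbsorbH3H1Elliptic} yields absorption of $c$ in $H^3_x$ but not $H^4_x$, so $\nabla^2_x c$ is not available in $L^\infty_x$, and the parabolic argument's key step of pulling $B_\tau[c]^2$ out of the integral in $L^\infty$ breaks down. Replacing the evolution inequalities for $c$ by the instantaneous Fourier-multiplier estimate $\|\nabla_x\tilde c\|_{L^2}^2 + \|\nabla_x^2\tilde c\|_{L^2}^2 \lesssim \|g\|_{L^2_{x,\theta}}^2$ is natural in the elliptic setting, and your alternative decomposition of the nonlinear term via $f = \tfrac{1}{2\pi}+g$ followed by integration by parts in $\theta$ (producing a term linear in $g$ and a genuinely quadratic one) is equally valid to the paper's Young-inequality split.

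However, as written your treatment of the cubic term does not close. If you interpolate $\|g\|_{L^{2q'}_x(L^2_\theta)}^2 \lesssim \|g\|_{L^2}^{2\lambda}\|g\|_{H^1}^{2(1-\lambda)}$ (with $\lambda<1$) and then substitute the uniform $H^1$ absorption bound $M$ for the second factor, you obtain a source term proportional to $\|g\|_{L^2}^{2\lambda}$, which is \emph{sublinear}. The resulting differential inequality $\tfrac{\dd}{\dd t}\int g^2 \leq -A\int g^2 + B(\chi)\bigl(\int g^2\bigr)^{\lambda}$ only gives attraction to a ball of radius of order $\chi^{\text{power}}$, not exponential decay to zero. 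To obtain the linear inequality you claim, you must instead absorb the $\|\nabla g\|_{L^2}$ contribution from the Gagliardo--Nirenberg factor into the dissipation via a weighted Young inequality (converting $\|g\|_{H^1}$ to $\|\nabla g\|$ with Poincaré, then splitting with exponents $1/(1-\lambda)$ and $1/\lambda$); the uniform $H^1$ absorption bound $M$ is then used only to control the \emph{coefficient} $\|\partial_\theta B_\tau[\tilde c]\|_{L^q_x(L^\infty_\theta)}$. Your phrase ``absorbing all the $\|\nabla_x g\|_{L^2}^2$ contributions into the dissipation'' suggests you have the correct mechanism in mind, but it is mutually inconsistent with the parenthetical ``(uniformly bounded by $M$)'' applied to the same factor, and should be disentangled. (Also a small typo: $\|g^2\|_{L^{q'}_x(L^1_\theta)} = \|g\|_{L^{2q'}_x(L^2_\theta)}^2$, not $\|g\|_{L^{q'}_x(L^2_\theta)}^2$.)
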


\newpage

\begingroup
\renewcommand{\thesubsection}{\Alph{subsection}}
\renewcommand{\thethm}{\thesubsection.\arabic{thm}}
\renewcommand{\theprop}{\thesubsection.\arabic{prop}}
\renewcommand{\theequation}{\thesubsection.\arabic{equation}}

\appendix
\section*{Appendix}
\addcontentsline{toc}{section}{Appendix}
\setcounter{equation}{0}

\subsection{Nonlinear Instability for Sectorial operators}
\label{App:NonLinearInstab}

For the sake of completeness, we here give the modification of the proof of \cite[Theorem 5.1.3]{henry2006geometric} in order to obtain estimate~\eqref{est:UnstableAbstractEstimate}.

\begin{proof}[Proof of Theorem~\ref{prop:UnstableManifoldAbstract}:]
    We first decompose $\Sigma$ into two invariant subspaces. We denote by $\Sigma_1$ and $\Sigma_2$,
    \begin{equation*}
        \Sigma_1 = \Sigma(L) \cap \{\Real (\mu) > 0\}, \Sigma_2 = \Sigma(L) / \Sigma_1,
    \end{equation*}
    and let $X_1,X_2$ be the associated invariant subspaces of $\Sigma_1$ and $\Sigma_2$, such that $X = X_1 \oplus X_2$, $L_1,L_2$ be the associated restrictions of $L$, and $\Pi_1$ and $\Pi_2$, the associated projection operators. We note that since $L$ is defined on the real Banach space $X$, and since we decomposed the spectrum along the imaginary axis, $X_1, X_2$ are real Banach spaces, and $\Pi_1,\Pi_2,L_1,L_2$ are defined on the real Banach space $X$.

     We note that $\Sigma_1$ is bounded since $(-L)$ is sectorial and consists of a finite number of isolated eigenvalues (since $L$ is of compact resolvent). This implies that there exists $\alpha >0$ such that,
    \begin{equation*}
        \inf\Real (\Sigma_1) > 3\alpha \text{ and }\sup\Real(\Sigma_2) < \alpha.
    \end{equation*}
    
    From Proposition~\ref{prop:SectorialOperatorEstim}, we obtain the following estimates,
    for $t > 0$, and $y \in X_2$,
    \begin{equation*}
        \left\| e^{L_2 t}y\right\|_\beta \leq M \left\|y\right\|t^{-\beta}e^{\alpha t},\hspace{2em} \left\| e^{L_2 t}y\right\|_\beta \leq M \left\|y\right\|_\beta e^{\alpha t}.
    \end{equation*}
    and for $y\in X_1$ and $t\leq 0$,
    \begin{equation*}
        \left\| e^{L_1 t}y\right\|_\beta \leq M \left\|y\right\|e^{3\alpha t}, \hspace{2em} \left\| e^{L_1 t}y\right\|_\beta \leq M \left\|y\right\|_\beta e^{3\alpha t},
    \end{equation*}
    since $\dim X_1< \infty$ because eigenfunctions of $L$ are in $D(L)$.

    For some $x\in X_1$, with a norm that we will fix later, consider the integral equation
    \begin{equation}
        \label{eq:InstabIntegral}
        y(t) = e^{L_1 t}x - \int^0_t e^{L_1(t-s)} \Pi_1 N(y(s)) \dd s + \int^t_{-\infty} e^{L_2(t-s)} \Pi_2 N(y(s)) \dd s.
    \end{equation}
    \newcommand{\Yspace}{\mathcal{Y}^\alpha_\beta}
    For this integral equations, we define the Banach space $(\Yspace , \|\cdot \|_{\mathcal{Y}^\alpha_\beta})$ as follows,
    \begin{align*}
        &\Yspace = \left\{ y\in C\left((-\infty,0],H^\beta\right),  \sup_{t\leq 0} \|y\|_{\beta}e^{-2\alpha t} <+\infty \right\},\\
        &\|\cdot \|_{\Yspace} = \sup_{t\leq 0} \|\cdot\|_{\beta}e^{-2\alpha t}.
    \end{align*}
    We want to find a fixed point so that we get a solution for the integral equation \eqref{eq:InstabIntegral}. One can check that the right-hand side of \eqref{eq:InstabIntegral} is a well-defined mapping from $\Yspace$ to itself. We write down below the contraction estimate. Let $z_1,z_2 \in \Yspace$, such that their $\Yspace$-norm is smaller than some $r>0$ to be fixed later and let $y_1,y_2\in \Yspace$ be the associated integral as defined in \eqref{eq:InstabIntegral}.
    
    For $t\leq 0$, we get
    \begin{align*}
        \|y_1(t) - y_2(t)\|_\beta & \leq \int^0_t \left\|e^{L_1(t-s)} \Pi_1 (N(z_1(s))- N(z_2(s)))\right\|_\beta\\
        &\hspace{3em}+ \int^t_{-\infty}\left\|e^{L_2(t-s)} \Pi_2 (N(z_1(s))-N(z_2(s)))\right\|_\beta \dd s\\
         & \leq M\int^0_t\|N(z_1(s)) - N(z_1(s))\|e^{3\alpha(t-s)} \dd s \\
         &\hspace{3em}+M\int^t_{-\infty}\left\|(N(z_1(s))-N(z_2(s)))\right\|(t-s)^{-\beta} e^{\alpha(t-s)}\dd s\\
         & \leq M \varpi(r) \int^0_t\|z_1(s) - z_1(s)\|_\beta e^{3\alpha(t-s)} \dd s \\
         &\hspace{3em}+M\varpi(r)\int^t_{-\infty}\left\|z_1(s)-z_2(s)\right\|_\beta(t-s)^{-\beta} e^{\alpha(t-s)}\dd s\\
         & \leq M \varpi(r) \|z_1 - z_1\|_{\Yspace} \int^0_t e^{2\alpha s}e^{3\alpha(t-s)} \dd s \\
         &\hspace{3em}+M\varpi(r)\|z_1 - z_1\|_{\Yspace}\int^t_{-\infty} e^{2\alpha s}(t-s)^{-\beta} e^{\alpha(t-s)}\dd s\\
         &\leq M \varpi(r)e^{2\alpha t}\left( \frac{1}{\alpha} + \alpha^{\beta-1} \Gamma(1-\beta)\right) \|z_1 - z_1\|_{\Yspace},
    \end{align*} 
    where $\Gamma$ is the gamma function defined as $\Gamma(z) = \int_0^\infty t^{z-1} e^{-t} \dd t$. We then choose $r>0$ so that,
    \begin{equation}
        \label{eq:EstimeInstabilityIntegral}
        M \varpi(r)\left( \frac{1}{\alpha} + \alpha^{\beta-1} \Gamma(1-\beta)\right) \leq \frac{\varepsilon}{2M} < \frac{1}{2}.
    \end{equation}

    And impose the normalization of the $x$ to satisfy,
    \begin{equation*}
        \|x\|_\beta = \frac{r}{2M},
    \end{equation*}
    which ensures that the solution satisfies the estimate,
    \begin{equation}
        \label{eq:YsolIntEstim}
        \|y(t)\|_{\Yspace} \leq 2M\|x\|_{\beta}.
    \end{equation}
    
    Then the second inequality in \eqref{eq:EstimeInstabilityIntegral}, ensures that \eqref{eq:InstabIntegral} defines a contraction. Thus, from the Banach contraction mapping theorem, a unique solution of equation~\eqref{eq:InstabIntegral} exists. 
    The fact that $(-L)$ is sectorial implies from \cite[Lemma 3.3.2]{henry2006geometric} that $y$ is a solution of the equation
    \begin{equation*}
        \frac{\dd}{\dd t}y^i = Ly^i + N(y^i).
    \end{equation*}
    Finally using the decay estimate \eqref{eq:YsolIntEstim}, we control the deviation of $y(0)$ from our unstable eigenvector $x$, using
    \begin{equation}
        y(0)=x+\int_{-\infty}^0 e^{L_2(t-s)}\Pi_2 N(y(s))\dd s,
    \end{equation}
    we conclude that,
    \begin{equation*}
        \|y(0) - x\|_\beta \leq M \varpi(r) \int_0^\infty s^{-\beta} e^{\alpha s} 2M \|x\|_\beta e^{-2\alpha s}\dd s \leq \varepsilon \|x\|_{\beta},
    \end{equation*}
    where we used the first inequality in \eqref{eq:EstimeInstabilityIntegral}. 
\end{proof}
\endgroup

\section*{References}
\addcontentsline{toc}{section}{References}
\printbibliography[heading=none]
\end{document}